\definecolor{navie}{RGB}{0, 70, 140}
\newtheorem*{rep@theorem}{\rep@title}
\newcommand{\newreptheorem}[2]{%
	\newenvironment{rep#1}[1]{%
		\def\rep@title{#2 \ref{##1}}%
		\begin{rep@theorem}}%
		{\end{rep@theorem}}}
\newtheorem*{rep@cor}{\rep@title}
\newcommand{\newrepcor}[2]{%
	\newenvironment{rep#1}[1]{%
		\def\rep@title{#2 \ref{##1}}%
		\begin{rep@cor}}%
		{\end{rep@cor}}}
\newtheorem*{rep@prop}{\rep@title}
\newcommand{\newrepprop}[2]{%
	\newenvironment{rep#1}[1]{%
		\def\rep@title{#2 \ref{##1}}%
		\begin{rep@prop}}%
		{\end{rep@prop}}}
\newtheorem{corollary}{Corollary}[section]
\newtheorem{corx}{Corollary}
\newtheorem{theorem}[corollary]{Theorem}
\newtheorem{thmx}[corx]{Theorem}
\newtheorem{proposition}[corollary]{Proposition}
\newtheorem*{theorem*}{Theorem}
\newtheorem{lemma}[corollary]{Lemma}
\theoremstyle{definition} 
\newtheorem{propdef}[corollary]{Proposition/Definition}
\newtheorem*{remark*}{Remark}
\newtheorem{definition}[corollary]{Definition}
\newtheorem*{definition*}{Definition}
\newtheorem{remark}[corollary]{Remark}
\theoremstyle{remark} 
\numberwithin{equation}{section}
\numberwithin{figure}{section}
\newcommand{\cir}[1]{\textcircled{\raisebox{-0.5pt}{\small{#1}}}}
\newcommand{\dt}[1]{\accentset{\,\mbox{\large\bfseries .}}{#1}}
\newcommand{\id}{\mathit{id}}
\newcommand{\diag}{\mathrm{diag}}
\newcommand{\Hom}{\mathrm{Hom}}
\newcommand{\Aut}{\mathrm{Aut}}
\newcommand{\R}{\mathbb{R}}
\newcommand{\C}{\mathbb{C}}
\renewcommand{\O}{\mathbf{O}}
\newcommand{\Z}{\mathbb{Z}}
\newcommand{\A}{\mathbb{A}}
\newcommand{\Fix}{\mathrm{Fix}}
\newcommand{\Hitchin}{{\scalebox{0.5}{Hitchin}}}
\newcommand{\hf}{F}
\newcommand{\ve}[1]{{\boldsymbol{#1}}}
\newcommand{\ca}[1]{{\mathcal{#1}}}
\newcommand{\T}{\mathsf{T}}
\newcommand{\TT}{\ca{L}}
\newcommand{\dif}{\mathsf{d}}
\newcommand{\dc}{\mathsf{d}^\mathrm{c}}
\newcommand{\vol}{\mathit{vol}}
\newcommand{\ac}[1]{{\boldsymbol{#1}}} %complex structure
\newcommand{\transp}[1]{\leftidx{^\mathsf{t}}\!{#1}}
\newcommand{\pa}{\partial}
\newcommand{\bpa}{{\bar{\partial}}}
\newcommand{\dz}{{\dif z}}
\newcommand{\dbz}{{\dif\bar z}}
\newcommand{\dx}{{\dif x}}
\newcommand{\dy}{{\dif y}}
\newcommand{\pazbz}{\partial_{z\bar z}^2}
\newcommand{\ima}{\boldsymbol{i}}
\newcommand{\jma}{\boldsymbol{j}}
\newcommand{\kma}{\boldsymbol{k}}
\newcommand{\lma}{\boldsymbol{l}}
\newcommand{\mat}[1]{\begin{pmatrix}#1\end{pmatrix}}
\newcommand{\cj}{\overline}
\newcommand{\vae}{\varepsilon}
\newcommand{\JT}{\ac{J}^{\ca{T}}\!\!}
\newcommand{\JN}{\ac{J}^\ca{N}\!\!}
\renewcommand{\H}{\mathbb{H}}
\newcommand{\GL}{\mathrm{GL}}
\newcommand{\SL}{\mathrm{SL}}
\newcommand{\SU}{\mathrm{SU}}
\newcommand{\SO}{\mathrm{SO}}
\newcommand{\Sp}{\mathrm{Sp}}
\newcommand{\II}{\ve{I\!\!I}}
\newcommand{\sth}{{\,\raisebox{0.8pt}{\scalebox{0.65}{$\bullet$}}\,}}
\newcommand{\wh}{\widehat}
\newcommand{\wt}{\widetilde}
\newcommand{\Sym}{\mathrm{Sym}}
\newcommand{\Sy}{X}
\DeclareMathOperator{\Ad}{Ad}
\DeclareMathOperator{\End}{End}
\DeclareMathOperator{\ad}{ad}
\DeclareMathOperator{\im}{\mathsf{Im}}
\DeclareMathOperator{\re}{\mathsf{Re}}
\DeclareMathOperator{\Tr}{Tr}
\begin{document}
\title[Cyclic Higgs bundles and minimal surfaces in pseudo-hyperbolic spaces]{Cyclic Higgs bundles and minimal surfaces\\ in pseudo-hyperbolic spaces} 
\author{Xin Nie}
\address{Shing-Tung Yau Center of Southeast University, Southeast University, Nanjing, China}
\maketitle

%\selectlanguage{english}
%\begin{poliabstract}{Abstract} 
%We introduce a type of minimal surface in the pseudo-hyperbolic space $\H^{n,n}$ (if $n$ is even) or $\H^{n+1,n-1}$ (if $n$ is odd) associated to cyclic $\SO_0(n,n+1)$-Higg bundles. By establishing the infinitesimal rigidity of these surfaces, we get a new proof, for $\SO_0(n,n+1)$, of Labourie's theorem that the holonomy map restricts to an immersion on the cyclic locus of Hitchin base, and extend it to Collier's components. This implies Labourie's former conjecture in the case of the exceptional group $G_2'$, for which we also show that these minimal surfaces are $\ac{J}$-holomorphic curves of a particular type in the almost complex $\H^{4,2}$.
%\end{poliabstract}
%
%\selectlanguage{french}
%\begin{poliabstract}{R\'esum\'e} 
%Nous introduisons un genre de surface minimale dans l'espace pseudo-hyperbolique $\H^{n,n}$ (si $n$ est paire) ou $\H^{n+1,n-1}$ (si $n$ est impaire) associ\'e aux $\SO_0(n,n+1)$-fibr\'es de Higgs. En \'etablissant la rigidit\'e infinit\'esimale de ces surfaces, nous obtenons une nouvelle d\'emonstration, pour $\SO_0(n,n+1)$, du th\'eor\`eme de Labourie affirmant que l'application d'holonomie se restreint \`a une immersion sur la partie cyclique de la base de Hitchin, et nous l'\'etendons aux composantes de Collier. Ceci entra\^ine l'ancienne conjecture de Labourie dans le cas du groupe exceptionel $G_2'$, pour lequel nous montrons aussi que ces surfaces minimales sont des courbes $\ac{J}$-holomorphe d'un type particulier dans le $\H^{4,2}$ presque complexe.
%\end{poliabstract}

\begin{quote}
	\footnotesize
	\textsc{Abstract.}
	We introduce a type of minimal surface in the pseudo-hyperbolic space $\mathbb{H}^{n,n}$ (with $n$ even) or $\mathbb{H}^{n+1,n-1}$ (with $n$ odd) associated to cyclic $\mathrm{SO}_0(n,n+1)$-Higg bundles. By establishing the infinitesimal rigidity of these surfaces, we get a new proof, for $\mathrm{SO}_0(n,n+1)$, of Labourie's theorem that the holonomy map restricts to an immersion on the cyclic locus of Hitchin base, and extend it to Collier's components. This implies Labourie's former conjecture in the case of the exceptional group $G_2'$, for which we also show that these minimal surfaces are $\boldsymbol{J}$-holomorphic curves of a particular type in the almost complex $\mathbb{H}^{4,2}$.
\end{quote}

%\vspace{0.3cm}
%
%\begin{quote}
%	\footnotesize
%	\textsc{R\'esum\'e.} Nous introduisons un genre de surface minimale dans l'espace pseudo-hyperbolique $\H^{n,n}$ (avec $n$ paire) ou $\H^{n+1,n-1}$ (avec $n$ impaire) associ\'e aux $\SO_0(n,n+1)$-fibr\'es de Higgs cyclique. En \'etablissant la rigidit\'e infinit\'esimale de ces surfaces, nous obtenons une nouvelle d\'emonstration, pour $\SO_0(n,n+1)$, du th\'eor\`eme de Labourie affirmant que l'application d'holonomie se restreint \`a une immersion sur la partie cyclique de la base de Hitchin, et nous l'\'etendons aux composantes de Collier. Ceci entra\^ine l'ancienne conjecture de Labourie dans le cas du groupe exceptionel $G_2'$, pour lequel nous montrons aussi que ces surfaces minimales sont des courbes $\ac{J}$-holomorphe d'un type particulier dans le $\H^{4,2}$ presque complexe.
%\end{quote}

%\setcounter{tocdepth}{1}
\tableofcontents

\section{Introduction}
%\subsection{Minimal surfaces in Higher Teichm\"uller Theory}
Let $S_g$ be the orientable closed surface of genus $g\geq2$ and $G$ be a noncompact real semisimple Lie group of rank $\geq2$.
Higher Teichm\"uller Theory (see \cite{wienhard_invitation} for a survey) studies fundamental group representations $\rho:\pi_1(S_g)\to G$, especially the $\rho$'s in the \emph{Hitchin components} \cite{hitchin_lie}, the \emph{maximal components} \cite{burger-iozzi-wienhard} and more recently the \emph{$\Theta$-positive components} \cite{guichard-labourie-wienhard,guichard-wienhard_positivity}.  
An important object in the theory are $\rho$-equivariant minimal mappings $f:\wt{S}_g\to \Sy_G$, where $\wt{S}_g$ is the universal cover of $S_g$
%, $K$ is the maximal compact subgroup of $G$, 
and $\Sy_G$ is the Riemannian symmetric space associated to $G$. Such an $f$ is known to exist as long as $\rho$ is \emph{Anosov} (see \cite{labourie_cross}), a property shared by all those components.

The uniqueness of $f$ is a difficult issue and undergoes extensive studies. Labourie \cite{labourie_cross} conjectured that \emph{$f$ is unique when $\rho$ is Hitchin} and proved it when $G$ has rank $2$ in \cite{labourie_cyclic}. Markovic's recent result \cite{markovic} (see also \cite{markovic-sagman-smillie}) implies that the conjecture fails for hermitian $G$ of rank $\geq3$. The rank $2$ case has also been studied by Labourie \cite{labourie_cubic}, Loftin \cite{loftin_amer}, Collier \cite{collier_dedicata}, Alessandrini-Collier \cite{alessandrini-collier} and Collier-Toulisse-Tholozan \cite{collier-tholozan-toulisse}. Most of the existing works are based on investigating certain ``incarnation'' of $f$ as a special surface in other spaces, namely
\begin{itemize}
	\item (\cite{labourie_cubic,loftin_amer}) \emph{hyperbolic affine sphere} in $\R^3$ when $\rho$ is an $\SL(3,\R)$-Hitchin representation;
	\item (\cite{collier-tholozan-toulisse}) \emph{maximal surface} in  the pseudo-hyperbolic space $\H^{2,q}$ when $\rho$ is a maximal $\SO_0(2,q+1)$-representation (see also \cite{bonsante-schlenker_max,bonsante-seppi_survey} for the $q=1$ case);
	\item (\cite{labourie_cyclic}) \emph{cyclic surface} in a homogeneous space of the complexified group $G^\C$ when $\rho$ is in the cyclic locus of the $G$-Hitchin component (for any real split $G$; see also \cite{alessandrini-collier, collier_dedicata} for $\Sp(4,\R)$).
\end{itemize}

In this paper, we define and study yet another incarnation when $G$ is $\SO_0(n,n+1)$  or the exceptional simple Lie group $G_2'\subset\SO_0(3,4)$: a special type of spacelike (i.e.\ Riemannian) minimal surfaces in $\H^{n,n}$ (if $n$ is even) or $\H^{n+1,n-1}$ (if $n$ is odd) with a suitably defined Gauss map that recovers $f$. This generalizes the $n=2$ case contained in \cite{collier-tholozan-toulisse}, although when $n\geq3$, like Labourie's cyclic surfaces,  our construction does not apply to all $\rho$ in a component of $\SO_0(n,n+1)$-representations, but only to those in the cyclic locus. The definition of these minimal surfaces, which we proceed to explain, is related to Chern's Frenet frame \cite{chern_on} for minimal spheres and Bryant's superminimal surfaces \cite{bryant_conformal}.

% those $\rho$ coming from \emph{cyclic $\SO(n,n+1)$-Higgs bundles}, by studying a special type of minimal immersions $\hf$ of $\wt{S}_g$ into the pseudo-hyperbolic space $\H^{n,n}$ (if $n$ is even) or $\H^{n+1,n-1}$ (if $n$ is odd) such that $\hf$ recovers $f:\wt{S}_g\to \Sy_{\SO_0(n,n+1)}$ via a suitably defined Gauss map. The definition of such minimal immersions, which we proceed to explain, is related to Chern's Frenet frame \cite{chern_on} for minimal spheres and Bryant's  \emph{superminimal} surfaces \cite{bryant_conformal}.

\subsection{A-surfaces}\label{sec_intro1}
For a smooth curve $t\mapsto \gamma(t)$ in the Euclidean space $\mathbb{E}^n$, a classical fact due to Frenet, Serret and Jordan is that if the derivatives $\gamma',\gamma'',\cdots,\gamma^{(n-1)}$ are linearly independent for all $t$, then there is a unique orthonormal moving frame $t\mapsto (e_1(t),\cdots,e_n(t))$ along $\gamma$ such that $e_1=\gamma'/|\gamma'|$ is the unit tangent vector and the derivative of the frame takes the form
\begin{equation}\label{eqn_frenetcurve}
(e_1',\cdots,e_n')=(e_1,\cdots,e_n)\,
\scalebox{0.9}
{$
\mat{
0&-\kappa_2&&&\\[0.1cm]
\kappa_2&0&-\kappa_3&&\\[-0.1cm]
&\kappa_3&0&\ddots&\\
&&\ddots&\ddots&-\kappa_n\\[0.1cm]
&&&\kappa_n&0
}
$}
\end{equation}
for nowhere vanishing functions $\kappa_2,\cdots,\kappa_n$, which capture the extrinsic geometry of $\gamma$. 

%This theory has a well known analogue for complex curves in $\mathbb{CP}^n$ (see \cite[\S 2.4]{griffiths-harris}). 
Chern \cite{chern_on} gave an analogue of this theory for minimal surfaces and essentially showed that if $\Sigma$ is a minimal sphere in a Riemannian manifold $M$ of constant curvature and is not contained in a totally geodesic hypersurface (this forces $M$ to be even dimensional), then there is an orthogonal splitting
\begin{equation}\label{eqn_introdecomp}
\T M|_\Sigma=\TT_1\oplus \cdots \oplus\TT_n
\end{equation}
with the following properties:
\begin{itemize}
\item
each $\TT_i$ is a rank $2$ vector bundle on $\Sigma$ and $\TT_1$ is the tangent bundle $\T \Sigma\subset \T M|_\Sigma$;
\item
the decomposition of the Levi-Civita connection $\nabla$ on $\T M|_\Sigma$ under \eqref{eqn_introdecomp} has a similar form as the matrix in \eqref{eqn_frenetcurve}, only with the $i$th diagonal zero replaced by a connection on $\TT_i$ and the function $\kappa_j$ replaced by a $1$-form
$\ve\alpha_j\in\Omega^1(\Sigma,\Hom(\TT_{j-1},\TT_j))$;
\item every $\ve\alpha_j(X):\TT_{j-1}\to\TT_j$ ($j=2,\cdots,n$, $X\in C^\infty(\Sigma,\T \Sigma)$) is \emph{conformal} in the sense that it maps any circle in a fiber of $\TT_{j-1}$ to a circle in $\TT_j$ (possibly reduced to a point).
\end{itemize}

The proof of Chern makes use of the fact that some naturally defined holomorphic $k$-differentials on $\Sigma$ vanish because $\Sigma$ is a sphere (a famous idea of Hopf). For $\Sigma$ of general topological type, which we consider from now on, the existence of a splitting \eqref{eqn_introdecomp} with these properties defines a highly special class of minimal surfaces, namely the \emph{superminimal} ones (c.f.\ Remark \ref{rk_historysuperminimal}).

We may generalize this definition of superminimality to the case where $M$ is pseudo-Riemannian by further requiring every $\TT_i$ to be (positive or negative) definite. Relaxing the definition, we may call $\Sigma$ \emph{quasi-superminimal} if the condition in the last bullet point is only imposed for $j\leq n-1$ and not for $j=n$ (see Definition \ref{def_quasisuper}). In this case, if furthermore $\TT_i$ is positive and negative definite for all odd and even $i$ respectively, then we call $\Sigma$ an \emph{A-surface} (``A'' for the Alternating space-/time-likeness of the $\TT_i$'s).  
Note that this forces the signature $(p,q)$ of $M$ to be
\begin{equation}\label{eqn_pqintro}
(p,q)=
\Bigg\{\hspace{-0.1cm}
\begin{array}{ll}
	(n,n)&\text{if $n$ is even},\\
	(n+1,n-1)&\text{if $n$ is odd}.\\
\end{array}
\end{equation}
In this paper, we always assume \eqref{eqn_pqintro} whenever an A-surface is being considered.  We first show:
\begin{thmx}\label{thm_intro0}
Suppose $M$ is orientable and is locally modeled on the pseudo-hyperbolic space $\H^{p,q}$. Then any oriented A-surface $\Sigma\subset M$ satisfies (only rough statements are given here):
\begin{enumerate}[label=(\arabic*)]
	\item\label{item_intro1} 
	(Theorem \ref{thm_holo}; see also Remark \ref{rk_local}) Each $\TT_i$ in \eqref{eqn_introdecomp} can be interpreted as a hermitian holomorphic line bundle on $\Sigma$, such that every $\ve\alpha_j$ with $2\leq j\leq n-1$ is an $\Hom(\TT_{j-1},\TT_j)$-valued holomorphic $1$-form, while
		$\ve\alpha_n$ is the sum of holomorphic $1$-forms $\ve\alpha_n^+$ and $\ve\alpha_n^-$ with values in $\Hom(\TT_{n-1},\TT_n)$ and $\Hom(\TT_{n-1},\cj{\TT}_n)\cong\Hom(\TT_{n-1},\TT_n^{-1})$, respectively.
	\item\label{item_intro2}
		(Theorem \ref{thm_affinetoda}) The Gauss-Codazzi equations of $\Sigma$ boil down to the following affine Toda system involving the above $\ve\alpha_2,\cdots,\ve\alpha_{n-1},\ve\alpha_n^\pm$ and the hermitian metric $\ve{h}_i$ on $\TT_i$:
	$$
	\begin{cases}
		\pazbz\log h_1=\tfrac{1}{2}h_1-\tfrac{h_2}{h_1}|\alpha_2|^2,\\[0.2cm]
		\pazbz\log h_k=\tfrac{h_k}{h_{k-1}}|\alpha_k|^2-\tfrac{h_{k+1}}{h_k}|\alpha_{k+1}|^2\quad (2\leq k\leq n-2),\\[0.2cm]
		\pazbz\log h_{n-1}=\tfrac{h_{n-1}}{h_{n-2}}|\alpha_{n-1}|^2-\tfrac{h_{n}}{h_{n-1}}|\alpha_n^+|^2-\tfrac{|\alpha_n^-|^2}{h_{n-1}h_n}~,\\[0.2cm]
		\pazbz\log h_n=\tfrac{h_{n}}{h_{n-1}}|\alpha_n^+|^2-\tfrac{1}{h_{n-1}h_n}|\alpha_n^-|^2.
	\end{cases}
	$$
	\item\label{item_intro3} (Theorem \ref{thm_gaussmap}) The Gauss map from $\Sigma$ to the symmetric space
	$$
	\Sy_{\SO_0(p,q+1)}=\big\{\text{timelike totally geodesic $q$-spheres in $\H^{p,q}$}\big\},
	$$
	defined by assigning to every $z\in \Sigma$ the $q$-sphere passing through $z$ and tangent to the timelike part $\ca{N}^-:=\TT_2\oplus\TT_4\oplus \cdots$ of the normal bundle, is a conformal minimal immersion.
\end{enumerate}
\end{thmx}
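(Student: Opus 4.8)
The plan is to read off all three statements from the structure equations of the orthonormal moving frame adapted to the splitting \eqref{eqn_introdecomp}, using throughout that, since $\H^{p,q}$ has constant curvature, the ambient curvature term $R(X,Y)$ with $X,Y$ tangent to $\Sigma$ preserves $\T\Sigma$ and annihilates the normal bundle; hence it enters only the $\TT_1$-block of the structure equations. For Part~\ref{item_intro1}: as $\TT_1=\T\Sigma$ is positive definite, the induced metric makes $\Sigma$ a Riemann surface, and we fix a local conformal coordinate $z$. Each $\TT_i$ is a definite rank-$2$ bundle over the oriented surface $\Sigma$, so rotation by $\pi/2$ (using the metric and the orientation induced from those of $M$ and $\Sigma$) turns $\TT_i$ into a complex line bundle carrying the hermitian metric $\ve h_i$; one checks that the $i$-th diagonal block $D^{(i)}$ of $\nabla$ preserves the metric and commutes with this complex structure, so it is unitary, its $(0,1)$-part defines a holomorphic structure on $\TT_i$ (no integrability obstruction on a curve), and $D^{(i)}$ is the corresponding Chern connection. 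The defining conformality of $\ve\alpha_j$ makes $\ve\alpha_j(X)\colon\TT_{j-1}\to\TT_j$ complex linear for every real $X$; together with the way the flag \eqref{eqn_introdecomp} is built (raising occurs along $(1,0)$ directions), this presents $\ve\alpha_j$, for $2\le j\le n-1$, as a $\Hom(\TT_{j-1},\TT_j)$-valued $(1,0)$-form. At $j=n$ the conformality is relaxed, so $\ve\alpha_n(X)$ splits into a complex linear part $\ve\alpha_n^+(X)$ valued in $\TT_n$ and a complex antilinear part $\ve\alpha_n^-(X)$, regarded as valued in $\cj{\TT}_n\cong\TT_n^{-1}$ --- the asserted decomposition. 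Holomorphicity of $\ve\alpha_2,\dots,\ve\alpha_{n-1}$ and of $\ve\alpha_n^\pm$ then follows from the off-diagonal Gauss--Codazzi equations, which by the remark above carry no ambient term and thus read $\bpa\ve\alpha_j=0$.

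Part~\ref{item_intro2} is then the diagonal part of the same structure equations, expanded through the metrics $\ve h_i$. The Gauss equation for $\Sigma\subset M$ writes the Chern curvature of $\ve h_1$ as the ambient constant-curvature term --- a multiple of the area form of $\Sigma$, producing (after normalizing the metric of $\H^{p,q}$) the term $\tfrac{1}{2} h_1$ --- plus the term quadratic in the second fundamental form, here carried by $\ve\alpha_2$ and giving $-\tfrac{h_2}{h_1}|\alpha_2|^2$; comparing with the expression of that Chern curvature through $\pazbz\log h_1$ yields the first line of the Toda system. For $i\ge2$ the ambient term is absent and the Chern curvature of $\ve h_i$ equals the difference of the contributions of the two neighbouring maps $\ve\alpha_i$ and $\ve\alpha_{i+1}$, with signs dictated by the alternating definiteness of the adjacent bundles; rewriting these with $\ve h_{i-1},\ve h_i,\ve h_{i+1}$ gives the middle lines. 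At the end of the chain the unique neighbour $\TT_{n-1}$ is linked to $\TT_n$ through both $\ve\alpha_n^+$, with weight $\tfrac{h_n}{h_{n-1}}$, and $\ve\alpha_n^-$, with weight $\tfrac{1}{h_{n-1}h_n}$ because $\ve\alpha_n^-$ is valued in $\TT_n^{-1}$; this produces the last two lines.

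For Part~\ref{item_intro3}, realize $\H^{p,q}$ as the set of vectors of norm $-1$ in $\R^{p,q+1}$, so that $\Sy_{\SO_0(p,q+1)}$ is the Grassmannian of negative-definite $(q+1)$-planes, and set $G(z):=\R z\oplus\ca{N}^-|_z$ with $\ca{N}^-=\TT_2\oplus\TT_4\oplus\cdots$. By \eqref{eqn_pqintro} this is a negative-definite $(q+1)$-plane, so $G$ maps $\Sigma$ into $\Sy_{\SO_0(p,q+1)}$, with $G(z)^\perp=\TT_1\oplus\TT_3\oplus\cdots$. To compute $dG_z$ in a tangent direction $X$, differentiate sections of $G$ and project onto $G(z)^\perp$: the position-vector section $z\mapsto z$ yields $X\in\TT_1$, while a section $s$ of $\TT_{2k}$ yields, by the frame equations, a combination of $\ve\alpha_{2k}(X)^*s\in\TT_{2k-1}$ and $\ve\alpha_{2k+1}(X)s\in\TT_{2k+1}$. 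A nonzero $X$ always contributes the nonzero vector $X\in\TT_1$, so $dG_z$ is injective and $G$ is an immersion. Moreover the two summands just named are orthogonal and, by the conformality of $\ve\alpha_{2k}$ and $\ve\alpha_{2k+1}$ from Part~\ref{item_intro1}, each has squared norm a fixed multiple of $\|s\|^2\,\|X\|^2$ independent of the direction of $X$; summing over an orthonormal basis of $G(z)$ gives $\|dG_z(X)\|^2=c(z)\,\|X\|^2$ with $c>0$, so $G$ is conformal. Finally, a conformal immersion of a Riemann surface is minimal iff it is harmonic, and $\tau(G)=0$ follows by computing the tension field from the same structure equations and invoking the holomorphicity of the $\ve\alpha_j$'s together with the Toda equations of Part~\ref{item_intro2}; equivalently, the flat $\SO_0(p,q+1)$-connection built from the moving frame is of cyclic, affine-Toda type, whose associated equivariant map into the symmetric space is automatically harmonic.

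The delicate part is the bookkeeping in Parts~\ref{item_intro1} and~\ref{item_intro2}: tracking bidegrees and complex (anti)linearity carefully enough to see precisely why conformality must fail at the last step, to obtain the asymmetric pair $\ve\alpha_n^\pm$ with $\cj{\TT}_n\cong\TT_n^{-1}$, and then to pin down the exact coefficients --- the $\tfrac{1}{2}$, and the two distinct weights of $\ve\alpha_n^\pm$ --- of the Toda system. Once these are in place, the immersion and conformality of the Gauss map are formal consequences of the A-surface hypothesis, and only the harmonicity requires the further (routine, given the Toda structure) tension-field computation.
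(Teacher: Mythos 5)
Your overall route---an orthonormal moving frame adapted to the Frenet splitting, the structure equations of the flat $\R^{p,q+1}$-frame, and then reading off the holomorphic data, the Toda system and the Gauss map---is the same as the paper's, but Part \ref{item_intro1} of your sketch has two genuine gaps. First, you endow each $\TT_i$ with a complex structure by ``rotation by $\pi/2$ using the orientation induced from those of $M$ and $\Sigma$''. For $2\leq i\leq n-1$ no such induced orientation exists: the orientations of $M$ and $\Sigma$ only orient the total normal bundle, not the individual summands, and a definite rank-$2$ subbundle over a surface need not be orientable. The existence of the correct orientation is one of the nontrivial claims (Remark \ref{rk_orientation}); in the paper it is produced by Lemma \ref{lemma_orientation} from the fact that $\ve\alpha_i$ is a not-identically-zero holomorphic $(1,0)$-form with conformal values, and the nonvanishing of $\ve\alpha_2,\cdots,\ve\alpha_{n-1}$ is exactly where the hypothesis that $\Sigma$ is not contained in a codimension-$4$ pseudo-hyperbolic subspace enters---a hypothesis your proposal never invokes (only $\TT_n$ gets its orientation from the ambient one, via Stiefel--Whitney additivity). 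Second, the $(1,0)$-type of $\ve\alpha_j$ for $j\geq3$ is not a consequence of conformality, and your phrase ``raising occurs along $(1,0)$ directions'' assumes precisely what has to be proved; moreover the Codazzi equation $\dif^{\nabla}\ve\alpha_j=0$ yields holomorphicity only once the $(1,0)$-type is known. The paper's argument is an induction: after $\ve\alpha_{j-1}$ is shown to be holomorphic, of type $(1,0)$ and $\not\equiv0$, the constant-curvature relation $\ve\alpha_j\wedge\ve\alpha_{j-1}=0$ from \eqref{eqn_sss3} kills the $(0,1)$-part of $\ve\alpha_j$ on the dense zero-free set of $\ve\alpha_{j-1}$, hence everywhere. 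This propagation step is absent from your plan, and without it neither the holomorphicity nor the orientation argument can get started beyond $j=2$.

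A smaller point concerns Part \ref{item_intro3}: you argue conformality of the Gauss map by claiming each contribution $|\ve\alpha_j(X)s|^2$ is independent of the direction of $X$ ``by conformality of the $\ve\alpha_j$''. For $j=n$ this is false---$\ve\alpha_n$ is exactly the map whose conformality is dropped in the definition of an A-surface. The pullback metric is nevertheless conformal, but only because the cross terms between $\ve\alpha_n^+$ and $\ve\alpha_n^-$ cancel after summing over an orthonormal basis of the fiber (the rotation-type and reflection-type $2\times2$ blocks are orthogonal), which is how the term $2(\|\ve\alpha_n^+\|^2+\|\ve\alpha_n^-\|^2)\ve{g}$ arises in the proof of Theorem \ref{thm_gaussmap}. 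Your Part \ref{item_intro2} and the remaining steps of Part \ref{item_intro3} (immersion from the $\transp\theta$-block; minimality via a parallel orthogonal complex structure making $\dif f$ a $(1,0)$-form, as in Lemma \ref{lemma_conformalminimal}) do match the paper once these points are repaired.
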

\begin{remark}\label{rk_intro1}
We need $M$ to be oriented in order to endow $\TT_n$ with an orientation and hence view it as a holomorphic line bundle (but not for $\TT_1,\cdots,\TT_{n-1}$). Therefore, we work with the hyperboloid model of $\H^{p,q}$ in this paper rather than the projective model, as the latter is not orientable in even dimensions. If the orientation of $M$ is inverted, so is the one on $\TT_n$. This interchanges $\TT_n$ with  $\cj{\TT}_n\cong\TT_n^{-1}$ and interchanges $\ve\alpha_n^+$ with $\ve\alpha_n^-$.
\end{remark}

The data $\TT_i,\ve\alpha_j$ in \ref{item_intro1} and the equations in \ref{item_intro2} resemble those appearing in cyclic Higgs bundles (see \cite{baraglia_thesis,baraglia_dedicata, collier-li,dai-li_minimal,dai-li_on}). We establish next a link between them.

\subsection{Cyclic $\SO_0(n,n+1)$-Higgs bundles}
On a closed Riemann surface $\Sigma$, a \emph{Higgs bundle} is a pair $(\ca{E},\Phi)$, where $\ca{E}$ is a holomorphic vector bundle and $\Phi$ an $\End(\ca{E})$-valued holomorphic $1$-form. 
%Under a certain stability condition, one may consider the unique solution to the \emph{Hitchin equation} on $(\ca{E},\Phi)$ and the associated flat \emph{Hitchin connection}. 
Given a real reductive group $G$, one may define a special class of $(\ca{E},\Phi)$'s called \emph{$G$-Higgs bundles}, whose Hitchin connections yield representations of $\pi_1(\Sigma)$ in $G$, and the \emph{non-abelian Hodge correspondence} claims that this roughly gives a bijection between the moduli spaces of such Higgs bundles and representations (see \cite{hitchin_selfduality,hitchin_lie,gothen,2009arXiv0909.4487G}). In this paper, we consider $(\ca{E},\Phi)$'s of the form
\begin{align*}
&\ca{E}=\TT_n^{-1}\oplus\TT_{n-1}^{-1}\oplus\cdots \oplus \TT_1^{-1}\oplus \ca{O}\oplus \TT_1\oplus  \cdots \oplus\TT_{n-1}\oplus \TT_n,\\
&\Phi=
\scalebox{0.85}{
$\left(
\begin{array}{ccccc|c|ccccc}
	0&&&&&&&&&\ve\alpha_n^-&\\
	\ve\alpha_n^+&0&&&&&&&&&\ve\alpha_n^-\\
	&\hspace{-0.1cm}\ve\alpha_{n-1}&\ddots&&&&&&&&\\
	&&\hspace{-0.2cm}\ddots&\ddots&&&&&&&\\
	&&&\hspace{-0.1cm}\ve\alpha_2&0&&&&&&\\
	\midrule
	&&&&\tfrac{1}{\sqrt{2}}&0&&&&&\\
	\midrule
	&&&&&\tfrac{1}{\sqrt{2}}&0&&&&\\
	&&&&&&\ve\alpha_2&\ddots&&&\\
	&&&&&&&\hspace{-0.2cm}\ddots&\ddots&&\\
	&&&&&&&&\hspace{-0.1cm}\ve\alpha_{n-1}&0&\\
	&&&&&&&&&\ve\alpha_n^+&0
\end{array}
\right)$
},
\end{align*}
where $\TT_1,\cdots, \TT_n$ are holomorphic line bundles with $\TT_1=\ca{K}^{-1}$.
The $\frac{1}{\sqrt{2}}$ can be brought to $1$ by a gauge transformation, but this exact constant is convenient for our purpose (see Remark \ref{rk_intro2}). We further assume that none of $\ve\alpha_2,\cdots,\ve\alpha_{n-1},\ve\alpha_n^+$ is identically zero. In this case, $(\ca{E},\Phi)$ is stable if and only if either $\ve\alpha_n^-\not\equiv0$ or $\deg(\TT_n)<0$ (see Prop./Def. \ref{propdef_harmonic}).

Such an $(\ca{E},\Phi)$ is a \emph{cyclic} $\SO_0(n,n+1)$-Higgs bundle, so its harmonic metric splits into hermitian metrics on the $\TT_i$'s (see \cite[Cor.\ 2.11]{collier-li}). Much of our work also relies on the following extra technical assumption on the divisors of the holomorphic forms $\ve\alpha_2,\cdots,\ve\alpha_{n-1},\ve\alpha_n^\pm$:
\begin{equation}\label{eqn_introprec}
	(\ve\alpha_2)\prec\cdots\prec(\ve\alpha_{n-1})\prec\min\big\{(\ve\alpha_n^+),(\ve\alpha_n^-)\big\},
\end{equation}
where we write ``$D_1\prec D_2$'' if the value of the divisor $D_1$ is strictly less than that of $D_2$ at every point where the former is nonzero (which always holds when $D_1=\emptyset$). In fact, by a generalization of Dai and Li's work \cite{dai-li_minimal}, the harmonic metrics on $\TT_1,\cdots,\TT_{n-1}$ have semi-negative Chern forms under this assumption (see \S \ref{subsec_hypo}), and some of our results can be viewed as new geometric applications of this property in addition to those in \cite{dai-li_minimal}.

We show that such  $(\ca{E},\Phi)$'s yield A-surfaces, where \eqref{eqn_introprec}
is needed in our proof of uniqueness:
\begin{thmx}[Theorem \ref{thm_higgsA}]\label{thm_intro1}
Given a stable $(\ca{E},\Phi)$ as above, with the $\ve\alpha_j$'s satisfying \eqref{eqn_introprec}, let $f:\wt{\Sigma}\to \Sy_{\SO_0(n,n+1)}$ be an equivariant conformal minimal mapping given by the harmonic metric. Then there is a unique antipodal pair $\pm F$ of A-surface immersions of $\wt{\Sigma}$ into $\H^{p,q}$ such that $f$ is given by post-composing $\pm F$ with their Gauss map. 
\end{thmx}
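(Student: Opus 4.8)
\emph{Constructing $F$.} The plan is to realise $F$ inside the flat bundle furnished by the non-abelian Hodge correspondence and read its extrinsic geometry off the block form of $\Phi$. By Prop./Def.~\ref{propdef_harmonic} the stable pair $(\ca{E},\Phi)$ carries a harmonic metric $H$, which by cyclicity (\cite[Cor.\ 2.11]{collier-li}) is diagonal: a hermitian metric $h_i$ on each $\TT_i$ and the constant metric on $\ca{O}$. I would form the flat connection $D=\nabla_H+\Phi+\Phi^{*_H}$. The orthogonal structure on $\ca{E}$ (pairing $\TT_i^{-1}$ with $\TT_i$, plus a quadratic form on $\ca{O}$) together with the compatible real structure makes $D$ an $\SO_0(n,n+1)$-connection, so over $\wt{\Sigma}$ the flat bundle trivialises as $\wt{\Sigma}\times\R^{p,q+1}$, equivariantly for the holonomy $\rho$, and the $H$-orthogonal splitting $\ca{E}=\underline{V}\oplus\underline{W}$ into the positive part $\underline{V}=(\TT_1\oplus\TT_3\oplus\cdots)^{\mathrm{re}}$ and the negative part $\underline{W}=(\ca{O}\oplus\TT_2\oplus\TT_4\oplus\cdots)^{\mathrm{re}}$ gives exactly the map $f$ (in the model of the symmetric space by negative definite $(q+1)$-planes). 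Since $\ca{O}$ is a negative definite line in $\R^{p,q+1}$, a real constant section $\sigma$ of $\ca{O}$ with $\langle\sigma,\sigma\rangle=-1$ (here the constant $\tfrac{1}{\sqrt2}$ in $\Phi$ is what makes the bookkeeping come out right, c.f.\ Remark~\ref{rk_intro2}) expresses, in a flat frame, as a $\rho$-equivariant map $F\colon\wt{\Sigma}\to\H^{p,q}$; the section $-\sigma$ produces the antipodal partner $-F$.

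\emph{$F$ is an A-surface with Gauss map $f$.} Since $\nabla_H$ preserves the $H$-orthogonal holomorphic splitting of $\ca{E}$, one has $\nabla_H\sigma=0$, hence $\partial F=\Phi\sigma$; the only entry of $\Phi$ leaving $\ca{O}$ is the nowhere-vanishing $\tfrac{1}{\sqrt2}\colon\ca{O}\to\TT_1$, and $\TT_1=\ca{K}^{-1}$ is isotropic for the orthogonal form (paired only with $\TT_1^{-1}$), so $F$ is an immersion with $(\partial F,\partial F)=0$, i.e.\ a spacelike conformal immersion with $F_*\T\wt{\Sigma}=\TT_1$ and induced metric proportional to $h_1^{-1}$. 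Iterating $D^{1,0}$ and using the strictly graded shape of $\Phi$ along the chain $\cdots\to\TT_1^{-1}\to\ca{O}\to\TT_1\to\TT_2\to\cdots\to\TT_n\to\TT_n^{-1}$ (the last arrow being $\ve\alpha_n^-$), the successive osculating/normal line bundles of $F$ are $\TT_2,\dots,\TT_n$; the off-diagonal components $\ve\alpha_2,\dots,\ve\alpha_{n-1}$ and $\ve\alpha_n^\pm$ realise the $1$-forms of Theorem~\ref{thm_holo} (holomorphic and $\C$-linear, hence conformal, for $2\le j\le n-1$; $\ve\alpha_n=\ve\alpha_n^++\ve\alpha_n^-$ with $\ve\alpha_n^-$ conjugate-linear, whence quasi-superminimality but not superminimality), and the sign convention of the real structure makes $\TT_i$ positive definite for odd $i$ and negative definite for even $i$. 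Thus $F$ is an A-surface, the diagonal part of the Hitchin equation for $(\ca{E},\Phi,H)$ being precisely the affine Toda system of Theorem~\ref{thm_affinetoda}; and by construction its Gauss map sends $z$ to $\R F(z)\oplus\ca{N}^-_z=\underline{W}_z=f(z)$, in accordance with Theorem~\ref{thm_gaussmap}.

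\emph{Uniqueness, and the main obstacle.} Let $F'\colon\wt{\Sigma}\to\H^{p,q}$ be any $\rho$-equivariant A-surface immersion with Gauss map $f$. By Theorems~\ref{thm_holo}--\ref{thm_gaussmap} its data $(\TT_i',\ve\alpha_j',h_i')$ assemble into a cyclic $\SO_0(n,n+1)$-Higgs bundle $(\ca{E}',\Phi')$ of the same shape, with diagonal harmonic metric $H'$, whose associated equivariant minimal map is again the Gauss map of $F'$, namely $f$; hence $(\ca{E}',\Phi')$ and $(\ca{E},\Phi)$ are stable $\SO_0(n,n+1)$-Higgs bundles with the same image under non-abelian Hodge, so they are isomorphic. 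One then argues that the cyclic graded presentation of such a Higgs bundle is rigid --- equivalently, that the grading is recovered from the osculating flag of $f$, which under \eqref{eqn_introprec} is everywhere non-degenerate of the expected type --- so the isomorphism matches $\TT_i'\cong\TT_i$, $\ve\alpha_j'=\ve\alpha_j$, and, by uniqueness of the harmonic metric, $h_i'=h_i$. Then $F$ and $F'$ are A-surface immersions of the simply connected $\wt{\Sigma}$ with identical first, second and higher fundamental forms, so the fundamental theorem of submanifold theory in the space form $\H^{p,q}$ gives $g\in\O(p,q+1)$ with $F'=g\circ F$; comparing the $\rho$-equivariance of $F$ and $F'$ shows $g$ centralises $\rho(\pi_1(\Sigma))$, which is $\{\pm\mathrm{Id}\}$ since the standard representation of $\rho$ on $\R^{p,q+1}$ is irreducible (Zariski-dense image). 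Hence $F'\in\{F,-F\}$. I expect the hard part to be precisely the rigidity step: showing that an arbitrary A-surface with Gauss map $f$ cannot have its osculating line bundles twisted away from the prescribed $\TT_i$'s. This is where \eqref{eqn_introprec} is genuinely used --- through the generalisation of Dai--Li \cite{dai-li_minimal} that it renders the Chern forms of $h_1,\dots,h_{n-1}$ semi-negative, the non-degeneracy of the osculating flag being extracted from that pinching --- whereas the construction of $F$ above is a direct, if lengthy, unwinding of the block form of $\Phi$.
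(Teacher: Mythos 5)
Your construction of $F$ is essentially the paper's: one takes the harmonic metric, forms the Hitchin connection $D^H=\nabla^H+\Phi+\Phi^*$ on the real flat bundle $\ca{E}_\R$ with its signature-$(p,q+1)$ form, and defines $F$ (and $-F$) by parallel-transporting the unit-timelike real section $\pm\underline{1}$ of $\ca{O}$; the verification that the resulting immersion is an A-surface whose structural data are $(\TT_i,\ve\alpha_j,\ve{h}_i)$ and whose Gauss map is $f$ is exactly the frame computation you defer (the paper carries it out by conjugating $D^H$ into the moving-frame form \eqref{eqn_proofhiggsA1}, which is where the constant $\tfrac{1}{\sqrt2}$ matters). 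Up to that bookkeeping, this half of your proposal matches the paper.

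The genuine gap is in the uniqueness half. Your route (non-abelian Hodge gives $(\ca{E}',\Phi')\cong(\ca{E},\Phi)$, then ``rigidity of the cyclic graded presentation,'' then the fundamental theorem of submanifolds, then a centralizer argument) leaves its central step unproved: you must show that an isomorphism of stable $\SO_0(n,n+1)$-Higgs bundles between two cyclic presentations carries one grading and its holomorphic data onto the other (up to the flip $\TT_n\leftrightarrow\TT_n^{-1}$, $\ve\alpha_n^+\leftrightarrow\ve\alpha_n^-$, which is precisely the $\pm F$ ambiguity), and the mechanism you point to --- ``non-degeneracy of the osculating flag extracted from the Dai--Li pinching'' --- is not an argument: semi-negativity of the Chern forms of $\ve{h}_1,\dots,\ve{h}_{n-1}$ says nothing about how an abstract Higgs-bundle isomorphism matches the line-bundle summands. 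Two further weaknesses: your argument only reaches competitors assumed $\rho$-equivariant (NAH needs a representation to compare with), whereas the statement concerns any A-surface immersion of $\wt{\Sigma}$ with Gauss map $f$; and the centralizer step cannot be justified by Zariski density (for $n=3$ the holonomy may lie in $G_2'\subsetneq\SO_0(3,4)$), though it can be repaired using irreducibility of the flat bundle coming from stability. The paper bypasses all of this with a direct rigidity statement (Theorem \ref{thm_determination}): if $\iota'$ is any A-surface immersion with the same Gauss map, the two moving-frame lifts of $f$ differ by a gauge $(g_1,g_2)$ valued in $\SO(p)\times\SO(q+1)$, so $\Gamma'=g_2^{-1}\Gamma g_1$; computing $\Tr_{\ve g}\big[\Gamma(\sth)\,\transp{\Gamma}(\sth)\big]$ and invoking the inequalities \eqref{eqn_Avariation3} --- which is exactly where \eqref{eqn_introprec} enters, since it yields \eqref{eqn_hypo} via Theorem \ref{thm_daili} --- one finds that $2$ is a simple largest eigenvalue, forcing $g_2=\diag(\pm1,*)$ and hence $\iota'=\pm\iota$. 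That spectral-gap comparison (or an actual proof of your grading-rigidity lemma) is the missing content; as written, the uniqueness claim is not established.
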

\begin{remark}\label{rk_intro2}
It is also shown that the structural data of $\hf$ in Theorem \ref{thm_intro0} \ref{item_intro1} (after possibly swapping $\pm\hf$) coincides with the data used to construct $(\ca{E},\Phi)$, as the notation already suggests. One may justify this by checking that the equations in Theorem \ref{thm_intro0} \ref{item_intro2} are exactly the Hitchin equation of $(\ca{E},\Phi)$. The constant $\frac{1}{\sqrt{2}}$ in $\Phi$ is required for this to be an exact match: otherwise the holomorphic data behind $F$ and $(\ca{E},\Phi)$ will differ by constants. Meanwhile, since the antipodal map inverts the orientation of $\H^{p,q}$, by Remark \ref{rk_intro1}, the structural data of $-\hf$ is the same except for a flip of $\TT_n^{\pm1}$ and $\ve\alpha_n^\pm$. This is related to a hidden symmetry in the definition of $(\ca{E},\Phi)$: if we interchange $\TT_n$ with $\TT_n^{-1}$ and interchange $\ve\alpha_n^\pm$, the resulting $(\ca{E},\Phi)$ is equivalent to the original one.
\end{remark}

The representation $\rho:\pi_1(\Sigma)\to\SO_0(n,n+1)$ given by the Hitchin connection of $(\ca{E},\Phi)$ belongs to the $\SO_0(n,n+1)$-Hitchin component when $\ve\alpha_2,\cdots,\ve\alpha_{n-1}$ and $\ve\alpha_n^+$ are nowhere zero (we have $\TT_i=\ca{K}^{-i}$ and $\ve\alpha_n^-\in H^0(\Sigma,\ca{K}^{2n})$ in this case). 
For the slightly broader class of $(\ca{E},\Phi)$'s with only $\ve\alpha_2,\cdots,\ve\alpha_{n-1}$ nowhere zero, $\rho$ belongs to the component $\ca{X}_d$ studied by Collier \cite{collier} with $d=|\deg(\TT_n)|\in[0,(2g-2)n]$ (after possibly interchanging $\TT_n^{\pm1}$ as in Remark \ref{rk_intro2} when $\ve\alpha_n^-\not\equiv0$, we may assume $d=\deg(\TT_n^{-1})$).
As the core results of this paper, we show next that the A-surfaces in Theorem \ref{thm_intro1} are infinitesimally rigid as minimal surfaces and draw consequences for $\ca{X}_d$.

\subsection{Infinitesimal rigidity}
%As the main result of this paper, we will show that a class of A-surfaces, including those from $\ca{M}_d$ as explained above, are infinitesimally rigid, and draw consequences for $\ca{M}_d$. 
%
%A simpler instance of such rigidity is the 
It is well known that in a negatively curved pseudo-Riemannian manifold $M$, any spacelike minimal submanifold $\Sigma$ with timelike normal bundle  is locally volume maximizing (see Proposition \ref{prop_maximal}). This fact underlies the works \cite{bonsante-schlenker_max, collier-tholozan-toulisse, labourie-toulisse, labourie-toulisse-wolf} on minimal surfaces in $\mathbb{H}^{2,q}$ and explains the terminology ``maximal surface'' used in this setting.

When $\Sigma$ is an A-surfaces and $M$ is modeled on a pseudo-hyperbolic space of dimension $2n\geq 6$, we generalize this fact by showing that if $\TT_1,\cdots,\TT_{n-1}$ have semi-negative Chern forms, then $\Sigma$ is a saddle type critical point for the area functional: its variations along the spacelike and timelike parts of the normal bundle strictly increase and decrease the area, respectively (Theorem \ref{thm_Avariation}). As a consequence, $\Sigma$ does not admit any nonzero compactly supported Jacobi field (Corollary \ref{coro_Avariation1}). Thus, when $\Sigma$ is closed, we obtain:
\begin{thmx}[Corollary \ref{coro_Avariation2}]\label{thm_intro2}
In the setting of Theorem \ref{thm_intro0}, further assume that $\Sigma$ is closed and the holomorphic forms $\ve\alpha_2,\cdots,\ve\alpha_{n-1},\ve\alpha_n^\pm$ satisfy \eqref{eqn_introprec}. Then $\Sigma$ does not admit any nonzero Jacobi field.
\end{thmx}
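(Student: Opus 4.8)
The statement is immediate from Corollary~\ref{coro_Avariation1} once that corollary's hypothesis is in force: on a closed surface every Jacobi field has compact support, so ``no nonzero compactly supported Jacobi field'' and ``no nonzero Jacobi field'' say the same thing. The plan is therefore to verify the hypothesis of Corollary~\ref{coro_Avariation1} under \eqref{eqn_introprec}, and, since the genuine content lies upstream, to recall how that corollary is proved. (When $n=2$ the normal bundle is timelike throughout and $\Sigma$ is a maximal surface, so the assertion is already covered by the rigidity of closed maximal surfaces; I concentrate on $n\ge 3$.)

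The hypothesis to be checked is that $\TT_1,\dots,\TT_{n-1}$ carry semi-negative Chern forms, equivalently $\pazbz\log h_k\ge 0$ for $1\le k\le n-1$ in the notation of the affine Toda system of Theorem~\ref{thm_intro0} (with the sign convention making this the semi-negativity of $c_1(\TT_k,\ve h_k)$). I would read this off that Toda system by a maximum-principle comparison on the closed surface $\Sigma$, in the manner of Dai-Li~\cite{dai-li_minimal}: the divisor domination \eqref{eqn_introprec} lets one control each ratio $\tfrac{h_{k+1}}{h_k}|\alpha_{k+1}|^2$ by $\tfrac{h_k}{h_{k-1}}|\alpha_k|^2$ and propagate the desired sign inductively down the system. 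The delicate point is the last two equations, where the two \emph{distinct} forms $\ve\alpha_n^\pm$ replace a single one and destroy the cyclic symmetry present in the Hitchin-component case; there the extra term $-|\alpha_n^-|^2/(h_{n-1}h_n)$ has the favourable sign and must be exploited. This is exactly the analysis carried out in \S\ref{subsec_hypo}.

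With semi-negativity in hand, Corollary~\ref{coro_Avariation1} applies, and its proof (through Theorem~\ref{thm_Avariation}) runs as follows. Split the normal bundle as $\ca N=\ca N^+\oplus\ca N^-$ with $\ca N^+=\TT_3\oplus\TT_5\oplus\cdots$ positive definite and $\ca N^-=\TT_2\oplus\TT_4\oplus\cdots$ negative definite, and let $Q$ be the symmetric second-variation form of the area functional of the spacelike surface $\Sigma$, so that a normal field $V$ is a Jacobi field precisely when $Q(V,\cdot)\equiv 0$. The heart of Theorem~\ref{thm_Avariation} is the ``saddle'' dichotomy: $Q$ is \emph{positive definite} on $\Gamma(\ca N^+)$ and \emph{negative definite} on $\Gamma(\ca N^-)$. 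Granting this, a Jacobi field $V=V^++V^-$ satisfies $Q(V,V^+)=0=Q(V,V^-)$; expanding each and using symmetry yields $Q(V^+,V^+)=-Q(V^+,V^-)=Q(V^-,V^-)$, and since the left-hand side is $\ge 0$ and the right-hand side is $\le 0$, both must vanish, forcing $V^+=0$ and $V^-=0$, hence $V=0$.

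To establish the dichotomy I would expand the standard second-variation integrand of a minimal submanifold --- the $|\nabla^\perp V|^2$ term, the ambient-curvature term, and the term quadratic in the second fundamental form --- and rewrite it using the A-surface data: the tridiagonal connection of \eqref{eqn_introdecomp}, the forms $\ve\alpha_j$, and the metrics $\ve h_i$. The obstruction is that $\nabla^\perp$ mixes consecutive $\TT_k$'s, so even on $\ca N^-$ the term $|\nabla^\perp V|^2$ need not have the expected sign; one must combine the second variation with Bochner-type identities for $|V|^2$ in the hermitian holomorphic line bundles $\TT_k$ --- in which the curvature enters through $c_1(\TT_k,\ve h_k)$ --- and use the Toda relations to balance the mixing terms (those built from the $\ve\alpha_j$) against the ambient negative-curvature term, the semi-negativity of $c_1(\TT_1),\dots,c_1(\TT_{n-1})$ being precisely what lets the inequality close. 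I expect this organization --- and in particular getting \emph{strict} inequality, so that equality forces $V^\pm\equiv 0$ rather than merely $V^\pm$ parallel --- to be the main obstacle. A secondary difficulty is handling the ``top'' bundle $\TT_n$ uniformly through both steps: it carries the atypical pair $\ve\alpha_n^\pm$ instead of a single form, and it sits in $\ca N^+$ or in $\ca N^-$ according to the parity of $n$ (the orientation-sensitive phenomenon of Remarks~\ref{rk_intro1}--\ref{rk_intro2}).
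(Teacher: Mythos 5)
Your overall architecture coincides with the paper's: the reduction to Corollary \ref{coro_Avariation1} (on a closed surface every Jacobi field is compactly supported), the passage from the divisor condition \eqref{eqn_introprec} to the semi-negativity hypothesis \eqref{eqn_hypo} by a Dai--Li-type maximum-principle analysis of the Toda system (this is precisely Theorem \ref{thm_daili}, applied with $\ve\alpha_1=\tfrac{1}{\sqrt2}$), and the self-adjointness argument splitting a Jacobi field as $\xi_++\xi_-$ and playing the two signs of the second variation against each other are all exactly what the paper does.

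The genuine gap is that the heart of the matter --- the saddle dichotomy of Theorem \ref{thm_Avariation}, which you state and then ``grant'' --- is not proved, and the mechanism you sketch for it is not the one that works. Two ingredients are missing. First, for $\xi$ a section of $\ca{N}^+$ the shape-operator term in the second-variation integrand vanishes identically, because every osculation vector lies in $\TT_2\subset\ca{N}^-$ (Remark \ref{rk_secondff}); without this observation that term has the unfavourable sign for the $\ca{N}^+$ half of the dichotomy, and your expansion does not address it. Second, the mixing you correctly identify as the obstruction --- the off-diagonal part $\Gamma_0^\pm$ of $\nabla^{\ca{N}}$ --- contributes a \emph{zeroth-order}, purely algebraic term in $\xi$, not a derivative term, so no Bochner identity or integration by parts on $|V|^2$ is needed or helpful. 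What the paper does instead is compute pointwise the eigenvalues of the quadratic form $\xi\mapsto\mp\Tr_{\ve g}\langle\Gamma_0^\pm(\sth)\xi,\Gamma_0^\pm(\sth)\xi\rangle$ (Lemma \ref{lemma_trggamma}); they are sums such as $2\|\ve\alpha_3\|^2+2\|\ve\alpha_4\|^2,\ \dots,\ 2(\|\ve\alpha_n^+\|\pm\|\ve\alpha_n^-\|)^2$, and Hypothesis \eqref{eqn_hypo}, rewritten through the Toda system as the chain of strict inequalities \eqref{eqn_Avariation3} ending in $\|\ve\alpha_2\|^2<\tfrac12$, bounds all of them strictly below $2$ on an open dense set, so the mixing term is dominated by the curvature term $2\langle\xi,\xi\rangle$. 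This pointwise estimate also disposes of your strictness worry: the integrand has the correct sign everywhere and is strict on a dense open subset of $\{\xi\neq0\}$, which already makes the second variation strictly positive (resp.\ negative) whenever $\xi\not\equiv0$, so no ``parallel field'' alternative ever arises.
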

Since the Jacobi fields of a minimal surface are the infinitesimal deformations (see \S \ref{subsec_variation}), this theorem signifies that $\Sigma$ is infinitesimally rigid as a minimal surface. 
Therefore, in the same way as how Labourie \cite[\S 7]{labourie_cyclic} uses the infinitesimal rigidity of cyclic surfaces to show that the holonomy map restricts to an immersion on the cyclic locus of the Hitchin base (a vector bundle over the Teichm\"uller space), we may deduce from Theorem \ref{thm_intro2} a generalization of Labourie's theorem to Collier's component $\ca{X}_d$ for $\SO_0(n,n+1)$:

\begin{thmx}[Theorem \ref{thm_main1}]\label{thm_intro3}
Fix $g\geq2$ and $n\geq3$. Given $1\leq d\leq (2g-2)n$, consider the natural mapping-class-group-equivariant smooth map 
$$
\Psi:\ca H_4\oplus\ca H_6\oplus\cdots\oplus \ca H_{2n-2}\oplus \ca F_{d}\to \ca X_d
$$
assigning to each point of the domain fiber bundle the holonomy of the cyclic $\SO_0(n,n+1)$-Higgs bundle constructed from that point
(see \S \ref{subsec_collier}). Then $\Psi$ restricts to an immersion on the subbundle $\ca F_{d}$.
\end{thmx}
\begin{remark}
For simplicity, we only treat in this paper the case $d>0$, as $\Psi$ is a smooth map between smooth manifolds in this case. When $d=0$, the domain is more complicated and has both orbifold and non-orbifold singularities (see \cite{alessandrini-collier,collier}), but Theorem \ref{thm_intro3} and its proof still hold on the smooth locus.
\end{remark}
The following cases of Theorem \ref{thm_intro3} are already known:
\begin{itemize}
\item 
The $d=(2g-2)n$ case is the original theorem of Labourie \cite{labourie_cyclic}. In this case, $\ca X_d$ is just the Hitchin component $\ca{X}_\Hitchin(\SO_0(n,n+1))$ and $\ca F_d=\ca H_{2n}$ is the cyclic locus of the Hitchin base. 
\item
When $n=2$, Theorem \ref{thm_intro3} still holds by virtue of the maximality (or the global uniqueness result \cite[Thm.\ 3.13]{collier-tholozan-toulisse}), but the statement and proof are already contained in \cite{alessandrini-collier,collier_dedicata,collier-tholozan-toulisse}.  In this case, the domain of $\Psi$ is $\ca{F}_d$ alone, and by an argument of Labourie, we can use a differential-topological result \cite[Thm.\ 8.1.1]{labourie_cyclic}, in combination with the relation between $\Psi$ and the energy functional \cite{labourie_cross}, to infer that $\Psi$ is a diffeomorphism. 
\item 
The first case above with $n=3$ is special: in this case, $\Psi$ sends $\ca{F}_{d}$ into the Hitchin component $\ca{X}_\Hitchin(G_2')$ of the exceptional group $G_2'$ (see below), which is contained in $\ca{X}_\Hitchin(\SO_0(3,4))$. The same argument as the last case shows that $\Psi$ is a diffeomorphism from $\ca{F}_d$ to $\ca{X}_\Hitchin(G_2')$.
\end{itemize}
The diffeomorphisms in the latter two cases prove Labourie's conjecture for the rank $2$ groups $\SO_0(2,3)$ and $G_2'$, while it remains elusive whether $\Psi$ is a diffeomorphism in general. Our final result is about the nature of the A-surfaces occurring in the last case.
\subsection{$G_2'$-Hitchin representations}
The rank $2$ exceptional complex simple Lie group $G_2^\C$ has two real forms: the compact one $G_2$ and the split one $G_2'$. The latter identifies with the group of isometries of $\H^{4,2}$ preserving a specific non-integrable almost complex structure $\ac{J}$ (see \S \ref{subsec_almostcomplex}). 

An interesting subclass of A-surfaces in this almost complex $\H^{4,2}$ are the \emph{spacelike $\ac{J}$-holomorphic curves with nowhere vanishing second fundamental form and timelike osculation lines} (where \emph{osculation line} refers to the $\ac{J}$-complex line in a normal space formed by the images of second fundamental form; see \S \ref{subsec_jholominimal}). The Gauss map of such an A-surface takes values in 
$$
X_{G_2'}=\big\{\text{timelike totally geodesic $\ac{J}$-holomorphic $2$-spheres in $\H^{4,2}$}\big\}\subset X_{\SO_0(4,3)}
$$
(see \S \ref{subsec_jholoA}).
We show that the A-surfaces from Theorem \ref{thm_intro1} belong to this subclass when $(\ca{E},\Phi)$ is in the  $G_2'$-Hitchin component (Proposition \ref{prop_g2A}). The infinitesimal rigidity of such A-surfaces imply Labourie's conjecture for $G_2'$, as explained in the last subsection.
The outcome can be summarized as:
\begin{corx}\label{coro_intro4}
For any Hitchin representation $\rho:\pi_1(S_g)\to G_2'$, there exist 
\begin{itemize}
\item
a unique orientation-preserving $\rho$-equivariant immersion of $\wt{S}_g$ into $\H^{4,2}$ as a $\ac{J}$-holomorphic curve with nowhere vanishing second fundamental form and timelike osculation lines; and
%\item[and]
\item
a unique $\rho$-equivariant minimal immersion of $\wt{S}_g$ into the symmetric space $\Sy_{G_2'}$.
\end{itemize}
The latter is given by post-composing the former with its Gauss map, which assigns to every point of the $\ac{J}$-holomorphic curve the $2$-sphere generated by the osculation line at that point.
\end{corx}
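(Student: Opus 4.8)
The plan is to obtain Corollary~\ref{coro_intro4} by feeding the Hitchin representation into the cyclic-Higgs-bundle/A-surface correspondence and then exploiting the global injectivity supplied by Theorem~\ref{thm_intro3}. The three ingredients I would invoke are: (a) that for $n=3$ the map $\Psi$ of Theorem~\ref{thm_intro3} restricts to a diffeomorphism $\Psi|_{\ca F_d}:\ca F_d\xrightarrow{\sim}\ca X_\Hitchin(G_2')$ — it is an immersion by Theorem~\ref{thm_intro3}, its source and target both have real dimension $28g-28$, and one upgrades the resulting local diffeomorphism to a global one by Labourie's criterion \cite[Thm.\ 8.1.1]{labourie_cyclic}; (b) that over a stable cyclic Higgs bundle obeying \eqref{eqn_introprec} there is, by Theorem~\ref{thm_intro1}, a unique antipodal pair $\pm F$ of A-surface immersions recovering the harmonic-metric minimal map through their Gauss map; and (c) that, in the $G_2'$-Hitchin case, these A-surfaces are precisely the spacelike $\ac{J}$-holomorphic curves with nowhere vanishing second fundamental form and timelike osculation lines (Proposition~\ref{prop_g2A} and \S\ref{subsec_jholoA}), whose Gauss map assigns to each point the timelike totally geodesic $\ac{J}$-holomorphic $2$-sphere through that point tangent to the osculation line, and hence takes values in the symmetric space $\Sy_{G_2'}$.

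For existence, given a Hitchin representation $\rho:\pi_1(S_g)\to G_2'$ I would set $(X,q_6):=(\Psi|_{\ca F_d})^{-1}([\rho])$, form the associated cyclic $G_2'$-Higgs bundle $(\ca E,\Phi)$ on $X$ — for which $\TT_i=\ca K^{-i}$, both $\ve\alpha_2$ and $\ve\alpha_3^+$ are nowhere zero, and $\ve\alpha_3^-=q_6$ — and let $f:\wt{S}_g\to\Sy_{G_2'}$ be the equivariant conformal minimal map given by its harmonic metric. Since $\ve\alpha_2$ is nowhere zero, $(\ve\alpha_2)=\emptyset$ and condition \eqref{eqn_introprec} holds automatically, so Theorem~\ref{thm_intro1} produces the antipodal pair $\pm F$ of A-surface immersions of $\wt{S}_g$ into $\H^{4,2}$ with Gauss map $f$, and Proposition~\ref{prop_g2A} identifies them with $\ac{J}$-holomorphic curves of the stated type. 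To extract one, orientation-preserving, immersion I would use that the antipodal map of $\H^{4,2}$ reverses orientation and anticommutes with $\ac{J}$ (cf.\ Remark~\ref{rk_intro1}): the two $\ac{J}$-complex orientations on the common tangent plane of $F(z)$ and $(-F)(z)$ are then opposite, so exactly one of $\pm F$ — call it $F$ after relabelling — is orientation-preserving. This yields both immersions in the corollary, the minimal one being $f=\mathrm{Gauss}\circ F$, while the last sentence of the corollary is the description of the Gauss map recalled in (c).

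For uniqueness, suppose first that $g:\wt{S}_g\to\Sy_{G_2'}$ is any $\rho$-equivariant minimal immersion. It induces a $\pi_1$-invariant conformal structure, hence a point $X'\in\ca T_g$, and $g$ is the $\rho$-equivariant harmonic map for $X'$; by non-abelian Hodge it therefore corresponds to a $G_2'$-Higgs bundle $(\ca E',\Phi')$ on $X'$ with monodromy $\rho$. Since $\rho$ lies in the Hitchin component and a representation determines its Higgs bundle on $X'$ uniquely, $(\ca E',\Phi')$ must be the point of the $G_2'$-Hitchin section over $X'$ with monodromy $\rho$; conformality of $g$ forces the quadratic differential of that point to vanish, so $(\ca E',\Phi')$ is cyclic and defines a point $(X',q_6')\in\ca F_d$ with $\Psi(X',q_6')=[\rho]=\Psi(X,q_6)$. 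Injectivity of $\Psi|_{\ca F_d}$ then gives $(X',q_6')=(X,q_6)$, so $g=f$, and the minimal surface is unique. For the $\ac{J}$-holomorphic curve, let $F'$ be any orientation-preserving $\rho$-equivariant spacelike $\ac{J}$-holomorphic curve with nowhere vanishing second fundamental form and timelike osculation lines. By \S\ref{subsec_jholoA} it is an A-surface of the type above, so its Gauss map is a $\rho$-equivariant minimal immersion into $\Sy_{G_2'}$ and thus equals $f$ by the previous sentence; Theorem~\ref{thm_intro1} (the uniqueness clause) then forces $F'\in\{F,-F\}$, and orientation-preservation singles out $F'=F$.

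The step I expect to be the crux is the bridge, used in the uniqueness argument, from an arbitrary $\rho$-equivariant minimal surface in $\Sy_{G_2'}$ to a point of the cyclic locus $\ca F_d$: this is precisely what converts the diffeomorphism $\Psi|_{\ca F_d}$ into Labourie's uniqueness conjecture for $G_2'$, and it rests on the fact that a $G_2'$-Higgs bundle with Hitchin monodromy automatically lies in the Hitchin section, so that conformality confines it to the $q_2=0$ (i.e.\ cyclic) locus on which $\Psi$ is injective. A smaller but still genuine point is the orientation bookkeeping that selects one member of the antipodal pair $\pm F$, which hinges on the interaction of the antipodal map with $\ac{J}$. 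Granting these, Corollary~\ref{coro_intro4} is a formal assembly of Theorems~\ref{thm_intro0}, \ref{thm_intro1} and \ref{thm_intro3}, Proposition~\ref{prop_g2A}, and the identification of the space of timelike totally geodesic $\ac{J}$-holomorphic $2$-spheres in $\H^{4,2}$ with $\Sy_{G_2'}$.
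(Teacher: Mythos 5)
Your proposal is correct and follows essentially the same route as the paper: Corollary \ref{coro_intro4} is obtained there exactly by combining the diffeomorphism $\Psi|_{\ca F_d}\cong\ca{X}_\Hitchin(G_2')$ (Theorem \ref{thm_main1} plus Labourie's differential-topological result, which encodes the same ``minimal surface $\Rightarrow$ Hitchin section with $q_2=0$ $\Rightarrow$ point of $\ca F_d$'' bridge you spell out) with Theorem \ref{thm_higgsA} and Proposition \ref{prop_g2A}, the orientation singling out one member of the antipodal pair $\pm F$ just as you argue via the $\ac{J}$-anti-holomorphy of the antipodal map.
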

It was Baraglia \cite[\S 3.6]{baraglia_thesis} who first observed that Higgs bundles in the $G_2'$-Hitchin component yield $\ac{J}$-holomorphic curves in $\H^{4,2}$. The above statement gives a more complete picture. 
 \begin{figure}[h]
	\includegraphics[width=10cm]{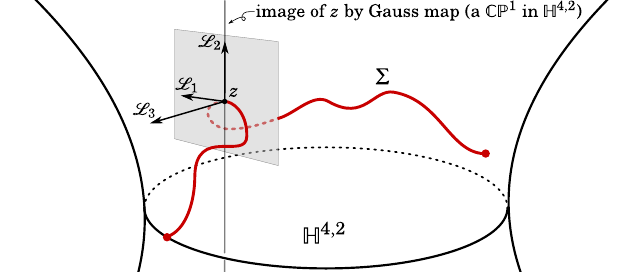}
    \caption{$\ac{J}$-holomorphic A-surface in $\H^{4,2}$}\label{figure_cartoon}
\end{figure} 
Figure \ref{figure_cartoon} is a cartoon of the theorem, where we use the familiar drawing for the anti-de Sitter space $\H^{2,1}$ to depict $\H^{4,2}$ schematically and show the A-surface $\Sigma$ as a ``Frenet curve''.
% whose timelike osculation direction $\TT_2$ gives the Gauss map. 
The ideal boundary $\pa_\infty\H^{4,2}$ carries, besides the conformal metric of signature $(3,2)$ induced by the metric of $\H^{4,2}$, a non-integrable $2$-plane distribution induced by $\ac{J}$ (see \cite{baez-huerta,sagerschnig}). Therefore, a natural question is how the frontier of $\Sigma$ in $\pa_\infty\H^{4,2}$ (depicted as two dots in Figure \ref{figure_cartoon}, but should actually be a Jordan curve) behaves with respect to these structures. Another intriguing problem is to geometrize $G_2'$-Hitchin representations by using $\Sigma$ in a similar way as what \cite{collier-tholozan-toulisse} did with maximal surfaces in $\H^{2,q}$.

Finally, we note that the recent work of Collier and Toulisse \cite{collier-toulisse}, independent of the current paper, uses cyclic surfaces to study spacelike $\ac{J}$-holomorphic curves in $\H^{4,2}$ coming from $G_2'$-representations that are not necessarily Hitchin.
\subsection*{Organization of the paper}
In \S \ref{sec_pseudo} and \S \ref{sec_minimal}, we review backgrounds on pseudo-hyperbolic spaces, the Lie group $G_2'$ and minimal submanifolds. In \S \ref{sec_asurface}, we discuss the definition of A-surfaces and establish their fundamental properties, namely Theorem \ref{thm_intro0}. In \S \ref{sec_infinitesimal}, we prove the infinitesimal rigidity result, Theorem \ref{thm_intro2}, with a crucial ingredient (Theorem \ref{thm_daili}) postponed to the appendix. In \S \ref{sec_cyclic}, we relate Higgs bundles to A-surfaces by proving Theorem \ref{thm_intro1}, and finally use the infinitesimal rigidity to deduce Theorem \ref{thm_intro3} and show that the $G_2'$ case yields $\ac{J}$-holomorphic curves.

\subsection*{Acknowledgments} We would like to thank Qiongling Li for her enormous help on Higgs bundles. We are also grateful to Andrea Seppi and Jun Wang for enlightening discussions, and to Brian Collier and J\'er\'emy Toulisse for informing us of their work \cite{collier-toulisse}.

\section{Pseudo-hyperbolic spaces and $G_2'$}\label{sec_pseudo}
In this section, we review the necessary backgrounds on the pseduo-hyperbolic space $\H^{p,q}$, the Lie group $G_2'$ and the almost complex structure on $\H^{4,2}$.
\subsection{Pseudo-hyperbolic spaces}\label{subsec_pseudohyperbolic}
Let $\R^{p,q+1}$ denote $\R^{p+q+1}$ endowed with the quadratic form $\langle\sth,\sth\rangle$ given by
$$
\langle x,x\rangle:=x_1^2+\cdots+x_p^2-x_{p+1}^2-\cdots-x_{p+q+1}^2.
$$ 
The \emph{pseudo-hyperbolic space} $\H^{p,q}$ is defined as the hypersurface 
$$
\H^{p,q}:=\big\{x\in\R^{p,q+1}\mid \langle x,x\rangle=-1\big\}
$$
endowed with the metric restricted from $\langle\sth,\sth\rangle$, which is a pseudo-Riemannian metric of signature $(p,q)$ with constant sectional curvature $-1$. In particular, $\H^{p,0}$ is the two-sheeted hyperboloid, namely two copies of the usual hyperbolic space $\H^p$, while $\H^{0,q}$ is the unit sphere $\mathbb{S}^q$ with metric multiplied by $-1$. Topologically, $\mathbb{H}^{p,q}$ identifies with the product of the $p$-dimensional ball with the $q$-sphere through the diffeomorphism
$$
\H^{p,q}\cong \mathbb{B}^p\times \mathbb{S}^q,\quad x\mapsto\Bigg(\frac{(x_1,\cdots,x_p)}{\scalebox{0.85}{$\sqrt{x_{p+1}^2+\cdots+x_{p+q+1}^2}$}}\,,\ \frac{(x_{p+1},\cdots,x_{p+q+1})}{\scalebox{0.85}{$\sqrt{x_{p+1}^2+\cdots+x_{p+q+1}^2}$}}\Bigg).
$$
%(where $\mathbb{B}^n$ and $\mathbb{S}^n$ denote the $n$-dimensional unit ball and sphere respectively)

\begin{remark}
More commonly used in the recent literature (e.g.\ \cite{bonsante-schlenker_max, collier-tholozan-toulisse,labourie-toulisse,labourie-toulisse-wolf}) is the \emph{projective model} of the pseudo-hyperbolic space. It identifies with the quotient of our hyperboloid model $\H^{p,q}$ by the antipodal map $x\mapsto -x$. The crucial advantage of the $\H^{p,q}$ here is that it is orientable, while the quotient $\H^{p,q}/(x\sim -x)$ with $q\geq1$ is orientable only when $p+q$ is odd.
\end{remark}

A linear subspace $L\subset\R^{p,q+1}$ is said to be \emph{nondegnerate} or \emph{positive/negative definite} if the quadratic form $\langle\sth,\sth\rangle|_L$ is.
The intersection of $\H^{p,q}$ with some nondegenerate $L$ is called a \emph{pseudo-hyperbolic subspace}, which is a totally geodesic copy of $\H^{p',q'}$ in $\H^{p,q}$ ($(p',q'+1)$ being the signature of $\langle\sth,\sth\rangle|_L$). We refer to the copies of $\H^{0,n}$ in $\H^{p,q}$ as \emph{timelike totally geodesic $n$-spheres} (when $n=1$, these are the complete  timelike  geodesics).
%The space of such spheres identifies with the Grassmannian of  $(n+1)$-dimensional negative definite subspaces.
In particular, all the timelike totally geodesic $q$-spheres are homotopic to $\{0\}\times\mathbb{S}^q\subset\mathbb{B}^p\times\mathbb{S}^q\cong\H^{p,q}$, hence can be oriented in a uniform way (corresponding to a choice of generator for the $q$th homology $H_q(\mathbb{H}^{p,q},\mathbb{Z})\cong\mathbb{Z}$).

The automorphism group $\mathrm{O}(p,q+1)$ of $\R^{p,q+1}$ acts on $\H^{p,q}$ as the isometry group. It has $4$ connected components. The index $2$ subgroup $\SO(p,q+1)$ consists of the orientation-preserving isometries, whereas the identity component $\SO_0(p,q+1)$ consists of the isometries preserving the above orientation of timelike totally geodesic $q$-spheres (or equivalently, acting trivially on homology). 

Let $e_1=(1,0,\cdots,0),\cdots, e_{p+q+1}=(0,\cdots,0,1)$ be the standard basis of $\R^{p,q+1}$ and put
$$
V:=\R e_1\oplus\cdots\oplus\R e_p\,,\quad W:=\R e_{p+1}\oplus\cdots\oplus\R e_{p+q+1}.
$$
These are maximal positive and negative definite subspaces of $\R^{p,q+1}$ orthogonal to each other. The stabilizer of either $V$ or $W$ in $\SO_0(p,q+1)$, namely 
$$
\SO(V)\times\SO(W)=\SO(p)\times\SO(q+1)\subset\SO_0(p,q+1),
$$
is a maximal compact subgroup of $\SO_0(p,q+1)$. Meanwhile, the $\SO_0(p,q+1)$-action on the space of maximal (i.e.\ $(q+1)$-dimensional) negative definite subspaces is transitive. Thus, the Riemannian symmetric space of $\SO_0(p,q+1)$ can be described as
\begin{align*}
X_{\SO_0(p,q+1)}:=&\SO_0(p,q+1)\big/\big(\SO(p)\times\SO(q+1)\big)\\
=&\big\{\text{$(q+1)$-dimensional negative definite subspaces of $\R^{p,q+1}$}\big\}\\
=&\big\{\text{timelike totally geodesic $q$-spheres in $\H^{p,q}$}\big\}.
\end{align*}

The \emph{pseudosphere} $\mathbb{S}^{q,p}$ is the counter part of pseudo-hyperbolic spaces defined by
$$
\mathbb{S}^{q,p}:=\big\{x\in\R^{q+1,p}\mid \langle x,x\rangle=1\big\}.
$$
It has curvature $+1$ and is clearly \emph{anti-isometric} to $\H^{p,q}$: there is an obvious diffeomorphism $\H^{p,q}\cong\mathbb{S}^{q,p}$ identifying the metric of the former with $-1$ times that of the latter. We sometimes use $\mathbb{S}^{q,p}$ with metric multiplied by $-1$ as an alternative model for $\H^{p,q}$.

\subsection{Split octonions}
The unit $6$-sphere $\mathbb{S}^6$ is known to carry a natural almost complex structure $\ac{J}$ coming from the algebra $\O$ of octonions. The group $G_2\subset\SO(7)$ of the isometries of $\mathbb{S}^6$ preserving $\ac{J}$, which is also the automorphism group of $\O$, is a maximal compact subgroup of the $14$-dimensional complex simple Lie group $G_2^\C$.
A close analogue holds for the pseudo-hyperbolic space $\H^{4,2}$: it carries an almost complex structure $\ac{J}$ coming from the \emph{split octonions} $\O'$, whose automorphism group 
$G_2'=\Aut(\O')=\Aut(\H^{4,2},\ac{J})\subset\SO_0(4,3)$
is the split real form of $G_2^\C$. We now briefly review the constructions.

The split octonions $\O'$ is the (noncommutative and nonassociative) algebra over $\R$ generated as a vector space by the symbols $1,\ima,\jma,\kma,\lma,\ima\lma,\jma\lma,\kma\lma$, such that $1$ is the identity and the products of the other basis vectors are given by Table \ref{table_multiplication}.
\begin{table}[ht]
	\scalebox{0.9}{
	\begin{TAB}(c)[4pt]{|c|c|c|c|c|c|c|c|}{|c|c|c|c|c|c|c|c|}
		&$\ima$&$\jma$&$\kma$&$\lma$&$\ima\lma$&$\jma\lma$&$\kma\lma$\\
		$\ima$&$-1$&$\kma$&$-\jma$&$\ima\lma$&$-\lma$&$-\kma\lma$&$\jma\lma$\\
		$\jma$&$-\kma$&$-1$&$\ima$&$\jma\lma$&$\kma\lma$&$-\lma$&$-\ima\lma$\\
		$\kma$&$\jma$&$-\ima$&$-1$&$\kma\lma$&$-\jma\lma$&$\ima\lma$&$-\lma$\\
		$\lma$&$-\ima\lma$&$-\jma\lma$&$-\kma\lma$&$1$&$-\ima$&$-\jma$&$-\kma$\\
		$\ima\lma$&$\lma$&$-\kma\lma$&$\jma\lma$&$\ima$&$1$&$\kma$&$-\jma$\\
		$\jma\lma$&$\kma\lma$&$\lma$&$-\ima\lma$&$\jma$&$-\kma$&$1$&$\ima$\\
		$\kma\lma$&$-\jma\lma$&$\ima\lma$&$\lma$&$\kma$&$\jma$&$-\ima$&$1$
	\end{TAB}
}
\caption{Multiplication table of $\O'$.}
	\label{table_multiplication}
\end{table}
Thus, $\O'$ is an extension of the algebras of complex numbers $\C=\R\oplus\R\ima$ and quaternions $\mathbf{H}=\R\oplus\R\ima\oplus\R\jma\oplus\R\kma$. The conjugation $x\mapsto \cj{x}$ (i.e.\ the linear involution fixing $1$ and sending any other basis vector to the opposite) and the notion of real and imaginary parts $\re(x):=\frac{x+\cj{x}}{2},\im(x):=\frac{x-\cj{x}}{2}$
naturally extend from $\C$ and $\mathbf{H}$ to $\O'$, and so does the quadratic form 
$$
\langle x,y\rangle:=\re(x\cj{y})=\tfrac{1}{2}(x\cj{y}+y\cj{x}).
$$
The conjugation and the quadratic form are still compatible with the multiplication in the sense that
\begin{equation}\label{eqn_compositionalgebra}
	\cj{xy}=\cj{y}\ \cj{x},\quad
	\langle xy,xy\rangle=\langle x,x\rangle\langle y,y\rangle. 
\end{equation}
%\begin{align}
%\cj{xy}&=\cj{y}\ \cj{x},\nonumber\\
%\langle xy,xy\rangle&=\langle x,x\rangle\langle y,y\rangle. \label{eqn_compositionalgebra}
%\end{align}
However, $\langle\sth,\sth\rangle$ is not positive definite on $\O'$: we have
$$
\langle x,x\rangle=x_0^2+x_1^2+x_2^2+x_3^2-x_4^2-x_5^2-x_6^2-x_7^2.
$$
for $x=x_0+x_1\ima+x_2\jma+x_3\kma+x_4\lma+x_5\ima\lma+x_6\jma\lma+x_7\kma\lma\in \O'$. It follows that the imaginary split octonions 
$$
\im\O':=\big\{x\in\O'\mid \re(x)=0\big\}=1^\perp,
$$
%(i.e.\ the subspace of $\O'$ generated by the basis vectors other than $1$), 
endowed with the restriction of the quadratic form $\langle\sth,\sth\rangle$,
identifies with $\R^{3,4}$.

\subsection{The cross product on $\R^{3,4}$}\label{subsec_crossproduct}
The Lie group $G_2'$ can be defined as the group $\Aut(\O')$ of algebra automorphisms of $\O'$. However, instead of working with $\O'$, we shall rather focus on $\im\O'\cong \R^{3,4}$ and the cross product ``$\times$'' defined on it by
$$
x\times y:=\im(xy)=\tfrac{1}{2}(xy-yx)\quad \text{ for all } x,y\in\im\O'
$$
(the last equality is because $\cj{x}=-x$ for any $x\in\im\O'$). This is a skew-symmetric bilinear operation extending the familiar cross product on the Euclidean $3$-space and satisfying the same identities as the latter with respect to the quadratic form $\langle\sth,\sth\rangle$ (see Lemma \ref{lemma_crossproduct} below).

It is useful to notice that for any $x,y\in\im\O'$, the following conditions are equivalent:
$$
\langle x,y\rangle=0\ \Leftrightarrow\ xy=-yx\ \Leftrightarrow\ \re(xy)=0\ \Leftrightarrow\ x\times y=xy.
$$
As a consequence, we get from Table \ref{table_multiplication} the table
 below for the cross product of the basis vectors $\ima,\jma,\cdots,\kma\lma\in\im\O'\cong\R^{3,4}$, re-denoted here by
 $e_1,\cdots,e_7$.
\begin{table}[ht]
	\scalebox{0.9}{
\begin{TAB}(c)[4pt]{|c|c|c|c|c|c|c|c|}{|c|c|c|c|c|c|c|c|}
	&$e_1$&$e_2$&$e_3$&$e_4$&$e_5$&$e_6$&$e_7$\\
	$e_1$&$0$&$e_3$&$-e_2$&$e_5$&$-e_4$&$-e_7$&$e_6$\\
	$e_2$&$-e_3$&$0$&$e_1$&$e_6$&$e_7$&$-e_4$&$-e_5$\\
	$e_3$&$e_2$&$-e_1$&$0$&$e_7$&$-e_6$&$e_5$&$-e_4$\\
	$e_4$&$-e_5$&$-e_6$&$-e_7$&$0$&$-e_1$&$-e_2$&$-e_3$\\
	$e_5$&$e_4$&$-e_7$&$e_6$&$e_1$&$0$&$e_3$&$-e_2$\\
	$e_6$&$e_7$&$e_4$&$-e_5$&$e_2$&$-e_3$&$0$&$e_1$\\
	$e_7$&$-e_6$&$e_5$&$e_4$&$e_3$&$e_2$&$-e_1$&$0$
\end{TAB}
}
\caption{The cross product on $\R^{3,4}$}
\label{table_crossproduct}
\end{table}
 We may take this table as the definition of the cross product on $\R^{3,4}$ without involving the algebra $\O'$. The main properties of this cross product are:
\begin{lemma}\label{lemma_crossproduct}
	For any $x,y,z\in\R^{3,4}$, we have
	\begin{align}
		\langle x\times y,x\rangle&=0,\label{eqn_crossproduct1}\\
		\langle x\times y,z\rangle&=\langle y\times z,x\rangle,\label{eqn_crossproduct2}\\
		\langle x\times y,x\times y\rangle&=\langle x,x\rangle\langle y,y\rangle-\langle x,y\rangle^2,\label{eqn_crossproduct3}\\
		x\times(x\times y)&=-\langle x,x\rangle y+\langle x,y\rangle x.\label{eqn_crossproduct4}
	\end{align}
Moreover, if $x$ is orthogonal to both $y$ and $z$, then  
%$\langle x,y\rangle=\langle x,z\rangle=0$, 
	we further have
	\begin{equation}\label{eqn_crossproduct5}
	x\times(y\times z)=-y\times (x\times z).
	\end{equation}
\end{lemma}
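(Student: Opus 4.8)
The plan is to prove Lemma \ref{lemma_crossproduct} by a mixture of bilinearity reductions and algebraic identities pulled back from the split octonions $\O'$, reserving direct table computations only for the irreducible cases. Since all five formulas are multilinear in $x,y,z$, it suffices in principle to verify each on the basis vectors $e_1,\dots,e_7$; however, rather than check dozens of triples by hand, I would instead derive \eqref{eqn_crossproduct1}--\eqref{eqn_crossproduct4} from the identity $x\times y=\im(xy)$ together with the composition-algebra properties \eqref{eqn_compositionalgebra} and the relation $\langle x,y\rangle=\tfrac12(x\bar y+y\bar x)$, exactly as in the classical Euclidean case. Concretely: for $x\in\im\O'$ one has $\bar x=-x$, so $\langle x,x\rangle=-x^2$ as a scalar; then for $x,y\in\im\O'$ the product $xy$ decomposes as $xy=-\langle x,y\rangle+x\times y$ (real part plus imaginary part), and the second identity in \eqref{eqn_compositionalgebra} gives $\langle xy,xy\rangle=\langle x,x\rangle\langle y,y\rangle$. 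Expanding $\langle xy,xy\rangle=\langle x,y\rangle^2\langle 1,1\rangle+\langle x\times y,x\times y\rangle$ (the cross terms vanish because $1\perp\im\O'$) and using $\langle 1,1\rangle=1$ yields \eqref{eqn_crossproduct3} immediately.

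For \eqref{eqn_crossproduct1} and \eqref{eqn_crossproduct2}, I would use the cyclic symmetry of the trilinear form $\langle x\times y,z\rangle=\re((xy)\bar z)=\re(xy\bar z)$ (since the real part is taken, and $\bar z=-z$); the associator of $\O'$ has vanishing real part, so $\re(x(yz))=\re((xy)z)$, and then the fact that $\re(abc)$ is invariant under cyclic permutation of $a,b,c$ in any composition algebra gives \eqref{eqn_crossproduct2}. Setting $z=x$ in \eqref{eqn_crossproduct2} and using skew-symmetry gives $\langle x\times y,x\rangle=\langle y\times x,x\rangle=-\langle x\times y,x\rangle$, hence \eqref{eqn_crossproduct1}. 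Identity \eqref{eqn_crossproduct4} is the ``$bac-cab$'' rule; I would obtain it by polarizing and using $x^2=-\langle x,x\rangle$ in $\im\O'$: compute $x(xy)=x^2 y=-\langle x,x\rangle y$ on one hand, and on the other expand $x(xy)=x(-\langle x,y\rangle+x\times y)=-\langle x,y\rangle x+x(x\times y)$, with $x(x\times y)=-\langle x,x\times y\rangle+x\times(x\times y)=x\times(x\times y)$ by \eqref{eqn_crossproduct1}; comparing the two gives \eqref{eqn_crossproduct4}.

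Finally, for \eqref{eqn_crossproduct5}, the hypothesis $x\perp y$ and $x\perp z$ is exactly what makes $xy=x\times y$ and $xz=x\times z$, so the cross product computations become genuine octonion products and I can exploit the \emph{alternative law} (every subalgebra generated by two elements is associative), equivalently the Moufang identities. The key point: linearizing the alternative identity $x(xw)=x^2 w$ in $x$, or directly using that $\re(x(yz))=\re((xy)z)$ plus the flexible law, one shows $x\times(y\times z)+y\times(x\times z)$ can be rewritten using associativity on the subalgebra generated by $x,z$ (valid since we are in an alternative algebra) and then collapses; alternatively I would linearize \eqref{eqn_crossproduct4} in $x$ to get $x\times(y\times z)+y\times(x\times z)=-2\langle x,y\rangle z+\langle x,z\rangle y+\langle y,z\rangle x$ (a general identity, no orthogonality needed) wait — this symmetric polarization does not directly give skew-symmetry, so instead I would polarize the Jacobi-type identity for alternative algebras. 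The cleanest route: since $x\perp y,z$, associativity in the alternative algebra on pairs shows $x(yz)=-(xy)z$ is false in general, but the \emph{Moufang identity} $x(y(xz))=(xyx)z$ specializes; I expect the honest proof to require either invoking the Moufang identities or, failing that, simply checking \eqref{eqn_crossproduct5} on basis triples with $x,y,z$ pairwise orthogonal using Table \ref{table_crossproduct}. This last identity is the main obstacle, precisely because it is the only one that genuinely uses non-associativity in an essential way rather than just the quadratic form, and the orthogonality hypothesis is doing real work; I would present the octonion-product proof via alternativity if it goes through cleanly, and otherwise fall back on the finite basis verification, noting that skew-symmetry in $y\leftrightarrow z$-free form plus the established \eqref{eqn_crossproduct1}--\eqref{eqn_crossproduct4} reduce the number of cases substantially.
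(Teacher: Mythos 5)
Your handling of \eqref{eqn_crossproduct1}--\eqref{eqn_crossproduct4} is correct and is essentially the same mathematics as the paper's proof, just carried out by hand instead of quoted: the paper simply cites Karigiannis (cyclic symmetry of $\langle x\times y,z\rangle$, alternativity, the composition identity, and the triple-product expansion), while you rederive these from $xy=-\langle x,y\rangle+x\times y$ and \eqref{eqn_compositionalgebra}. One step you should make explicit is $x(xy)=x^2y$ in your proof of \eqref{eqn_crossproduct4}: this is left alternativity of $\O'$, which is precisely the fact the paper imports from the survey, so either quote it or note that it follows from the composition-algebra structure.

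The genuine gap is \eqref{eqn_crossproduct5}: you never commit to an argument, and the polarization identity you derived and then discarded is in fact the decisive step. Polarizing \eqref{eqn_crossproduct4} in $x$ gives, for \emph{all} $x,y,z$,
\[
x\times(y\times z)+y\times(x\times z)=-2\langle x,y\rangle z+\langle x,z\rangle y+\langle y,z\rangle x,
\]
with no Moufang identities or table verification needed. Under the hypothesis as printed ($\langle x,y\rangle=\langle x,z\rangle=0$ only) the right-hand side is $\langle y,z\rangle x$, not $0$; and indeed $x=e_1$, $y=z=e_2$ violates \eqref{eqn_crossproduct5} as stated (the left side is $0$, the right side is $-e_1$). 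The identity holds exactly when $\langle y,z\rangle=0$ as well, i.e.\ for pairwise orthogonal triples, which is the situation in which it is actually invoked (e.g.\ in the verifications around Proposition \ref{prop_g2basis} and Lemma \ref{lemma_sphere}). The paper's own deduction via Karigiannis's formula $x\times(y\times z)=-\langle x,y\rangle z+\langle x,z\rangle y-\tfrac{1}{2}[x,y,z]$ with totally skew associator has the same feature: adding the corresponding formula for $y\times(x\times z)$ cancels the associators and leaves exactly the bilinear terms above, so it too only yields \eqref{eqn_crossproduct5} once $\langle y,z\rangle=0$ is imposed. So: finish \eqref{eqn_crossproduct5} with your polarization identity and record the extra orthogonality condition, rather than reaching for Moufang identities; note also that your proposed fallback of checking basis triples with $x,y,z$ \emph{pairwise} orthogonal would in any case only establish the corrected statement, not the one with the weaker stated hypothesis.
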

\begin{proof}
These actually hold in a more general setting: Given an algebra $\mathbb{A}$ (with identity) over $\R$ endowed with a nondegenerate quadratic form $\langle\sth,\sth\rangle$ satisfying \eqref{eqn_compositionalgebra}, one may consider the cross product $\times$ on $\im\mathbb{A}:=1^\perp$ defined in the same way as here. In the survey \cite{karigiannis}, it is shown in Lemma 3.39, Prop.\ 3.33 (see also Def.\ 3.34 and Eq.(3.45)) and Cor.\ 3.49, respectively, that \eqref{eqn_crossproduct1}, \eqref{eqn_crossproduct2} and \eqref{eqn_crossproduct3} hold in this setting (although $\langle\sth,\sth\rangle$ is assumed to be positive define in \cite{karigiannis}, nondegeneracy is adequate for these identities). Also, \cite[Cor.\ 3.49]{karigiannis} shows that for any $x,y,z\in\im\mathbb{A}$ we have
$$
x\times(y\times z)=-\langle x,y\rangle z+\langle x,z\rangle y-\tfrac{1}{2}[x,y,z],
$$
where the \emph{associator} $[x,y,z]:=(xy)z-x(yz)$ is known to be skew-symmetric in $x$, $y$ and $z$ (this means that $\mathbb{A}$ is an \emph{alternative} algebra; see \cite[Prop.\ 3.33]{karigiannis}). This implies \eqref{eqn_crossproduct4} and \eqref{eqn_crossproduct5}.
\end{proof}

Finally, note that the cross product is encoded in the $3$-form $\varphi\in\bigwedge^3(\R^{3,4})^*$ defined by
$$
\varphi(x,y,z):=\langle x\times y,z\rangle
$$
(it is skew-symmetric in $x$, $y$ and $z$ by \eqref{eqn_crossproduct2}). Using Table \ref{table_crossproduct} and noting that $\langle e_i,e_i\rangle=1$ (resp.\ $-1$) for $i=1,2,3$ (resp.\ $4,5,6,7$) and $\langle e_i,e_j\rangle=0$ for $i\neq j$, we get the explicit expression
\begin{equation}\label{eqn_3form}
\varphi=e^{123}-e^{145}+e^{167}-e^{246}-e^{257}-e^{347}+e^{356},
\end{equation}
where $(e^1,\cdots,e^7)$ is the basis of $(\R^{3,4})^*$ dual to $(e_1,\cdots,e_7)$ and we write $e^{ijk}:=e^i\wedge e^j\wedge e^k$.

\subsection{The Lie group $G_2'$}
It is known that the quadratic form $\langle\sth,\sth\rangle$ and the cross product $\times$ on $\im\O'$ completely capture the algebraic structure of $\O'$ in the sense that 
the natural homomorphism from $\Aut(\O')$ to the group of linear transformations of $\im\O'$ preserving $\langle\sth,\sth\rangle$ and $\times$ is an isomorphism. Therefore, we may henceforth forget about $\O'$ and use the following alternative definition of $G_2'$:
\begin{definition}\label{def_g2}
$G_2'$ is the group of all $A\in\mathrm{O}(3,4)$ whose action on $\R^{3,4}$ preserves the cross product $\times$ (or equivalently, preserves the $3$-form $\varphi$; see also Remark \ref{rk_g22} below). Vectors $\wt{e}_1,\cdots,\wt{e}_7\in\R^{3,4}$ are said to form a \emph{$G_2'$-basis} if they are successively the columns of such an $A$ (or in other words, $\wt{e}_i=A(e_i)$ for the standard basis $(e_1,\cdots,e_7)$ of $\R^{3,4}$).
\end{definition}
The following proposition and corollary give more concrete descriptions of $G_2'$ and its Lie algebra:
\begin{proposition}\label{prop_g2basis}
	Vectors $\wt{e}_1,\cdots,\wt{e}_7\in\R^{3,4}$ are a $G_2'$-basis if and only if they satisfy:
	\begin{enumerate}[label=(\alph*)]
		\item\label{item_lemmag2basis1} $\wt{e}_1$, $\wt{e}_2$ and $\wt{e}_4$ are orthogonal to each other and moreover
		$$
		\langle \wt{e}_1,\wt{e}_1\rangle=\langle \wt{e}_2,\wt{e}_2\rangle=1,\quad \langle \wt{e}_4,\wt{e}_4\rangle=-1,\quad \langle \wt{e}_1\times \wt{e}_2,\wt{e}_4\rangle=0;
		$$
		\item\label{item_lemmag2basis2} the other $\wt{e}_i$'s are determined from $\wt{e}_1$, $\wt{e}_2$ and $\wt{e}_4$ by
		$$
		\wt{e}_3=\wt{e}_1\times \wt{e}_2,\quad \wt{e}_5=\wt{e}_1\times \wt{e}_4,\quad \wt{e}_6=\wt{e}_2\times \wt{e}_4,\quad \wt{e}_7=\wt{e}_3\times \wt{e}_4.
		$$
	\end{enumerate}
\end{proposition}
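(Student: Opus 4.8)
The plan is to prove Proposition \ref{prop_g2basis} by a dimension-count combined with the structural identities for the cross product established in Lemma \ref{lemma_crossproduct}. First I would settle the ``only if'' direction: if $\wt e_1,\dots,\wt e_7$ is a $G_2'$-basis, then it is the image $A(e_1),\dots,A(e_7)$ of the standard basis under some $A\in\mathrm{O}(3,4)$ preserving $\times$. Since the standard basis satisfies \ref{item_lemmag2basis1} and \ref{item_lemmag2basis2} — these are read off from Table \ref{table_crossproduct} and the signature convention $\langle e_i,e_i\rangle=1$ for $i\le3$, $=-1$ for $i\ge4$, noting $\langle e_1\times e_2,e_4\rangle=\langle e_3,e_4\rangle=0$ — and since $A$ preserves both $\langle\sth,\sth\rangle$ and $\times$, all these relations transport verbatim to the $\wt e_i$'s.

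For the ``if'' direction I would argue that the conditions \ref{item_lemmag2basis1}--\ref{item_lemmag2basis2} are exactly enough to force $(\wt e_i)$ to be an orthonormal-type basis on which the cross product reproduces Table \ref{table_crossproduct}; equivalently, the linear map $A$ sending $e_i\mapsto\wt e_i$ lies in $\mathrm{O}(3,4)$ and intertwines $\times$, hence is in $G_2'$. Concretely: from \ref{item_lemmag2basis1}, $\wt e_1,\wt e_2,\wt e_4$ are mutually orthogonal with the prescribed norms; using \eqref{eqn_crossproduct1}, \eqref{eqn_crossproduct2} and \eqref{eqn_crossproduct3} one checks that $\wt e_3=\wt e_1\times\wt e_2$ is orthogonal to $\wt e_1,\wt e_2$ with $\langle\wt e_3,\wt e_3\rangle=\langle\wt e_1,\wt e_1\rangle\langle\wt e_2,\wt e_2\rangle-0=1$, and the extra condition $\langle\wt e_1\times\wt e_2,\wt e_4\rangle=0$ says $\wt e_3\perp\wt e_4$, so $\{\wt e_1,\wt e_2,\wt e_3,\wt e_4\}$ is an orthogonal system of the correct signature $(3,1)$ inside the $(3,4)$-space. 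Then \eqref{eqn_crossproduct3} and \eqref{eqn_crossproduct1} give $\langle\wt e_5,\wt e_5\rangle=\langle\wt e_1,\wt e_1\rangle\langle\wt e_4,\wt e_4\rangle=-1$ and $\wt e_5\perp\wt e_1,\wt e_4$, and likewise for $\wt e_6,\wt e_7$; the various mixed orthogonalities among $\wt e_5,\wt e_6,\wt e_7$ and between them and $\wt e_2,\wt e_3$ follow from \eqref{eqn_crossproduct2} together with \eqref{eqn_crossproduct5} (which applies precisely because of the orthogonality hypotheses already verified), e.g.\ $\langle\wt e_5,\wt e_6\rangle=\langle\wt e_1\times\wt e_4,\wt e_2\times\wt e_4\rangle$ is handled by \eqref{eqn_crossproduct2} and \eqref{eqn_crossproduct4}. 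This yields that $(\wt e_1,\dots,\wt e_7)$ is a pseudo-orthonormal basis of signature $(3,4)$, so $A\in\mathrm{O}(3,4)$.

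It then remains to verify that $A$ preserves $\times$, i.e.\ that $\wt e_i\times\wt e_j$ obeys the same table as $e_i\times e_j$. Having fixed $\wt e_3,\wt e_5,\wt e_6,\wt e_7$ via \ref{item_lemmag2basis2}, seven of the entries are true by definition, and the rest are obtained by repeatedly applying \eqref{eqn_crossproduct4} (to compute $x\times(x\times y)$-type products, e.g.\ $\wt e_1\times\wt e_3=\wt e_1\times(\wt e_1\times\wt e_2)=-\wt e_2$), \eqref{eqn_crossproduct5} (to swap, e.g.\ $\wt e_1\times\wt e_6=\wt e_1\times(\wt e_2\times\wt e_4)=-\wt e_2\times(\wt e_1\times\wt e_4)=-\wt e_2\times\wt e_5$, which is then reduced again), and the Malcev/Jacobi-type consequences that $\eqref{eqn_crossproduct4}$--$\eqref{eqn_crossproduct5}$ encode in an alternative algebra. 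Each remaining product of two distinct basis vectors reduces, after at most two such moves, to one of the defining seven or to a product already computed; I would organize this as a short finite table-check rather than spelling out all $21$ cases. The only genuine subtlety — the step I expect to be the main obstacle — is making sure that the single scalar hypothesis $\langle\wt e_1\times\wt e_2,\wt e_4\rangle=0$ in \ref{item_lemmag2basis1}, i.e.\ that $\wt e_1,\wt e_2,\wt e_4$ do not merely span a nondegenerate $(2,1)$-subspace but span one on which the octonionic triple product vanishes, is exactly what is needed (and sufficient) to prevent an anti-automorphism or a spurious sign from creeping in; concretely, this is what guarantees that the cross product on the $7$-dimensional space built from them is the \emph{split}-octonionic one with the correct orientation, rather than its image under an element of $\mathrm{O}(3,4)\setminus G_2'$. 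Once that point is pinned down, the reconstruction of the full multiplication table is forced and the proof closes.
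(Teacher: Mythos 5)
Your proposal takes essentially the same route as the paper's proof: the ``only if'' direction is the trivial transport of the standard relations, and the ``if'' direction checks pseudo-orthonormality and then reconstructs the whole of Table \ref{table_crossproduct} from the identities of Lemma \ref{lemma_crossproduct} (with \eqref{eqn_crossproduct5} applied, as you do, only to mutually orthogonal triples), which is exactly the verification the paper leaves as ``straightforward''. The subtlety you flag at the end is not a genuine obstacle: the hypothesis $\langle\wt{e}_1\times\wt{e}_2,\wt{e}_4\rangle=0$ serves only to give $\wt{e}_3\perp\wt{e}_4$ (and the ensuing orthogonalities), and once all $21$ products of basis vectors are shown to reproduce the table, the map $e_i\mapsto\wt{e}_i$ preserves $\times$ by bilinearity, so no orientation or anti-automorphism ambiguity can occur.
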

\begin{proof}
Being a $G_2'$-basis means that the endomorphism $A$ of $\R^{3,4}$ with $A(e_i)=\wt{e}_i$ satisfies:
	\begin{itemize}
		\item $\langle\sth,\sth\rangle$-preserving $\Leftrightarrow$ $\langle \wt{e}_i,\wt{e}_i\rangle=1$ (resp.\ $-1$) for $i=1,2,3$ (resp.\ $4,5,6,7$) and $\langle\wt{e}_i,\wt{e}_j\rangle=0$ for $i\neq j$;
		\item $\times$-preserving $\Leftrightarrow$ the cross product $\wt{e}_i\times \wt{e}_j$ matches Table \ref{table_crossproduct} (with each $e_i$ replaced by $\wt{e}_i$).
	\end{itemize}
	These properties clearly imply conditions \ref{item_lemmag2basis1} and \ref{item_lemmag2basis2}. Conversely, if  \ref{item_lemmag2basis1} and \ref{item_lemmag2basis2} hold, it is straightforward to verify the first bullet point and then the second by using the identities in Lemma \ref{lemma_crossproduct}.
\end{proof}
\begin{corollary}\label{coro_g2}
$G_2'$ is connected (hence contained in the identity component $\SO_0(3,4)$ of $\mathrm{O}(3,4)$) and its Lie algebra $\frak{g}_2'\subset\frak{so}(2,3)$ consists of matrices of the following form, with $a_1,\cdots,a_6, b_1,\cdots,b_6,c_1,c_2\in\R$:
$$
\scalebox{0.9}{$
\left(
\begin{array}{ccc|cccc}
	0&a_1&a_2&b_4&c_1&b_1&b_2\\[0.1cm]
	-a_1&0&a_3&b_5&b_1-b_6&c_2&b_3\\[0.1cm]
	-a_2&-a_3&0&b_6&b_2+b_5&b_3-b_4&-c_1-c_2\\
	\midrule
	b_4&b_5&b_6&0&a_4&a_5&a_6\\[0.2cm]
	c_1&b_1-b_6&b_2+b_5&-a_4&0&a_1+a_6&a_2-a_5\\[0.1cm]
	b_1&c_2&b_3-b_4&-a_5&-a_1-a_6&0&a_3+a_4\\[0.1cm]
	b_2&b_3&-c_1-c_2&-a_6&-a_2+a_5&-a_3-a_4&0
\end{array}
\right)$}.
$$
\end{corollary}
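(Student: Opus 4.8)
The plan is to prove Corollary \ref{coro_g2} directly from Definition \ref{def_g2} and Proposition \ref{prop_g2basis}, splitting the statement into its two assertions: connectedness of $G_2'$, and the explicit description of $\frak{g}_2'$.

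First I would handle connectedness. By Proposition \ref{prop_g2basis}, a $G_2'$-basis is determined by the triple $(\wt e_1,\wt e_2,\wt e_4)$ subject to the open-and-closed-type constraints in \ref{item_lemmag2basis1}: $\wt e_1,\wt e_2$ span a positive definite $2$-plane, $\wt e_4$ is a unit timelike vector orthogonal to that plane, and $\wt e_1\times\wt e_2$ is orthogonal to $\wt e_4$. I would argue that the space of such triples is connected by exhibiting an explicit path from any given triple back to the standard one $(e_1,e_2,e_4)$: first rotate $\wt e_1$ to $e_1$ using a path in $\SO(3,4)$ (the positive-definite Grassmannian of lines is connected, in fact the relevant sphere bundle is), then within the orthogonal complement rotate $\wt e_2$ into place, then move $\wt e_4$ within the now-fixed orthogonal $(1,4)$-space; the residual freedom for $\wt e_4$ lives in a copy of $\H^{0,?}$-type space that one checks is connected, and the last condition $\langle\wt e_1\times\wt e_2,\wt e_4\rangle=0$ cuts out a connected subset because $\wt e_1\times\wt e_2$ is a single nonzero vector (its norm is $1$ by \eqref{eqn_crossproduct3}) and "being orthogonal to a fixed spacelike vector" inside the relevant pseudosphere is connected in these dimensions. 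Since $G_2'$ acts simply transitively on $G_2'$-bases (two elements of $\mathrm O(3,4)$ with the same columns agree), connectedness of the basis space gives connectedness of $G_2'$; being connected and contained in $\mathrm O(3,4)$, it lies in the identity component $\SO_0(3,4)$.

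Next, the Lie algebra. Differentiating Definition \ref{def_g2}: $\xi\in\frak{g}_2'$ iff $\xi\in\frak{so}(3,4)$ and $\xi$ is a derivation of the cross product, i.e. $\xi(x\times y)=\xi x\times y+x\times\xi y$ for all $x,y$. Equivalently, differentiating Proposition \ref{prop_g2basis}: take the curve $\wt e_i(t)=\exp(t\xi)e_i$, write $\xi e_i=\sum_j \xi_{ji}e_j$, and impose the relations \ref{item_lemmag2basis2} at the infinitesimal level, namely $\xi e_3=\xi e_1\times e_2+e_1\times\xi e_2$ and similarly for $e_5,e_6,e_7$, together with the $\frak{so}(3,4)$ condition from \ref{item_lemmag2basis1}. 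The $\frak{so}(3,4)$ condition says the matrix $(\xi_{ij})$ is skew with respect to the diagonal form $\diag(1,1,1,-1,-1,-1,-1)$, which accounts for $21$ free parameters; each of the four vector identities $\xi e_3=\cdots$, $\xi e_5=\cdots$, $\xi e_6=\cdots$, $\xi e_7=\cdots$ is an equation in $\R^{3,4}$, but many components are automatically satisfied, and a count using $\dim\frak g_2'=14$ predicts exactly $21-14=7$ independent linear constraints. I would carry out this computation by substituting the cross-product values from Table \ref{table_crossproduct} into the four derivation identities, reading off which entries of $(\xi_{ij})$ get expressed in terms of others, and then renaming the $14$ surviving free entries as $a_1,\dots,a_6,b_1,\dots,b_6,c_1,c_2$ to match the displayed matrix; one then double-checks that the displayed matrix is indeed skew for $\diag(1,1,1,-1,-1,-1,-1)$ and satisfies the four derivation relations, and that it has $14$ free parameters, so no constraint was missed. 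The inclusion $\frak g_2'\subset\frak{so}(2,3)$ in the statement is the image of the standard isomorphism $\SO_0(3,4)\cong$ (a group acting on a $(2,3)$-form after the usual reindexing); this is just a bookkeeping remark and requires nothing beyond matching signatures.

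The main obstacle I expect is the bookkeeping in the Lie algebra computation: the four derivation identities, expanded via Table \ref{table_crossproduct}, produce a large system of linear relations among the $\xi_{ij}$, and organizing the solution so that exactly the parameters $a_i,b_j,c_k$ survive in exactly the displayed positions is error-prone. (Connectedness is comparatively soft once one commits to explicit paths, and the $\frak{so}(3,4)$ skew-symmetry check is mechanical.) A clean way to contain the obstacle is to verify the final answer rather than derive it blindly: check that the displayed $7\times7$ matrix is $\diag(1,1,1,-1,-1,-1,-1)$-skew, that it satisfies all four derivation relations $\xi(e_i\times e_j)=\xi e_i\times e_j+e_i\times\xi e_j$ for the generating pairs, and that it depends on $14$ independent real parameters; since $\dim\frak g_2'=14$ is known, this forces the displayed set of matrices to be exactly $\frak g_2'$.
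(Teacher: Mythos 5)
Your proposal follows essentially the same route as the paper: connectedness comes from Proposition \ref{prop_g2basis} by showing the space of admissible triples $(\wt e_1,\wt e_2,\wt e_4)$ is connected (the paper phrases this as a sphere bundle over an $\mathbb{S}^{1,4}$-bundle over $\mathbb{S}^{2,4}$ rather than via explicit paths, which is a bit cleaner than your ad hoc path construction but the same idea), and the Lie algebra description is obtained by differentiating the relations of Proposition \ref{prop_g2basis} at the identity, the paper likewise omitting the bookkeeping; your shortcut of verifying that the displayed $14$-parameter family satisfies the linearized relations and then invoking $\dim\frak{g}_2'=14$ is a legitimate way to contain that computation. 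One correction: your closing remark justifying the inclusion ``$\frak{g}_2'\subset\frak{so}(2,3)$'' via an isomorphism of $\SO_0(3,4)$ with a group preserving a $(2,3)$-form is false --- $\frak{so}(2,3)$ has dimension $10$ and cannot contain a $14$-dimensional algebra; the ``$\frak{so}(2,3)$'' in the statement is simply a typo for $\frak{so}(3,4)$, and the relevant check is the one you already perform, namely that the displayed matrix is skew with respect to $\diag(1,1,1,-1,-1,-1,-1)$.
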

\begin{proof}
By Proposition \ref{prop_g2basis}, $G_2'$ is diffeomorphic to the space of triples $(\wt{e}_1,\wt{e}_2,\wt{e}_4)$ in $\R^{3,4}$ satisfying Condition \ref{item_lemmag2basis1}, which is a sphere bundle over the space of orthonormal pairs $(\wt{e}_1,\wt{e}_2)$. The latter space is in turn an $\mathbb{S}^{1,4}$-bundle over $\mathbb{S}^{2,4}$, hence connected. The description of $\frak{g}_2'$ can be deduced from Proposition \ref{prop_g2basis} by differentiating the relations therein at the identity $(\wt{e}_1,\cdots,\wt{e}_7)=(e_1,\cdots,e_7)$. We omit the detailed calculations.
%also identifies $G_2'$ as the group of matrices $u\in\GL(7,\R)$ whose columns $u_1,\cdots,u_7$ satisfy
%$$
%\begin{cases}
%	\langle u_1,u_1\rangle=\langle u_2,u_2\rangle=-\langle u_4,u_4\rangle=1,\\
%	\langle u_1\times u_2, u_4\rangle=0,\\
%	u_3=u_1\times u_2,\\
%	u_5=u_1\times u_4,\\
%	u_6=u_2\times u_4,\\
%	u_7=(u_1\times u_2)\times u_4.
%\end{cases}
%$$
%Taking the derivative at the identity $u=I$, we see that the Lie algebra $\g_2'$ consists of matrices $X\in\mathfrak{gl}_7\R$ whose columns $X_1,\cdots,X_7$ satisfy
%$$
%\begin{cases}
%	\langle X_1,e_1\rangle=\langle X_2,e_2\rangle=\langle X_3,e_3\rangle=0,\\
%	\langle X_1\times e_2, e_4\rangle+\langle e_1\times X_2, e_4\rangle+\langle e_1\times e_2, X_4\rangle=0,\\
%	X_3=X_1\times e_2+e_1\times X_2,\\
%	X_5=X_1\times e_4+e_1\times X_4,\\
%	X_6=X_2\times e_4+e_2\times X_4,\\
%	X_7=(X_1\times e_2)\times e_4+(e_1\times X_2)\times e_4+(e_1\times e_2)\times X_4.
%\end{cases}
%$$
%By computations (best done with a computer algebra system), one may check that these equations mean exactly that $X$ has the given form.
\end{proof}

\begin{remark}\label{rk_g21}
The compact group $G_2$ is given by an almost identical construction as above except for some changes of signs. Namely, to get $G_2$ instead of $G_2'$, it suffices to replace the split octonions $\O'$ by the octonions $\O$, whose multiplication is given by Table \ref{table_multiplication} with the signs in the lower-right $4\times 4$ block inverted; or we may consider the corresponding cross product on the Euclidean space $\mathbb{E}^7\cong \im\O$. 
\end{remark}

\begin{remark}\label{rk_g22}
Although we defined $G_2'$ as the group of linear transformations $A\in\GL(\R^{3,4})$ preserving both the metric $\langle\sth,\sth\rangle$ and the $3$-form $\varphi$, it is a nontrivial fact that if $A$ preserves $\varphi$, then it automatically preserves $\langle\sth,\sth\rangle$. This can be proved by showing directly, with  cumbersome computations (best done with a computer algebra system), that if a matrix $X\in\mathfrak{gl}_7\R$  preserves $\varphi$ infinitesimally, then it has the form given in Corollary \ref{coro_g2}. Alternatively, a conceptual proof of the analogous fact for $G_2$ is given in \cite{bryant_metrics} (see also \cite[Theorem 4.4]{karigiannis}), which can be adapted to $G_2'$. This fact is of fundamental importance in $G_2$-geometry (see \cite[Remark 4.10]{karigiannis}).
\end{remark}

\begin{remark}\label{rk_g23}
By the last remark, $G_2$ and $G_2'$ are the subgroups of $\GL(7,\R)$ preserving two specific $3$-forms on $\R^7$ respectively. This is related to the fact that the $\GL(7,\R)$-action on $\bigwedge^3\R^{7*}$ has exactly two open orbits (see \cite[p.332]{sagerschnig}): those two $3$-forms belong to the two orbits respectively.
\end{remark}

\subsection{Maximal compact subgroup}\label{subsec_maximalcompact}
It is known that $G_2'$ is a simple Lie group, and
%Its maximal compact subgroup can thus be described as follows. 
if we consider the orthogonal splitting 
$\R^{3,4}=V\oplus W$ from \S \ref{subsec_pseudohyperbolic},
then the intersection (between subgroups of $\SO_0(3,4)$)
$$
K:=G_2'\cap\big(\SO(V)\times \SO(W)\big)
$$ 
is a maximal compact subgroup.
%This can be shown as follows: By Corollary \ref{coro_g2}, the Lie algebra $\frak{g}_2'$ is preserved by the Cartan involution $\theta:X\mapsto-\transp{X}$ of $\frak{so}(3,4)$. So by the simpleness of $\frak{g}_2'$, $\theta$ restricts to a Cartan involution of $\frak{g}_2'$. By Corollary \ref{coro_g2} again, the fixed points of $\theta$ in $\frak{g}_2'$ is exactly the Lie algebra of $K$, which means that $K$ is the maximal compact subgroup.
In order to describe $K$ more concretely, we use the $3$-form $\varphi$ from  \S \ref{subsec_crossproduct} to identify every $v\in V$ as a $2$-form on $W$ via the map 
\begin{equation}\label{eqn_mapvw}
	V\to \textstyle{\bigwedge^2}W^*,\quad v\mapsto (v\lrcorner \varphi)|_{W}.
\end{equation}
In particular, the basis vectors $e_1,e_2,e_3$ of $V$ correspond to the $2$-forms
\begin{equation}\label{eqn_e1e2e3}
	\left\{\hspace{-0.1cm}
	\begin{array}{l}
		e_1\mapsto -e^{45}+e^{67},\\ 
		e_2\mapsto -e^{46}-e^{57},\\ 
		e_3\mapsto -e^{47}+e^{56},
	\end{array}
	\right.
\end{equation}
where we view $(e^4,e^5,e^6,e^7)$ as a basis of $W^*$ and set $e^{ij}:=e^i\wedge e^j$. By a well known linear-algebraic construction, the Hodge star $\star:\bigwedge^\sth W^*\to \bigwedge^\sth W^*$ (defined under the Euclidean metric $-\langle\sth,\sth\rangle|_W$) is an involution on $\bigwedge^2W^*$ and the $2$-forms in \eqref{eqn_e1e2e3} are an orthonormal basis for the $(-1)$-eigenspace $\bigwedge^2_-W^*\subset\bigwedge^2W^*$ of $\star$ (whose elements are known as \emph{anti-self-dual} $2$-forms).
Therefore, the map \eqref{eqn_mapvw} induces a natural identification 
$V\cong\bigwedge^2_-W^*$. It enables us to describe $K$ as follows:

\begin{proposition}\label{prop_maximalcompact}
	Consider the homomorphism $\varrho:\SO(W)\to\SO(V)$ given by the natural action of $\SO(W)$ on $V\cong \bigwedge^2_-W^*$. Then we have
	%the intersection of $G_2'$ with $\SO(W)\times\SO(V)$ in $\SO_0(3,4)$ is
	$$
	K=\big\{(\varrho(A_W),A_W)\mid A_W\in\SO(W)\big\}\cong \SO(4).
	$$
	%
	%and is a maximal compact subgroup of $G_2'$.
\end{proposition}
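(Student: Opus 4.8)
The plan is to decompose the $3$-form $\varphi$ of \eqref{eqn_3form} along the splitting $\R^{3,4}=V\oplus W$ and read off directly which pairs $(A_V,A_W)\in\SO(V)\times\SO(W)$ preserve it. Inspecting \eqref{eqn_3form}, every monomial of $\varphi$ is either $e^{123}$ or has exactly one index in $\{1,2,3\}$; hence, under the decomposition
$$
\textstyle\bigwedge^3(\R^{3,4})^*=\bigwedge^3V^*\ \oplus\ (\bigwedge^2V^*\otimes W^*)\ \oplus\ (V^*\otimes\bigwedge^2W^*)\ \oplus\ \bigwedge^3W^*,
$$
we have $\varphi=\vol_V+\psi$, where $\vol_V:=e^{123}$ spans $\bigwedge^3V^*$, the form $\psi:=-e^{145}+e^{167}-e^{246}-e^{257}-e^{347}+e^{356}$ lies in $V^*\otimes\bigwedge^2W^*$, and the remaining two graded components of $\varphi$ vanish. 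By construction, $\psi$ is exactly the tensor encoding the map \eqref{eqn_mapvw}: regarding it as a linear map $\Psi\colon V\to\bigwedge^2W^*$, $v\mapsto(v\lrcorner\varphi)|_W$, the list \eqref{eqn_e1e2e3} shows that $\Psi$ carries the orthonormal basis $e_1,e_2,e_3$ of $V$ onto the stated orthogonal basis of $\bigwedge^2_-W^*$, each vector of which has norm $\sqrt2$ for the Euclidean metric $-\langle\sth,\sth\rangle|_W$. Thus $\Psi$ is an isomorphism of $V$ onto $\bigwedge^2_-W^*$, equal to $\sqrt2$ times an isometry.

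Next I would note that every $g=(A_V,A_W)\in\SO(V)\times\SO(W)$ acts on $\bigwedge^3(\R^{3,4})^*$ preserving the above grading, and that it preserves $\vol_V$ automatically because $\det A_V=1$. Hence $g\in G_2'$ if and only if $g$ preserves $\psi$. Unwinding this last condition: for $v\in V$ and $y,z\in W$, the equality $\varphi(A_Vv,A_Wy,A_Wz)=\varphi(v,y,z)$ reads $\Psi(A_Vv)(A_Wy,A_Wz)=\Psi(v)(y,z)$, i.e.
$$
A_W^*\circ\Psi\circ A_V=\Psi\qquad\text{as maps }V\to\textstyle\bigwedge^2W^*,
$$
where $A_W^*$ denotes pullback of $2$-forms. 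Since $A_W$ is an isometry of $W$, $A_W^*$ commutes with the Hodge star and therefore preserves $\bigwedge^2_-W^*$; conjugating $(A_W^*)^{-1}|_{\bigwedge^2_-W^*}$ through the isomorphism $\Psi$ yields precisely the map $\varrho(A_W)$, which a priori lies in $\mathrm O(V)$ and in fact in $\SO(V)$ because $\SO(W)\cong\SO(4)$ is connected and $\varrho$ is continuous with $\varrho(\id)=\id$. Consequently the displayed relation is equivalent to $A_V=\varrho(A_W)$.

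Putting these together gives
$$
K=G_2'\cap\big(\SO(V)\times\SO(W)\big)=\big\{(\varrho(A_W),A_W)\mid A_W\in\SO(W)\big\},
$$
and the projection to the second factor is a continuous bijective group homomorphism $K\to\SO(W)$ (injectivity is immediate, surjectivity is the above description, and $\varrho$ is a homomorphism by contravariance of pullback), whence $K\cong\SO(W)=\SO(4)$.

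The computations involved are all elementary; the step that needs genuine care is the third one --- keeping the variance straight (pullback versus the intrinsic $\SO(W)$-action on $\bigwedge^2_-W^*$) and noting that, although $\Psi$ is only a $\sqrt2$-scaled isometry, conjugation by it still carries the $\SO(W)$-action into $\mathrm O(V)$, so that $\varrho$ genuinely takes values in $\SO(V)$ as asserted. As an independent check one can instead verify the infinitesimal statement from Corollary \ref{coro_g2}: setting all of $b_1,\dots,b_6,c_1,c_2$ to zero there leaves exactly the $\mathfrak{so}(W)$-block (parametrized by $a_1,\dots,a_6$, hence $\cong\mathfrak{so}(4)$) together with a linear image of it in the $\mathfrak{so}(V)$-block, which is $d\varrho$.
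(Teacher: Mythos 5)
Your proof is correct and follows essentially the same route as the paper: you reduce preservation of $\varphi$ by a block-diagonal element to preservation of its component in $V^*\otimes\bigwedge^2W^*$ (your graded decomposition is just a repackaging of the paper's case-by-case check of index triples), and then identify that condition with $A_V=\varrho(A_W)$ via the map \eqref{eqn_mapvw}. Your extra remarks on the variance, the $\sqrt{2}$ scaling, and why $\varrho$ lands in $\SO(V)$ are fine but do not change the argument.
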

\begin{proof}
	By definition, $A\in \mathrm{O}(3,4)$ is in $G_2'$ if and only if 
	\begin{equation}\label{eqn_proofmaximalcompact}
		\varphi(A(e_i),A(e_j),A(e_k))=\varphi(e_i,e_j,e_k)\ \text{ for all } 1\leq i<j<k\leq 7.
	\end{equation}
	Now assume $A=(A_V,A_W)\in\SO(V)\times\SO(W)$. By the expression \eqref{eqn_3form} of $\varphi$, we infer that 
	\begin{itemize}
		\item if $(i,j,k)=(1,2,3)$, then both sides of \eqref{eqn_proofmaximalcompact} are equal to $1$;
		\item if $i\leq 3$ and $j,k\geq4$, then the left-hand side of \eqref{eqn_proofmaximalcompact} can be rewritten as $\varphi(A_V(e_i),A_W(e_j),A_W(e_k))$;
		\item if $(i,j,k)$ is in neither of the above cases, then both sides of \eqref{eqn_proofmaximalcompact} vanish.
	\end{itemize}
As a consequence, such an $A$ belongs to $K$ if and only if
	\begin{equation}\label{eqn_proofmaximalcompact2}
		\varphi(A_V(v),A_W(w_1),A_W(w_2))=\varphi(v,w_1,w_2)\ \text{ for all } v\in V,\ w_1,w_2\in W.
	\end{equation}
	On the other hand, by the definition \eqref{eqn_mapvw} of the identification $V\cong\bigwedge^2_-W^*$, we have $A_V=\varrho(A_W)$ if and only if
	$\big(A_V(v)\lrcorner\varphi\big)\big|_W =A_W^*(v\lrcorner\varphi)|_W$ for all $v\in V$,
	or equivalently,
	$$
	\varphi(A_V(v),w_1,w_2)=\varphi(v,A_W^{-1}(w_1),A_W^{-1}(w_2))\ \text{ for all } v\in V,\ w_1,w_2\in W.
	$$
	This is equivalent to \eqref{eqn_proofmaximalcompact2}, so the required statement follows.
\end{proof}
\begin{remark}
	There are well known exceptional isomorphisms 
	$$
	\SO(4)\cong\big(\SU(2)\times\SU(2)\big)\big/\{\pm I\}\cong \SU(2)\rtimes \SO(3).
	$$ 
	The first one results from quaternions, while the second is given by the diagonal $\SO(3)\cong\SU(2)/\{\pm I\}$ in $(\SU(2)\times\SU(2))/\{\pm I\}$ and the first component $\SU(2)$. The homomorphism $\varrho$ in Proposition \ref{prop_maximalcompact} is essentially the projection $\SO(4)\cong\SU(2)\rtimes\SO(3)\to\SO(3)$ whose kernel is $\SU(2)$.
\end{remark}

\begin{remark}\label{rk_p}
By using the matrix representation of $\frak{g}_2'$ in Corollary \ref{coro_g2}, it can be checked that
the subspace $\frak{p}\subset\frak{g}_2'$ in the Cartan decomposition $\frak{g}_2'=\frak{k}\oplus\frak{p}$ (where $\frak{k}\cong\frak{so}(4)$ is the Lie algebra of $K$) is isomorphic, as a $K$-module, to the subspace $\Hom_0(W,V):=\big\{T\in\Hom(W,V)\,\big|\, \Tr_{12}(T)=0\big\}$ of
$\Hom(W,V)\cong\Hom(W,\bigwedge^2_-W^*)= W^*\otimes\bigwedge^2_-W^*\subset W^*\otimes W^*\otimes W^*$, where $\Tr_{12}$ is defined by
$$
\Tr_{12}:W^*\otimes W^*\otimes W^*\to W^*,\quad T(\sth,\sth,\sth)\mapsto \sum_{i=4}^7T(e_i,e_i,\sth)
$$
(the contraction of the first two slots of any $3$-tensor on $W$ with the metric $-\langle\sth,\sth\rangle|_W$). 
\end{remark}

\subsection{The almost complex $\H^{4,2}$ and the symmetric space $X_{G_2'}$}\label{subsec_almostcomplex}
%In the same way as how the canonical complex structures on $\mathbb{S}^2\subset\R^3$ and $\H^2\subset\R^{2,1}$ can be deduced from the Euclidean and Minkowski cross products, 
The cross product on $\R^{3,4}$ induces an almost complex structure $\ac{J}$ on the pseudosphere $\mathbb{S}^{2,4}=\{x\in\R^{3,4}\mid \langle x,x\rangle=1\}
$ by
$$
\ac{J}(X):=x\times X\quad \text{ for all } x\in \mathbb{S}^{2,4},\ X\in \T_x\mathbb{S}^{2,4}\cong x^\perp.
$$
Using equations \eqref{eqn_crossproduct2} and \eqref{eqn_crossproduct4} in Lemma \ref{lemma_crossproduct}, one may check that $\ac{J}$ is indeed an almost complex structure (i.e.\ $\ac{J}$ preserves $x^\perp$ and satisfies $\ac{J}^2=-\id$) and is \emph{orthogonal} with respect to the pseudo-Riemannian metric in the sense that it is an isometry of each $\T_x\mathbb{S}^{2,4}$. Moreover, it is clear that the antipodal map $\tau(x)=-x$ of $\mathbb{S}^{4,2}$ is $\ac{J}$-anti-holomorphic, namely $\dif\tau\circ\ac{J}=-\ac{J}\circ\dif\tau$.

%that $\ac{J}$ is indeed an endomorphism of $\T_x\mathbb{S}^{2,4}$ preserving the quadratic form and satisfying $\ac{J}^2=-\id$.

It is known that $\ac{J}$ is non-integrable and has the property (c.f.\ \S \ref{subsec_jholominimal} below)
$$
(\nabla_X\ac{J})Y=-(\nabla_Y\ac{J})X\quad \text{ for all } X,Y\in\T \mathbb{S}^{2,4},
$$ 
 where $\nabla$ is the Levi-Civita connection of $\mathbb{S}^{2,4}$. In fact, it can be shown by computations that
$$
(\nabla_X\ac{J})Y=(X\times Y)^{x^\perp}\quad\text{ for all } x\in\mathbb{S}^{2,4}\text{ and } X,Y\in \T_x\mathbb{S}^{4,2}\cong x^\perp,
$$
where $(y)^{x^\perp}:=y-\langle x,y\rangle x$ is the $x^\perp$-component of $y\in\R^{3,4}$ under the splitting $\R^{3,4}=x\R\oplus x^\perp$.

We henceforth consider $(\mathbb{S}^{2,4},\ac{J})$ with metric multiplied by $-1$ and call it \emph{the almost complex $\H^{4,2}$} (see the last paragraph of \S \ref{subsec_pseudohyperbolic}).
%(c.f.\ \S \ref{subsec_pseudohyperbolic}; the reason for this convention is that our discussion of this space largely integrates into that of general $\H^{p,q}$).
In this model of $\H^{4,2}$, timelike totally geodesic $2$-spheres $S\subset\H^{4,2}$ correspond to $3$-dimensional positive definite subspaces $V'\subset\R^{3,4}$ via the relation $S=V'\cap \H^{4,2}$. The almost complex structure $\ac{J}$ singles out a subclass of such spheres, namely the $\ac{J}$-holomorphic ones:
\begin{lemma}\label{lemma_sphere}
For any timelike totally geodesic $2$-sphere $S=V'\cap\H^{4,2}$, where $V'\subset\R^{3,4}$ is a $3$-dimensional positive definite subspace, the following conditions are equivalent to each other: 
\begin{enumerate}[label=(\roman*)]
	\item\label{item_lemmasphere1} $\T_x S\subset\T_x\H^{4,2}$ is preserved by $\ac{J}$ for some $x\in S$;
	\item\label{item_lemmasphere2} $S$ is a $\ac{J}$-holomorphic curve (i.e.\ $\T_x S$ is preserved by $\ac{J}$ for all $x\in S$);
	\item\label{item_lemmasphere3} $V'$ is closed under the cross product operation.
\end{enumerate}
\end{lemma}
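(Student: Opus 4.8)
The plan is to prove the chain \ref{item_lemmasphere3} $\Rightarrow$ \ref{item_lemmasphere2} $\Rightarrow$ \ref{item_lemmasphere1} $\Rightarrow$ \ref{item_lemmasphere3}. First note that since $x\in S\subset V'$ with $\langle x,x\rangle=1$ and $V'$ is positive definite, for every $x\in S$ we have $\T_x S=\T_x\H^{4,2}\cap V'=x^\perp\cap V'$, a $2$-dimensional positive definite plane, and $V'=\R x\oplus\T_x S$ orthogonally. All of this follows from the definitions, and the rest of the argument is an exercise in the identities of Lemma \ref{lemma_crossproduct}.

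For \ref{item_lemmasphere3} $\Rightarrow$ \ref{item_lemmasphere2}: assume $V'$ is closed under $\times$. Given $x\in S$ and $X\in\T_x S\subset V'$, we have $\ac{J}(X)=x\times X\in V'$ by closure, while $\langle x\times X,x\rangle=0$ by \eqref{eqn_crossproduct1}; hence $\ac{J}(X)\in x^\perp\cap V'=\T_x S$. Thus $\T_x S$ is $\ac{J}$-invariant for every $x$, which is \ref{item_lemmasphere2}, and \ref{item_lemmasphere2} $\Rightarrow$ \ref{item_lemmasphere1} is trivial.

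The substance is \ref{item_lemmasphere1} $\Rightarrow$ \ref{item_lemmasphere3}. Fix $x\in S$ with $\T_x S$ $\ac{J}$-invariant and pick an orthonormal basis $\{u,v\}$ of $\T_x S$, so $\{x,u,v\}$ is an orthonormal basis of $V'$. By \eqref{eqn_crossproduct1} and \eqref{eqn_crossproduct3}, $\ac{J}(u)=x\times u$ is a unit vector orthogonal to $x$ and to $u$; since it also lies in $\T_x S$ by hypothesis, after possibly replacing $v$ by $-v$ we may assume $x\times u=v$. Then \eqref{eqn_crossproduct4} gives $x\times v=x\times(x\times u)=-u$, hence $v\times x=u$. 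It remains to locate $u\times v$: it is orthogonal to $u$ and to $v$ by \eqref{eqn_crossproduct1}, has unit norm by \eqref{eqn_crossproduct3}, and $\langle u\times v,x\rangle=\langle v\times x,u\rangle=\langle u,u\rangle=1$ by \eqref{eqn_crossproduct2}. Writing $u\times v=x+w$ with $w$ orthogonal to $x,u,v$, the unit-norm condition forces $\langle w,w\rangle=0$; but $w$ lies in the orthogonal complement of the positive definite space $V'$, which is negative definite, so $w=0$ and $u\times v=x\in V'$. By skew-symmetry and bilinearity, every product of two vectors in $V'=\Span\{x,u,v\}$ again lies in $V'$, which is \ref{item_lemmasphere3}.

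The one step that is not purely formal is the identification $u\times v=x$: the three inner products of $u\times v$ with $x,u,v$ do not by themselves determine a vector in the signature-$(1,4)$ space $\{u,v\}^\perp$, and one must use the negative-definiteness of $(V')^\perp$ (equivalently, one could instead invoke the vanishing of the associator $[u,x,u]=0$ in the alternative algebra $\O'$, as exploited in the proof of Lemma \ref{lemma_crossproduct}). Everything else is a direct substitution into \eqref{eqn_crossproduct1}--\eqref{eqn_crossproduct4}.
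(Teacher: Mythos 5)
Your proof is correct, and it follows essentially the same route as the paper's: the two easy implications, plus \ref{item_lemmasphere1} $\Rightarrow$ \ref{item_lemmasphere3} via an adapted orthonormal basis $\{x,u,v\}$ of $V'$ with $x\times u=v$ and $x\times v=-u$. The only divergence is in how the key product $u\times v$ is identified: the paper notes that $\ac{J}$-invariance also gives $u=-\ac{J}v=-x\times v$, so $u\times v=-(x\times v)\times v=x$ drops out in one line from \eqref{eqn_crossproduct4}, with no reference to the ambient metric; you instead determine $u\times v$ from its inner products with $x,u,v$ via \eqref{eqn_crossproduct1}--\eqref{eqn_crossproduct3} and then kill the residual component $w$ using the negative-definiteness of $(V')^\perp$. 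Your variant is valid — and you rightly flag that the three inner products alone do not determine a vector in the signature-$(1,4)$ complement of $\{u,v\}$, so the signature (or alternativity) input is genuinely needed in your version — but the paper's purely algebraic computation is shorter and avoids that extra consideration.
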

\begin{proof}
The implication ``\ref{item_lemmasphere2}$\Rightarrow$\ref{item_lemmasphere1}'' is trivial and ``\ref{item_lemmasphere3}$\Rightarrow$\ref{item_lemmasphere2}'' follows easily from the definition of $\ac{J}$. To show ``\ref{item_lemmasphere1}$\Rightarrow$\ref{item_lemmasphere3}'', we assume \ref{item_lemmasphere1} and take an orthonormal basis $(y,z)$ of $\T_xS\cong V'\cap x^\perp$ such that $z=\ac{J}y=x\times y$ and $y=-\ac{J}z=-x\times z$. Then $V'$ is spanned by $(x,y,z)$, whereas $x\times y=z,x\times z=-y$ and $y\times z=-(x\times z)\times z=x$ (see \eqref{eqn_crossproduct4} in Lemma \ref{lemma_crossproduct}) are all in $V'$, hence \ref{item_lemmasphere3} holds.
\end{proof}
%In this case, we call $S$ a \emph{timelike totally geodesic $\mathbb{CP}^1$} and call $V'$ a \emph{$\times$-closed $\mathbb{E}^3$}. 
The most obvious instance of $V'$ satisfying \ref{item_lemmasphere3} is the above $V=\R e_1\oplus\R e_2\oplus \R e_3$, whose stabilizer in $G_2'$ is the maximal compact subgroup $K\cong\SO(4)$. Meanwhile, we have:
\begin{proposition}
$G_2'$ acts transitively on the space of $3$-dimensional positive definite subspaces of $\R^{3,4}$ that are closed under the cross product.
\end{proposition}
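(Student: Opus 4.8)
The plan is to deduce transitivity directly from the characterization of $G_2'$-bases in Proposition~\ref{prop_g2basis}. Concretely, given a $3$-dimensional positive definite subspace $V'\subset\R^{3,4}$ closed under the cross product, I would construct a $G_2'$-basis $\wt e_1,\dots,\wt e_7$ whose first three vectors span $V'$; then the element $A\in G_2'$ defined by $A(e_i)=\wt e_i$ sends the standard subspace $V=\Span(e_1,e_2,e_3)$ onto $V'$, and since $G_2'$ is a group and this works for every such $V'$, the action is transitive.

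To build the frame I would first pick an orthonormal basis $(f_1,f_2,f_3)$ of $V'$, possible since $\langle\sth,\sth\rangle|_{V'}$ is positive definite. Using \eqref{eqn_crossproduct1}, \eqref{eqn_crossproduct2} (together with $f_2\times f_2=0$) and \eqref{eqn_crossproduct3}, the vector $f_1\times f_2$ is orthogonal to both $f_1$ and $f_2$ and satisfies $\langle f_1\times f_2,f_1\times f_2\rangle=1$; since it also lies in $V'$ by the closure hypothesis, it must equal $\pm f_3$, and after replacing $f_3$ by $-f_3$ if necessary I may assume $f_1\times f_2=f_3$. Set $\wt e_1:=f_1$ and $\wt e_2:=f_2$. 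For $\wt e_4$, note that $V'$ is nondegenerate, so $\R^{3,4}=V'\oplus V'^\perp$ orthogonally and the signature forces $\langle\sth,\sth\rangle|_{V'^\perp}$ to be negative definite of dimension $4$; pick any $\wt e_4\in V'^\perp$ with $\langle\wt e_4,\wt e_4\rangle=-1$. Then $\wt e_1,\wt e_2,\wt e_4$ are mutually orthogonal with the correct self-pairings, and $\langle\wt e_1\times\wt e_2,\wt e_4\rangle=\langle f_3,\wt e_4\rangle=0$ because $f_3\in V'$ and $\wt e_4\in V'^\perp$ --- which is exactly condition \ref{item_lemmag2basis1} of Proposition~\ref{prop_g2basis}. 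Defining the remaining $\wt e_i$ by the formulas in condition \ref{item_lemmag2basis2} (which in particular gives $\wt e_3=\wt e_1\times\wt e_2=f_3$), we obtain a $G_2'$-basis with $\Span(\wt e_1,\wt e_2,\wt e_3)=V'$, as desired.

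Everything here is essentially formal once Proposition~\ref{prop_g2basis} is in hand, so I do not expect a genuine obstacle. The only two points worth spelling out carefully are the sign normalization $f_1\times f_2=f_3$ (forced by \eqref{eqn_crossproduct3} up to the orientation of $f_3$) and the observation that $V'^\perp$ is automatically negative definite, so that a vector $\wt e_4$ with $\langle\wt e_4,\wt e_4\rangle=-1$ exists.
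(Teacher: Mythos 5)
Your proof is correct and follows essentially the same route as the paper: choose an orthonormal pair in $V'$ and a unit-timelike vector in $V'^\perp$, verify condition \ref{item_lemmag2basis1} of Proposition~\ref{prop_g2basis} (the key point being that $\wt e_1\times\wt e_2\in V'$ by closure, hence orthogonal to $\wt e_4$), and let the resulting $G_2'$-basis carry $V$ to $V'$. The extra details you supply — the sign normalization $f_1\times f_2=f_3$ and the negative definiteness of $V'^\perp$ — are fine but not essential changes to the argument.
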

\begin{proof}
Let $V'$ be any such subspace. Pick an orthonormal pair of vectors $(\wt{e}_1,\wt{e}_2)$ in $V'$ and a vector $\wt{e}_4\in (V')^\perp$ with $\langle \wt{e}_4,\wt{e}_4\rangle=-1$. Then $\wt{e}_1$, $\wt{e}_2$ and $\wt{e}_4$ satisfy Condition \ref{item_lemmag2basis1} in Proposition \ref{prop_g2basis}, hence there exists $A\in G_2'$ sending the basis $e_1,e_2,e_3$ of $V$ to the basis $\wt{e}_1,\wt{e}_2,\wt{e}_3:=\wt{e}_1\times \wt{e}_2$ of $V'$. Therefore, $V'$ is conjugate to $V$ by the $G_2'$-action.
\end{proof}

Thus, we obtain the following description for the symmetric space of $G_2'$:
\begin{corollary}\label{coro_xg2}
The Riemannian symmetric space $X_{G_2'}:=G_2'/K$ is naturally identified as
\begin{align*}
	X_{G_2'}
	=&\big\{\text{3d positive definite subspaces of $\R^{3,4}$ closed under cross product}\big\}\\
	=&\big\{\text{timelike totally geodesic $\ac{J}$-holomorphic $2$-spheres in $\H^{4,2}$}\big\}
\end{align*}
and is a totally geodesic submanifold of $X_{\SO_0(3,4)}=\big\{\text{timelike totally geodesic $2$-spheres in $\H^{4,2}$}\big\}$.
\end{corollary}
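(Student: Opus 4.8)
The plan is to derive the two displayed identifications from the transitivity statement just established together with a stabilizer computation, and then to establish the totally-geodesic claim via the Lie-triple-system criterion for totally geodesic submanifolds of a Riemannian symmetric space.

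\emph{The identifications.} By the preceding proposition, $G_2'$ acts transitively on the set $\mathcal{S}$ of $3$-dimensional positive definite subspaces of $\R^{3,4}$ that are closed under the cross product. Since $G_2'\subseteq\SO_0(3,4)$ (Corollary \ref{coro_g2}) and the stabilizer of $V=\R e_1\oplus\R e_2\oplus\R e_3$ in $\SO_0(3,4)$ is $\SO(V)\times\SO(W)$ (see \S\ref{subsec_pseudohyperbolic}), the stabilizer of $V\in\mathcal{S}$ in $G_2'$ is $G_2'\cap\big(\SO(V)\times\SO(W)\big)=K$. Hence the orbit--stabilizer correspondence identifies $G_2'/K$ with $\mathcal{S}$. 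By the equivalence \ref{item_lemmasphere2}$\Leftrightarrow$\ref{item_lemmasphere3} of Lemma \ref{lemma_sphere}, $\mathcal{S}$ is precisely the set of timelike totally geodesic $\ac{J}$-holomorphic $2$-spheres in $\H^{4,2}$, so both identifications in the statement follow. Since $\big\{\text{timelike totally geodesic $2$-spheres in $\H^{4,2}$}\big\}$ is identified with $\big\{\text{$3$-dimensional positive definite subspaces of $\R^{3,4}$}\big\}$ via $S=V'\cap\H^{4,2}$ (as recalled before Lemma \ref{lemma_sphere}), the inclusion $\mathcal{S}\hookrightarrow\big\{\text{$3$-dim.\ positive definite subspaces}\big\}$ exhibits $X_{G_2'}$ as a $G_2'$-invariant subset of $X_{\SO_0(3,4)}$; it remains to see that this subset is a totally geodesic submanifold.

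\emph{The totally-geodesic claim.} Let $o\in X_{\SO_0(3,4)}$ be the point corresponding to $V$, write $\mathfrak{so}(3,4)=\mathfrak{k}_0\oplus\mathfrak{p}$ for the Cartan decomposition with $\mathfrak{k}_0=\mathfrak{so}(V)\oplus\mathfrak{so}(W)$, and let $\theta=\Ad\big(\diag(I_3,-I_4)\big)$ be the corresponding Cartan involution. The key point is that $\theta$ preserves $\mathfrak{g}_2'$: this is verified directly from the explicit matrices in Corollary \ref{coro_g2}, since negating $b_1,\dots,b_6,c_1,c_2$ while keeping $a_1,\dots,a_6$ produces a matrix of the same form (it is also the general compatibility of Cartan involutions for the reductive pair $(\mathfrak{so}(3,4),\mathfrak{g}_2')$, the maximal compact $K$ of $G_2'$ sitting inside $\SO(V)\times\SO(W)$). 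Thus $\mathfrak{g}_2'=\mathfrak{k}\oplus\mathfrak{m}$ with $\mathfrak{k}:=\mathfrak{g}_2'\cap\mathfrak{k}_0$ (the Lie algebra of $K$) and $\mathfrak{m}:=\mathfrak{g}_2'\cap\mathfrak{p}$; and since $\mathfrak{g}_2'$ is a subalgebra while $[\mathfrak{p},\mathfrak{p}]\subseteq\mathfrak{k}_0$ and $[\mathfrak{k}_0,\mathfrak{p}]\subseteq\mathfrak{p}$, we get $\big[[\mathfrak{m},\mathfrak{m}],\mathfrak{m}\big]\subseteq[\mathfrak{k},\mathfrak{m}]\subseteq\mathfrak{m}$, i.e.\ $\mathfrak{m}$ is a Lie triple system in $\mathfrak{p}$. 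From the Cartan decomposition $G_2'=K\exp(\mathfrak{m})$ and $K\cdot o=o$, the orbit $G_2'\cdot o$ equals $\exp_o(\mathfrak{m})$, which by the classical correspondence between Lie triple systems in $\mathfrak{p}$ and complete totally geodesic submanifolds of $X_{\SO_0(3,4)}$ through $o$ is totally geodesic. Finally, the metric induced on $G_2'\cdot o$ is $G_2'$-invariant, and since $G_2'$ is simple the isotropy $K$-module $\mathfrak{m}$ is irreducible, so this metric is a positive multiple of the symmetric-space metric of $X_{G_2'}$; with the usual normalization, the natural map $X_{G_2'}\to X_{\SO_0(3,4)}$ is therefore a totally geodesic isometric embedding.

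\emph{Anticipated difficulty.} The whole argument rests on two verifications: that the $G_2'$-stabilizer of $V$ is exactly $K$ — immediate from the stabilizer computation already recorded in \S\ref{subsec_pseudohyperbolic} — and that $\mathfrak{g}_2'$ is $\theta$-stable. The second is the only step with real computational content, though it is routine given the explicit $\mathfrak{g}_2'$ of Corollary \ref{coro_g2}. Everything else (the Lie-triple identity, the exponential description of the orbit, the metric comparison) is formal, so I expect no serious obstacle beyond keeping careful track of which of $V$, $W$ is the basepoint in each symmetric space.
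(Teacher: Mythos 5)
Your proposal is correct and follows essentially the same route as the paper: the two identifications come from the transitivity proposition together with the stabilizer computation $G_2'\cap(\SO(V)\times\SO(W))=K$ and Lemma \ref{lemma_sphere}, and the totally-geodesic claim is exactly the Lie-triple-system criterion (the paper cites \cite[Chapt.\ IV, Thm.\ 7.2]{helgason} for this), which you verify explicitly via the $\theta$-stability of $\frak{g}_2'$ read off from Corollary \ref{coro_g2}. The extra details you supply (orbit $=\exp_o(\frak{m})$, irreducibility of the isotropy module) are correct fillings-in of what the paper leaves implicit.
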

Here, the claim that $X_{G_2'}$ is totally geodesic in $X_{\SO_0(3,4)}$ can be shown by using the criterion \cite[Chapt.\ IV, Thm.\ 7.2]{helgason} for totally geodesic submanifolds in symmetric spaces. 
%and noting that a Cartan decomposition $\frak{g}_2'=\frak{k}\oplus\frak{p}$ is given by intersecting $\frak{g}_2'$ with the Cartan decomposition of $\frak{so}(3,4)$ (see \S \ref{subsec_maximalcompact}).

\subsection{A matrix representation of $\frak{g}_2^\C$}\label{subsec_g2complex}
Let us turn to the complex Lie algebra $\frak{g}_2^\C:=\frak{g}_2'\otimes\C$.
In view of the matrix representation of $\frak{g}_2'$ in Corollary \ref{coro_g2}, we may identify $\frak{g}_2^\C$ as the algebra of complex matrices of the same form. However, with this form, it is difficult to single out a Cartan subalgebra and a principal $3$-dimensional subalgebra, which are important for Higgs bundles. We now exhibit another matrix representation which makes these subalgebras explicit (at the cost of making the real form $\frak{g}_2'$ implicit and dependent on a hermitian metric):
\begin{proposition}\label{prop_g2complex}
Consider the complex $3$-form $\varpi$ on $\C^7$ given by
$$
\varpi:=\ima\big(\vae^{147}+\vae^{246}-\vae^{345}-\scalebox{0.8}{$2\sqrt{2}$}\,\vae^{156}-\tfrac{1}{\sqrt{2}}\vae^{237}\big)\in\textstyle{\bigwedge^3}\C^{7*}
$$
(the factor $\ima$ is inessential here but needed for Proposition \ref{prop_g2realform}), where we let $(\vae_1,\cdots,\vae_7)$ be the standard basis of $\C^7$, $(\vae^1,\cdots,\vae^7)$ be the dual basis of $\C^{7*}$, and write $\vae^{ijk}:=\vae^i\wedge\vae^j\wedge\vae^k$. Then
an endomorphism $X\in\frak{gl}_7\C$ of $\C^7$ preserves $\varpi$ infinitesimally if and only if it has the form
\begin{equation}\label{eqn_g2complex}
	X=
\scalebox{0.9}{$
\left(
\begin{array}{ccc|c|ccc}
	t_1+t_2&x_1&x_2&x_3&x_4&x_5&0\\[0.25cm]
	y_1&t_1&x_6&\scalebox{0.8}{$-2\sqrt{2}$}\,x_2&\scalebox{0.8}{$-\sqrt{2}$}\,x_3&0&x_5\\[0.25cm]
	y_2&y_6&t_2&\scalebox{0.8}{$2\sqrt{2}$}\,x_1&0&\scalebox{0.8}{$-\sqrt{2}$}\,x_3&-x_4\\[0.1cm]
	\midrule
	y_3&-\tfrac{y_2}{\sqrt{2}}&\tfrac{y_1}{\sqrt{2}}&0&\scalebox{0.8}{$2\sqrt{2}$}\,x_1&\scalebox{0.8}{$2\sqrt{2}$}\,x_2&x_3\\
	\midrule
	y_4&-\tfrac{y_3}{2\sqrt{2}}&0&\tfrac{y_1}{\sqrt{2}}&-t_2&x_6&-x_2\\[0.25cm]
	y_5&0&-\tfrac{y_3}{2\sqrt{2}}&\tfrac{y_2}{\sqrt{2}}&y_6&-t_1&x_1\\[0.25cm]
	0&y_5&-y_4&y_3&-y_2&y_1&-t_1-t_2
\end{array}
\right)
$}
\end{equation}
with $t_1,t_2,x_1,\cdots,x_6,y_1,\cdots,y_6\in\C$. 
 The Lie algebra formed by all such $X$'s is isomorphic to $\frak{g}_2^\C$ and is contained in the Lie algebra $\frak{so}(Q)\cong\frak{so}(7,\C)$ infinitesimally preserving the quadratic form
$$
Q:=
\scalebox{0.8}{$
\mat{&&&&&&-1\\&&&&&1&\\&&&&-1&&\\&&&1&&&\\&&-1&&&&\\&1&&&&&\\-1&&&&&&}
$}.
$$
\end{proposition}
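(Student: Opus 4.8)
The plan is to verify Proposition \ref{prop_g2complex} by a direct structural computation, organized so that the bulk of the calculation is pushed onto a computer algebra system rather than done by hand. First I would set up the infinitesimal stabilizer condition: for $X\in\frak{gl}_7\C$, the derivative of the $\GL(7,\C)$-action on $\bigwedge^3\C^{7*}$ is $(X\cdot\varpi)(u,v,w)=-\varpi(Xu,v,w)-\varpi(u,Xv,w)-\varpi(u,v,Xw)$, and $X$ preserves $\varpi$ infinitesimally iff $X\cdot\varpi=0$. This is a linear system in the $49$ entries of $X$; expanding $\varpi$ in the basis $\vae^{ijk}$ and collecting the $\binom{7}{3}=35$ coefficients gives $35$ linear equations on the $49$ unknowns. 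Solving this system, I expect the solution space to be $14$-dimensional and to be exactly the set of matrices of the form \eqref{eqn_g2complex} parametrized by $t_1,t_2,x_1,\dots,x_6,y_1,\dots,y_6$; carrying out this elimination is precisely the ``cumbersome computation best done with a computer algebra system'' flagged in Remark \ref{rk_g22}, and it is the step I expect to be the main obstacle in the sense that it is long, though not conceptually deep.

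Next I would identify the resulting Lie algebra with $\frak{g}_2^\C$. Since the real form $\frak{g}_2'=\frak{g}_2'\otimes\R$ was defined in Corollary \ref{coro_g2} as the infinitesimal stabilizer of the real $3$-form $\varphi$ in \eqref{eqn_3form}, and a generic $3$-form in the open $\GL(7,\C)$-orbit has stabilizer $\frak{g}_2^\C$ (Remark \ref{rk_g23}), it suffices to check that $\varpi$ lies in that open orbit, equivalently that the stabilizer computed above is $14$-dimensional and semisimple: a $14$-dimensional Lie subalgebra of $\frak{gl}_7\C$ arising as the stabilizer of a $3$-form in the open orbit is necessarily isomorphic to $\frak{g}_2^\C$. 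Concretely one can exhibit an explicit $A\in\GL(7,\C)$ with $A^*\varpi=\varphi_\C$ (the complexification of $\varphi$), which simultaneously proves $\varpi$ is in the open orbit and conjugates one stabilizer to the other; finding such an $A$ amounts to matching the two $3$-forms after a change of basis, again a finite linear-algebra task.

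For the last assertion — that every such $X$ lies in $\frak{so}(Q)$ — I would verify directly that the form of the matrix in \eqref{eqn_g2complex} satisfies $\transp{X}Q+QX=0$ for all values of the parameters; this is a matter of reading off the antisymmetry pattern imposed by $Q$ (which pairs $\vae_i$ with $\vae_{8-i}$ up to sign) against the explicit entries, and it is immediate once \eqref{eqn_g2complex} is in hand. Conceptually this reflects the fact that $G_2'\subset\SO_0(3,4)$ already established in Corollary \ref{coro_g2}, transported through the change of basis diagonalizing $Q$; alternatively, since $\varpi$ determines a nondegenerate symmetric form on $\C^7$ via the standard $G_2$-construction (contracting $\varpi\wedge\varpi$ against the volume form), and that form must be a scalar multiple of $Q$ by $G_2'$-invariance, the inclusion $\frak{g}_2^\C\subset\frak{so}(Q)$ is automatic. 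I would present the $Q$ check as the quick closing step and relegate the coefficient-matching elimination for $\varpi$ to a remark noting it was done by machine, in keeping with the paper's stated practice.
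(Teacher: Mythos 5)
Your proposal follows essentially the same route as the paper: the explicit form \eqref{eqn_g2complex} is obtained by machine-solving the linear system $X\cdot\varpi=0$, the identification with $\frak{g}_2^\C$ rests on an explicit change of basis carrying $\varpi$ to (the complexification of) $\varphi$ — which is precisely what the matrix $B$ in the proof of Proposition \ref{prop_g2realform} supplies — and the containment in $\frak{so}(Q)$ is the direct check $\transp{X}Q+QX=0$. Your extra embellishments (the open-orbit dimension count, and the bilinear form built from $\varpi$) are harmless alternatives, though the latter is mildly circular as stated, since showing that the induced form is proportional to $Q$ is itself a computation rather than a consequence of invariance alone.
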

\begin{proof}
The first statement can be checked by direct computations with a computer algebra software. Alternatively, we may use the explicit conjugation in the proof of Proposition \ref{prop_g2realform} below to bring the $3$-form $\varphi$ considered earlier to $\varpi$ and check that the conjugation brings  the matrices in Corollary \ref{coro_g2} to the ones here. We omit the details. The fact that $X\in\frak{so}(Q)$ is checked by a simple computation. Finally, the statement that the Lie algebra formed by these $X$'s is isomorphic to $\frak{g}_2^\C:=\frak{g}_2'\otimes\C$ also follows from  Proposition \ref{prop_g2realform} below, where we exhibit explicit real forms isomorphic to $\frak{g}_2'$.
\end{proof}
In this matrix representation of $\frak{g}_2^\C$, the subspace $\frak{h}$ of diagonal elements is a Cartan subalgebra. It is easy to find out explicitly the corresponding root system and Chevalley basis, which we do not specify here. An $\frak{h}$-principal $3$-dimensional subalgebra $\frak{s}\subset\frak{g}_2^\C$ (see \cite{hitchin_lie,labourie_cyclic}) is spanned by
$$
\wt{e}=
\scalebox{0.85}{$
\mat{0&&&&&&\\1&0&&&&&\\&1&0&&&&\\&&\tfrac{1}{\sqrt{2}}&0&&&\\&&&\tfrac{1}{\sqrt{2}}&0&&\\&&&&1&0&\\&&&&&1&0}
$},\quad
a=
\scalebox{0.85}{$
\mat{3&&&&&&\\&2&&&&&\\&&1&&&&\\&&&0&&&\\&&&&\hspace{-0.2cm}-1&&\\&&&&&\hspace{-0.2cm}-2&\\&&&&&&\hspace{-0.2cm}-3}	
$},\quad
e=
\scalebox{0.85}{$
\mat{0&3&&&&&\\&0&5&&&&\\&&0&\hspace{-0.15cm}\scalebox{0.8}{$6\sqrt{2}$}&&&\\&&&0&\hspace{-0.15cm}\scalebox{0.8}{$6\sqrt{2}$}&&\\&&&&0&5&\\&&&&&0&3\\&&&&&&0}
$}.
$$

As fundamental facts in the construction of Hitchin components, to any principal $3$-dimensional subalgebra is associated an involution $\sigma$ of $\frak{g}_2^\C$, and every Cartan involution $\rho$ of $\frak{g}_2^\C$ which commutes with $\sigma$ determines an anti-linear involution $\lambda=\sigma\circ\rho$ whose fixed point set is a split real form  (see \cite{hitchin_lie,labourie_cyclic}). Furthermore, those $\rho$ arising from \emph{cyclic} Higgs bundles preserve $\frak{h}$, and such a $\rho$ is called an \emph{$\frak{h}$-Cartan involution}. For the above $\frak{s}$, the associated $\sigma$ is the usual one $\sigma(x)=-\wh{x}$, where $\wh{x}$ denotes the transpose of $x$ with respect to the diagonal from lower-left to upper-right. The following proposition characterizes all $\frak{h}$-Cartan involutions commuting with $\sigma$ and identifies the resulting split real forms with $\frak{g}_2'$ in a concrete way:
\begin{proposition}\label{prop_g2realform}
	Let $H$ be a hermitian metric on $\C^7$ and $\rho(x)=-x^*$ be the Cartan involution of $\frak{sl}_7\C$ induced by $H$ (which is also the Cartan involution of  the $\frak{so}(Q)$ in Proposition \ref{prop_g2complex}), where $x^*=\cj{H}{}^{-1}\transp{\cj{x}}\cj{H}$ denotes the $H$-adjoint of $x$. Then $\rho$ preserves both the above $\frak{g}_2^\C$ and its Cartan subalgebra $\frak{h}$, and furthermore commutes with $\sigma$, if and only if $H$ has the form
	$$
	H=\diag\big(\tfrac{1}{h_3}\,,\ \tfrac{1}{h_2}\,,\ \tfrac{1}{h_1}\,,\ 1\,,\ h_1\,,\ h_2\,,\ h_3\big)\ \text{ with }h_3=\tfrac{h_1h_2}{4}.
	$$
Moreover, given such an $H$, consider the anti-linear involution $\Lambda$ of $\C^7$ defined by 
	$$
	\Lambda(z_1,\cdots,z_7)=\big(h_3\cj{z}_7\,,\ h_2\cj{z}_6\,,\ h_1\cj{z}_5\,,\ \cj{z}_4\,,\ \tfrac{1}{h_1}\cj{z}_3\,,\ \tfrac{1}{h_2}\cj{z}_2\,,\ \tfrac{1}{h_3}\cj{z}_1\big).
	$$
	Then $Q$ and $\varpi$ from Proposition \ref{prop_g2complex} restrict to a real quadratic form and a real $3$-form on $\Fix(\Lambda)\cong\R^7$, respectively, and $(\Fix(\Lambda), Q,\varpi)$ is isomorphic to $\R^{3,4}$ endowed with the $3$-form $\varphi$ (see \S \ref{subsec_crossproduct}). In particular, $\big\{X\in\frak{g}_2^\C\mid \text{$X$ preserves $\Fix(\Lambda)$}\big\}$ is a real form of $\frak{g}_2^\C$ isomorphic to $\frak{g}_2'$.
\end{proposition}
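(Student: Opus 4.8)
The statement has two essentially independent parts. First, one determines which $\mathfrak{h}$-Cartan involutions $\rho(x)=-x^*$ preserve both $\mathfrak{g}_2^\C$ (in the matrix form of Proposition \ref{prop_g2complex}) and $\mathfrak{h}$, and commute with $\sigma$. Second, given such an $H$, one checks that the anti-linear involution $\Lambda$ cuts out a real seven-dimensional subspace on which $Q$ and $\varpi$ become the standard $\R^{3,4}$ data.

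For the first part, the plan is to start from a general positive-definite hermitian $H$ on $\C^7$. The condition that $\rho$ preserves $\mathfrak{h}$ forces $H$ to be diagonal (a diagonal $x\in\mathfrak{h}$ must be sent to a diagonal matrix by $x\mapsto -\cj H^{-1}\transp{\cj x}\cj H$, which pins down $H$ up to diagonal, since conjugation by a non-diagonal matrix moves a generic diagonal matrix off the diagonal). Write $H=\diag(H_1,\dots,H_7)$ with $H_i>0$. Commuting with $\sigma(x)=-\wh x$ (transpose across the anti-diagonal) is equivalent to $H$ being invariant under the anti-diagonal flip, i.e. $H_i=H_{8-i}$; together with rescaling $H\mapsto cH$ (which does not change $\rho$) this lets me normalize $H_4=1$ and write $H_3=1/h_1$, $H_2=1/h_2$, $H_1=1/h_3$, $H_5=h_1$, $H_6=h_2$, $H_7=h_3$. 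The remaining constraint is that $\rho$ preserve $\mathfrak{g}_2^\C$, i.e. that $-\cj H^{-1}\transp{\cj X}\cj H$ have the shape \eqref{eqn_g2complex} whenever $X$ does; imposing this on the off-diagonal generators (say the entries parametrized by the $x_i$ and $y_i$, comparing the constrained entries like the $-2\sqrt2\,x_2$ and $2\sqrt2\,x_1$ slots against their images) yields exactly one nontrivial relation among $h_1,h_2,h_3$, namely $h_3=h_1h_2/4$. I expect the bookkeeping here — tracking which entry of $X$ controls which entry after the adjoint — to be the main obstacle, though it is elementary; the cleanest route is to test $\rho$ on the principal nilpotent $e$ and on a couple of the $\mathfrak{g}_2^\C$-generators and read off the constraint.

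For the second part, I would verify directly that $\Lambda$ as defined is anti-linear, squares to the identity (using $h_3=h_1h_2/4$ is not even needed for $\Lambda^2=\id$, only $H_iH_{8-i}=1$, which holds), and is compatible with $Q$ and $\varpi$ in the sense that $Q(\Lambda v,\Lambda w)=\cj{Q(v,w)}$ and similarly for $\varpi$ — this uses the specific coefficients $1,1,-1,-2\sqrt2,-1/\sqrt2$ in $\varpi$ and the anti-diagonal form of $Q$, and is where the factor $\ima$ in $\varpi$ and the precise constant $h_3=h_1h_2/4$ pull their weight. Consequently $Q|_{\Fix(\Lambda)}$ and $\varpi|_{\Fix(\Lambda)}$ are real-valued. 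Then I exhibit an explicit real basis of $\Fix(\Lambda)\cong\R^7$ (e.g. $\vae_4$ and the combinations $\vae_i+\Lambda\vae_i$, $\ima(\vae_i-\Lambda\vae_i)$ for $i=1,2,3$, suitably rescaled by powers of the $h_j$), compute the Gram matrix of $Q$ on it to see it has signature $(3,4)$, and compute $\varpi$ on it to match the expression \eqref{eqn_3form} for $\varphi$ after an obvious relabeling and rescaling of the basis vectors; since $G_2'$ was defined as the stabilizer of $(\langle\cdot,\cdot\rangle,\varphi)$ on $\R^{3,4}$, the subalgebra of $\mathfrak{g}_2^\C$ preserving $\Fix(\Lambda)$ is then a real form isomorphic to $\mathfrak{g}_2'$. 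The one point requiring a little care is the normalization: one must choose the rescaling of the real basis so that the $Q$-Gram matrix becomes exactly $\diag(1,1,1,-1,-1,-1,-1)$ and simultaneously $\varpi$ lands on the nose on $\varphi$; the constant $\scalebox{0.8}{$2\sqrt2$}$ (and hence the constant $1/\sqrt2$ appearing in $\mathfrak{s}$ and in $\Phi$) is exactly what makes both normalizations achievable at once, so I would present that computation rather than merely assert it.
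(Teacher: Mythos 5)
Your route is essentially the paper's: for the first half, force $H$ to be diagonal from preservation of $\frak{h}$, then extract the single relation $h_3=\tfrac{h_1h_2}{4}$ by demanding that $\rho(X)$ again have the shape \eqref{eqn_g2complex} for a few generators (testing on $X$ with only $x_1\neq0$ and comparing the resulting $(2,1)$, $(4,3)$, $(5,4)$, $(7,6)$ entries does give exactly $h_1h_2=4h_3$); for the second half, the paper also exhibits an explicit real basis of $\Fix(\Lambda)$ — the columns of a matrix $B$, which are precisely $\vae_4$ together with rescaled $\vae_i+\Lambda\vae_i$ and $\ima(\vae_i-\Lambda\vae_i)$ — and checks that the $Q$-Gram matrix is the standard one of signature $(3,4)$ and that $\varpi$ becomes $e^{123}-e^{145}+e^{167}-e^{246}-e^{257}-e^{347}+e^{356}=\varphi$ in the dual basis. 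So the second half of your plan is the paper's proof, and your observation that the factor $\ima$ and the relation $h_3=\tfrac{h_1h_2}{4}$ are exactly what make $\varpi$ compatible with $\Lambda$ is correct.

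One step in your first half is wrong as stated: commuting with $\sigma$ is \emph{not} equivalent to $H_i=H_{8-i}$. For diagonal real $H$ one finds $\sigma\rho(x)=JH\cj{x}H^{-1}J$ and $\rho\sigma(x)=H^{-1}J\cj{x}JH$, where $J$ is the anti-diagonal permutation matrix; these agree for all $x$ precisely when $\wh{H}H$ is a scalar, i.e.\ when $H_iH_{8-i}$ is independent of $i$, not when $\wh{H}=H$. Your own normalized form $H=\diag(\tfrac{1}{h_3},\tfrac{1}{h_2},\tfrac{1}{h_1},1,h_1,h_2,h_3)$ satisfies $H_iH_{8-i}=1$ and violates $H_i=H_{8-i}$ unless $h_1=h_2=h_3=1$, and the rescaling $H\mapsto cH$ cannot convert one condition into the other, so as written this step does not produce the parametrization you then use. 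Replace the claimed equivalence by $\wh{H}H=cI$ and normalize $c=1$; with that fix (and noting, for the "diagonal" step, that a positive definite $H$ conjugating generic diagonal matrices to diagonal ones cannot involve a nontrivial permutation), the rest of your derivation goes through and matches the paper's "simple computation."
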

\begin{proof}
It is easy to see that if $\sigma$ preserves $\frak{h}$ then $H$ is diagonal. The ``if and only if'' statement then follows from a simple computation. For the ``moreover'' statement, we consider the basis $(e_1,\cdots,e_7)$ of $\C^7$ given by the columns of the matrix
$$
B=\tfrac{1}{\sqrt{2}}
\left(
\scalebox{0.85}{$
\begin{array}{c|cc|cc|cc}
&&&&&\ima h_3^\frac{1}{2}&h_3^\frac{1}{2}\\[0.2cm]
&h_2^\frac{1}{2}&-\ima h_2^\frac{1}{2}&&&&\\[0.2cm]
&&&h_1^\frac{1}{2}&-\ima h_1^\frac{1}{2}&&\\
\midrule
\hspace{-0.2cm}\scalebox{0.9}{$\sqrt{2}$}&&&&&&\\
\midrule
&&&h_1^{-\frac{1}{2}}&\ima h_1^{-\frac{1}{2}}&&\\[0.2cm]
&h_2^{-\frac{1}{2}}&\ima h_2^{-\frac{1}{2}}&&&&\\[0.2cm]
&&&&&-\ima h_3^{-\frac{1}{2}}&h_3^{-\frac{1}{2}}
\end{array}
$}
\right),
$$
or in other words, $(e_1,\cdots,e_7):=(\vae_1,\cdots,\vae_7)B$. Clearly, every $e_i$ is fixed by $\Lambda$, and we have $Q(e_i,e_i)=1$ (resp.\ $-1$) for $i=1,2,3$ (resp.\ $4,5,6,7$) and $Q(e_i,e_j)=0$ for $i\neq j$. Also, noting that the basis $(e^1,\cdots,e^7)$ of $\C^{7*}$ dual to $(e_1,\cdots, e_7)$ satisfies $(\vae^1,\cdots,\vae^7)=(e^1,\cdots,e^7)\,\transp{B}$, we obtain by computations that
$\varpi:=\ima\big(\vae^{147}+\vae^{246}-\vae^{345}-\scalebox{0.8}{$2\sqrt{2}$}\,\vae^{156}-\tfrac{1}{\sqrt{2}}\vae^{237}\big)$ is equal to $e^{123}-e^{145}+e^{167}-e^{246}-e^{257}-e^{347}+e^{356}$, which is exactly the expression of $\varphi$ in the standard coordinates of $\R^{3,4}$. This implies the ``moreover'' statement.
\end{proof}

\begin{remark}\label{rk_conjugate}
The matrix representation of $\frak{g}_2^\C$ considered in this subsection is adapted for the Higgs bundles of the form in the introduction, and is not symmetric in the sense that $X\in\frak{g}_2^\C$ does not imply $\transp{X}\in\frak{g}_2^\C$. This is the reason why the coefficients of the $3$-form $\varpi$ lack symmetry. 
Conjugating this $\frak{g}_2^\C$ by a matrix of the form $\diag\big(c,c,c,1,\frac{1}{c},\frac{1}{c},\frac{1}{c}\big)$ (which preserves $Q$), one may bring  it to a symmetric representation as the one in \cite[p.89]{baraglia_thesis}, for which the condition $h_3=\tfrac{h_1h_2}{4}$ on $H$ becomes $h_3=h_1h_2$. One may also use another conjugation of this type to change the $\tfrac{1}{\sqrt{2}}$ in $\wt{e}$ into $1$, so that the corresponding Higgs bundles have a more familiar form.
\end{remark}

\section{Minimal submanifolds}\label{sec_minimal}
In this section, we collect some preliminary definitions and results about minimal submanifolds.
\subsection{Submanifolds of pseudo-Riemannian manifold}\label{subsec_submanifolds}
Let $(M,\ve{h})$ be a pseudo-Riemannian manifold. Given a smooth submanifold $\Sigma\subset M$, we consider the restricted tangent bundle $\T M|_\Sigma$, which is a vector bundle on $\Sigma$ endowed with the metric $\langle\sth,\sth\rangle$ and connection $\nabla$ induced by $\ve{h}$ and its Levi-Civita connection. Also let ${\ca{T}}:=\T \Sigma\subset \T M|_\Sigma$ be the tangent bundle of $\Sigma$ and $\ve{g}:=\langle\sth,\sth\rangle|_{{\ca{T}}}$ be the first fundamental form. $\Sigma$ is called a \emph{spacelike} (or \emph{Riemannian}) submanifold if $\ve{g}$ is positive definite, which in particular implies that we have an orthogonal splitting $\T M|_\Sigma={\ca{T}}\oplus \ca{N}$, where $\ca{N}$ is the normal bundle, namely the 
orthogonal complement of ${\ca{T}}$ in $\T M|_\Sigma$. 

Since $\nabla$ preseves $\langle\sth,\sth\rangle$, it decomposes as follows under the splitting $\T M|_\Sigma=\ca{T}\oplus\ca{N}$:
\begin{equation}\label{eqn_nablaTN}
\nabla=\mat{\nabla^{\ca{T}}&-A^\dagger\\A&\nabla^\ca{N}}.
\end{equation}
$\nabla^{\ca{T}}$ is just the Levi-Civita connection of $\ve{g}$, while $\nabla^\ca{N}$ is called the \emph{normal connection}. The $1$-forms $A\in\Omega^1(\Sigma,\Hom({\ca{T}},\ca{N}))$ and   $A^\dagger\in\Omega^1(\Sigma,\Hom(\ca{N},{\ca{T}}))$ are related by
$\langle A(X)Y,\xi\rangle=\langle Y,A^\dagger(X)\xi\rangle$ (here and below, we let $X,Y$ and $\xi$ be arbitrary sections of ${\ca{T}}$ and $\ca{N}$, respectively). We recall the following familiar names for  variants of $A$ and $A^\dagger$:
\begin{itemize}
	\item The \emph{second fundamental form} of $\Sigma$ is the $\ca{N}$-valued $2$-tensor $\II(\sth,\sth)$ defined by\footnote{Throughout this paper, given a decomposition $V=V_1\oplus V_2\oplus\cdots$ (of a  vector space or vector bundle), we put a superscript such as $V_1$ after $v\in V$ to denote the corresponding component of $v$.} $\II(X,Y):=A(X)Y:=(\nabla_XY)^\ca{N}$. An basic fact is that it is a \emph{symmetric} tensor. In analogy with the theory of Frenet curves, we call every normal vector in the image of $\II(\sth,\sth)$ an \emph{osculation} vector.
	\item The \emph{shape operator} of $\Sigma$ associated to a given $\xi$ is the $(1,1)$-tensor $\ve{S}_\xi(\sth)$ defined by $\ve{S}_\xi(X):=A^\dagger(X)\xi=-(\nabla_X\xi)^{\ca{T}}$. By the relation between $A$ and $A^\dagger$, we have $\langle \II(X,Y),\xi\rangle=\langle Y,\ve{S}_\xi(X)\rangle$.
\end{itemize}

The section $\Tr_{\ve g}\II(\sth,\sth):=\sum_{i=1}^n\II(e_i,e_i)$ of $\ca{N}$ (where $(e_1,\cdots,e_n)$ is any orthonormal local frame of ${\ca{T}}$) is called the \emph{mean curvature field} of $\Sigma$, and $\Sigma$ is said to be a \emph{minimal} submanifold if $\Tr_{\ve g}\II(\sth,\sth)\equiv0$.

%Let $\nabla^{\ca{T}}$ and $\nabla^\ca{N}$ be the connections on ${\ca{T}}$ and $\ca{N}$ induced by $\nabla$, namely
%$$
%\nabla_X^{\ca{T}}Y:=(\nabla_XY)^{\ca{T}},\quad \nabla_X^\ca{N}\xi:=(\nabla_X\xi)^\ca{N}.
%$$
%Here and below, we let $X,Y$ and $\xi$ be arbitrary sections of ${\ca{T}}$ and $\ca{N}$, respectively, and indicate the ${\ca{T}}$- or $\ca{N}$- component of a section of $\T M|_\Sigma$ by a superscript. It is known that $\nabla^{\ca{T}}$ is exactly the Levi-Civita connection of $\ve{g}$, while $\nabla^\ca{N}$ is called the \emph{normal connection}. On the other hand, the \emph{second fundamental form} and \emph{shape operator} of $\Sigma$ are defined respectively by 
%$$
%\II(X,Y):=(\nabla_XY)^\ca{N},\quad \ve{S}_\xi(X):=-(\nabla_X\xi)^{\ca{T}}.
%$$
%More specifically, $\II(\sth,\sth)$ is an $\ca{N}$-valued $2$-tensor on $\Sigma$ and $\ve{S}_\xi(\sth)$ is a $(1,1)$-tensor for any given $\xi$. They determine each other via the relation
%\begin{equation}\label{eqn_second-shape}
%	\langle\II(X,Y),\xi\rangle=\langle \nabla_XY,\xi\rangle=-\langle Y,\nabla_X\xi\rangle=\langle Y,\ve{S}_\xi(X)\rangle.
%\end{equation}
%$\Sigma$ is said to be \emph{minimal} if the trace $\Tr_{\ve g}\II(\sth,\sth)\in C^\infty(\Sigma,\ca{N})$ is identically zero.

\subsection{Jacobi operator}\label{subsec_jacobi}
Given a Riemannian manifold $(\Sigma,\ve{g})$ and a real vector bundle $\ca{E}$ on $\Sigma$ endowed with a metric $\langle\sth,\sth\rangle$ and a metric-preserving connection $\nabla^\ca{E}$, we have a Laplacian operator $\Delta^\ca{E}:C^\infty(\Sigma,\ca{E})\to	C^\infty(\Sigma,\ca{E})$ defined by
$$
	\Delta^\ca{E}\xi:=\sum_{i=1}^n\big(\nabla_{e_i}^\ca{E}\nabla_{e_i}^\ca{E}\xi-\nabla_{\nabla^{\T \Sigma}_{e_i}e_i}^\ca{E}\xi\big),
$$
where $(e_1,\cdots,e_n)$ is an orthonormal local frame of $\T\Sigma$ and $\nabla^{\T \Sigma}$ is the Levi-Civita connection of $\ve{g}$. For any $\xi_1,\xi_2\in C^\infty(\Sigma,\ca{E})$ with $\xi_1$ compactly supported, we have (see e.g.\ \cite[\S 2.1]{berline-getzler-vergne}):
\begin{equation}\label{eqn_laplacian}
	\int_\Sigma\langle \Delta^\ca{E}\xi_1,\xi_2\rangle\dif\vol_\ve{g}=\int_\Sigma\langle \xi_1,\Delta^\ca{E}\xi_2\rangle\dif\vol_\ve{g}=-\int_\Sigma\Tr_{\ve g}\langle\nabla^\ca{E}_\sth\xi_1,\nabla^\ca{E}_\sth\xi_2\rangle\dif\vol_\ve{g},
\end{equation}
where the first equality signifies that $\Delta^\ca{E}$ is a self-adjoint operator.

When $\Sigma$ is a spacelike minimal submanifold in a pseudo-Riemannian manifold $M$ as considered in \S \ref{subsec_submanifolds}, the \emph{Jacobi operator} (or \emph{stability operator}) $L_\Sigma:C^\infty(\Sigma,\ca{N})\to	C^\infty(\Sigma,\ca{N})$ is defined by
$$
L_\Sigma\xi:=\Delta^{\!\ca{N}}\xi+\wt{A}(\xi)+\Tr_{\ve g}\big(R_M(\sth,\xi)\sth\big),
$$
where $\Delta^{\!\ca{N}}$ is the Laplacian given by the first fundamental form $\ve{g}$ and the normal connection $\nabla^\ca{N}$, $R_M$ is the curvature tensor\footnote{As in \cite{anciaux,colding-minicozzi}, we follow do Carmo's sign convention of curvature tensor: $R(X,Y)Z:=\nabla_Y\nabla_XZ-\nabla_X\nabla_YZ+\nabla_{[X,Y]}Z$.} of $M$, and $\wt{A}\in C^\infty(\Sigma,\End(\ca{N}))$ is the \emph{Simons operator} of $\Sigma$ given by
$$
\wt{A}(\xi):=\sum_{i,j}^n\langle \II(e_i,e_j),\xi\rangle \II(e_i,e_j).
$$
The sections of $\ca{N}$ annihilated by $L_\Sigma$ are called \emph{Jacobi fields}.

The self-adjointness of $\Delta^\ca{N}$ impplies that $L_\Sigma$ is a self-adjoint operator as well: $\int_\Sigma\langle L_\Sigma\xi_1,\xi_2\rangle\dif\vol_\ve{g}=\int_\Sigma\langle \xi_1,L_\Sigma\xi_2\rangle\dif\vol_\ve{g}$ if $\xi_1$ has compact support.
Also, since it follows from the relation between $\II$ and $\ve{S}_\xi$ that $\langle \wt{A}(\xi),\xi\rangle=\Tr_{\ve g}\langle \ve{S}_\xi(\sth),\ve{S}_\xi(\sth)\rangle$, by \eqref{eqn_laplacian}, we have
\begin{equation}\label{eqn_variationintegral}
	-\int_\Sigma\langle L_\Sigma\xi,\xi\rangle\dif\vol_\ve{g}=\int_\Sigma\Tr_{\ve g}\Big(\langle \nabla_\sth^\ca{N} \xi,\nabla_\sth^\ca{N}\xi\rangle-\langle \ve{S}_\xi(\sth),\ve{S}_\xi(\sth)\rangle-\langle R_M(\sth,\xi)\sth,\xi\rangle\Big)\dif\vol_\ve{g}
\end{equation}
for any compactly supported section $\xi$ of $\ca{N}$.

\subsection{Variations of minimal submanifolds}\label{subsec_variation}
$L_\Sigma$ is related to variations of the minimal submanifold $\Sigma$. By definition, a \emph{variation} of $\Sigma$ is a smooth one-parameter family $(F_t)_{t\in(-\epsilon,\epsilon)}$  of embeddings $\Sigma\to M$ such that $F_0$ is the inclusion. We use the following standard terminologies concerning such a $(F_t)$:
\begin{itemize}
	\item
	The derivative $\dt{F}_0:=\tfrac{\dif}{\dif t}F_t\big|_{t=0}$, which is a section of $\T M|_\Sigma$, is called the \emph{variational vector field}.
	\item
	$(F_t)$ is said to be \emph{normal} if $\dt{F}_0$ is a section of $\ca{N}\subset\T M|_\Sigma$, and is said to be \emph{compactly supported} if $F_t=F_0$ for all $t$ outside of a compact subset of $\Sigma$.
	\item
	Let $\Sigma_t$ denote the image of $F_t$. When $(F_t)$ is compactly supported, $\Sigma_t$ only differs from $\Sigma$ within a compact set $C\subset M$, so the volume $\vol(\Sigma_t\cap C)$ is well defined and its derivatives in $t$ are independent of the choice of $C$. The second derivative $\tfrac{\dif^2}{\dif t^2}\vol(\Sigma_t\cap K)\big|_{t=0}$ is called the \emph{second variation of volume} associated to the variation $(F_t)$.
\end{itemize}

The following theorem summarizes the fundamental facts about variations that we will need. See e.g.\ \cite{anciaux,colding-minicozzi,spivak,xin} for proofs. Note that although most of the literature only treats the case where the ambient space $M$ is Riemannian, the results and proofs actually hold in the pseudo-Riemannian setting without much modification, and one may even assume that $\Sigma$ is a pseudo-Riemannian submanifold instead of a Riemannian one (see \cite{anciaux}).

\begin{theorem}\label{thm_variation}
	Let $\Sigma$ be a spacelike minimal submanifold in a pseudo-Riemannian manifold $M$ and $(F_t)_{t\in(-\epsilon,\epsilon)}$ be a variation of $\Sigma$.
	%	with variational vector field $\xi:=\dt{F}_0\in C^\infty(\Sigma,\T M|_\Sigma)$.
	\begin{enumerate}[label=(\arabic*)]
		\item\label{item_thmvariation1} If $(F_t)$ is a compactly supported normal variation, then the associated second variation of volume is equal to the integral \eqref{eqn_variationintegral} with $\xi=\dt{F}_0$.
		\item\label{item_thmvariation2} If every $\Sigma_t$ is a minimal submanifold, then the normal component $\dt{F}_0^\ca{N}$ of $\dt{F}_0$ is a Jacobi field.
	\end{enumerate}
\end{theorem}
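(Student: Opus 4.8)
The plan is to obtain both statements from one computation: the linearization at $\Sigma$ of the mean-curvature map. Write $H_t:=\Tr_{\ve g_t}\II_t$ for the mean curvature field of $\Sigma_t:=F_t(\Sigma)$, transported via $F_t$ to a section of $F_t^*\T M$, and let $V:=\dt{F}_0$ be the variational vector field, with components $V^\ca{T}$, $V^\ca{N}$ under $\T M|_\Sigma=\ca{T}\oplus\ca{N}$. The key claim is the linearized--mean--curvature identity
\begin{equation}
\frac{\dif}{\dif t}\big(H_t\circ F_t\big)\Big|_{t=0}=L_\Sigma\big(V^\ca{N}\big).\tag{$\ast$}
\end{equation}
Granting $(\ast)$, part~\ref{item_thmvariation2} is immediate: if every $\Sigma_t$ is minimal then $H_t\circ F_t\equiv0$, and differentiating at $t=0$ gives $L_\Sigma(V^\ca{N})=0$, i.e.\ $\dt{F}_0^\ca{N}$ is a Jacobi field. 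For part~\ref{item_thmvariation1} I would combine $(\ast)$ with the standard first variation formula $\frac{\dif}{\dif t}\vol(\Sigma_t\cap C)=-\int_\Sigma\langle H_t\circ F_t,\partial_tF_t\rangle\,\dif\vol_{\ve g_t}$ (valid for any compactly supported variation, $\partial_tF_t$ being the variational field at time $t$): differentiating once more at $t=0$ and using that $\Sigma$ is minimal, so the two terms carrying a factor $H_0$ drop out, one is left with $\frac{\dif^2}{\dif t^2}\vol(\Sigma_t\cap C)\big|_{t=0}=-\int_\Sigma\langle\tfrac{\dif}{\dif t}(H_t\circ F_t)|_0,V\rangle\,\dif\vol_{\ve g}=-\int_\Sigma\langle L_\Sigma(V^\ca{N}),V\rangle\,\dif\vol_{\ve g}$. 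For a normal variation $V=V^\ca{N}=\xi$, and $-\int_\Sigma\langle L_\Sigma\xi,\xi\rangle\,\dif\vol_{\ve g}$ is exactly the integral~\eqref{eqn_variationintegral}, this reformulation being the one recorded in \S\ref{subsec_jacobi} via \eqref{eqn_laplacian} and the identity $\langle\wt{A}(\xi),\xi\rangle=\Tr_{\ve g}\langle\ve{S}_\xi(\sth),\ve{S}_\xi(\sth)\rangle$.

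To prove $(\ast)$ I would compute in local coordinates $(x^1,\dots,x^n)$ on $\Sigma$: writing $g_i:=F_{t*}\partial_i$, one has $(\ve g_t)_{ij}=\langle g_i,g_j\rangle$ and $H_t\circ F_t=(\ve g_t)^{ij}(\nabla_{g_i}g_j)^{\perp_t}$, where $\perp_t$ is the projection onto the normal bundle of $\Sigma_t$. Two structural inputs drive the calculation: torsion-freeness of $\nabla$, which gives $\nabla_{\partial_t}g_i=\nabla_{g_i}(\partial_tF_t)$, and the definition of the ambient curvature $R_M$ (in the do Carmo sign convention fixed in \S\ref{subsec_jacobi}), which lets one commute $\nabla_{\partial_t}$ past $\nabla_{g_i}$ at the cost of a term $R_M(\partial_tF_t,g_i)(\sth)$. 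Using these, $\tfrac{\dif}{\dif t}(\nabla_{g_i}g_j)|_0$ becomes $\nabla_{\partial_i}\nabla_{\partial_j}V+R_M(V,\partial_i)\partial_j$; substituting $V=V^\ca{T}+V^\ca{N}$, expanding with the block form~\eqref{eqn_nablaTN} of $\nabla$ (i.e.\ $\nabla_X\eta=-\ve{S}_\eta(X)+\nabla^\ca{N}_X\eta$ for $\eta\in\ca{N}$ and $\nabla_XY=\nabla^\ca{T}_XY+\II(X,Y)$ for $Y\in\ca{T}$), also differentiating the inverse metric $(\ve g_t)^{ij}$ and the normal projection, and finally taking $\ca{N}$-components, the genuinely surviving pieces package precisely into the three summands $\Delta^\ca{N}V^\ca{N}$, $\wt{A}(V^\ca{N})$ and $\Tr_{\ve g}(R_M(\sth,V^\ca{N})\sth)$ of $L_\Sigma(V^\ca{N})$ --- the Laplacian from $\nabla^\ca{N}\nabla^\ca{N}V^\ca{N}$ corrected by the usual Christoffel term, the Simons term from $\II$ contracted against $\ve{S}_{V^\ca{N}}$ and against $\partial_t(\ve g_t)^{ij}$, and the last term from the curvature. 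Every remaining term --- those involving $V^\ca{T}$, and the exact divergences --- is a contraction of $\II$ with no free normal slot, hence a multiple of the mean curvature $H_0$ of $\Sigma$, and so vanishes by minimality; this establishes $(\ast)$.

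The main obstacle is bookkeeping rather than ideas: one has to control the full list of terms produced by the two time-derivatives, verify that each spurious one genuinely packages into a multiple of $H_0$ or an exact divergence, and make every sign consistent with the curvature convention and with the sign choices built into $\ve{S}_\xi$, $\II$ and $L_\Sigma$ in \S\ref{subsec_submanifolds}--\S\ref{subsec_jacobi}. This is the classical second-variation computation, for which I would follow \cite{anciaux,colding-minicozzi,spivak,xin}; the one point needing a little care beyond the Riemannian case is that $\ca{N}$ need not be definite, but since $\Sigma$ is assumed spacelike the orthogonal splitting $\T M|_\Sigma=\ca{T}\oplus\ca{N}$ and all the formulas of \S\ref{subsec_submanifolds} hold verbatim, so the argument goes through unchanged (see \cite{anciaux}).
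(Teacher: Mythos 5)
Your proposal is correct and is exactly the classical linearized-mean-curvature/second-variation argument: the paper itself does not prove Theorem \ref{thm_variation} but defers to the cited references \cite{anciaux,colding-minicozzi,spivak,xin}, which carry out precisely this computation (with the pseudo-Riemannian adaptation you note, valid because spacelikeness makes $\ca{N}$ nondegenerate). The only cosmetic imprecision is your phrasing that the tangential and divergence terms are "multiples of $H_0$" --- more accurately they vanish because $H_0\equiv0$ identically on $\Sigma$ (so e.g.\ $\nabla^\ca{N}_{V^\ca{T}}H_0=0$), but this does not affect the argument.
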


Part \ref{item_thmvariation1} is known as the \emph{second variation formula}. A simple consequence is:
\begin{proposition}\label{prop_maximal}
	In the setting of Theorem \ref{thm_variation} \ref{item_thmvariation1}, if furthermore  the signature of $M$ is $(p,q)$ with $p=\dim \Sigma$ (or equivalently, the normal bundle of $\Sigma$ is negative definite) and $M$ has negative sectional curvature, then the second variation of volume is negative whenever $\dt{F}_0$ is not identically zero.
\end{proposition}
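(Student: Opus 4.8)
The plan is to read off the sign of the second variation directly from formula \eqref{eqn_variationintegral}, exploiting that under the stated hypotheses every one of the three terms in the integrand is non-positive, and that the first term is strictly negative when $\dt F_0\not\equiv 0$. Concretely, by Theorem \ref{thm_variation} \ref{item_thmvariation1} the second variation of volume equals the right-hand side of \eqref{eqn_variationintegral} with $\xi=\dt F_0$, so it suffices to show that
$$
\Tr_{\ve g}\Big(\langle \nabla_\sth^\ca{N}\xi,\nabla_\sth^\ca{N}\xi\rangle-\langle \ve S_\xi(\sth),\ve S_\xi(\sth)\rangle-\langle R_M(\sth,\xi)\sth,\xi\rangle\Big)\ >\ 0
$$
pointwise wherever $\xi\neq 0$, and is $\geq 0$ everywhere; integrating then gives a strictly positive number as soon as $\xi$ is not identically zero, whence the second variation of volume is negative.

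The three terms are handled as follows. First, since the normal bundle $\ca N$ is negative definite (the signature of $M$ is $(p,q)$ with $p=\dim\Sigma$, so all $q$ normal directions are timelike), the metric on $\ca N$ is $-1$ times a positive definite one; hence for a positively-oriented orthonormal tangent frame $(e_i)$ we have $\langle \nabla^\ca{N}_{e_i}\xi,\nabla^\ca{N}_{e_i}\xi\rangle\leq 0$, with equality iff $\nabla^\ca{N}\xi=0$. Similarly $\ve S_\xi(e_i)$ is a tangent vector and $\ve g$ is positive definite, so $-\langle \ve S_\xi(e_i),\ve S_\xi(e_i)\rangle\leq 0$. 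For the curvature term, negative sectional curvature of $M$ together with the sign convention in the footnote (do Carmo's) means that for a spacelike $e_i$ and timelike $\xi$ spanning a nondegenerate plane the sectional curvature is $\langle R_M(e_i,\xi)e_i,\xi\rangle/\big(\langle e_i,e_i\rangle\langle\xi,\xi\rangle-\langle e_i,\xi\rangle^2\big)$; here $e_i\perp\xi$, $\langle e_i,e_i\rangle=1$, $\langle\xi,\xi\rangle\leq 0$, so the denominator is $\leq 0$, and negativity of the sectional curvature forces $\langle R_M(e_i,\xi)e_i,\xi\rangle\geq 0$, i.e.\ $-\langle R_M(e_i,\xi)e_i,\xi\rangle\leq 0$ (with the degenerate case $\xi=0$ at a point contributing $0$). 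Summing over $i$, all three contributions are $\leq 0$, so the integrand is $\leq 0$ in absolute terms but the integral \emph{inside} \eqref{eqn_variationintegral} is $\leq 0$; the sign flips once more because of the overall minus sign in front, giving exactly what we want.

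The only genuinely delicate point is the strict inequality: I need to know that $\xi\not\equiv 0$ actually forces the integral to be nonzero, not merely non-positive. This is where the first term does the work. If the second variation were zero, then by the sign analysis above we would need $\nabla^\ca{N}\xi\equiv 0$, $\ve S_\xi\equiv 0$ and the curvature term to vanish identically. But $\nabla^\ca{N}\xi\equiv 0$ together with continuity of $|\xi|$ forces $\xi$ to have constant (nonzero) norm, in particular nowhere vanishing; then the curvature term $-\langle R_M(e_i,\xi)e_i,\xi\rangle$ is the product of a strictly negative quantity (by strict negativity of sectional curvature, applied to the nondegenerate plane spanned by the spacelike $e_i$ and the timelike $\xi$) — hence strictly positive in the summand as written, contradicting its required vanishing. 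Thus the integral is strictly negative, i.e.\ the second variation of volume is strictly negative, which is the assertion.

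I expect the main obstacle to be purely bookkeeping of signs: keeping straight the three nested sign reversals (the negative-definiteness of $\ca N$, do Carmo's curvature convention versus the usual one, and the overall minus in \eqref{eqn_variationintegral}), and being careful that at points where $\xi$ vanishes the curvature and shape-operator terms genuinely contribute zero rather than something of indeterminate sign. There is also a small point about whether $e_i$ and $\xi$ span a \emph{nondegenerate} plane so that ``sectional curvature'' is defined; this is automatic here since $e_i$ is spacelike and $\xi$ (when nonzero) is timelike, so the plane they span is Lorentzian, hence nondegenerate. Once these sign conventions are pinned down, the proof is a one-line consequence of \eqref{eqn_variationintegral} and Theorem \ref{thm_variation} \ref{item_thmvariation1}.
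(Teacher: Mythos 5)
Your term-by-term estimates are exactly the paper's: negative definiteness of $\ca{N}$ gives $\Tr_{\ve g}\langle\nabla^\ca{N}_\sth\xi,\nabla^\ca{N}_\sth\xi\rangle\leq0$, tangentiality of $\ve S_\xi$ gives $-\Tr_{\ve g}\langle \ve S_\xi(\sth),\ve S_\xi(\sth)\rangle\leq0$, and $K(e_i\wedge\xi)<0$ together with $\langle\xi,\xi\rangle<0$ forces $\langle R_M(e_i,\xi)e_i,\xi\rangle>0$, hence $-\Tr_{\ve g}\langle R_M(\sth,\xi)\sth,\xi\rangle<0$ wherever $\xi\neq0$. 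What is wrong is your top-level bookkeeping of the formula: by Theorem \ref{thm_variation} \ref{item_thmvariation1} the second variation of volume \emph{is} the right-hand integral of \eqref{eqn_variationintegral} (equivalently $-\int_\Sigma\langle L_\Sigma\xi,\xi\rangle\dif\vol_{\ve g}$); there is no further overall minus sign in front of it. So the correct reduction is that the integrand is $\leq0$ everywhere and $<0$ on the nonempty open set $\{\xi\neq0\}$ --- which is precisely what your three estimates deliver, via the curvature term --- and the conclusion follows with no sign flip at all. Your opening claim that it ``suffices to show the integrand is $>0$ wherever $\xi\neq0$'', later reconciled by an alleged ``overall minus sign in front'', is two sign errors cancelling; as stated that reduction is false and contradicts your own estimates.

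Beyond that, your strictness argument takes a more roundabout route than the paper's but is valid: the paper simply notes that the curvature term is strictly negative at every point of the nonempty open set $\{\xi\neq0\}$, so the integral of a nonpositive integrand is strictly negative, whereas you assume the integral vanishes, conclude $\nabla^\ca{N}\xi\equiv0$ from the first term, deduce constant nonzero norm and hence that $\xi$ is nowhere zero, and then contradict the forced vanishing of the curvature term. That works (only the nonvanishing of the curvature summand is needed; your parenthetical ``hence strictly positive in the summand as written'' is another sign slip --- it is strictly negative --- but harmless). With the top-level sign statements corrected, the proof is correct and in substance the same as the paper's.
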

In particular, every spacelike minimal surface in the pseudo-hyperbolic space $\H^{2,q}$  locally maximizes the area. This explains why they are called \emph{maximal surfaces} in the literature.
\begin{proof}
	Put $\xi:=\dt{F}_0$ and let $(e_1,\cdots,e_n)$ be an orthonormal local frame of $\Sigma$ with respect to the first fundamental form $\ve{g}=\langle\sth,\sth\rangle|_{{\ca{T}}}$. Since $\nabla_{e_i}^\ca{N}\xi$ and $\ve{S}_\xi(e_i)$ are sections of $\ca{N}$ and ${\ca{T}}$, respectively, we have
	\begin{equation}\label{eqn_propmaximal1}
		\Tr_{\ve g}\langle \nabla_\sth^\ca{N} \xi,\nabla_\sth^\ca{N}\xi\rangle=\sum_{i=1}^n\langle \nabla^\ca{N}_{e_i}\xi,\nabla^\ca{N}_{e_i}\xi\rangle\leq0,\quad \Tr_{\ve g}\langle \ve{S}_\xi(\sth),\ve{S}_\xi(\sth)\rangle=\sum_{i=1}^n\langle \ve{S}_\xi(e_i),\ve{S}_\xi(e_i)\rangle\geq0.
	\end{equation}
	On the other hand, at  any point $z\in\Sigma$ with $\xi(z)\neq0$, the sectional curvature of $M$ along the tangent $2$-plane spanned by $e_i$ and $\xi$ is
	$$
	K(e_i\wedge \xi):=\frac{\langle R_M(e_i,\xi)e_i,\xi\rangle}{\langle e_i,e_i\rangle\langle \xi,\xi\rangle-\langle e_i,\xi\rangle^2}=\frac{\langle R_M(e_i,\xi)e_i,\xi\rangle}{\langle \xi,\xi\rangle}.
	$$
	We have $K(e_i\wedge \xi)<0$ and $\langle\xi,\xi\rangle<0$ at $z$ by assumption. It follows that $\langle R_M(e_i,\xi)e_i,\xi\rangle>0$, and hence
	\begin{equation}\label{eqn_propmaximal2}
		\Tr_\ve{g}\langle R_M(\sth,\xi)\sth,\xi\rangle=\sum_{i=1}^n\langle R_M(e_i,\xi)e_i,\xi\rangle>0 \ \  \text{at $z$}.
	\end{equation}
	By \eqref{eqn_propmaximal1} and \eqref{eqn_propmaximal2}, the right-hand side of \eqref{eqn_variationintegral} is negative when $\xi$ is not identically zero. In view of Theorem \ref{thm_variation} \ref{item_thmvariation1}, this implies the required statement.
\end{proof}

For an application later on, we also recall the following formula for the first variation of the connection $\nabla$ on $\T M|_\Sigma$. Here $\Sigma$ does not need to be minimal.
\begin{lemma}\label{lemma_variationconnection}
	Let $\Sigma$ be a spacelike submanifold in a pseudo-Riemannian manifold $M$ and $(F_t)_{t\in(-\epsilon,\epsilon)}$ be a variation of $\Sigma$. Let $\nabla_t$ denote the connection on $\T M|_{\Sigma_t}\cong \T M|_\Sigma$, where the identification is given by parallel transportation along the paths $(F_t(x))_{t\in(-\epsilon,\epsilon)}$, $x\in\Sigma$. Then the derivative $\dt{\nabla}_0\in\Omega^1(\Sigma,\End(\T M|_\Sigma))$ of $\nabla_t$ at $t=0$ is given by
	\begin{equation}\label{eqn_variationconnection1}
	(\dt{\nabla}_0)_X\zeta=R_M(X,\dt{F}_0)\zeta\quad  \text{ for any }X\in C^\infty(\Sigma,\T\Sigma),\ \zeta\in C^\infty(\Sigma,\T M|_\Sigma).
	\end{equation}
	In particular, if $M$ has constant sectional curvature and $(F_t)$ is a tangential variation (i.e.\ $\dt{F}_0$ is a section of $\T\Sigma\subset\T M|_\Sigma$), then every component of the decomposition \eqref{eqn_nablaTN} is stationary at $t=0$ except for the $\nabla^{\ca{T}}$-component.
\end{lemma}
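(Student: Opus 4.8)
The plan is to compute the derivative $\dt\nabla_0$ directly from the definition of the connection on the bundle $\T M|_{\Sigma_t}$, exploiting that all of the $\nabla_t$ come from restricting one fixed connection (the Levi-Civita connection of $M$) to the moving submanifold $\Sigma_t$. First I would set up the standard trick for comparing connections along a variation: view the data as living on the ambient space, i.e.\ regard $\dt F_0$ as the infinitesimal generator of the variation and extend $X \in C^\infty(\Sigma,\T\Sigma)$ and $\zeta \in C^\infty(\Sigma,\T M|_\Sigma)$ to vector fields on a neighbourhood of $\Sigma_0$ in $M$ in such a way that they are Lie-dragged (or at least have controlled Lie bracket) along the flow of $\dt F_0$. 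Under the parallel-transport identification $\T M|_{\Sigma_t}\cong \T M|_\Sigma$ along the curves $t\mapsto F_t(x)$, the connection $\nabla_t$ applied to $X$ and $\zeta$ is, up to the conjugation by parallel transport, just the ambient $\nabla_M$ evaluated at points of $\Sigma_t$; differentiating this in $t$ at $t=0$ produces the curvature term by the usual Ricci-identity manipulation. Concretely, $\frac{\dif}{\dif t}\big|_{t=0}(\nabla_M)_{X}\zeta$ picks up $\nabla_{\dt F_0}\nabla_X\zeta - \nabla_X\nabla_{\dt F_0}\zeta - \nabla_{[\dt F_0,X]}\zeta$ modulo terms that are absorbed into the parallel-transport normalization, and this is exactly $R_M(X,\dt F_0)\zeta$ with do Carmo's sign convention quoted in the footnote of \S\ref{subsec_jacobi}. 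So the first — and main — step is a careful bookkeeping argument identifying $\dt\nabla_0$ with the curvature endomorphism \eqref{eqn_variationconnection1}; everything else is a corollary.

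For the ``in particular'' clause, I would specialize to $M$ of constant sectional curvature $c$, where the curvature tensor has the explicit form $R_M(X,Y)Z = c\big(\langle X,Z\rangle Y - \langle Y,Z\rangle X\big)$ (up to the sign dictated by the convention in use). Plugging $\dt F_0 = T \in C^\infty(\Sigma,\T\Sigma)$ into \eqref{eqn_variationconnection1} gives $(\dt\nabla_0)_X\zeta = c\big(\langle X,\zeta\rangle\, T - \langle T,\zeta\rangle\, X\big)$. Both of the vectors on the right-hand side, $T$ and $X$, are sections of $\ca T = \T\Sigma$. Now decompose $\zeta$ into its $\ca T$- and $\ca N$-parts and read off the block structure in \eqref{eqn_nablaTN}: the output of $\dt\nabla_0$ always lands in $\ca T$, so the $A$-block (the $\Hom(\ca T,\ca N)$ component, i.e.\ the second fundamental form) and the $\nabla^{\ca N}$-block receive no contribution; the $A^\dagger$-block pairs $\ca N$-valued inputs to $\ca T$-valued outputs, but since $\langle X,\zeta^{\ca N}\rangle = \langle T,\zeta^{\ca N}\rangle = 0$, its variation also vanishes; hence only the $\nabla^{\ca T}$-block (the intrinsic Levi-Civita connection of the first fundamental form, which of course does move under a reparametrization) can be nonzero. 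That proves the stated invariance of all blocks except $\nabla^{\ca T}$.

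The main obstacle I anticipate is purely the first step: making the parallel-transport identification rigorous and tracking which terms in the naive $t$-derivative are genuine and which are artifacts of the chosen trivialization. In particular one must check that the connection induced on $\T M|_{\Sigma_t}$ by $M$ really is $\nabla_M$ restricted along $\Sigma_t$ (it is, since $\T M|_{\Sigma_t}$ is not a quotient but the honest restriction of $\T M$), and that the difference between ``differentiate then restrict'' and ``restrict then differentiate'' is controlled by the Lie derivative along $\dt F_0$, whose antisymmetrization against $\nabla_X$ yields precisely the curvature. Once this is done carefully — and this is exactly the standard computation behind the second variation formula, so it can be cited from \cite{anciaux, colding-minicozzi, spivak, xin} — the rest is the elementary linear algebra above. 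I would keep the writeup short by pointing to those references for the core identity and only spelling out the constant-curvature specialization and the block-decomposition argument in detail.
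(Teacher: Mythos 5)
Your proposal is correct and follows essentially the same route as the paper: differentiate the connection in the parallel-transport trivialization, use that the coordinate fields $\partial_t F_t$ and $\partial_{x_1}F_t$ commute and that the $t$-extension of $\zeta$ is parallel (so $\nabla_{\dot F_0}\zeta=0$), leaving exactly the curvature term in \eqref{eqn_variationconnection1}, and then plug the constant-curvature form of $R_M$ into it and read off the block structure of \eqref{eqn_nablaTN}. The only point to tighten is that the extension of $\zeta$ should be the parallel one (as the identification in the statement dictates), not a Lie-dragged one, which makes the term $\nabla_X\nabla_{\dot F_0}\zeta$ vanish outright rather than being ``absorbed'' into the normalization.
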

\begin{proof}
Let $x=(x_1,\cdots,x_n)$ be local coordinates of $\Sigma$. It suffices to prove \eqref{eqn_variationconnection1} at $x=0$ for $X=\pa_{x_1}$. To this end, for each fixed $x_1$, we parallel-translate $\zeta_0(x_1):=\zeta(x_1,0,\cdots,0)\in \T_{(x_1,0,\cdots,0)}M$ along the path $(F_t(x_1,0,\cdots,0))_{t\in(-\epsilon,\epsilon)}$ and get a vector $\zeta_t(x_1)\in \T_{F_t(x_1,0,\cdots,0)}M$ for every $t$. If we fix $t$ and let $x_1$ vary instead, these vectors form a vector field along $(F_t(x_1,0,\cdots,0))_{x_1\in(-\delta,\delta)}$, whose covariant derivative in $x_1$ at $x_1=0$ can be denoted by $\nabla_{\pa_1F_t(0)}\zeta_t(x_1)\in \T_{F_t(0)}M$. The left-hand side of \eqref{eqn_variationconnection1} at $x=0$ is just the covariant derivative of the last vector in $t$ at $t=0$:
$$
(\dt{\nabla}_0)_{\pa_{x_1}}\zeta=\nabla_{\dt{F}_0(0)}\nabla_{\pa_1F_t(0)}\zeta_t(x_1).
$$
On the other hand, by definition of the curvature tensor $R_M$ and the fact that the vector fields $\pa_1F_t(x_1,0)$ and $\pa_tF_t(x_1,0)$ commute, we have
$$
R_M(\pa_{x_1},\dt{F}_0(0))=\nabla_{\dt{F}_0(0)}\nabla_{\pa_1F_t(0)}\zeta_t(x_1)-\nabla_{\pa_{x_1}}\nabla_{\dt{F}_0(x_1,0)}\zeta_t(x_1),
$$
and the last term vanishes by the construction of $\zeta_t(x_1)$. This proves  \eqref{eqn_variationconnection1}. We deduce the ``In particular'' statement by noting that having constant curvature $\kappa$ means $R_M(u,v)w=\kappa(\langle u,w\rangle v-\langle v,w\rangle u)$.
\end{proof}

\subsection{$\ac{J}$-holomorphic curves as minimal surfaces}\label{subsec_jholominimal}
In an almost complex manifold $(M,\ac{J})$, a \emph{$\ac{J}$-holomorphic curve} is a surface $\Sigma$ whose tangent bundle $\T\Sigma\subset \T M$ is preserved by $\ac{J}$. Such a $\Sigma$ is a minimal surface for a pseudo-Riemannian metric $\ve{h}$ satisfying some compatibility condition with $\ac{J}$: 

\begin{proposition}\label{prop_kahler}
	Let $(M,\ve{h})$ be a pseudo-Riemannian manifold endowed with an almost complex structure $\ac{J}$ which is orthogonal (i.e.\ $\ve{h}(\sth,\sth)=\ve{h}(\ac{J}\sth,\ac{J}\sth)$), $\nabla$ be the Levi-Civita connection, and $\Sigma\subset M$ be a spacelike $\ac{J}$-holomorphic curve. Then 
	\begin{enumerate}[label=(\arabic*)]
		\item\label{item_propkahler1} A sufficient condition for $\Sigma$ to be a minimal surface is
		\begin{equation}\label{eqn_quasi}
			\nabla_{\ac{J}X}\ac{J}=-\ac{J}\nabla_X\ac{J}\ \ \text{ for all }X\in C^\infty(\Sigma,\T M).
		\end{equation}
		\item\label{item_propkahler2} The second fundamental form $\II(\sth,\sth)$ of $\Sigma$ is complex bilinear (i.e.\ $\II(\sth,\ac{J}\sth)=\ac{J}\circ \II(\sth,\sth)$) if and only if
		$(\nabla_X\ac{J})Y=0$ for any $X,Y$ tangent to $\Sigma$. A sufficient condition for this is
		\begin{equation}\label{eqn_nearly}
			(\nabla_X\ac{J})X=0 \ \ \text{ for all }X\in C^\infty(\Sigma,\T M).
		\end{equation}	
	\end{enumerate}
\end{proposition}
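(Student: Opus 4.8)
The plan is to derive everything from the decomposition \eqref{eqn_nablaTN} of the ambient Levi-Civita connection along $\Sigma$, fed into the Leibniz rule $\nabla_U(\ac{J}V)=(\nabla_U\ac{J})V+\ac{J}\nabla_U V$. Two elementary observations set the stage. First, since $\Sigma$ is $\ac{J}$-holomorphic, $\ac{J}$ preserves $\ca{T}=\T\Sigma$, and being orthogonal it then also preserves $\ca{N}$; hence $\ac{J}$ commutes with the projections onto $\ca{T}$ and $\ca{N}$. Second, because $\Sigma$ is spacelike, for any unit tangent vector $e_1$ the vector $\ac{J}e_1$ is again a unit vector $\ve g$-orthogonal to $e_1$, so $(e_1,\ac{J}e_1)$ is an orthonormal frame of $\ca{T}$, against which I will test the relevant tensors.

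For part \ref{item_propkahler1}, I would expand $\II(\ac{J}X,\ac{J}X)=\big(\nabla_{\ac{J}X}(\ac{J}X)\big)^{\ca{N}}$ for a tangent field $X$ by the Leibniz rule, take the $\ca{N}$-component (pushing $\ac{J}$ through the projection by the first observation), and substitute \eqref{eqn_quasi} in the form $(\nabla_{\ac{J}X}\ac{J})X=-\ac{J}\big((\nabla_X\ac{J})X\big)$. Expanding $(\nabla_X\ac{J})X$ once more and using $\ac{J}^2=-\id$ rewrites the right-hand side as a combination of $\II(X,X)$, $\II(X,\ac{J}X)$ and $\II(\ac{J}X,X)$; the symmetry of $\II$ makes the last two cancel, leaving the clean identity $\II(\ac{J}X,\ac{J}X)=-\II(X,X)$. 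Evaluating at $X=e_1$ gives $\Tr_{\ve g}\II=\II(e_1,e_1)+\II(\ac{J}e_1,\ac{J}e_1)=0$, so $\Sigma$ is minimal.

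For part \ref{item_propkahler2}, the basic computation is the identity $\II(X,\ac{J}Y)-\ac{J}\II(X,Y)=\big((\nabla_X\ac{J})Y\big)^{\ca{N}}$ for $X,Y$ tangent, obtained by the same Leibniz-and-project manipulation; so complex bilinearity of $\II$ is equivalent to the vanishing of the $\ca{N}$-part of $(\nabla_X\ac{J})Y$ on $\ca{T}$. To match the stated criterion I would also note that the $\ca{T}$-part of $(\nabla_X\ac{J})Y$ vanishes automatically: restricted to $\ca{T}$, the tensor $X\mapsto\nabla^{\ca{T}}_X(\ac{J}|_{\ca{T}})$ is $\ve g$-skew-symmetric (differentiate $\ve h(\ac{J}\sth,\ac{J}\sth)=\ve h(\sth,\sth)$) and anticommutes with $\ac{J}|_{\ca{T}}$ (differentiate $\ac{J}^2=-\id$), and on a $\ve g$-definite plane every $\ve g$-skew endomorphism is a multiple of $\ac{J}|_{\ca{T}}$, so anticommutation forces it to be zero — equivalently, the complex structure of the Riemann surface $\Sigma$ is $\nabla^{\ca{T}}$-parallel. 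Hence $\II$ is complex bilinear if and only if $(\nabla_X\ac{J})Y=0$ on $\ca{T}$. Finally, to see that \eqref{eqn_nearly} suffices, set $T(X,Y):=\II(X,\ac{J}Y)-\ac{J}\II(X,Y)$; polarizing \eqref{eqn_nearly} gives $(\nabla_X\ac{J})Y=-(\nabla_Y\ac{J})X$, hence (taking $\ca{N}$-parts) $T$ is antisymmetric, while a purely algebraic manipulation with $\ac{J}^2=-\id$ and the symmetry of $\II$ gives $T(X,\ac{J}Y)=-\ac{J}\,T(X,Y)$. On the two-dimensional $\ca{T}$ with frame $(e_1,\ac{J}e_1)$, antisymmetry yields $T(e_1,e_1)=0$, whence $T(e_1,\ac{J}e_1)=-\ac{J}\,T(e_1,e_1)=0$; by bilinearity $T\equiv 0$, i.e.\ $\II$ is complex bilinear.

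The bulk of the work is routine bookkeeping of tangential versus normal components. The only steps needing a genuine (though very short) argument are the automatic vanishing of $\big((\nabla_X\ac{J})Y\big)^{\ca{T}}$ in part \ref{item_propkahler2} — without it the ``if and only if'' would relate complex bilinearity only to the $\ca{N}$-part of $(\nabla_X\ac{J})Y$ — and the realization that \eqref{eqn_nearly} by itself yields merely the antisymmetry of $T$, so that it is really the dimension hypothesis $\dim\Sigma=2$, entering through the frame $(e_1,\ac{J}e_1)$, that closes the argument. I expect no serious obstacle.
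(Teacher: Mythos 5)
Your proof is correct and follows essentially the same route as the paper: both rest on the identity $(\nabla_X\ac{J})Y=\II(X,\ac{J}Y)-\ac{J}\II(X,Y)$ for tangent $X,Y$ (the tangential part vanishing because the complex structure of $\Sigma$ is $\nabla^{\ca{T}}$-parallel), followed by polarizing the hypotheses and evaluating on the frame $(e_1,\ac{J}e_1)$. The only cosmetic difference is the last step of \ref{item_propkahler2}, where the paper derives the symmetry of $\II(\sth,\ac{J}\sth)-\ac{J}\II(\sth,\sth)$ from minimality while you instead use the algebraic relation $T(X,\ac{J}Y)=-\ac{J}T(X,Y)$ together with antisymmetry; both close the argument equally well.
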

As a consequence, under condition \eqref{eqn_nearly}, for any point $z\in\Sigma$ where $\II(\sth,\sth)$ is nonzero, the space of osculation vectors $\big\{\II(X,Y)\mid X,Y\in\T_z\Sigma\big\}$ is a complex line in $\ca{N}_z$. We call it the \emph{osculation line} of $\Sigma$ at $z$.
This proposition will mainly be applied to the almost complex $\H^{4,2}$, which fulfills \eqref{eqn_nearly} (see \S \ref{subsec_almostcomplex}).
\begin{remark}\label{rk_kahler1}
	Condition \eqref{eqn_nearly} is equivalent to the condition that 
	$(\nabla_X\ac{J})Y$ is anti-symmetric in $X$ and $Y$, which is
	stronger than \eqref{eqn_quasi} because it implies 
	$$
	(\nabla_{\ac{J}X}\ac{J})Y=-(\nabla_Y\ac{J})\ac{J}X=\ac{J}\nabla_Y\ac{J})X=-\ac{J}(\nabla_X\ac{J})Y
	$$
	(note that $\ac{J}\nabla_X\ac{J}=-(\nabla_X\ac{J})\ac{J}$ because $\ac{J}^2=-\id$). Correspondingly, it is easy to check that the complex linearity of $\II(\sth,\sth)$ also implies the minimality condition $\Tr_\ve{g}\II(\sth,\sth)=0$.
\end{remark}
\begin{remark}\label{rk_nearlykahler}
	When $\ve{h}$ is Riemannian, $(M,\ve{h},\ac{J})$ satisfying \eqref{eqn_quasi} and \eqref{eqn_nearly} are known as \emph{quasi-K\"ahler} and \emph{nearly K\"ahler} manifolds, respectively. Along with the notion of \emph{almost-K\"ahler} manifolds, they were introduced by Gray \cite{gray_michigan,gray_jdg}, whose also proved a higher-dimensional generalization of Part \ref{item_propkahler1} (\cite[Theorem 5.7]{gray_michigan}).
\end{remark}
\begin{proof}[Proof of Proposition \ref{prop_kahler}]
\ref{item_propkahler1} 
We abuse the notation to let $\nabla$ and $\ac{J}$ also standard for the connection and complex structure induced on the vector bundle $\T M|_\Sigma$  (see \S \ref{subsec_submanifolds}). By assumption, $\ac{J}$ preserves the subbundles ${\ca{T}}$ and $\ca{N}$. Let $\JT $ and $\JN $ denote the restriction of $\ac{J}$ to these subbundles. In view of the decompositions
$$
	\nabla=
	\mat{\nabla^{\ca{T}}&-A^\dagger\\A&\nabla^\ca{N}},\quad \ac{J}=\mat{\JT &\\&\JN },
$$
we have the following expression for $\nabla \ac{J}$ as an $\End(\T M|_\Sigma)$-valued $1$-form on $\Sigma$:
\begin{equation}\label{eqn_proofkahler1}
\nabla\ac{J}=\mat{\nabla^{\ca{T}}\JT &\JT A^\dagger-A^\dagger \JN \\A\JT -\JN A&\nabla^\ca{N}\JN }.
\end{equation}
Since $\nabla^{\ca{T}}$ is the Levi-Civita connection of the first fundamental form $\ve{g}$, while $\ve{g}$ is a metric conformal to the complex structure $\JT $, we 
have $\nabla^{\ca{T}}\JT =0$. Therefore, the first column of \eqref{eqn_proofkahler1} gives
\begin{equation}\label{eqn_proofkahler2}
(\nabla_X\ac{J})Y=A(X)\JT Y-\JN A(X)Y=\II(X,\JT Y)-\JN \II(X,Y)
\end{equation}
for any tangent vector fields $X$ and $Y$ of $\Sigma$.

Assuming \eqref{eqn_quasi}, we have $(\nabla_X\ac{J})X+(\nabla_{\ac{J}X}\ac{J})\ac{J}X=0$ for any tangent vector field $X$ of $M$. Taking $X$ to be tangent to $\Sigma$ and applying \eqref{eqn_proofkahler2}, we get
\begin{align*}
	0&=(\nabla_X\ac{J})X+(\nabla_{\JT X}\ac{J})\JT X
	\\&=\big(\II(X,\JT X)-\JN \II(X,X)\big)+\big(-\II(\JT X,X)-\JN \II(\JT X,\JT X)\big)\\
	&=-\JN \big(\II(X,X)+\II(\JT X,\JT X)\big),
\end{align*}
This implies that $\Sigma$ is minimal, because $\II(X,X)+\II(\JT X,\JT X)$ is exactly the mean curvature field $\Tr_\ve{g}\II(\sth,\sth)$ when $X$ has unit length under $\ve{g}$.

\ref{item_propkahler2} 
The ``if and only if'' statement follows immediately from \eqref{eqn_proofkahler2}. Assuming \eqref{eqn_nearly}, we have $(\nabla_X\ac{J})Y+(\nabla_Y\ac{J})X=0$.
Taking $X$ and $Y$ to be tangent to $\Sigma$ and applying \eqref{eqn_proofkahler2} again, we get
\begin{equation}\label{eqn_proofkahler3}
0=(\nabla_X\ac{J})Y+(\nabla_Y\ac{J})X=\II(X,\JT Y)+\II(\JT X,Y)-2\JN \II(X,Y).
\end{equation}
Since we know that $\Tr_\ve{g}\II(\sth,\sth)=0$ by Part \ref{item_propkahler1}, we have $\II(X,\JT Y)=\II(\JT X,Y)$ (this can be checked for $(X,Y)=(e_1,e_1)$, $(e_2,e_2)$, $(e_1,e_2)$ and $(e_2,e_1)$ respectively, for an orthonormal local frame $(e_1,e_2)$ of ${\ca{T}}$ with $e_2=\JT e_1$). Thus, \eqref{eqn_proofkahler3} implies $\II(X,\JT Y)=\JN \II(X,Y)$, as required.
\end{proof}

\subsection{Minimal immersions and maps to symmetric space}\label{subsec_immersion}
Although $\Sigma$ is a submanifold of $(M,\ve{h})$ in all the discussions above, it is just a matter of notation to adapt everything to the setting of an immersion $f:\Sigma\to M$. Namely, instead of $\T M|_\Sigma$, we consider the vector bundle $f^*\T M$ on $\Sigma$, which comes with the metric $\langle\sth,\sth\rangle$ and connection $\nabla$ given by the ones on $\T M$. In addition, we view the differential $\dif f$ as a $1$-form on $\Sigma$ with values in this vector bundle, which tells how $\T \Sigma$ is embedded as a subbundle.

These objects can be used to study the more general setting where $\Sigma$ is endowed with a background metric or conformal structure unrelated to the first fundamental form $f^*\ve{h}$ and/or $f$ is merely a smooth map rather than an immersion. In particular, when $\Sigma$ is a Riemann surface, the notion of \emph{harmonic map} is defined via the covariant derivative of the $1$-form $\dif f$, and it is well known that $f$ is harmonic and weakly conformal at the same time if and only if it is a branched conformal minimal immersion (see \cite{gulliver-osserman-royden}).

In this paper, we will only encounter true immersions and do not need to worry about branching points. We will consider certain $f$ satisfying a strong condition which implies conformal minimality:
\begin{lemma}\label{lemma_conformalminimal}
Let $f$ be an immersion of a Riemann surface $\Sigma$ into a Riemannian manifold $X$. If the vector bundle $f^*\T X$ admits an orthogonal, parallel, complex structure $\ac{J}$ under which $\dif f$ is a $(1,0)$-form, then $f$ is a conformal minimal immersion. 
\end{lemma}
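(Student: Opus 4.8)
The plan is to work with the splitting $f^*\T X = \ca{T} \oplus \ca{N}$ induced by the immersion together with its (ambient) Levi-Civita connection $\nabla = \begin{pmatrix}\nabla^{\ca{T}} & -A^\dagger \\ A & \nabla^{\ca{N}}\end{pmatrix}$, and to extract from the hypotheses that (i) $f$ is weakly conformal and (ii) its tension field vanishes, which together imply that $f$ is a conformal minimal immersion by the classical result of Gulliver--Osserman--Royden. The key observation is that $\dif f$ being a $(1,0)$-form for the orthogonal complex structure $\ac{J}$ on $f^*\T X$ forces $\T\Sigma$ to be a $\ac{J}$-invariant subbundle (since $\dif f$ spans $\ca{T}$ fiberwise and $\ac{J}\,\dif f = \ima\,\dif f$ means $\ac{J}$ maps the image of $\dif f$ to itself), so $\ac{J}$ restricts to $\JT$ on $\ca{T}$ and to $\JN$ on $\ca{N}$, and the parallelism $\nabla\ac{J}=0$ decomposes exactly as in \eqref{eqn_proofkahler1}, giving both $\nabla^{\ca{T}}\JT = 0$ and the off-diagonal relation $A(X)\JT Y = \JN A(X) Y$, i.e.\ $\II(X, \JT Y) = \JN\,\II(X,Y)$.

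First I would establish conformality. Pick a local holomorphic coordinate $z$ on $\Sigma$; then $\dif f$ being $(1,0)$ for $\ac{J}$ means $\dif f(\pa_{\bar z})$ is the $(0,1)$-part, and one checks that the vectors $\dif f(\pa_x)$ and $\dif f(\pa_y)$ (writing $z = x + \ima y$) satisfy $\dif f(\pa_y) = \JT\,\dif f(\pa_x)$. Since $\JT$ is orthogonal with respect to $\langle\sth,\sth\rangle|_{\ca{T}}$, we get $\langle \dif f(\pa_x), \dif f(\pa_x)\rangle = \langle \dif f(\pa_y), \dif f(\pa_y)\rangle$ and $\langle \dif f(\pa_x), \dif f(\pa_y)\rangle = \langle \dif f(\pa_x), \JT\,\dif f(\pa_x)\rangle = 0$ by skew-adjointness of $\JT$. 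Hence $f^*\ve{h}$ is conformal to the flat metric $|\dz|^2$ in this chart, so $f$ is weakly conformal; since $f$ is assumed to be an immersion it is in fact a conformal immersion and $\Sigma$ inherits a genuine conformal (indeed the given Riemann surface) structure for which $\ca{T} \cong \ca{K}^{-1}$ as a line bundle and $\JT$ is multiplication by $\ima$.

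Next I would show minimality, i.e.\ $\Tr_{\ve g}\II(\sth,\sth) = 0$. Take an orthonormal frame $(e_1, e_2)$ of $\ca{T}$ with $e_2 = \JT e_1$ (possible after the conformal rescaling, using that $\ca{T}$ is definite). Then
\begin{equation*}
\Tr_{\ve g}\II(\sth,\sth) = \II(e_1,e_1) + \II(e_2,e_2) = \II(e_1,e_1) + \II(\JT e_1, \JT e_1).
\end{equation*}
Applying the relation $\II(X, \JT Y) = \JN\,\II(X,Y)$ twice — once in the second slot, once in the first, using symmetry of $\II$ — gives $\II(\JT e_1, \JT e_1) = \JN\,\II(\JT e_1, e_1) = \JN\,\II(e_1, \JT e_1) = \JN^2\,\II(e_1, e_1) = -\,\II(e_1,e_1)$, so the trace vanishes. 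Thus $f$ is conformal with zero mean curvature, hence a conformal minimal immersion. The main obstacle is really just bookkeeping: making sure that "$\dif f$ is a $(1,0)$-form'' is correctly translated into the statement that $\ca{T}$ is $\ac{J}$-invariant with $\dif f(\pa_y) = \JT\,\dif f(\pa_x)$ (rather than a sign-reversed version), and verifying that $\nabla^{\ca{T}}\JT = 0$ follows automatically here from $\nabla\ac{J}=0$ rather than needing the separate conformality argument used in Proposition \ref{prop_kahler}; once the decomposition \eqref{eqn_proofkahler1} is in hand these are immediate.
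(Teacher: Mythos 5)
Your proposal is correct and follows essentially the same route as the paper: conformality from orthogonality of $\ac{J}$ together with the $(1,0)$-condition on $\dif f$, and minimality by noting that $\ac{J}$ preserves $\T\Sigma$, so the vanishing of the $\Hom(\ca{T},\ca{N})$-component of $\nabla\ac{J}$ (as computed in the proof of Proposition \ref{prop_kahler}) gives complex bilinearity of $\II$, whence $\Tr_{\ve g}\II=0$. You simply spell out the steps the paper delegates to \eqref{eqn_proofkahler2} and Remark \ref{rk_kahler1}, so no substantive difference.
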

\begin{proof}
The conformality follows immediately from the assumption that $\dif f$ is a $(1,0)$-form and $\ac{J}$ is orthogonal. To show the minimality, we transfer back to the setting where $\Sigma$ is an embedded surface in $X$, and only need to show that if the vector bundle $\T X|_\Sigma$ on $\Sigma$ admits an orthogonal, parallel, complex structure $\ac{J}$ which preserves the subbundle $\T\Sigma$, then $\Sigma$ is minimal. Here, being parallel means $\nabla\ac{J}=0$, but by the computation of $\nabla\ac{J}$ in the proof of Proposition \ref{prop_kahler}, the condition that the $\T\Sigma$-component of $\nabla\ac{J}$ (given by \eqref{eqn_proofkahler2}) vanishes already implies that the second fundamental form $\II(\sth,\sth)$ is complex bilinear, which in turns implies the minimality.  
\end{proof}

Although the proof has some overlap with Proposition \ref{prop_kahler}, this lemma will be applied to a context different from that proposition, with $X$ being the Riemannian symmetric space $X_{\SO_0(n,n+1)}$, which does not admit a natural almost complex structure itself.

Finally, we recall the following well known description of $f^*\T X_G$ (as a metric vector bundle with connection) and $\dif f$ (as a $f^*\T X_G$-valued $1$-form) for a general symmetric spaces $X_G$:
\begin{lemma}\label{lemma_pullback}
Let $G$ be a noncompact semisimple Lie group with maximal compact subgroup $K$ and Cartan decomposition $\frak{g}=\frak{k}\oplus\frak{p}$. Let $X_G=G/K$ be the associated Riemannian symmetric space and $\omega\in \Omega^1(G,\frak{g})$ be the left Maurer-Cartan form on $G$. Given a smooth map $f$ from a manifold $\Sigma$ to $X_G$, if $f$ has a lift $\wt{f}:\Sigma\to G$, then $f^*\T X_G$ identifies with the trivial $\frak{p}$-bundle over $\Sigma$ endowed with metric given by the Killing form and the connection given by the $\End(\frak{p})$-valued $1$-form $\ad(\wt{f}^*\omega)^\frak{k}$ (where $\ad$ is the map $\frak{k}\to\End(\frak{p})$ corresponding to the $\frak{k}$-action on $\frak{p}$), while the $f^*\T X_G$-valued $1$-form $\dif f$ identifies with the $\frak{p}$-valued $1$-form $(\wt{f}^*\omega)^\frak{p}$.
\end{lemma}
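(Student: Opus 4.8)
The plan is to deduce this from three classical facts about the symmetric space $X_G=G/K$, seen through the principal $K$-bundle $\pi\colon G\to G/K$, and then pull everything back along the lift $\wt f$. The three facts are: (i) $\dif\pi$ induces a $K$-equivariant bundle isomorphism $G\times_K\frak p\cong\T X_G$, where $K$ acts on $\frak p$ by $\Ad$; (ii) the $G$-invariant metric of $X_G$ corresponds, under (i), to the constant fibrewise form on $G\times_K\frak p$ given by the Killing form $B|_{\frak p}$, which is positive definite because $\frak g=\frak k\oplus\frak p$ is a Cartan decomposition; (iii) the Levi--Civita connection of this metric is the canonical connection of the symmetric pair $(G,K)$, i.e.\ the one induced on the associated bundle $G\times_K\frak p$ by the principal connection on $G\to G/K$ whose $\frak g$-valued connection form is the $\frak k$-component $\omega^{\frak k}$ of the left Maurer--Cartan form.

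For (iii) one checks that the principal connection defined by $\omega^{\frak k}$ is metric, since it is built from the $\Ad(K)$-invariant form $B$, and torsion-free, since $[\frak p,\frak p]\subseteq\frak k$ (the symmetric-pair relation); alternatively one cites \cite[Ch.~IV]{helgason} or the standard theory of reductive homogeneous spaces. In any local trivialization of $G\times_K\frak p$ coming from a section $s$ of $G\to G/K$, this connection reads $\dif+\ad((s^*\omega)^{\frak k})$, where $\ad\colon\frak k\to\End(\frak p)$ is the $\frak k$-action on $\frak p$; moreover the tautological identification of (i) sends the velocity $\dif\pi_g(X)$, for $X\in\T_gG$, to the $\frak p$-component of $\omega_g(X)$, since $\dif\pi_g$ kills exactly $\frak k$ (left-translated to $\T_gG$).

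Now specialize the section to $s=\wt f$ over $\Sigma$. Since $f=\pi\circ\wt f$, formula (i) yields the trivialization $\Sigma\times\frak p\cong f^*\T X_G$; under it the pulled-back metric is the constant $B|_{\frak p}$ by (ii); the pulled-back Levi--Civita connection is $\dif+\ad((\wt f^*\omega)^{\frak k})$ by (iii) applied to the trivializing section $\wt f$; and for a curve $\gamma$ in $\Sigma$ the vector $\tfrac{\dif}{\dif t}f(\gamma(t))$ reads in this trivialization as the $\frak p$-component of $(\wt f^*\omega)(\dot\gamma)$, by the last sentence of the previous paragraph. Hence $\dif f$ identifies with $(\wt f^*\omega)^{\frak p}$, which is the assertion.

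The only delicate points are bookkeeping ones, not genuine obstacles: fixing the convention that $\omega$ is the \emph{left} Maurer--Cartan form (so $\wt f^*\omega=\wt f^{-1}\dif\wt f$), with the attendant sign in $\ad((\wt f^*\omega)^{\frak k})$, and invoking the classical identification of the canonical connection of $(G,K)$ with the Levi--Civita connection of the symmetric metric; both should simply be stated and referenced rather than rederived.
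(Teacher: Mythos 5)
Your proposal is correct and follows essentially the same route as the paper: identify $\T X_G$ with the associated bundle $G\times_{\Ad_K}\frak{p}$ for the principal $K$-bundle $\pi:G\to X_G$ with canonical connection form $\omega^{\frak k}$, note that the Levi--Civita connection is the associated connection and the metric is the Killing form on $\frak p$, and then pull back along the lift $\wt f$ to read off the trivialization, the connection form $\ad(\wt f^*\omega)^{\frak k}$, and $\dif f=(\wt f^*\omega)^{\frak p}$. The extra detail you supply (torsion-freeness via $[\frak p,\frak p]\subseteq\frak k$, or the citation to Helgason) merely justifies a standard fact the paper asserts without proof.
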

\begin{proof}
If we view the projection $\pi:G\to X$ as a principal $K$-bundle endowed with the principal connection whose horizontal distribution is given by left-translating $\frak{p}\subset\frak{g}=\T_eG$ to every point of $G$, then $\T X_G$ identifies with the $\frak{p}$-bundle $G\times_{\Ad_K}\frak{p}$ associated to this principal $K$-bundle, while the Levi-Civita connection on $\T X_G$ is the associated connection. As a consequence, $\pi^*\T X_G$ identifies with the trivial $\frak{p}$-bundle over $G$ with the connection given by the $1$-form $\ad \omega^\frak{k}\in\Omega^1(G,\End(\frak{p}))$, while the $1$-form $\dif\pi\in\Omega^1(G,\frak{p})$ is just $\omega^\frak{p}$. The required statement follows.
\end{proof}
\begin{remark}
In Higgs bundle theory, one obtains a map $f:\Sigma\to X$ from the data of a principal $G$-bundle $\ca{P}$ over $\Sigma$, a flat $G$-connection $\nabla$ on $\ca{P}$ and a $K$-reduction of $\ca{P}$. We may write
$\nabla=\nabla^K+\Psi$,
where $\nabla^K$ is a $K$-connection and $\Psi$ is a $1$-form with values in the $\frak{p}$-bundle $\ca{P}_\frak{p}$ associated to the principal $K$-bundle resulting from the reduction. Lemma \ref{lemma_pullback} essentially means that $f^*\T X_G$ identifies with $\ca{P}_\frak{p}$ endowed with the connection $\nabla^K$ and the metric given by the Killing form, while $\dif f$ is just $\Psi$.
\end{remark}

\section{A-surfaces: definition and properties}\label{sec_asurface}
In this section, we give a detailed discussion of A-surfaces and establish their fundamental properties in Theorems \ref{thm_holo}, \ref{thm_affinetoda} and \ref{thm_gaussmap}, which are roughly stated as Theorem \ref{thm_intro0} in the introduction.
\subsection{The definition}
In analogy to Frenet curves, we introduce:
\begin{definition}\label{def_quasisuper}
	Let $M$ be a pseudo-Riemannian manifold of even dimension $2n\geq4$ and $\Sigma\subset M$ be a smooth, embedded, spacelike surface. Consider the vector bundle $\T M|_\Sigma$ on $\Sigma$ endowed with the metric $\langle\sth,\sth\rangle$ and connection $\nabla$ given by the metric and Levi-Civita connection of $M$.
	Then a \emph{Frenet splitting} of $\T M|_\Sigma$ is an orthogonal splitting 
	$\T M|_\Sigma=\TT_1\oplus \cdots \oplus \TT_n$ into rank $2$ subbundles
	such that
	\begin{itemize}
		\item $\TT_1=\T\Sigma\subset \T M|_\Sigma$ is the tangent bundle of $\Sigma$ (hence $\ca{N}=\TT_2\oplus\cdots\oplus \TT_n$ is the normal bundle);
		\item every $\TT_i$ is (positive or negative) definite;
		\item  $(\nabla_X\xi)^{\TT_i}=0$ for all $X\in C^\infty(\Sigma,\T\Sigma)$ and $\xi\in C^\infty(\Sigma,\TT_j)$ with $|i-j|\geq2$.
	\end{itemize}
\end{definition}

Since $\nabla$ preserves $\langle\sth,\sth\rangle$, the last bullet point implies that $\nabla$ decomposes under the splitting as
\begin{equation}\label{eqn_nablamatrix}
\nabla=
\scalebox{0.9}{$
\mat{
	\nabla_1&-\ve\alpha_2^\dagger&&&\\[0.1cm]
	\ve\alpha_2&\nabla_2&-\ve\alpha_3^\dagger&&\\
	&\ve\alpha_3&\nabla_3&\ddots&\\
	&&\hspace{-0.2cm}\ddots&\hspace{-0.2cm}\ddots&\hspace{-0.1cm}-\ve\alpha_n^\dagger\\[0.1cm]
	&&&\hspace{-0.3cm}\ve\alpha_n&\hspace{-0.1cm}\nabla_n
}$},
\end{equation}
where $\nabla_i$ is a connection on $\TT_i$ preserving the metric $\langle\sth,\sth\rangle|_{\TT_i}$, while $\ve\alpha_j$ and $\ve\alpha_j^\dagger$ are $1$-forms with values in $\Hom(\TT_{j-1},\TT_j)$ and $\Hom(\TT_j,\TT_{j-1})$, respectively,
%The notation ``\,$\dagger$\,'' means that $\ve\alpha_j(X)$ and $\ve\alpha_j^\dagger(X)$ are $\langle\sth,\sth\rangle$-adjoint to each other in the sense that 
related by $\langle \ve\alpha_j(X)\xi,\eta\rangle=\langle \xi,\ve\alpha_j^\dagger(X)\eta\rangle$ for any tangent vector field $X$ and sections $\xi$ and $\eta$ of $\TT_{j-1}$ and $\TT_j$ respectively.

\begin{remark}\label{rk_secondff}
The $1$-form $\ve\alpha_2$ is just the second fundamental form of $\Sigma$, as we have
$\II(X,Y)=\ve\alpha_2(X)Y$.
In particular, every osculation vector (see \S \ref{subsec_submanifolds}) is in $\TT_2$. 
%Thus, we may think of $\TT_2$ as the ``osculation bundle'' of $\Sigma$ (especially in the setting of Theorem \ref{thm_holo} below, where $\II$ has full image in $\TT_2$ except at isolated zeros).
By the relation $\langle\II(X,Y),\xi\rangle=\langle X,\ve{S}_\xi(Y)\rangle$, it follows that the shape operator $\ve{S}_\xi\in C^\infty(\Sigma,\End(\T \Sigma))$ assigned to a normal vector field $\xi$ is identically zero whenever $\xi$ is orthogonal to $\TT_2$.
\end{remark}

A linear map  $A:\mathbb{E}^2\to\mathbb{E}^2$ of the Euclidean plane is said to be \emph{conformal} if $A$ sends any circle centered at the origin to a circle (possibly reduced to $\{0\}$), or equivalently, if $A$ is either a scaled rotation, a scaled reflection, or the zero map.
%As an alternative characterization, if we endow the domain plane with an orthogonal complex structure $\ac{J}$ (and identify it with $\C$), then $A$ is a complex multiplication one
A homomorphism between rank $2$ metric vector bundles is said to be conformal if it is a conformal linear map on each fiber. Our main object of study is:

\begin{definition}\label{def_quasisuperminimal}
Given $M$ and $\Sigma$ as above, with $\Sigma$ connected and orientable, we call $\Sigma$ \emph{superminimal} if $\T M|_\Sigma$ has a Frenet splitting such that $\ve\alpha_j(X):\TT_{j-1}\to \TT_j$ is conformal for all $X\in C^\infty(\Sigma,\T\Sigma)$ and $2\leq j\leq n$. More generally, we call $\Sigma$ \emph{quasi-superminimal} if $\Sigma$ is minimal and $\T M|_\Sigma$ has a Frenet splitting with $\ve\alpha_j(X)$ conformal for $2\leq j\leq n-1$ (the condition is removed for $j=n$ here).
If $\Sigma$ is quasi-superminimal and the $\TT_i$'s have alternating space-/time-likeness, namely $\langle\sth,\sth\rangle|_{\TT_i}$ is positive/negative definite for all odd/even $i$, then we call $\Sigma$ an \emph{A-surface}. 
\end{definition}

As mentioned in the introduction, by counting the number of the positive/negative definite $\TT_i$'s, we see that if $M$ admits an A-surface, then its signature $(p,q)$ is determined from $n=\tfrac{p+q}{2}$ by \eqref{eqn_pqintro}, so we will always let $(p,q)$ be given by that equation whenever an A-surface is being considered.

\begin{remark}\label{rk_superminimalisminimal}
It is easy to show that if a symmetric bilinear map  $B(\sth,\sth):\mathbb{E}^2\times\mathbb{E}^2\to\mathbb{E}^2$ has the property that $B(v,\sth):\mathbb{E}^2\to\mathbb{E}^2$ is conformal for all fixed $v$, then $\Tr B(\sth,\sth):=B(e_1,e_1)+B(e_2,e_2)$ vanishes. Therefore, if $\ve\alpha_2(X)=\II(X,\sth)$  is conformal for all $X\in C^\infty(\Sigma,\T\Sigma)$, then $\Sigma$ is minimal. In particular, superminimal surfaces are minimal, and when $n\geq3$ we may omit the condition ``$\Sigma$ is minimal'' in the definition of quasi-superminimal surfaces as well. On the other hand, when $n=2$, the conformality condition in the definition of quasi-superminimal surfaces is vacuous, so an A-surface in this case is just a spacelike minimal surface with rank $2$ negative definite normal bundle.
\end{remark}

\begin{remark}\label{rk_historysuperminimal}
The notion of superminimal surfaces arose from the study of minimal $2$-spheres. Chern \cite{chern_on} essentially showed that any minimal $2$-sphere $S$ in a Riemannian manifold $M$ of constant curvature is superminimal in the sense of Definition \ref{def_quasisuperminimal} (after replacing $M$ by the smallest totally geodesic submanifold containing $S$), while such spheres are also studied via twistor lift in \cite{calabi,barbosa,michelsohn} for $M=\mathbb{S}^{2n}$. Bryant \cite{bryant_conformal} coined the term ``superminimal'' to describe minimal surfaces of general topological type in $\mathbb{S}^4$ with similar properties, again via twistor lift. When $M$ is a Riemannian $4$-manifold, the equivalent definition via the conformality of $\ve\alpha_2^\dagger(X)$ (which is the same as the conformality of $\ve\alpha_2(X)$) appears in \cite{friedrich, forstneric_proper, forstneric_calabiyau} and is attributed to K.\ Kommerell \cite{kommerell}.
\end{remark}
\subsection{Holomorphic interpretation of $\TT_i$ and $\ve\alpha_j$}\label{subsec_holo}
Given an A-surface $\Sigma\subset M$ with Frenet splitting $\T M|_\Sigma=\TT_1\oplus\cdots\oplus\TT_n$, it is convenient to consider the positive definite metric
$$
\langle\sth,\sth\rangle_i:=(-1)^{i+1}\langle\sth,\sth\rangle|_{\TT_i}
$$ 
on each $\TT_i$. The adjoint $\ve\alpha_j^*(X)$ of $\ve\alpha_j(X):\TT_{j-1}\to\TT_j$ with respect to these metrics is opposite to the adjoint $\ve\alpha_j^\dagger(X)$ in \eqref{eqn_nablamatrix} with respect to the original $\langle\sth,\sth\rangle$, so  \eqref{eqn_nablamatrix} can be rewritten as 
\begin{equation}\label{eqn_nablamatrix2}
	\nabla=
	\scalebox{0.9}{$
	\mat{
		\nabla_1&\ve\alpha_2^*&&&\\[0.1cm]
		\ve\alpha_2&\nabla_2&\ve\alpha_3^*&&\\
		&\ve\alpha_3&\nabla_3&\ddots&\\
		&&\hspace{-0.2cm}\ddots&\hspace{-0.2cm}\ddots&\hspace{-0.1cm}\ve\alpha_n^*\\[0.1cm]
		&&&\hspace{-0.3cm}\ve\alpha_n&\hspace{-0.1cm}\nabla_n
	}$}.
\end{equation}

Now assume that $M$ is the pseudo-hyperbolic space $\H^{p,q}$. Since this is a real analytic manifold, any minimal surface therein is analytic as well (see e.g.\ \cite[p.\ 117]{eells-sampson} for a stronger result in this regard).
Our first main result is a holomorphic interpretation of $\TT_i$ and $\ve\alpha_j$. 
Let us recall some backgrounds in order to give the statement. 

On a Riemann surface $\Sigma$, the following two types of objects can be considered as the same thing:
\begin{itemize}
\item 
rank $r$ hermitian holomorphic vector bundle $(\ca{E},\ve{h})$;
\item
  rank $2r$ real vector bundle $\ca{E}$ endowed with a (positive definite) metric $\langle\sth,\sth\rangle$, an orthogonal complex structure $\ac{J}$ and a connection $\nabla$ preserving $\langle\sth,\sth\rangle$ and $\ac{J}$.
\end{itemize}
Namely, given the former, we obtain the latter by letting $\langle\sth,\sth\rangle$ and $\nabla$ be the real part and Chern connection, respectively, of the hermitian metric $\ve{h}$; conversely, given the latter, the $(0,1)$-part of $\nabla$ is a Dolbeault operator (or ``partial/pseudo connection'') that provides $\ca{E}$ with a holomorphic atlas (see e.g.\ \cite{guichard_introduction}). 

When $r=1$, since the Euclidean plane admits exactly two orthogonal complex structures, corresponding to the two orientations, we conclude that a hermitian holomorphic line bundle $(\TT,\ve{h})$ on $\Sigma$ is the same as an oriented real rank $2$ vector bundle $\TT$ endowed with a metric $\langle\sth,\sth\rangle$ and a metric-preserving connection $\nabla$. It can be shown that the orientation-reversed vector bundle $\cj{\TT}$ (endowed with the same $\langle\sth,\sth\rangle$ and $\nabla$) corresponds to the inverse hermitian holomorphic line bundle $(\TT^{-1},\ve{h}^{-1})$. As an example, the anti-canonical line bundle $\ca{K}^{-1}$ endowed with a hermitian metric identifies with the tangent bundle $\T\Sigma$ endowed with the natural orientation (underlying the Riemann surface structure), a conformal Riemannian metric, and the Levi-Civita connection of that metric.

Returning to the context of A-surfaces, we have:
\begin{theorem}\label{thm_holo}
	 Let $\Sigma$ be an A-surface in $\H^{p,q}$ with $n=\tfrac{p+q}{2}\geq2$. Suppose $\Sigma$ is not contained in any pseudo-hyperbolic subspace of codimension $4$. Then
\begin{enumerate}[label=(\arabic*)]
	\item\label{item_thmholo1}
	 The Frenet splitting $\T \H^{p,q}|_\Sigma=\TT_1\oplus\cdots\oplus \TT_n$ is unique, and none of the $1$-forms $\ve\alpha_2,\cdots,\ve\alpha_{n-1}$ in \eqref{eqn_nablamatrix2} is identically zero (but $\ve\alpha_n$ can be zero, namely when $\Sigma$ is in a codimension $2$ subspace).
	 \item\label{item_thmholo2} 
	 Once an orientation of $\TT_1=\T\Sigma$ is chosen, we can endow each of $\TT_2,\cdots,\TT_n$ with an orientation as well, such that if $\Sigma$ is endowed with the Riemann surface structure given by the first fundamental form $\langle\sth,\sth\rangle_1$ and the orientation, and every $\TT_i$ is viewed as a hermitian holomorphic line bundle on $\Sigma$ by means of the metric $\langle\sth,\sth\rangle_i$, the connection $\nabla_i$ and the orientation (in particular, $\TT_1$ gets identified with the anti-canonical bundle $\ca{K}^{-1}$), then 
%				each $\ve\alpha_j$ with $j\leq n-2$ is a nonzero $\Hom_\C(\TT_j,\TT_{j+1})$-valued holomorphic $(1,0)$-form; while
%				$\ve\alpha_{n-1}$ is the sum of a $\Hom_\C(\TT_{n-1},\TT_n)$-valued holomorphic $(1,0)$-form  $\ve{\alpha}$ and a  $\Hom_\C(\TT_{n-1},\cj{\TT}_n)$-valued holomorphic $(1,0)$-form $\ve{\beta}$, such that at least one of $\ve\alpha$ and $\ve\beta$ is nonzero.
		\begin{enumerate}[label=(\roman*)]
			\item\label{item_gammaj1}
			each $\ve\alpha_j$ with $j\leq n-1$ is a $\Hom_\C(\TT_{j-1},\TT_j)$-valued holomorphic $(1,0)$-form;
			\item\label{item_gammaj2}
			$\ve\alpha_n=\ve\alpha_n^++\ve\alpha_n^-$, where $\ve\alpha_n^+$ and $\ve\alpha_n^-$ are holomorphic $(1,0)$-forms with values in the line bundles $\Hom_\C(\TT_{n-1},\TT_n)$ and $\Hom_\C(\TT_{n-1},\cj{\TT}_n)=\Hom_\C(\TT_{n-1},\TT_n^{-1})$, respectively.
		\end{enumerate}
%\item\label{item_thmholo3} $\Sigma$ is superminimal if and only if one of $\ve\alpha_n^\pm$ is identically zero. On other hand, $\Sigma$ is contained in a pseudo-hyperbolic subspace of codimension $1$ if and only if $\ca{L}_n^2$ is the trivial line bundle $\ca{O}$ with constant hermitian metric (so that $\cj{\TT}_n\cong \TT_n^{-1}\cong \TT_n$) and $\ve\alpha_n^+=e^{\theta\ima}\ve\alpha_n^-$ for some $\theta\in\R$. 
\end{enumerate}
\end{theorem}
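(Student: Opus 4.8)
The plan is to fix a local conformal coordinate $z$ for the Riemann surface structure of $\langle\sth,\sth\rangle_1$ and to work with the inclusion $F\colon\Sigma\hookrightarrow\H^{p,q}\subset\R^{p,q+1}$, which is real-analytic because $\H^{p,q}$ is. On $\Sigma$ the trivial bundle $\underline{\C^{p,q+1}}$ carries the flat connection whose restriction to $(\underline{\C}F)^\perp=\T\H^{p,q}|_\Sigma\otimes\C$ is the complexified $\nabla$. Conformality of $F$ gives $\langle F_z,F_z\rangle=0$ and $\langle F_z,F_{\bar z}\rangle>0$, while minimality gives that $F_{z\bar z}$ is orthogonal to $F_z$, $F_{\bar z}$ and to the normal bundle, hence $F_{z\bar z}\in\underline{\C}F$ and $\nabla_{\bar z}F_z=0$. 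Reading off the triangular form \eqref{eqn_nablamatrix2}, I set $\psi_1:=F_z\in\TT_1\otimes\C$ and inductively $\psi_j:=\ve\alpha_j(\partial_z)\psi_{j-1}=\ve\alpha_j(\partial_z)\cdots\ve\alpha_2(\partial_z)F_z$, which is the $\TT_j$-component of $\nabla_z^{j-1}F_z$; these are the higher fundamental forms of the A-surface.

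\emph{Part \ref{item_thmholo1}.} If $\ve\alpha_j\equiv0$ for some $2\le j\le n-1$, then \eqref{eqn_nablamatrix2} shows $\underline{\C}F\oplus\TT_1\oplus\cdots\oplus\TT_{j-1}$ is $\nabla$-parallel; being nondegenerate and parallel in the flat bundle $\underline{\C^{p,q+1}}$, it is a fixed subspace $U$ of dimension $2j-1$, so $\Sigma\subset\H^{p,q}\cap U$ has codimension $2(n-j+1)\ge4$, contrary to hypothesis. Hence $\ve\alpha_2,\dots,\ve\alpha_{n-1}\not\equiv0$; once the holomorphicity of part \ref{item_thmholo2} is established, each $\psi_j$ with $j\le n-1$ is then a nonzero holomorphic differential with isolated zeros. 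For uniqueness, two Frenet splittings agree on $\TT_1=\T\Sigma$; then $\TT_2$ contains the image of the second fundamental form $\ve\alpha_2=\II$, which is rank $2$ off the isolated zeros of $\psi_2$ (because $\psi_2$ spans an isotropic line in the definite bundle $\TT_2\otimes\C$, so $\psi_2,\bar\psi_2$ are independent there), whence $\TT_2$ is determined; inductively, $\TT_{k+1}$ is forced as (the fibrewise closure of) the image of $\ve\alpha_{k+1}$, and finally $\TT_n=(\TT_1\oplus\cdots\oplus\TT_{n-1})^\perp$, so the splitting is unique.

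\emph{Part \ref{item_thmholo2}.} First I orient $\TT_2,\dots,\TT_{n-1}$: by induction on $j\le n-1$, $\psi_j$ spans an isotropic line $\ell_j\subset\TT_j\otimes\C$, since $\langle\psi_1,\psi_1\rangle=\langle F_z,F_z\rangle=0$ and, $\ve\alpha_j(X)$ being a conformal map of oriented definite planes, it is $\C$-linear or $\C$-antilinear for the metric complex structures, hence carries isotropic lines to isotropic lines, so $\psi_j=\ve\alpha_j(\partial_z)\psi_{j-1}$ spans one. As a definite rank-$2$ metric bundle has exactly two orthogonal complex structures, interchanging the two isotropic lines of its complexification, I orient each $\TT_j$ ($2\le j\le n-1$) so that $\ell_j=\TT_j^{1,0}$; this fixes their orientations from that of $\TT_1$, and I orient $\TT_n$ by the ambient orientation of $\H^{p,q}$ together with these (cf.\ Remark \ref{rk_intro1}). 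The heart is a simultaneous induction on $j$, using $R_M(u,v)w=-(\langle u,w\rangle v-\langle v,w\rangle u)$ on $\H^{p,q}$, the relation $\nabla_{\bar z}F_z=0$, the Ricci identity $\nabla_{\bar z}\nabla_z-\nabla_z\nabla_{\bar z}=R_M(\partial_z,\partial_{\bar z})$ and \eqref{eqn_nablamatrix2}, proving: $(\mathrm a)_j$, $\nabla_{\bar z}\psi_j$ has no component in $\TT_k$ for any $k\ge j$; and $(\mathrm b)_j$, $\ve\alpha_{j+1}(\partial_{\bar z})$ annihilates $\TT_j^{1,0}$. Indeed $(\mathrm a)_j$ gives $\ve\alpha_{j+1}(\partial_{\bar z})\psi_j=0$, and since $\psi_j$ spans $\TT_j^{1,0}$ while the $\C$-linear and $\C$-antilinear parts of $\ve\alpha_{j+1}(\partial_{\bar z})$ carry $\TT_j^{1,0}$ into the complementary summands $\TT_{j+1}^{1,0}$ and $\TT_{j+1}^{0,1}$, both parts annihilate $\psi_j$, hence vanish (a nonzero conformal map of planes is injective): this yields $(\mathrm b)_j$ and, in particular, that $\ve\alpha_{j+1}$ is a $(1,0)$-form; and $(\mathrm a)_j$ together with $(\mathrm b)_j$ feeds the curvature computation yielding $(\mathrm a)_{j+1}$, the curvature contributions being contractions $R_M(F_z,F_{\bar z})\,\sth$ that vanish on every $\TT_k$ with $k\ge2$, and the one potentially dangerous lower-order term being killed by $(\mathrm b)_j$. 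From $(\mathrm a)_j$, the vanishing of the $\TT_j$-part says $\nabla_j(\partial_{\bar z})\psi_j=0$, i.e.\ $\psi_j$ is holomorphic, so $\ell_j$ ($j\le n-1$) is a holomorphic line subbundle equal to $\TT_j^{1,0}$. For $2\le j\le n-1$, the relation $\psi_j=\ve\alpha_j(\partial_z)\psi_{j-1}$ with $\psi_{j-1}\in\TT_{j-1}^{1,0}$ and $\psi_j\in\TT_j^{1,0}$ forces the $\C$-antilinear part of $\ve\alpha_j(\partial_z)$ to annihilate $\psi_{j-1}$, hence to vanish, so $\ve\alpha_j$ is $\Hom_\C(\TT_{j-1},\TT_j)$-valued; dividing the holomorphic $\psi_j$ by the non-vanishing holomorphic $\psi_{j-1}$ away from its isolated zeros, and removing the resulting removable singularities, shows $\ve\alpha_j$ is holomorphic. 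This is \ref{item_gammaj1}. Finally $(\mathrm a)_{n-1}$ gives (as in $(\mathrm b)_{n-1}$) that $\ve\alpha_n(\partial_{\bar z})=0$, so $\ve\alpha_n$ is a $(1,0)$-form; writing $\ve\alpha_n=\ve\alpha_n^++\ve\alpha_n^-$ for its $\C$-linear and $\C$-antilinear parts, valued in $\Hom_\C(\TT_{n-1},\TT_n)$ and $\Hom_\C(\TT_{n-1},\cj{\TT}_n)\cong\Hom_\C(\TT_{n-1},\TT_n^{-1})$, one splits $\psi_n=\ve\alpha_n^+(\partial_z)\psi_{n-1}+\ve\alpha_n^-(\partial_z)\psi_{n-1}$ into its $\TT_n^{1,0}$- and $\TT_n^{0,1}$-parts and uses $\nabla_n(\partial_{\bar z})\psi_n=0$ (from $(\mathrm a)_n$) with $\nabla_n$ preserving $\TT_n\otimes\C=\TT_n^{1,0}\oplus\TT_n^{0,1}$ to conclude both parts, hence both $\ve\alpha_n^\pm$, are holomorphic. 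This is \ref{item_gammaj2}.

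The step I expect to be the main obstacle is the curvature induction establishing $(\mathrm a)_j$: one must expand $\psi_j$ via \eqref{eqn_nablamatrix2}, commute $\nabla_{\bar z}$ through the chain of $\nabla_z$'s while tracking all curvature and lower-order contributions, and check that nothing survives in $\TT_k$ for $k\ge j$ — the cancellation of the would-be $\TT_{j+1}$-term relying precisely on $(\mathrm b)_j$. The remaining ingredients — the plane geometry of conformal linear maps (the $\C$-linear/$\C$-antilinear dichotomy, injectivity of nonzero ones, behaviour under re-orientation) and the removable-singularity argument used to propagate the $\ell_j$ and the forms $\ve\alpha_j$ across the isolated zeros of the $\psi_j$, where real-analyticity of $\Sigma$ is a convenience — are routine.
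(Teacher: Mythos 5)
Your proposal is correct in outline but takes a genuinely different route from the paper. The paper chooses an orthonormal moving frame adapted to the Frenet splitting, writes flatness of the ambient connection as $\dif\Omega+\Omega\wedge\Omega=0$, and extracts three families of equations; holomorphicity then propagates up the chain purely algebraically, via the wedge identities $\ve\alpha_k\wedge\ve\alpha_{k-1}=0$ of \eqref{eqn_sss3} together with density of the nonvanishing locus of the previous form, while the orientations of $\TT_2,\cdots,\TT_{n-1}$ come from Lemma \ref{lemma_orientation} and that of $\TT_n$ from orientability of the ambient space. You instead run the classical harmonic-sequence/higher-fundamental-form scheme: the isotropic sections $\psi_j$ and the commutation of $\nabla_{\bar z}$ with $\nabla_z$ in the constant-curvature ambient replace the wedge identities, and the orientations are decreed by $\psi_j$ spanning $\TT_j^{1,0}$. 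Both proofs share the argument for $\ve\alpha_2,\cdots,\ve\alpha_{n-1}\not\equiv0$ (a parallel nondegenerate subbundle of the flat $\R^{p,q+1}$-bundle is a fixed subspace, forcing codimension $\geq4$) and essentially the same uniqueness argument. Your route pays with curvature bookkeeping but makes the holomorphic higher fundamental forms the primary objects; the paper's route uses flatness only once, in matrix form, and its frame computation is reused verbatim for Theorems \ref{thm_affinetoda} and \ref{thm_gaussmap}. The induction you flag as the main obstacle does close: the curvature term $R(\partial_{\bar z},\partial_z)\psi_j$ is tangential, $\nabla_z\nabla_{\bar z}\psi_j$ reaches at most $\TT_j$ by $(\mathrm{a})_j$, and the dangerous term $\ve\alpha_{j+1}(\partial_{\bar z})\nabla_j(\partial_z)\psi_j$ dies by $(\mathrm{b})_j$ because the metric connection $\nabla_j$ preserves $\TT_j^{1,0}$ (a point worth stating).

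Two places in your write-up are thinner than they look and should be made explicit. First, the isotropy of $\psi_j$: $\ve\alpha_j(\partial_z)=\tfrac12\big(\ve\alpha_j(\partial_x)-\ima\,\ve\alpha_j(\partial_y)\big)$ is a complex combination of two real conformal maps, and conformality of each $\ve\alpha_j(X)$ separately does not force the image of an isotropic vector to be isotropic (a rotation-type and a reflection-type summand send $\psi_{j-1}$ into the two different isotropic lines of $\TT_j\otimes\C$, and the cross term $\langle A\psi_{j-1},B\psi_{j-1}\rangle$ need not vanish). What saves you is that scaled rotations and scaled reflections are two linear subspaces of $\Hom_\R(\R^2,\R^2)$ meeting only at $0$, so conformality of $\ve\alpha_j(X)$ for \emph{every} $X$ forces $\ve\alpha_j(\partial_x)$ and $\ve\alpha_j(\partial_y)$ to be of the same type at each point; this pointwise linear/antilinear dichotomy is the statement you actually use. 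Second, the zero sets: the orientations, the statements $(\mathrm{b})_j$, and ``$\psi_j$ spans $\TT_j^{1,0}$'' are only defined off the zeros of $\psi_j$, so the induction must also carry $\psi_j\not\equiv0$ (if $\psi_{j+1}\equiv0$, then together with $(\mathrm{b})_j$ and its conjugate, $\ve\alpha_{j+1}$ kills both $\psi_j$ and $\cj{\psi}_j$ on a dense set, contradicting Part \ref{item_thmholo1}), and the complex structure must be extended across the zeros. The cleanest fix is to note that the $\TT_j$-part of $(\mathrm{a})_j$ reads $\nabla_j^{0,1}\psi_j=0$ with no orientation of $\TT_j$ required, so $\psi_j$ is a holomorphic section of the rank-two complex bundle $\TT_j\otimes\C$, its zeros are isolated, and the isotropic line it spans extends holomorphically — which is exactly the mechanism of the paper's Lemma \ref{lemma_orientation} that your ``removable singularity'' remark implicitly invokes. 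With these two points spelled out, your plan goes through.
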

\begin{remark}\label{rk_orientation}
%	[\textbf{Uniqueness of the orientations}]	
One of the nontrivial claims in Part \ref{item_thmholo2} is the \emph{existence} of an orientation on $\TT_i$ for $i\geq2$. On the other hand, the \emph{uniqueness} is relatively simple: On each $\TT_i$ with $2\leq i\leq n-1$, the orientation with the required properties \ref{item_gammaj1} \ref{item_gammaj2} is unique, because reversing it would violate \ref{item_gammaj1}. These orientations all get reversed if the initial orientation of $\TT_1$ is reversed. On the other hand, the orientation of $\TT_n$ is inessential for the properties \ref{item_gammaj1} \ref{item_gammaj2}, as reversing it just switches $\TT_n$ and $\cj{\TT}_n$ and switches $\ve\alpha_n^\pm$.  
%A way to single out a preferred orientation for $\TT_n$ is to prescribe an orientation on $\H^{p,q}$ and require the direct sum orientation of $\T \H^{p,q}|_\Sigma=\TT_1\oplus\cdots\oplus\TT_n$ to be coherent with it. 
\end{remark}
\begin{remark}
	%[\textbf{$\Sigma$ in pseudo-hyperbolic subspace}]
\label{rk_insubspace}
$\Sigma$ not being contained in a pseudo-hyperbolic subspace of codimension $4$ is necessary for the uniqueness of the Frenet splitting and the nonzeroness of $\ve\alpha_2,\cdots,\ve\alpha_{n-1}$. For instance, if $\Sigma$ is a totally geodesic hyperbolic plane $\H^2\subset\H^{4,2}$, then $\T\H^{4,2}|_\Sigma$ has uncountably many Frenet splittings with $\ve\alpha_2$ identically zero: each totally geodesic copy of  $\H^{2,2}$ in $\H^{4,2}$ containing $\Sigma$ induces one (in this case, $\ve\alpha_3^\pm$ are zero as well). The nonzeroness of $\ve\alpha_2,\cdots,\ve\alpha_{n-1}$ is crucial in the proof, as we will use it to ``propagate'' the property of being a holomorphic $(1,0)$-form from $\ve\alpha_2$ to $\ve\alpha_3$, then to $\ve\alpha_4$, and so forth.
\end{remark}

Before proceeding with the proof, we need to carry the above discussion of vector bundles one step further. Given a hermitian holomorphic line bundle $\TT$ and a real vector bundle $\ca{V}$ endowed with a metric and a metric-preserving connection, 
%(not necessarily even dimensional)
the bundle of homomorphisms $\Hom_\R(\TT,\ca{V})$ is a hermitian holomorphic vector bundle in a natural way: its complex structure is given by pre-composing each homomorphism $\ca{L}\to\ca{V}$ with the complex structure of $\TT$, and its metric and connection are induced by those of $\TT$ and $\ca{V}$. If furthermore $\ca{V}$ is endowed with a parallel orthogonal complex structure (hence is itself a hermitian holomorphic vector bundle), then we have a holomorphic decomposition
\begin{equation}\label{eqn_homLV}
\Hom_\R(\TT,\ca{V})=\Hom_\C(\TT,\ca{V})\oplus\Hom_\C(\TT,\cj{\ca{V}})
\end{equation}
given by splitting each homomorphism into complex linear and anti-linear parts. In rank $2$, we have:
\begin{lemma}\label{lemma_orientation}
	Let $\ca{L}$ be a hermitian holomorphic line bundle and $\ca{V}$ be a real rank $2$  vector bundle endowed with a metric and a metric-preserving connection. Let $\ve\alpha$ be a $\Hom_\R(\ca{L},\ca{V})$-valued holomorphic $(1,0)$-form which is not identically zero, such that $\ve\alpha(X)$ is conformal for all $X\in\T\Sigma$. 
	Then $\ca{V}$ is orientable, and has a unique orientation such that $\ve\alpha$ takes values in the subbundle $\Hom_\C(\ca{L},\ca{V})$ of $\Hom_\R(\ca{L},\ca{V})$ defined by the orthogonal complex structure of $\ca{V}$ underlying the orientation.
\end{lemma}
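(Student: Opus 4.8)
The plan is to construct the orientation of $\ca{V}$ fiberwise from $\ve\alpha$ over the open dense locus where $\ve\alpha$ is nonzero, and then to extend it across the remaining isolated points (I take $\Sigma$ connected, as in all our applications). The underlying pointwise linear algebra is the following: a nonzero conformal linear map $P$ between Euclidean $2$-planes is automatically an isomorphism of the form $P=\lambda O$ with $\lambda>0$ and $O$ a linear isometry, so if $P:\ca{L}_z\to\ca{V}_z$ is such a map then $J_P:=P\circ J_{\ca{L}}\circ P^{-1}$ (with $J_{\ca{L}}$ the complex structure of the fiber $\ca{L}_z$) is a well-defined orthogonal complex structure on $\ca{V}_z$. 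The elementary point to keep in mind is that $J_P$ is unchanged when $P$ is pre-composed with a $\C$-linear automorphism of $\ca{L}_z$, since such an automorphism is multiplication by a nonzero complex scalar and hence commutes with $J_{\ca{L}}$.

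Next, let $\Sigma^\circ\subset\Sigma$ be the locus where $\ve\alpha$ is nonzero. As $\ve\alpha$ is a nonzero holomorphic section of $\ca{K}\otimes\Hom_\R(\ca{L},\ca{V})$, in a local holomorphic frame it is a tuple of holomorphic functions not all identically vanishing near any point, so its zero set $D:=\Sigma\setminus\Sigma^\circ$ is discrete, hence $\Sigma^\circ$ is open and dense. Since $\ve\alpha$ is a $(1,0)$-form and the complex structure of $\Hom_\R(\ca{L},\ca{V})$ is pre-composition with $J_{\ca{L}}$, for two nonvanishing local vector fields $X,X'$ on $\Sigma^\circ$ with $X'=aX+bJ_\Sigma X$ one computes $\ve\alpha(X')=\ve\alpha(X)\circ(a\,\mathrm{id}_{\ca{L}}+b\,J_{\ca{L}})$ pointwise, and $a\,\mathrm{id}_{\ca{L}}+b\,J_{\ca{L}}$ is a $\C$-linear automorphism of $\ca{L}$ over $\Sigma^\circ$. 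By the remark above, the smooth orthogonal complex structure $J_{\ca{V}}:=\ve\alpha(X)\circ J_{\ca{L}}\circ\ve\alpha(X)^{-1}$ on $\ca{V}|_{\Sigma^\circ}$ does not depend on the choice of $X$; it defines an orientation of $\ca{V}|_{\Sigma^\circ}$, and by construction each $\ve\alpha(X)$ is $\C$-linear from $(\ca{L},J_{\ca{L}})$ to $(\ca{V},J_{\ca{V}})$, so $\ve\alpha$ takes values in the complex-linear summand $\Hom_\C(\ca{L},\ca{V})$ of \eqref{eqn_homLV} over $\Sigma^\circ$.

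To finish, I would extend this orientation across $D$ and check uniqueness. Around each $p\in D$ pick a coordinate disk meeting $D$ only at $p$; a real rank $2$ bundle over a disk is trivial, so one of the two orientations of $\ca{V}$ over the disk restricts to the already-defined orientation on the connected punctured disk. Gluing these local pieces to the orientation on $\Sigma^\circ$ gives a global orientation of $\ca{V}$, proving orientability; with it the complex structure $J_{\ca{V}}$ is moreover $\nabla^{\ca{V}}$-parallel (a metric connection on a rank $2$ bundle preserves an orientation-compatible complex structure, $\mathfrak{so}(2)$ being abelian), so \eqref{eqn_homLV} literally applies and $\Hom_\C(\ca{L},\ca{V})$ is a closed subbundle of $\Hom_\R(\ca{L},\ca{V})$; since it contains the image of $\ve\alpha$ over the dense $\Sigma^\circ$, it contains it over all of $\Sigma$. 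For uniqueness, any orientation with the stated property must restrict over $\Sigma^\circ$ to the one built above, because $\C$-linearity of the nonzero conformal maps $\ve\alpha(X)$ determines the orientation of the target, and two orientations of $\ca{V}$ agreeing on the dense $\Sigma^\circ$ agree on $\Sigma$.

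The step I expect to carry the real content is the well-definedness of $J_{\ca{V}}$ over $\Sigma^\circ$, i.e.\ the independence of the ``$\ve\alpha(X)$-push-forward orientation'' from the auxiliary vector field $X$. This is precisely where the hypothesis that $\ve\alpha$ is a $(1,0)$-form, rather than an arbitrary $\Hom_\R(\ca{L},\ca{V})$-valued $1$-form with conformal values, is indispensable: it forces the various $\ve\alpha(X)$ at a point to differ only by $\C$-linear automorphisms of $\ca{L}_z$. Everything else — the discreteness of $D$, the extension across it, and the uniqueness — is soft.
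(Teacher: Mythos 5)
Your proof is correct, and it reaches the conclusion by a route that differs from the paper's in how the zeros of $\ve\alpha$ are handled. The shared core is the same pointwise observation: the $(1,0)$-condition forces the values $\ve\alpha(X)$ at a point to differ only by precomposition with $a\,\mathrm{id}+b\,J_{\ca{L}}$, so conformality (which makes each nonzero value an invertible scaled isometry) pins down a single orthogonal complex structure $J_{\ca{V}}=\ve\alpha(X)\circ J_{\ca{L}}\circ\ve\alpha(X)^{-1}$ on the nonvanishing locus, i.e.\ singles out one of the two local summands in \eqref{eqn_homLV}. Where you diverge is at the zero set: the paper invokes the holomorphic fact that a not-identically-zero holomorphic $(1,0)$-form with values in a holomorphic bundle generates a unique holomorphic line subbundle containing its image (locally dividing by $(z-z_0)^d$ at a zero), and this saturated line subbundle selects the distinguished summand, hence the orientation, across the zeros; you instead extend the orientation over each isolated zero by a soft topological argument (triviality of a rank-$2$ bundle over a disk plus connectedness of the punctured disk) and then recover that $\ve\alpha$ lands in $\Hom_\C(\ca{L},\ca{V})$ everywhere since it vanishes at the remaining points. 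Your version is more elementary and uses holomorphicity only to guarantee that the zero set is discrete (for which connectedness of $\Sigma$, which you rightly flag, is needed), while the paper's version additionally produces the holomorphic line subbundle containing the image, a piece of structure in the spirit of standard Higgs-bundle arguments, though it is not needed for the lemma's statement. Your supporting points — invertibility of nonzero conformal maps, orthogonality of the conjugated complex structure, parallelism of $J_{\ca{V}}$ via the abelianness of $\mathfrak{so}(2)$, and uniqueness from surjectivity of $\ve\alpha(X)$ plus density — are all sound.
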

\begin{proof}
It is a basic fact that given a holomorphic vector bundle $\ca{E}$ on a Riemann surface and an $\ca{E}$-valued holomorphic $(1,0)$-form $\ve\alpha$ which is not identically zero, there is a unique holomorphic line subbundle $\ca{A}\subset\ca{E}$ containing the image of $\ve\alpha$. In fact, if we write locally $\ve\alpha=\alpha(z)\dz$ for a holomorphic local section $\alpha(z)$ of $\ca{E}$, then $\ca{A}$ is just generated by $\alpha(z)$ away from the zeros of $\ve\alpha$, and this extends over any zero $z=z_0$ because we may write $(z-z_0)^d\wh\alpha(z)$ around $z_0$ for a nonzero holomorphic section $\wh\alpha(z)$, which equally generates $\ca{A}$.

In the current setting with $\ca{E}=\Hom_\R(\TT,\ca{V})$, since $\ca{V}$ admits locally defined orientations and hence complex structures, the splitting \eqref{eqn_homLV} exists locally, and $\ca{V}$ is globally orientable if and only if the two components of the splitting does not intertwine (hence give a global splitting). But the conformality assumption on $\ve\alpha(X)$ means that the image of $\ve\alpha$ is contain in one of the two components. Therefore, by the above fact, the splitting, and hence the orientation, is global. Also, after possibly reversing the orientation, the image of $\ve\alpha$ is contained in the first component rather than the second. 
\end{proof}

\begin{proof}[Proof of Theorem \ref{thm_holo}]
	We first formulate the Gauss-Codazzi equations of $\Sigma$ in a moving frame. Since $\H^{p,q}$ is formed by all $x\in \R^{p,q+1}$ with $\langle x,x\rangle=-1$, the tangent space $\T_x\H^{p,q}$ identifies with the orthogonal complement $x^\perp$ of $x$ in $\R^{p,q+1}$. Thus, around any point of $\Sigma$, we may choose locally defined maps $\iota,u_1,v_1,u_2,v_2,\cdots,u_n,v_n$ from $\Sigma$ to $\R^{p,q+1}$ such that $\iota$ is the inclusion, while $u_i$ and $v_i$ are perpendicular to $\iota$ and form an orthonormal frame of subspace $\TT_i$ of the tangent space of $\H^{p,q}$ at $f$ (with respect to the positive definite metric $\langle\sth,\sth\rangle_i$, which is $(-1)^{i+1}$ times the metric inherited from $\R^{p,q+1}$). These maps are the moving frame that we are going to use. In particular, $(u_1,v_1)$ is an orthonormal local frame of the tangent bundle $\TT_1=\T\Sigma$.

Consider each of $\iota,u_1,v_1,\cdots,u_n,v_n$ as a column vector and $(\iota,u_1,v_1,\cdots,u_n,v_n)$ as a (locally defined) matrix-valued function on $\Sigma$. In view of \eqref{eqn_nablamatrix2}, its differential is
\begin{equation}\label{eqn_thmholo1}
	\dif(\iota,u_1,v_1,\cdots,u_n,v_n)=(\iota,u_1,v_1,\cdots,u_n,v_n)\Omega\ \text{ with }\ 
	\Omega=
	\scalebox{0.9}{$
	\mat{
	&\transp{\theta}&&&&\\[0.1cm]
	\theta&\Omega_1&\transp{\ve\alpha_2}&&&\\[0.1cm]
	&\ve\alpha_2&\Omega_2&\transp{\ve\alpha_3}&&\\
	&&\ve\alpha_3&\Omega_3&\ddots&\\
	&&&\hspace{-0.2cm}\ddots&\hspace{-0.2cm}\ddots&\hspace{-0.1cm}\transp{\ve\alpha_n}\\[0.1cm]
	&&&&\hspace{-0.3cm}\ve\alpha_n&\hspace{-0.1cm}\Omega_n
	}$},
\end{equation}
where $\theta=\transp(\theta_1,\theta_2)$ is the local frame of $\T^*\Sigma$ dual to $(u_1,v_1)$, $\Omega_i$ is the matrix of the connection $\nabla_i$ under the frame $(u_i,v_i)$, and we abuse the notation to denote the matrix  of $\ve\alpha_j\in\Omega^1(\Sigma,\Hom(\TT_{j-1},\TT_j))$ still by $\ve\alpha_j$ (so $\Omega_i$ and $\ve\alpha_j$ are $2\times 2$ matrices of locally defined $1$-forms).

%To verify the claim, note that the first column of $\Gamma$ signifies that $\dif f=(u_1,v_1)\theta$, which is tautological by the definition of $\theta$, whereas the hypothesis that $\TT_1\oplus\cdots\oplus\TT_n$ is a Frenet splitting means that any block of $\Gamma$ two or more steps away from the diagonal is zero and that the lower off-diagonal blocks except the first one are $\ve\alpha_1,\cdots,\ve\alpha_{n-1}$. Therefore, it suffices to check that the upper off-diagonal blocks of $\Gamma$ are exactly the transposes $\transp\theta$, $\transp\ve\alpha_1,\cdots,\transp\ve\alpha_{n-1}$. But this follows from the orthogonality property of the frame, which can be summarized as 
%\begin{equation}\label{eqn_thmholo2}
%	\transp(f,u_1,v_1,\cdots,u_n,v_n)(f,u_1,v_1,\cdots,u_n,v_n)=\diag(-1,1,1,-1,-1,1,1,\cdots).
%\end{equation}
%In fact, taking the differential of \eqref{eqn_thmholo2}, one finds that $\Gamma$ satisfies
%\begin{equation}\label{eqn_thmholo3}
%	\transp\Gamma\ \diag(-1,1,1,-1,-1,1,1,\cdots)+\diag(-1,1,1,-1,-1,1,1,\cdots)\ \Gamma=0.
%\end{equation}
%Using \eqref{eqn_thmholo3}, one may then check that the upper off-diagonal blocks of $\Omega$ are as required. 

We observe that since $\Sigma$ is assumed not to be contained in any pseudo-hyperbolic subspace of codimension $4$ or higher, none of the $\ve\alpha_k$'s with $k\leq n-1$ is identically zero. In fact, if $\ve\alpha_k\equiv0$, then $\Omega$ is the direct sum matrix of a upper-left block $\Omega'$ of size $2k-1$ and a lower-right block $\Omega''$ of size $2(n-k+1)$, and by \eqref{eqn_thmholo1} we have
$\dif(\iota,u_1,v_1,\cdots,u_{k-1},v_{k-1})=(\iota,u_1,v_1,\cdots,u_{k-1},v_{k-1})\Omega'$. This implies that $\iota,u_1,v_1,\cdots,u_{k-1},v_{k-1}$ span a fixed nondegenerate subspace $L\subset\R^{p,q+1}$. In particular, $\Sigma$ is contained in the pseudo-hyperbolic subspace $L\cap \H^{p,q}$ of codimension $2(n-k+1)$, which violates the assumption unless $k=n$.

Under this moving frame, the Gauss-Codazzi equations of $\Sigma$ are just
\begin{equation}\label{eqn_omegaomega}
	\dif\Omega+\Omega\wedge\Omega=0.
\end{equation}
To get more explicit equations on $\Omega_i$ and $\ve\alpha_j$, note that since $(u_i,v_i)$ is an orthonormal frame of $\TT_i$ and $\nabla_i$ preserves the metric, $\Omega_i$ has the form
$$
\Omega_i=\scalebox{0.9}{$\mat{&\omega_i\\-\omega_i&}$}
$$ 
for a $1$-form $\omega_i$, hence $\Omega_i\wedge\Omega_i=0$. We also clearly have $\transp\theta\wedge\theta=0$. So we find by computation that
$$
\Omega\wedge\Omega=
\scalebox{0.95}{
$\mat{
	&\hspace{0.3cm}\transp\theta\wedge\Omega_1&\hspace{0.3cm}\transp\theta\wedge\transp \ve\alpha_2&&&&\\[0.4cm]
	%%%%%%%%%%%%%%%%%%
	\Omega_1\wedge\theta&\hspace{0.4cm}\scalebox{0.85}{\parbox{1.5cm}{$\theta\wedge\transp\theta$\\$+\transp \ve\alpha_2\wedge\ve\alpha_2$}}&\hspace{0.4cm}\scalebox{0.85}{\parbox{1.6cm}{$\Omega_1\wedge\transp \ve\alpha_2$\\$+\transp \ve\alpha_2\wedge\Omega_2$}}&\hspace{0.3cm}\transp \ve\alpha_2\wedge\transp \ve\alpha_3&&&\\[0.6cm]
	%%%%%%%%%%%%%%%%%%
	\ve\alpha_2\wedge\theta&\hspace{0.4cm}\scalebox{0.85}{\parbox{1.5cm}{$\ve\alpha_2\wedge\Omega_1$\\$+\Omega_2\wedge\ve\alpha_2$}}&\hspace{0.4cm}\scalebox{0.85}{\parbox{1.5cm}{$\ve\alpha_2\wedge\transp \ve\alpha_2$\\$+\transp \ve\alpha_3\wedge\ve\alpha_3$}}&\hspace{0.4cm}\scalebox{0.85}{\parbox{1.6cm}{$\Omega_2\wedge\transp \ve\alpha_3$\\$+\transp \ve\alpha_3\wedge\Omega_3$}}&\hspace{0.3cm}\transp \ve\alpha_3\wedge\transp \ve\alpha_4&&\\[0.6cm]
	%%%%%%%%%%%%%%%%%%
	&\hspace{0.3cm}\ve\alpha_3\wedge\ve\alpha_2&\hspace{0.4cm}\scalebox{0.85}{\parbox{1.5cm}{$\ve\alpha_3\wedge\Omega_2$\\$+\Omega_3\wedge\ve\alpha_3$}}&\hspace{0.4cm}\scalebox{0.85}{\parbox{1.5cm}{$\ve\alpha_3\wedge\transp \ve\alpha_3$\\$+\transp \ve\alpha_4\wedge\ve\alpha_4$}}&\ddots&\ddots&\\[0.5cm]
	%%%%%%%%%%%%%%%%%%
	&&\hspace{0.3cm}\ve\alpha_4\wedge\ve\alpha_3&\ddots&\ddots&\ddots&\scalebox{0.9}{$\transp \ve\alpha_{n-1}\wedge \transp \ve\alpha_n$}\\[0.5cm]
	%%%%%%%%%%%%%%%%%%
	&&&\ddots&\ddots&\scalebox{0.85}{\parbox{2.2cm}{$\ve\alpha_{n-1}\wedge\transp \ve\alpha_{n-1}$\\$+\transp \ve\alpha_n\wedge\ve\alpha_n$}}&\scalebox{0.85}{\parbox{2cm}{$\Omega_{n-1}\wedge\transp \ve\alpha_n$\\$+\transp\ve\alpha_n\wedge\Omega_n$}}\\[0.6cm]
	%%%%%%%%%%%%%%%%%%
	&&&&\scalebox{0.9}{$\ve\alpha_n\wedge\ve\alpha_{n-1}$}&\hspace{0.4cm}\scalebox{0.85}{\parbox{2.1cm}{$\ve\alpha_n\wedge\Omega_{n-1}$\\$+\Omega_n\wedge\ve\alpha_n$}}&\scalebox{0.9}{$\ve\alpha_n\wedge\transp \ve\alpha_n$}
}$}.
$$
Thus, the Gauss-Codazzi equations \eqref{eqn_omegaomega} can be rewritten as the following three sets of equations, which are respectively the diagonal part and the parts two and three steps away from the diagonal:
\begin{align}
&\left\{\hspace{-0.15cm}
\begin{array}{l}
		\dif \Omega_1+\theta\wedge\transp\theta+\transp \ve\alpha_2\wedge\ve\alpha_2=0,\\
		\dif \Omega_k+\ve\alpha_k\wedge\transp \ve\alpha_k+\transp \ve\alpha_{k+1}\wedge\ve\alpha_{k+1}=0\quad (2\leq k\leq n-1),\\
		\dif \Omega_n+\ve\alpha_n\wedge\transp \ve\alpha_n=0.
	\end{array}
\right.
\label{eqn_sss1}\\
&\left\{\hspace{-0.15cm}
 \begin{array}{l}
		\dif \theta+\Omega_1\wedge\theta=0,\\
		\dif \ve\alpha_k+\ve\alpha_{k-1}\wedge\Omega_k+\Omega_k\wedge\ve\alpha_k=0\quad(2\leq k\leq n).
\end{array}
\right.
\label{eqn_sss2}\\
&\left\{\hspace{-0.15cm}
\begin{array}{l}
		\ve\alpha_2\wedge\theta=0,\\
		\ve\alpha_k\wedge\ve\alpha_{k-1}=0\quad(3\leq k\leq n).
	\end{array}
\right.
\label{eqn_sss3}
\end{align}

We may now prove Part \ref{item_thmholo2} of the theorem by showing, for $i=2,\cdots,n$ successively, that $\TT_i$ can be oriented and $\ve\alpha_i$ has the required holomorphic description.

Let us first treat $\TT_2$ and $\ve\alpha_2$. Since $(u_1,v_1)$ is an orthonormal local frame of $\TT_1=\T\Sigma$ and the second fundamental form of $\Sigma$ is $\II(X,Y)=\ve\alpha_2(X)Y$ (see Remark \ref{rk_secondff}), the conditions that $\Sigma$ is a minimal surface and that $\II$ is a symmetric tensor amount to the following equations, respectively:
\begin{equation}\label{eqn_proofthmholo1}
	\ve\alpha_2(u_1)u_1+\ve\alpha_2(v_1)v_1=0,\quad \ve\alpha_2(u_1)v_1=\ve\alpha_2(v_1)u_1
\end{equation}
(the latter is effectively the first equation in \eqref{eqn_sss3}). We may further assume that the frame $(u_1,v_1)$ is compatible with the prescribed orientation of $\TT_1$, so that the complex structure $\ac{J}_1$ on $\TT_1$ is given by $\ac{J}_1u_1=v_1$. Then we deduce from \eqref{eqn_proofthmholo1} that
\begin{equation}\label{eqn_proofthmholo2}
	\ve\alpha_2(\ac{J}_1X)Y=\ve\alpha_2(X)\ac{J}_1Y\  \text{ for any }X,Y\in C^\infty(\Sigma,\T\Sigma).
\end{equation}
In fact, using \eqref{eqn_proofthmholo1}, we readily check that \eqref{eqn_proofthmholo2} holds when $(X,Y)$ is $(u_1,u_1)$, $(u_1,v_1)$, $(v_1,u_1)$ and $(v_1,v_1)$ respectively, but this implies that \eqref{eqn_proofthmholo2} holds in general.

Since $\TT_1$ is a hermitian holomorphic line bundle, we can view $\Hom_\R(\TT_1,\TT_2)$ as a rank $2$ hermitian holomorphic vector bundle as explained before the proof, and understand \eqref{eqn_proofthmholo2} as saying that $\ve\alpha_2$ is a $\Hom_\R(\TT_1,\TT_2)$-valued $(1,0)$-form.
On the other hand, the $k=2$ case of the second equation in \eqref{eqn_sss2} just means that the covariant exterior derivative $\dif^{\nabla^{\Hom_\R(\TT_1,\TT_2)}} \ve\alpha_2\in\Omega^2(\Sigma,\Hom_\R(\TT_1,\TT_2))$ vanishes. But it is a basic fact that a smooth $(1,0)$-form $\ve\alpha$ with values in a hermitian holomorphic vector bundle $\ca{E}$ is holomorphic if and only if $\dif^{\nabla^\ca{E}}\ve\alpha=0$ (where $\nabla^\ca{E}$ is the Chern connection): in fact, being holomorphic means $\ve\alpha$ is annihilated by the Dolbeault operator $\bpa_\ca{E}:\Omega^{1,0}(\Sigma,\ca{E})\to \Omega^{1,1}(\Sigma, \ca{E})=\Omega^2(\Sigma,\ca{E})$, but $\bpa_\ca{E}$ coincides with $\dif ^{\nabla^\ca{E}}$ on $\Omega^{1,0}(\Sigma,\ca{E})$ because $\nabla^\ca{E}$ is a connection of $(1,0)$-type and  $\Sigma$ does not admit $(2,0)$-forms. Therefore, we conclude that $\ve\alpha_2$ is a holomorphic  $\Hom_\R(\TT_1,\TT_2)$-valued $(1,0)$-form. We proceed to show that $\ve\alpha_2$ matches the description \ref{item_gammaj2} and \ref{item_gammaj1} when $n=2$ and $n\geq3$ respectively.

\textbf{Case $n=2$.} In this case we have $\T\H^{p,q}|_\Sigma=\TT_1\oplus \TT_2$. By the additivity of the first Stiefel-Whitney class, we know that the direct sum of an orientable vector bundle with a non-orientable one is again non-orientable. So $\TT_2$ is orientable because $\TT_1$ and $\H^{p,q}$ are. Pick any orientation and view $\TT_2$ as a hermitian holomorphic line bundle correspondingly. Since $\Hom_\R(\TT_1,\TT_2)$ is the direct sum of the holomorphic line subbundles $\Hom_\C(\TT_1,\TT_2)$ and $\Hom_\C(\TT_1,\cj{\TT}_2)$ (see the paragraph preceding Lemma \ref{lemma_orientation}), $\ve\alpha_2$ is the sum of holomorphic $(1,0)$-forms $\ve\alpha_2^+$ and $\ve\alpha_2^-$ with values in them respectively, as required.

\textbf{Case $n\geq3$.} 
In this case, the conformality condition in Definition \ref{def_quasisuperminimal} takes effect. In particular,  $\ve\alpha_2(X)$ is conformal for any $X$. So we may use Lemma \ref{lemma_orientation} to conclude that $\TT_2$ carries a unique orientation under which $\ve\alpha_2$ is a nonzero $\Hom_\C(\TT_1,\TT_2)$-valued holomorphic $(1,0)$-form, as required.

Thus, we have shown that $\TT_2$ can be oriented and $\ve\alpha_2$ has the required description.

Next, we assume $n\geq3$ and show that $\TT_3$ can be oriented and $\ve\alpha_3$ has the required description. As $\TT_2$ has already been oriented and hence is a hermitian holomorphic line bundle, $\Hom_\R(\TT_2,\TT_3)$ is a rank $2$ hermitian holomorphic vector bundle and  we may write $\ve\alpha_3=\ve\alpha_3'+\ve\alpha_3''$ under the decomposition into $(1,0)$- and $(0,1)$-parts
$$
\Omega^1(\Sigma,\Hom_\R(\TT_2,\TT_3))= \Omega^{1,0}(\Sigma,\Hom_\R(\TT_2,\TT_3))\oplus \Omega^{0,1}(\Sigma,\Hom_\R(\TT_2,\TT_3)).
$$
Since $\ve\alpha_2$ and $\ve\alpha_3'$ are $(1,0)$-forms with values in $\Hom_\C(\TT_1,\TT_2)$ and $\Hom_\R(\TT_2,\TT_3)$, respectively, their wedge-composition $\ve\alpha_3'\wedge\ve\alpha_2$ is a $\Hom_\R(\TT_1,\TT_3)$-valued $(2,0)$-form, which must vanish. Therefore, the $k=3$ case of the second equation in \eqref{eqn_sss3} gives
$\ve\alpha_3''\wedge\ve\alpha_2
%=\ve\alpha_3'\wedge\ve\alpha_2+\ve\alpha_3''\wedge\ve\alpha_2
=\ve\alpha_3\wedge\ve\alpha_2=0$.
This forces $\ve\alpha_3''$ to vanish on the set $\{\ve\alpha_2\neq 0\}\subset\Sigma$. But since $\ve\alpha_2$ is holomorphic and is not identically zero as noticed early, this set is dense. So we conclude by continuity that $\ve\alpha_3''$ vanishes everywhere, or in other words, $\ve\alpha_3=\ve\alpha_3'$ is a $(1,0)$-form. Now the rest of the argument is the same as the above one for $\ve\alpha_2$: we first use the second equation in \eqref{eqn_sss2} to infer that $\ve\alpha_3$ is holomorphic, then show in the cases $n=3$ and $n\geq4$ separately that $\TT_3$ can be oriented and $\ve\alpha_3$ has the required description. 

This scheme of argument carries on and eventually shows that $\TT_i$ can be oriented and $\ve\alpha_i$ has the required holomorphic description for all $i=2,\cdots,n$. The proof of Part \ref{item_thmholo2} is thus finished.

For Part \ref{item_thmholo1}, we have already seen that $\ve\alpha_2,\cdots,\ve\alpha_{n-1}$ are nonzero. To show the uniqueness of the Frenet splitting, we note that as a subbundle of $\T\H^{p,q}|_\Sigma$, $\TT_1=\T\Sigma$ is already completely determined, whereas each $\TT_i$ is determined from $\TT_1,\cdots,\TT_{i-1}$ because
\begin{itemize}
	\item if $i=n$, then $\TT_i$ is just the orthogonal complement of $\TT_1\oplus\cdots\oplus\TT_{i-1}$ in $\T\H^{p,q}|_\Sigma$;
	\item if $i\leq n-1$, $\TT_i$ is also determined because the image of the $\Hom_\C(\TT_{i-1},\TT_i)$-valued holomorphic $(1,0)$-form $\ve\alpha_i$ is full in $\TT_i$ except at the isolated zeros.
\end{itemize}
This implies the required uniqueness and completes the proof.
%The rest of the theorem is now easy. For Part \ref{item_thmholo1}, we note that as a subbundle of $\T\H^{p,q}|_\Sigma$, $\TT_1=\T\Sigma$ is already completely determined, whereas each $\TT_i$ is determined from $\TT_1,\cdots,\TT_{i-1}$ because
%\begin{itemize}
%	\item if $i=n$, then $\TT_i$ is just the orthogonal complement of $\TT_1\oplus\cdots\oplus\TT_{i-1}$ in $\T\H^{p,q}|_\Sigma$;
%	\item if $i\leq n-1$, then $\ve\alpha_i$ is a nonzero  $\Hom_\C(\TT_{i-1},\TT_i)$-valued holomorphic $(1,0)$-form, so by (the $\ve\beta=0$ case of) Lemma \ref{lemma_image}, the image of $\ve\alpha_i$ in $\TT_i$ is full everywhere on $\Sigma$ except at the zeros. 
%%	Since $\ve\alpha_i$ is completely given by the connection $\nabla$, we conclude that $\TT_{i+1}$ is determined by $\TT_i$.
%\end{itemize}
%Thus, Part \ref{item_thmholo1} follows by induction. The first statement of Part \ref{item_thmholo3} follows immediately from Part \ref{item_thmholo2}, Lemma \ref{lemma_orientation} and the definition of superminimal surfaces. Finally, for the last statement, we claim that for $\Sigma$ to be in a codimension $1$ pseudo-hyperbolic subspace, it is necessary and sufficient that the image of $\ve\alpha_n$ in $\TT_n$ is contained in a parallel real line subbundle of $\TT_n$. This can be shown in a similar way as how we observed earlier that none of $\ve\alpha_2,\cdots,\ve\alpha_n$ is zero, and we omit the details. The last statement of Part \ref{item_thmholo3} then follows from the claim and Lemma \ref{lemma_image}. This completes the proof of Theorem \ref{thm_holo}.
\end{proof}

%We introduce the following terminology and notation:
%\begin{definition}
In what follows, given an \emph{oriented} A-surface $\Sigma\subset\H^{p,q}$ not contained in a codimension $4$ pseudo-hyperbolic subspace, we will always view $\Sigma$ as a Riemann surface as in Theorem \ref{thm_holo} \ref{item_thmholo2} and refer to the collection
$(\TT_1,\cdots,\TT_n,\ve\alpha_2,\cdots,\ve\alpha_{n-1},\ve\alpha_n^\pm)$
of hermitian holomorphic line bundles and holomorphic forms as the \emph{structural data} of $\Sigma$. Moreover, we let $\ve{h}_i$ denote the hermitian metric on $\TT_i$. Note that $\TT_1$ is the anti-canonical line bundle $\ca{K}^{-1}$ and $\ve{g}:=\re\ve{h}_1=\langle\sth,\sth\rangle_1$ is the first fundamental form.
%\end{definition}

\subsection{Affine Toda system from Gauss-Codazzi equations}
In the above proof, we wrote the Gauss-Codazzi equations of an A-surface $\Sigma\subset\H^{p,q}$ as the three sets of equations \eqref{eqn_sss1}, \eqref{eqn_sss2} and \eqref{eqn_sss3}, and basically deduced Theorem \ref{thm_holo} from the latter two. We show next that the first set of equations \eqref{eqn_sss1} is an \emph{affine Toda system}, namely the same type of equation as the Hitchin equation of cyclic Higgs bundles (see \cite{baraglia_thesis,baraglia_dedicata,collier-li}). This is  an expected result, as A-surfaces will be linked  directly with cyclic Higgs bundles in \S \ref{sec_cyclic} below.

\begin{theorem}\label{thm_affinetoda}	
For any oriented A-surface $\Sigma\subset\H^{p,q}$ (with $n=\frac{p+q}{2}\geq2$) not contained in a pseudo-hyperbolic subspace of codimension $4$, the structural data satisfies 
\begin{align*}
&\text{if $n=2$:}\quad
	\begin{cases}
		\pazbz\log h_1=\tfrac{1 }{2}h_1-\tfrac{h_2}{h_1}|\alpha_2^+|^2-\frac{1}{h_1h_2}|\alpha_2^-|^2,\\[0.2cm]
		\pazbz\log h_2=\tfrac{h_2}{h_1}|\alpha_2^+|^2-\frac{1}{h_1h_2}|\alpha_2^-|^2;
	\end{cases}\\
&\text{if $n\geq3$:}\quad
\begin{cases}
	\pazbz\log h_1=\tfrac{1}{2}h_1-\tfrac{h_2}{h_1}|\alpha_2|^2,\\[0.2cm]
	\pazbz\log h_k=\tfrac{h_k}{h_{k-1}}|\alpha_k|^2-\tfrac{h_{k+1}}{h_k}|\alpha_{k+1}|^2\quad \text{for $2\leq k\leq n-2$},\\[0.2cm]
	\pazbz\log h_{n-1}=\tfrac{h_{n-1}}{h_{n-2}}|\alpha_{n-1}|^2-\tfrac{h_{n}}{h_{n-1}}|\alpha_n^+|^2-\tfrac{1}{h_{n-1}h_n}|\alpha_n^-|^2,\\[0.2cm]
	\pazbz\log h_n=\tfrac{h_{n}}{h_{n-1}}|\alpha_n^+|^2-\tfrac{1}{h_{n-1}h_n}|\alpha_n^-|^2;
\end{cases}
\end{align*}
where notations are as follows: we view $\ve\alpha_j$ ($2\leq j\leq n-1$) and $\ve\alpha_n^\pm$ as holomorphic sections of $\ca{K}\!\ca{L}_{j-1}^{-1}\ca{L}_j$ and $\ca{K}\!\ca{L}_{n-1}^{-1}\ca{L}_{n}^{\pm1}$, respectively, then pick a conformal coordinate $z$ of $\Sigma$ and a holomorphic local frame $\ell_i(z)$ of $\TT_i$ to write locally 
$h_i=\ve{h}_i(\ell_i,\ell_i)$, $\ve\alpha_j=\alpha_j \ell_{j-1}^{-1}\ell_j\dz$ and $\ve\alpha_n^\pm=\alpha_n^\pm \ell_{n-1}^{-1}\ell_n^{\pm1}\dz$.
\end{theorem}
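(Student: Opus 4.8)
The plan is to extract the desired equations from the diagonal block of the Gauss--Codazzi system~\eqref{eqn_sss1}, using the holomorphic frames to convert the matrix-valued $1$-form identities into scalar PDEs. First I would fix a conformal coordinate $z$ on $\Sigma$ and, on each $\TT_i$, replace the real orthonormal frame $(u_i,v_i)$ used in the proof of Theorem~\ref{thm_holo} by the holomorphic frame $\ell_i(z)$; concretely, $\ell_i = \tfrac{1}{2}\sqrt{h_i}\,(u_i - \sqrt{-1}\,v_i)$ up to a unit factor, so that the Chern connection form of $\TT_i$ is $\pa\log h_i$ in the $(1,0)$-direction. Under this change of frame, the antisymmetric matrix $\Omega_i=\bigl(\begin{smallmatrix}0&\omega_i\\-\omega_i&0\end{smallmatrix}\bigr)$ becomes the $1\times1$ (complex) connection form $\pa\log h_i - \bpa\log h_i$ up to sign conventions, and its curvature $\dif\Omega_i$ becomes (twice) the imaginary part of $\pazbz\log h_i\,\dz\wedge\dbz$. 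Likewise $\ve\alpha_j$ is represented by the scalar $\alpha_j$, and the quantity $\transp{\ve\alpha_j}\wedge\ve\alpha_j$ together with $\ve\alpha_j\wedge\transp{\ve\alpha_j}$ contribute terms proportional to $|\alpha_j|^2\,\tfrac{h_j}{h_{j-1}}$, the ratio $\tfrac{h_j}{h_{j-1}}$ arising because $\alpha_j$ is measured relative to the frame $\ell_{j-1}^{-1}\ell_j$ whose norm-squared is $h_j/h_{j-1}$ (with the usual $1/(h_{j-1}h_j)$ in place of $h_j/h_{j-1}$ for $\ve\alpha_n^-$, since $\TT_n^{-1}$ rather than $\TT_n$ appears).

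The key computational steps, in order, are: (1) rewrite each of the three diagonal equations of~\eqref{eqn_sss1} --- namely $\dif\Omega_1 + \theta\wedge\transp\theta + \transp{\ve\alpha_2}\wedge\ve\alpha_2 = 0$, the middle family for $2\le k\le n-1$, and $\dif\Omega_n + \ve\alpha_n\wedge\transp{\ve\alpha_n}=0$ --- in the holomorphic frames; (2) identify the curvature term $\dif\Omega_i$ with $-2\sqrt{-1}\,\pazbz\log h_i\,\dx\wedge\dy$ (the Gauss equation / Chern form of a line bundle), being careful that $\TT_1=\ca{K}^{-1}$ so the term $\theta\wedge\transp\theta$ produces precisely the constant curvature contribution $\tfrac12 h_1$, which is where the factor $\tfrac12$ (matching the $\tfrac1{\sqrt2}$ in $\Phi$, cf.\ Remark~\ref{rk_intro2}) enters; (3) for $n\ge3$, substitute the decomposition $\ve\alpha_n = \ve\alpha_n^+ + \ve\alpha_n^-$ from Theorem~\ref{thm_holo}\ref{item_gammaj2}, noting that the cross term $\ve\alpha_n^+\wedge\transp{(\ve\alpha_n^-)}$ vanishes for type reasons (a $(2,0)$-form on $\Sigma$), so only $|\alpha_n^+|^2$ and $|\alpha_n^-|^2$ survive, with opposite signs in the $h_{n-1}$ and $h_n$ equations because $\ve\alpha_n^-$ lands in $\cj{\TT}_n\cong\TT_n^{-1}$; (4) for $n=2$ the same computation applies with $\ve\alpha_2 = \ve\alpha_2^+ + \ve\alpha_2^-$ and no intermediate bundles, and there is no conformality constraint, matching the stated two-equation system. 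Throughout, one uses $h_1=\ve h_1(\ell_1,\ell_1)$ and the fact that the first fundamental form is $\ve g = \re\ve h_1$, so the area form and the coordinate area form $\dx\wedge\dy$ differ by exactly $h_1$.

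I expect the main obstacle to be purely bookkeeping: getting every sign and every power of $h_i$ correct when passing from the real orthonormal frame $(u_i,v_i)$ --- in which $\Omega_i$, $\ve\alpha_j$ are $2\times2$ real matrices and~\eqref{eqn_sss1} is literal --- to the complex holomorphic frame $\ell_i$, and in particular tracking how the adjoint $\ve\alpha_j^\dagger$ versus $\ve\alpha_j^*$ (differing by the sign $(-1)^{i+1}$ built into $\langle\sth,\sth\rangle_i$) interacts with the alternating definiteness of the $\TT_i$'s. A clean way to organize this is to verify the correspondence first for the standard cyclic Higgs bundle flat connection (where the answer is the known Hitchin equation of $(\ca E,\Phi)$, cf.\ Remark~\ref{rk_intro2} and \cite{baraglia_thesis,baraglia_dedicata,collier-li}) and then observe that~\eqref{eqn_sss1} in the holomorphic frames \emph{is} that Hitchin equation --- but since the theorem asks for an intrinsic derivation, I would instead just carry out the frame change directly, checking the $n=2$ and $n\ge3$ cases and, within the latter, the three sub-families ($k=1$, $2\le k\le n-2$, $k\in\{n-1,n\}$) one at a time. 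No conceptual difficulty arises beyond this; the holomorphicity of the $\ve\alpha_j$'s (Theorem~\ref{thm_holo}) and the vanishing of $(2,0)$-forms on a Riemann surface do all the structural work, and~\eqref{eqn_sss1} is the only input needed.
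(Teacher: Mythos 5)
Your plan is essentially the paper's own proof: it feeds the structural data into the diagonal block \eqref{eqn_sss1} of the Gauss--Codazzi system via the adapted frame $(u_i,v_i)=h_i^{-1/2}(\ell_i,\ima\ell_i)$, identifies $\dif\Omega_i$ with the Chern curvature $2\pazbz\log h_i$ and $\theta\wedge\transp\theta$ with $-h_1$ (whence the $\tfrac12 h_1$), and splits $\ve\alpha_n=\ve\alpha_n^++\ve\alpha_n^-$, exactly as in the paper. One bookkeeping caveat: since $\transp\ve\alpha_n^-=\ve\alpha_n^-$, the cross terms in the block $\ve\alpha_n\wedge\transp\ve\alpha_n$ (the $h_n$ equation) are individually nonzero $(1,1)$-type contributions proportional to $\im(\alpha_n^+\cj{\alpha_n^-})$ and only cancel as a pair, so the pure-type ``$(2,0)$-form'' argument you invoke works for the cross terms in $\transp\ve\alpha_n\wedge\ve\alpha_n$ but not there, and you should verify that cancellation explicitly.
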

\begin{remark}\label{rk_norm}
Despite having been exhibited in a local form, these equations make sense globally. In fact, after being divided by $h_1$, every term is a globally defined function on $\Sigma$: the left-hand side $\tfrac{1}{h_1}\pazbz\log h_i$ is a constant times the ratio between the curvature form $\bpa\pa\log h_i$ of the Chern connection of $\TT_i$ and the volume form $h_1\dx\wedge\dy$ of the first fundamental form $\ve{g}=\re\ve{h}_1$, whereas each term on the right-hand side is the (pointwise) squared norm of $\ve\alpha_j$ or $\ve\alpha_n^\pm$ under the hermitian metric on the respective line bundle induced by the $\ve{h}_i$'s, namely
$$
\|\ve\alpha_j\|^2=\tfrac{h_{j}}{h_1h_{j-1}}|\alpha_j|^2\quad (2\leq j\leq n-1),\quad \|\ve\alpha_n^+\|^2=\tfrac{h_n}{h_1h_{n-1}}|\alpha_n^+|^2,\quad \|\ve\alpha_n^-\|^2=\tfrac{1}{h_1h_{n-1}h_n}|\alpha_n^-|^2.
$$
\end{remark}
\begin{proof}
In the proof of Theorem \ref{thm_holo}, we obtained equations \eqref{eqn_sss1}, \eqref{eqn_sss2} and \eqref{eqn_sss3} after choosing an orthonormal local frame $(u_i,v_i)$ for each $\TT_i$. The matrices $\Omega_i$ and $\ve\alpha_j$ therein depend on the choice. Under the special choice
$(u_i,v_i)=h_i^{-\frac{1}{2}}(\ell_i,\ima\ell_i)$, we have
\begin{equation}\label{eqn_prooftoda1}
\ve\alpha_j
=\tfrac{\sqrt{h_j}}{\sqrt{h_{j-1}}}
\scalebox{0.9}{
	$\mat{\re(\alpha_j\dz)&-\im(\alpha_j\dz)\\[0.1cm]\im(\alpha_j\dz)&\re(\alpha_j\dz)}$}
\end{equation}
for $2\leq j\leq n-1$ and
\begin{equation}\label{eqn_prooftoda2}
	\ve\alpha_n^+=\tfrac{\sqrt{h_n}}{\sqrt{h_{n-1}}}
	\scalebox{0.9}{
		$\mat{\re(\alpha_n^+\dz)&-\im(\alpha_n^+\dz)\\[0.1cm]\im(\alpha_n^+\dz)&\re(\alpha_n^+\dz)}$},\quad
	\ve\alpha_n^-=\tfrac{1}{\sqrt{h_{n-1}h_n}}
	\scalebox{0.9}{
		$\mat{\re(\alpha_n^-\dz)&-\im(\alpha_n^-\dz)\\[0.1cm]-\im(\alpha_n^-\dz)&-\re(\alpha_n^-\dz)}$}.
\end{equation}

We shall show that \eqref{eqn_sss1} reduces to the required equations. After expanding the $1$-forms in \eqref{eqn_prooftoda1} and \eqref{eqn_prooftoda2} in the real coordinates $(x,y)$ with $z=x+\ima y$ (e.g.\ $\re(\alpha_j\dz)=\re(\alpha_j)\dx-\im(\alpha_j)\dy$, etc.), we obtain by computations that
\begin{align}
	\ve\alpha_j\wedge  \transp \ve\alpha_j&=-\transp \ve\alpha_j\wedge \ve\alpha_j=-\tfrac{2\,h_j}{h_{j-1}}|\alpha_j|^2\scalebox{0.8}{$\mat{&-1\\1&}$}\dx\wedge \dy\quad (2\leq j\leq n-1),\label{eqn_prooftoda5}\\
	\ve\alpha_n^+\wedge\transp\ve\alpha_n^+&=-\transp\ve\alpha_n^+\wedge\ve\alpha_n^+=-\tfrac{2\,h_n}{h_{n-1}}|\alpha_n^+|^2\scalebox{0.8}{$\mat{&-1\\1&}$}\dx\wedge \dy,\label{eqn_prooftoda51}\\
	\ve\alpha_n^-\wedge\ve\alpha_n^-&=\tfrac{2}{h_{n-1}h_n}|\alpha_n^-|^2\scalebox{0.8}{$\mat{&-1\\1&}$}\dx\wedge \dy,\label{eqn_prooftoda52}\\
	\ve\alpha_n^+\wedge\ve\alpha_n^-&=\transp\ve\alpha_n^+\wedge\ve\alpha_n^-=0.\label{eqn_prooftoda53}
\end{align}
Since $\ve\alpha_n^-$ is a symmetric and $\ve\alpha_n=\ve\alpha_n^++\ve\alpha_n^-$, we deduce from \eqref{eqn_prooftoda51}$\sim$\eqref{eqn_prooftoda53} that
\begin{align}
	\ve\alpha_n\wedge\transp \ve\alpha_n&=(\ve\alpha_n^++\ve\alpha_n^-)\wedge(\transp\ve\alpha_n^++\transp\ve\alpha_n^-)=2\left(-\tfrac{h_{n}}{h_{n-1}}|\alpha_n^+|^2+\tfrac{1}{h_{n-1}h_n}|\alpha_n^-|^2\right)\scalebox{0.8}{$\mat{&-1\\1&}$}\dx\wedge \dy,\label{eqn_prooftoda6}\\
	\transp \ve\alpha_n\wedge	\ve\alpha_n&=(\transp\ve\alpha_n^++\transp\ve\alpha_n^-)\wedge(\ve\alpha_n^++\ve\alpha_n^-)=2\left(\tfrac{h_{n}}{h_{n-1}}|\alpha_n^+|^2+\tfrac{1}{h_{n-1}h_n}|\alpha_n^-|^2\right)\scalebox{0.8}{$\mat{&-1\\1&}$}\dx\wedge \dy. \label{eqn_prooftoda7}
\end{align}

Next, since $\Omega_i\wedge\Omega_i=0$ (see the proof of Theorem \ref{thm_holo}), $\dif\Omega_i$ is the matrix of the curvature $2$-form $F_{\nabla_i}\in\Omega^2(\Sigma,\End_\C(\TT_i))$ of $\nabla_i$ under the frame $(u_i,v_i)$. But since $\nabla_i$ is the Chern connection of $\TT_i$, its curvature is expressed in the holomorphic frame $\ell_i$ as
$\bpa\pa\log h_i=2\ima \pazbz\,\log h_i\dx\wedge\dy$.
%which is $2\pazbz\log h_i\dx\wedge\dy$ times the complex structure of $\TT_i$. 
Changing to the real frame $(u_i,v_i)$, we get 
\begin{equation}\label{eqn_prooftoda8}
\dif\Omega_i=2\,\pazbz\log h_i\scalebox{0.8}{$\mat{&-1\\1&}$}\dx\wedge \dy.
\end{equation}

Finally, since $\theta=\transp(\theta_1,\theta_2)$ is the frame of $\T^*\Sigma$ dual to the orthonormal frame $(u_1,v_1)$ of $\TT_1=\T\Sigma$, $\theta_1\wedge\theta_2$ equals the volume form $h_1\dx\wedge\dy$ of first fundamental form $\ve g=\re\ve h_1$. It follows that 
\begin{equation}\label{eqn_prooftoda9}
\theta\wedge\transp\theta=-\scalebox{0.8}{$\mat{&-1\\1&}$}\theta_1\wedge\theta_2=-h_1\scalebox{0.8}{$\mat{&-1\\1&}$}\dx\wedge\dy.
\end{equation}

Inserting \eqref{eqn_prooftoda5} and \eqref{eqn_prooftoda6}$\sim$\eqref{eqn_prooftoda9} into \eqref{eqn_sss1}, we get the required equations.
\end{proof}

\subsection{Gauss map}\label{subsec_gauss}
The classical notion of Gauss map has a nice generalization to spacelike surfaces $\Sigma$ in $\H^{2,q}$, which plays a key role in \cite{bonsante-schlenker_max, collier-tholozan-toulisse}. In this case, the Gauss map $f$ of $\Sigma$ takes values in the symmetric space 
$\Sy_{\SO_0(2,q+1)}$ (see \S \ref{subsec_pseudohyperbolic}) and is defined in such a way that $f(z)$ is the timelike totally geodesic $q$-sphere $S\subset\H^{2,q}$ passing through $z$ with $\T_zS$ being the orthogonal complement of $\T_z\Sigma$ in $\T_z\H^{2,q}$.
For A-surfaces in $\H^{p,q}$, we introduce:
\begin{definition}\label{def_gauss}
Given an A-surface $\Sigma\subset\H^{p,q}$ not contained in any pseudo-hyperbolic subspace of codimension $4$, we let $\ca{N}^-:=\TT_2\oplus \TT_4\oplus\cdots$ denote the timelike part of the normal bundle of $\Sigma$ given by the Frenet splitting and define the Gauss map 
$$
f:\Sigma\to \Sy_{\SO_0(p,q+1)}=\big\{\text{timelike totally geodesic $q$-spheres in $\H^{p,q}$}\big\}
$$ 
in such a way that for every $z\in \Sigma$, $f(z)$ is the totally geodesic $q$-sphere $S\subset\H^{p,q}$ passing through $z$ with $\T_zS=\ca{N}^-_z\subset \T_z\H^{p,q}$.
\end{definition}
Note that since the definition requires the uniqueness of the Frenet splitting (Theorem \ref{thm_holo} \ref{item_thmholo1}), the assumption that $\Sigma$ is not in a subspace of codimension $4$ is indispensable (see Remark \ref{rk_insubspace}).

%As the last result of this section, we show:
%that the Gauss map is a conformal minimal immersion and the conformal factor has a  simple expression in terms of the squared norms of the holomorphic forms $\ve\alpha,\ve\beta,\ve\alpha_j$ (see Remark \ref{rk_norm}):
\begin{theorem}\label{thm_gaussmap}
	In the above setting, $f$ is a conformal minimal immersion, with pullback metric
	\begin{equation}\label{eqn_thmgauss}
	f^*\ve{g}_X=(8n-4)\big(\tfrac{1}{2}+\|\ve\alpha_2\|^2+\cdots+\|\ve\alpha_{n-1}\|^2+\|\ve\alpha_n^+\|^2+\|\ve\alpha_n^-\|^2\big)\ve{g},
	\end{equation}
	where $\ve{g}_X$ is the metric on $X:=\Sy_{\SO_0(p,q+1)}$ and $\ve{g}=\re\ve{h}_1$ is the first fundamental form of $\Sigma$.
\end{theorem}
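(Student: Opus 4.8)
The plan is to use Lemma \ref{lemma_conformalminimal}: I will exhibit on the pullback bundle $f^*\T X$ an orthogonal parallel complex structure $\ac{J}$ making $\dif f$ a $(1,0)$-form, and extract the pullback metric along the way. The first step is to describe $f^*\T X$ concretely. Since a point of $X = \Sy_{\SO_0(p,q+1)}$ is a $(q+1)$-dimensional negative definite subspace $N^-_z \oplus \R z \subset \R^{p,q+1}$ (the span of the timelike normal directions together with the position vector), the tangent space $\T_{f(z)}X$ identifies with $\Hom(N^-_z \oplus \R z,\, V_z)$ where $V_z$ is the orthogonal complement, i.e.\ the positive definite part $\R\langle u_1,v_1,u_3,v_3,\cdots\rangle$ spanned by $z^\perp \cap (\TT_1\oplus\TT_3\oplus\cdots)$. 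Equivalently, writing $\ca{N}^+ := \TT_1\oplus\TT_3\oplus\cdots$ (the would-be spacelike part, which contains $\T\Sigma = \TT_1$), we get
$$
f^*\T X \;\cong\; \Hom\big(\R z \oplus \ca{N}^-,\ \ca{N}^+\big),
$$
with metric the Killing form (a positive multiple of the trace form, explaining the factor $8n-4$) and connection induced from the flat connection on $\R^{p,q+1}$ together with the moving-frame equation \eqref{eqn_thmholo1}; this is the content of Lemma \ref{lemma_pullback} made explicit in the present embedding. The differential $\dif f$ is then read off from \eqref{eqn_thmholo1}: differentiating the frame $(\iota,u_i,v_i)$, the components of $\Omega$ that move $\ca{N}^-\oplus\R\iota$ into $\ca{N}^+$ are exactly $\theta$ (the $\R\iota \to \TT_1$ block, of size governed by $\tfrac12 h_1$ after normalization) and the blocks $\ve\alpha_j$ with $j$ even landing in odd $\TT$'s together with $\ve\alpha_j^\dagger$ with $j$ odd — organized, these assemble into a $\Hom(\R z\oplus\ca{N}^-,\ca{N}^+)$-valued $1$-form whose squared norm is precisely $(8n-4)\big(\tfrac12 + \sum\|\ve\alpha_j\|^2 + \|\ve\alpha_n^\pm\|^2\big)\ve g$, giving \eqref{eqn_thmgauss} once conformality is known.

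The heart of the argument is producing $\ac{J}$. Here the key observation is that each rank-$2$ bundle $\TT_i$ carries its orthogonal complex structure $\ac{J}_i$ from Theorem \ref{thm_holo} \ref{item_thmholo2} (for $i\le n-1$; and $\TT_n$ too once an orientation is fixed). On $\Hom(\R z\oplus\ca{N}^-,\ca{N}^+)$ I will define $\ac{J}$ by a checkerboard recipe: on the summand $\Hom(\R z, \TT_1)$ use $\ac{J}_1$ on the target; on $\Hom(\TT_{2k}, \TT_{2k-1})$ and $\Hom(\TT_{2k},\TT_{2k+1})$ use the complex structures $\ac{J}_{2k-1},\ac{J}_{2k},\ac{J}_{2k+1}$ appropriately (pre/post-composition) — in effect declaring the holomorphic blocks of $\dif f$ to be of type $(1,0)$ and reconstructing $\ac{J}$ from that requirement. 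One checks $\ac{J}^2=-\id$ and orthogonality fiberwise from $\ac{J}_i^2=-\id$ and the definition of $\langle\sth,\sth\rangle_i$. That $\dif f$ is a $(1,0)$-form is then automatic from the construction together with the facts established in Theorem \ref{thm_holo}: $\theta$ and $\ve\alpha_2$ are $(1,0)$-type relative to $\ac{J}_1$ (equations \eqref{eqn_proofthmholo1}--\eqref{eqn_proofthmholo2}), and inductively each $\ve\alpha_j$ is $\Hom_\C$-valued, hence $(1,0)$, while $\ve\alpha_n^+$ and $\ve\alpha_n^-$ contribute the $(1,0)$-parts landing in $\TT_n$ and $\cj\TT_n$ respectively — both of which appear on the $\Hom(\TT_{n-1},\TT_n^{\pm1})$ slot inside $f^*\T X$ with matching complex structure.

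The main obstacle — and the step requiring genuine care rather than bookkeeping — is proving that $\ac{J}$ is parallel, i.e.\ $\nabla^{f^*\T X}\ac{J}=0$. This must come from the Codazzi-type equations \eqref{eqn_sss2}: the statement that $\dif^\nabla\ve\alpha_j = 0$ (each $\ve\alpha_j$ holomorphic) is exactly what makes the off-diagonal blocks of the connection on $\Hom(\R z\oplus\ca{N}^-,\ca{N}^+)$ commute with the block complex structures, while the diagonal connections $\nabla_i$ are Chern connections and hence commute with $\ac{J}_i$ by construction. Concretely, I will compute $\nabla^{f^*\T X}\ac{J}$ block by block using the connection matrix inherited from $\Omega$ in \eqref{eqn_thmholo1}, and show each entry vanishes: the $\ca{T}$-diagonal entries vanish because $\nabla_i\ac{J}_i=0$; the entries coupling adjacent $\TT$'s vanish because the relevant $\ve\alpha_j$ (or its adjoint) is a $\Hom_\C$-valued $(1,0)$-form and $\Sigma$ is a Riemann surface (no $(2,0)$-forms), which is precisely the vanishing $\dif^{\nabla}\ve\alpha_j=0$; and — the subtle point — the entries involving the $n$-th slot require using \emph{both} that $\ve\alpha_n^+$ is $\Hom_\C(\TT_{n-1},\TT_n)$-valued holomorphic and that $\ve\alpha_n^-$ is $\Hom_\C(\TT_{n-1},\cj\TT_n)$-valued holomorphic, so that the relevant block of $\dif f$ decomposes into two genuinely $(1,0)$ pieces with respect to the chosen $\ac{J}$ on the $\TT_n$-target and its conjugate. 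Once parallelism is verified, Lemma \ref{lemma_conformalminimal} immediately gives that $f$ is a conformal minimal immersion, and the pullback-metric formula \eqref{eqn_thmgauss} follows by evaluating $\langle\dif f, \dif f\rangle$ in the orthonormal frame, reading the norms $\|\ve\alpha_j\|^2$, $\|\ve\alpha_n^\pm\|^2$ off Remark \ref{rk_norm} and the overall constant $8n-4$ off the normalization of the Killing form on $\frak{so}(p,q+1)$ restricted to $\frak p \cong \Hom(\R z\oplus W^-, W^+)$.
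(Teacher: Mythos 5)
Your overall route is the same as the paper's: use Lemma \ref{lemma_pullback} with the moving-frame equation \eqref{eqn_thmholo1} to identify $f^*\T X$ as a homomorphism bundle and $\dif f$ as the off-diagonal ($\frak p$-valued) part of $\wt f^{-1}\dif\wt f$, compute $f^*\ve g_X$ from the Killing form, and get conformal minimality from Lemma \ref{lemma_conformalminimal} by equipping $f^*\T X$ with a blockwise complex structure making $\dif f$ of type $(1,0)$. Two bookkeeping points: with your convention $f^*\T X\cong\Hom(\R z\oplus\ca N^-,\ca N^+)$ the blocks of $\dif f$ are $\theta$, the $\ve\alpha_j$ with $j$ \emph{odd} and the adjoints $\ve\alpha_j^\dagger$ with $j$ \emph{even} (you state the reverse parity), and in the metric computation you must still check that the cross terms between $\ve\alpha_n^+$ and $\ve\alpha_n^-$ vanish and that the Killing constant is $4n-2$, the remaining factor $2$ coming from the real $2\times2$ blocks representing complex scalars — only then does the constant assemble into $8n-4$ with the $\tfrac12$ in front of $\ve g$.

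The genuine problem is with what you call the heart of the argument. By Lemma \ref{lemma_pullback} the pulled-back Levi-Civita connection on $f^*\T X$ is $\ad(\wt f^*\omega)^{\frak k}$, i.e.\ it is built \emph{only} from the diagonal blocks $\Omega_i$ of \eqref{eqn_thmholo1}, the Chern connections of the $\TT_i$; the blocks $\theta,\ve\alpha_j$ form the $\frak p$-part, which \emph{is} $\dif f$ and is not part of the connection. Hence the connection on $f^*\T X$ preserves each summand $\Hom_\R(\TT_k,\TT_l)$ and $\Hom(\TT_k,\underline{\R})$, and the checkerboard $\ac J$ is parallel for the trivial reason that each $\ac J_i$ is $\nabla_i$-parallel — no Codazzi input is needed, and there are no ``off-diagonal blocks of the connection on $\Hom(\R z\oplus\ca N^-,\ca N^+)$'' whose commutation with $\ac J$ would have to be enforced by $\dif^{\nabla}\ve\alpha_j=0$. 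Your description conflates the connection on $f^*\T X$ with the connection \eqref{eqn_nablamatrix2} on $\T M|_\Sigma$ (equivalently with the full flat connection $\Omega$), and moreover $\dif^\nabla\ve\alpha_j=0$ is a statement about a covariant exterior derivative, not a commutation relation; as written, the block-by-block verification you propose is ill-posed. Where Theorem \ref{thm_holo} (whose proof is where \eqref{eqn_sss2}--\eqref{eqn_sss3} actually enter) is needed is in the \emph{other} hypothesis of Lemma \ref{lemma_conformalminimal}: each $\ve\alpha_j$ (including $\ve\alpha_n=\ve\alpha_n^++\ve\alpha_n^-$, whose two parts are complex-linear for the pre-composition complex structure and its conjugate) is a $(1,0)$-form, the metric block is $(1,0)$ by conformality of $\ve g$, and on the adjoint blocks one inverts the complex structure to turn the $(0,1)$-forms $\ve\alpha_k^\dagger$ into $(1,0)$-forms. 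With that correction the argument closes exactly as in the paper.
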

Here, the squared norms in \eqref{eqn_thmgauss} are as define in Remark \ref{rk_norm}, while the metric $\ve{g}_X$ is the one induced by the Killing form $B(x,y)=(2n-1)\Tr(xy)$ of $\frak{so}(n,n+1)$.
%restricted to the subspace $\frak{p}\subset\frak{g}$ in the Cartan decomposition $\frak{g}=\frak{k}\oplus\frak{p}$.
\begin{proof}
%We claim that the pullback tangent bundle $f^*\T X$ and the $1$-form $\dif f\in\Omega^1(\Sigma,f^*\T X)$ can be identified as follows: $f^*\T X$ is the vector bundle 
%$$
%f^*\T X=\Hom\big(\underline{\R}\oplus \TT_2\oplus \TT_4\oplus\cdots, \TT_1\oplus\TT_3\oplus\TT_5\oplus\cdots\big)
%$$
%endowed with the connection and metric induced by those on the $\TT_i$'s, whereas $\dif f$ has the following matrix expression:
%$$
%\text{if $n$ is even:}\quad
%\dif f=\mat{
%	\theta&\transp \ve\alpha_1&&&&\\[0.1cm]
%	&\ve\alpha_2&\transp \ve\alpha_3&&&\\
%	&&\ve\alpha_4&\ddots&&\\
%	&&&\ddots&\transp\ve\alpha_{n-3}&\\[0.1cm]
%	&&&&\hspace{-0.2cm}\ve\alpha_{n-2}&\hspace{-0.2cm}\transp \ve\alpha_{n-1}
%},
%$$
%$$
%\text{if $n$ is odd:}\quad
%\dif f=
%\mat{
%	\theta&\transp \ve\alpha_1&&&\\[0.1cm]
%	&\ve\alpha_2&\transp \ve\alpha_3&&\\
%	&&\ve\alpha_4&\ddots&\\
%	&&&\ddots&\transp\ve\alpha_{n-2}\\[0.1cm]
%	&&&&\ve\alpha_{n-1}
%}.
%$$
%The expression can be understood in a frame-independent way: $\theta$ is the identity in $\Omega^1(\Sigma,\Hom(\underline\R,\TT_1))=\Omega^1(\Sigma,\T\Sigma)=C^\infty(\Sigma,\End(\T\Sigma))$,  
We first describe the metric and connection on $f^*\T X$ (see \S \ref{subsec_immersion}) by using Lemma \ref{lemma_pullback} and the moving frame $(\iota,u_1,v_1,\cdots,u_n,v_n)$ from the proof of Theorem \ref{thm_holo}. Viewing $X$ as the space of $(q+1)$-dimensional negative definite subspaces of $\R^{p,q+1}$, by definition of the Gauss map $f$, we can write
$$
f(z)=\mathrm{Span}\big\{\iota(z),\,u_2(z),\,v_2(z),\,u_4(z),\,v_4(z),\,\cdots\big\}.
$$
In particular, the matrix-valued function
$$
\wt{f}:\Sigma\to G:=\SO_0(p,q+1),\quad \wt{f}:=(u_1,\,v_1,\,u_3,\,v_3,\,\cdots,\,\iota,\,u_2,\,v_2,\,u_4,\,v_4,\,\cdots)
$$
is a lift of $f$. Thus, by Lemma \ref{lemma_pullback}, $f^*\T X$ identifies locally with the trivial $\frak{p}$-bundle endowed with the metric given by the Killing form $B$ and the connection $\wh{\nabla}$ given by the $\End(\frak{p})$-valued $1$-form $\ad(\wt{f}^*\omega)^\frak{k}$, while $\dif f\in\Omega^1(\Sigma,f^*\T X)$ is locally the $\frak{p}$-valued $1$-form $(\wt{f}^*\omega)^\frak{p}$. In order to get more explicit expressions, we view $\frak{g}:=\frak{so}(p,q+1)$ and $G$ as matrix algebra and matrix group contained in $\R^{(p+q+1)\times(p+q+1)}$, so that the Maurer-Cartan form $\omega$ can be written as $g^{-1}\dif g$ (where $g\in G$ is the matrix coordinate), and hence $\wt{f}^*\omega=\wt{f}^{-1}\dif\wt{f}$. By equation \eqref{eqn_thmholo1} in the proof of Theorem \ref{thm_holo}, we obtain
$$
\wt{f}^*\omega=
\wt{f}^{-1}\dif\wt{f}=
\left(
\begin{array}{c|c}
	\scalebox{0.85}{$
	\begin{matrix}
		\Omega_1&&\\&\hspace{-0.2cm}\Omega_3&\\[-0.1cm]&&\hspace{-0.15cm}\ddots
	\end{matrix}
$}	
	&\transp\Gamma\\
	\midrule
	\Gamma&
		\scalebox{0.85}{$
	\begin{matrix}
		0&&&\\[-0.1cm]&\hspace{-0.1cm}\Omega_2&&\\&&\hspace{-0.2cm}\Omega_4&\\[-0.1cm]&&&\hspace{-0.1cm}\ddots
	\end{matrix}
$}
\end{array}
\right),
$$
where
\begin{equation}\label{eqn_Gamma}
	\Gamma=
	\scalebox{0.9}{$
	\mat{
		\transp\theta&&&&\\
		\ve\alpha_2&\transp\ve\alpha_3&&&\\
		&\ve\alpha_4&\transp\ve\alpha_5&&\\
		&&\hspace{-0.3cm}\ddots&\ddots&\\
		&&&\hspace{-0.3cm}\ve\alpha_{n-2}&\transp\ve\alpha_{n-1}\\
		&&&&\hspace{-0.2cm}\ve\alpha_n
	}$}
\text{ if $n$ is even,}\quad
\Gamma=
	\scalebox{0.9}{$
	\mat{
		\transp\theta&&&&\\
		\ve\alpha_2&\transp\ve\alpha_3&&&\\
		&\ve\alpha_4&\transp\ve\alpha_5&&\\
		&&\hspace{-0.3cm}\ddots&\ddots&\\
		&&&\hspace{-0.3cm}\ve\alpha_{n-1}&\transp\ve\alpha_n
	}$}
\text{ if $n$ is odd}.
\end{equation}
Meanwhile, since the components $\frak{k}$ and $\frak{p}$ of the Cartan decomposition $\frak{g}=\frak{k}\oplus\frak{p}$ are
$$
\frak{k}=\left\{
	\mat{
	\xi_1&\\
	&\xi_2
}\ \Bigg|\
\parbox{2cm}{$\xi_1\in\frak{so}(p)$,\\ $\xi_2\in\frak{so}(q+1)$}
\right\},\quad
\frak{p}=\left\{
\mat{
	&\transp \eta\\
	\eta&
}
\ \Bigg|\
\eta\in\R^{p\times(q+1)}
\right\},
$$
we may identify $\frak{p}$ with $\R^{p\times(q+1)}$ in such a way that
the action of $\diag(\xi_1,\xi_2)\in\frak{k}$ sends $\eta\in\R^{p\times(q+1)}$ to $\xi_2\eta-\eta\xi_1$,
while the Killing form $B$ on $\frak{p}$ is $(4n-2)$-times the standard metric of $\R^{p\times(q+1)}$.
Therefore, we conclude that $f^*\T X$ identifies locally with the trivial $\R^{p\times(q+1)}$-bundle endowed with the metric $\langle \eta_1,\eta_2\rangle=(4n-2)\Tr(\transp{\eta}_1 \eta_2)$ and the connection $\wh{\nabla}$ defined by
$$
\wh{\nabla} \eta=\dif \eta+
\scalebox{0.9}{$
\mat{	0&&&\\[-0.1cm]&\hspace{-0.1cm}\Omega_2&&\\&&\hspace{-0.2cm}\Omega_4&\\[-0.1cm]&&&\hspace{-0.1cm}\ddots}
$}
\eta-\eta
\scalebox{0.9}{$
\mat{\Omega_1&&\\&\hspace{-0.2cm}\Omega_3&\\[-0.1cm]&&\hspace{-0.15cm}\ddots}
$}
,
$$
while $\dif f\in\Omega^1(\Sigma,f^*\T X)$ is locally the $\R^{p\times(q+1)}$-valued $1$-form $\Gamma$ in \eqref{eqn_Gamma}.

Next, we verify the required expression \eqref{eqn_thmgauss} of $f^*\ve{g}_X$. Using the above identifications, we obtain by computations that
$$
f^*\ve g_X(\sth,\sth)=B(\dif f(\sth),\dif f(\sth))=(4n-2)\Tr\big[\transp\Gamma(\sth)\Gamma(\sth)\big]=(4n-2)\Big\{\transp\theta(\sth)\theta(\sth)+\sum_{j=2}^n\Tr\big[\transp\ve\alpha_j(\sth)\ve\alpha_j (\sth)\big]\Big\},
$$
%(the fact that $\Tr[\ve\alpha_j(\sth)\transp\ve\alpha_j (\sth)]=\Tr[\transp\ve\alpha_j(\sth)\ve\alpha_j (\sth)]$ is used). 
where ``$\,\Tr\,$'' takes the matrix trace of each matrix-valued $2$-tensor to produce a scalar $2$-tensor. 
Since $\theta=\transp(\theta_1,\theta_2)$ is the frame of $\T^*\Sigma$ dual to the orthonormal frame $(u_1,v_1)$ of $\T\Sigma$, the term $\transp\theta(\sth)\theta(\sth)$ is just the first fundamental form $\ve g$. To compute the term $\Tr\big[\transp\ve\alpha_j(\sth)\ve\alpha_j (\sth)\big]$, we pick a coordinate $z$ of $\Sigma$ and a holomorphic local frame $\ell_i$ of $\TT_i$ as in Theorem \ref{thm_affinetoda} and let $(u_i,v_i)$ be as in the proof of that theorem. Using the expression of $\ve\alpha_j$ given in that proof, we get, for $2\leq j\leq n-1$,
\begin{align*}
	\Tr\big[\transp\ve\alpha_j(\sth)\ve\alpha_j(\sth)\big]&=\tfrac{h_j}{h_{j-1}}\Tr
	\scalebox{0.9}{
		$\mat{\re(\alpha_j\dz)&\im(\alpha_j\dz)\\[0.1cm]-\im(\alpha_j\dz)&\re(\alpha_j\dz)}$}
	\otimes
	\scalebox{0.9}{
		$\mat{\re(\alpha_j\dz)&-\im(\alpha_j\dz)\\[0.1cm]\im(\alpha_j\dz)&\re(\alpha_j\dz)}$}
	\\
	&=\tfrac{2\,h_j}{h_{j-1}}\big(\re(\alpha_j\dz)\otimes\re(\alpha_j\dz)+\im(\alpha_j\dz)\otimes\im(\alpha_j\dz)\big)\\
	&=\tfrac{2\,h_j}{h_{j-1}}|\alpha_j|^2(\dx^2+\dy^2)=\tfrac{2\,h_j}{h_1h_{j-1}}|\alpha_j|^2h_1(\dx^2+\dy^2)=2\|\ve\alpha_j\|^2\ve{g}.
\end{align*}
%\begin{align*}
%\Tr\big[\transp\ve\alpha_j(\sth)\ve\alpha_j(\sth)\big]&=\tfrac{h_j}{h_{j-1}}\Tr
%\scalebox{0.9}{
%	$\mat{\re(\alpha_j)\dx-\im(\alpha_j)\dy&\im(\alpha_j)\dx+\re(\alpha_j)\dy\\[0.1cm]-\im(\alpha_j)\dx-\re(\alpha_j)\dy&\re(\alpha_j)\dx-\im(\alpha_j)\dy}$}\\
%&\hspace{1.7cm} \otimes \mat{\re(\alpha_j)\dx-\im(\alpha_j)\dy&-\im(\alpha_j)\dx-\re(\alpha_j)\dy\\\im(\alpha_j)\dx+\re(\alpha_j)\dy&\re(\alpha_j)\dx-\im(\alpha_j)\dy}\\
%&=\tfrac{2\,h_j}{h_{j-1}}\Big[\big(\re(\alpha_j)\dx-\im(\alpha_j)\dy\big)\otimes \big(\re(\alpha_j)\dx-\im(\alpha_j)\dy\big)\\
%&\hspace{1.5cm}+\big(\im(\alpha_j)\dx+\re(\alpha_j)\dy\big)\otimes\big(\im(\alpha_j)\dx+\re(\alpha_j)\dy\big)\Big]\\
%&=\tfrac{2\,h_j}{h_{j-1}}|\alpha_j|^2(\dx^2+\dy^2)=\tfrac{2\,h_j}{h_1h_{j-1}}|\alpha_j|^2h_1(\dx^2+\dy^2)=2\|\ve\alpha_j\|^2\ve{h}_1.
%\end{align*}
As for $\ve\alpha_n=\ve\alpha_n^++\ve\alpha_n^-$, we have
$$
\Tr\big[\transp\ve\alpha_n(\sth)\ve\alpha_n(\sth)\big]=\Tr\big[\transp\ve\alpha_n^+(\sth)\ve\alpha_n^+(\sth)\big]+\Tr\big[\transp\ve\alpha_n^-(\sth)\ve\alpha_n^-(\sth)\big]+\Tr\big[\transp\ve\alpha_n^+(\sth)\ve\alpha_n^-(\sth)\big]+\Tr\big[\transp\ve\alpha_n^-(\sth)\ve\alpha_n^+(\sth)\big].
$$
By similar computations, the last two terms vanish and the first two terms equal $2\|\ve\alpha_n^+\|^2\ve g$ and $2\|\ve\alpha_n^-\|^2\ve g$, respectively. Putting the results together, we get \eqref{eqn_thmgauss}. 

This already implies that $f$ is a conformal immersion. We finally show that $f$ is minimal via Lemma \ref{lemma_conformalminimal}. Note that the above local description of $f^*\T X$  effectively identifies $f^*\T X$ globally as the homomorphism bundle $\Hom_\R(\TT_1\oplus\TT_3\oplus\cdots,\ \underline{\R}\oplus\TT_2\oplus\TT_4\oplus\cdots)$ with metric and connection induced by those on the trivial line bundle $\underline{\R}$ and each $\TT_i$. 
In particular, 
\begin{equation}\label{eqn_homkl}
\Hom_\R(\TT_k,\TT_l),\ \  \Hom(\TT_k,\underline{\R})\quad  (k\in\{1,3,\cdots\},\ l\in\{2,4,\cdots\})
\end{equation}
 are parallel subbundles orthogonal to each other.
Meanwhile, by the expression \eqref{eqn_Gamma}, the only nonzero components of $\dif f=\Gamma$ are  $\ve\alpha_l\in\Omega^1(\Sigma,\Hom_\R(\TT_{l-1},\TT_l))$, $\ve\alpha_k^*\in\Omega^1(\Sigma,\Hom_\R(\TT_k,\TT_{k-1}))$ and $\ve{g}\in\Omega^1(\Sigma,\Hom_\R(\TT_1,\underline{\R}))=C^\infty(\Sigma,\T^*\Sigma\otimes\T^*\Sigma)$ (the last one corresponds to the term $\transp\theta$). To apply Lemma \ref{lemma_conformalminimal}, we need to give $f^*\T X$ an orthogonal, parallel, complex structure under which $\dif f$ is a $(1,0)$-form. Since each $\TT_i$ is a hermitian holomorphic vector bundle (Theorem \ref{thm_holo}), so is every subbundle in \eqref{eqn_homkl} (see the paragraph preceding Lemma \ref{lemma_orientation}), and the component $\ve\alpha_l$ is a $(1,0)$-form by Theorem \ref{thm_holo}. Also, the component $\ve g$ is a $(1,0)$-form since the metric $\ve{g}$ is conformal. On the other hand, the component $\ve\alpha_k^*$ is a $(0,1)$-form, but we can invert the complex structure on $\Hom_\R(\TT_k,\TT_{k-1})$ to make it a $(1,0)$-from. Thus, we conclude that $f^*\T X$ does admit a complex structure fulfilling the assumption of Lemma \ref{lemma_conformalminimal}, so we can apply the lemma to complete the proof.
\end{proof}

\subsection{$\ac{J}$-holomorphic curves in $\H^{4,2}$ as A-surfaces}\label{subsec_jholoA}
We now restrict to the case $n=3$ and consider the interplay between A-surfaces $\Sigma\subset\H^{4,2}$ and the almost complex structure $\ac{J}$ on  $\H^{4,2}$ (see \S \ref{subsec_almostcomplex}). 
%The condition that $\Sigma$ is not contained in a pseudo-hyperbolic subspace of codimension $4$ just means that $\Sigma$ is not a totally geodesic $\H^2$ (c.f.\ Remark \ref{rk_insubspace}).
By Lemma \ref{lemma_sphere} and the description of $X_{G_2'}$ in Corollary \ref{coro_xg2}, we immediately deduce from Theorem \ref{thm_gaussmap}:
\begin{corollary}\label{coro_gaussholo}
For any A-surface $\Sigma\subset\H^{4,2}$ which is not part of a totally geodesic $\H^2\subset\H^{4,2}$, the following conditions are equivalent to each other:
%\begin{enumerate}[label=(\roman*)]
\begin{itemize}
\item the Gauss map $f:\Sigma\to X_{\SO_0(3,4)}$ takes values in the totally geodesic submanifold $X_{G_2'}\subset X_{\SO_0(3,4)}$ (hence is a conformal minimal immersion into $X_{G_2'}\,$);
\item the subbundle bundle $\TT_2\subset\T \H^{4,2}|_\Sigma$ is preserved by $\ac{J}$.
% (which implies that the whole Frenet splitting $\TT_1\oplus\TT_2\oplus\TT_3$ is preserved by $\ac{J}$).
\end{itemize}
\end{corollary}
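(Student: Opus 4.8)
The plan is to deduce the corollary directly from Theorem~\ref{thm_gaussmap} together with Lemma~\ref{lemma_sphere} and Corollary~\ref{coro_xg2}, using the description of the Gauss map in Definition~\ref{def_gauss}. By definition, for $z\in\Sigma$ the $2$-sphere $f(z)$ corresponds to the $3$-dimensional negative definite subspace $\R\iota(z)\oplus\ca{N}^-_z = \R\iota(z)\oplus(\TT_2)_z$ of $\R^{3,4}$, wait---more precisely, in the model where $\H^{4,2}=\mathbb{S}^{2,4}$ with reversed metric, $f(z)$ is the timelike totally geodesic $2$-sphere through $z$ tangent to $(\TT_2)_z$, which corresponds to the $3$-dimensional \emph{positive} definite subspace $V'(z):=\R\iota(z)\oplus(\TT_2)_z\subset\R^{3,4}$ via $f(z)=V'(z)\cap\H^{4,2}$. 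Now Corollary~\ref{coro_xg2} says that $f$ takes values in $X_{G_2'}$ if and only if every such $V'(z)$ is closed under the cross product, and Lemma~\ref{lemma_sphere} says this is equivalent to the sphere $f(z)=V'(z)\cap\H^{4,2}$ being $\ac{J}$-holomorphic, i.e.\ to $(\TT_2)_z=\T_z f(z)$ being preserved by $\ac{J}$.

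So the corollary reduces to the pointwise statement: $\TT_2$ is preserved by $\ac{J}$ as a subbundle of $\T\H^{4,2}|_\Sigma$ if and only if each tangent plane $\T_z f(z)$ is preserved by $\ac{J}$. But these are literally the same condition, since $\T_z f(z)=(\TT_2)_z$ by the definition of the Gauss map. The only subtlety is that $\ac{J}$ at the point $\iota(z)\in\H^{4,2}$ (acting on $\iota(z)^\perp=\T_{\iota(z)}\H^{4,2}$, which contains $(\TT_2)_z$) is the \emph{same} operator $X\mapsto \iota(z)\times X$ that governs $\ac{J}$-holomorphicity of the sphere $f(z)$ at its point $z=\iota(z)$; here one uses that $\ac{J}$ on $\mathbb{S}^{2,4}$ is defined uniformly by the cross product, so the ambient $\ac{J}$ restricted to $\iota(z)^\perp$ and the intrinsic $\ac{J}$ of the subsphere $f(z)$ agree on $(\TT_2)_z$. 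Then the chain of equivalences \ref{item_lemmasphere1}$\Leftrightarrow$\ref{item_lemmasphere2}$\Leftrightarrow$\ref{item_lemmasphere3} in Lemma~\ref{lemma_sphere} closes the loop.

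I would write this out as: first recall from Theorem~\ref{thm_gaussmap} that $f$ is a conformal minimal immersion, so it makes sense to ask whether it lands in the totally geodesic $X_{G_2'}$; then unwind Definition~\ref{def_gauss} to identify $f(z)$ with $V'(z)=\R\iota(z)\oplus(\TT_2)_z$; then apply Corollary~\ref{coro_xg2} to reduce ``$f$ lands in $X_{G_2'}$'' to ``$V'(z)$ closed under $\times$ for all $z$''; then apply Lemma~\ref{lemma_sphere}\,\ref{item_lemmasphere2}$\Leftrightarrow$\ref{item_lemmasphere3} to reduce that to ``$\T_z f(z)=(\TT_2)_z$ preserved by $\ac{J}$ for all $z$''; and finally observe this is exactly the statement that $\TT_2\subset\T\H^{4,2}|_\Sigma$ is $\ac{J}$-invariant. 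The hypothesis that $\Sigma$ is not part of a totally geodesic $\H^2$ is needed precisely so that the Frenet splitting---and hence $\TT_2$ and the Gauss map---are well defined (Theorem~\ref{thm_holo}\,\ref{item_thmholo1} and Remark~\ref{rk_insubspace}); I would flag this at the start.

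There is essentially no serious obstacle here: the corollary is a dictionary translation, and the only thing requiring a sentence of care is checking that the ambient almost complex structure $\ac{J}$ of $\H^{4,2}$ and the intrinsic almost complex structure of a timelike totally geodesic $2$-sphere agree on the common tangent plane---which is immediate from the formula $\ac{J}(X)=x\times X$ being insensitive to whether one views $X$ as tangent to $\mathbb{S}^{2,4}$ or to the subsphere. If one wanted to be even more self-contained one could note that by Lemma~\ref{lemma_sphere} it suffices to check $\ac{J}$-invariance of $\T_z f(z)$ at a single point $z$ of each sphere, but since we want a statement about all of $\Sigma$ simultaneously we just run the equivalence fiberwise over every $z\in\Sigma$.
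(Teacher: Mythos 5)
Your proof is correct and is exactly the route the paper takes: the paper states Corollary \ref{coro_gaussholo} as an immediate consequence of Theorem \ref{thm_gaussmap}, Lemma \ref{lemma_sphere} and Corollary \ref{coro_xg2}, which is precisely your fiberwise dictionary identifying $f(z)$ with $V'(z)=\R\,\iota(z)\oplus(\TT_2)_z$ and $\T_z f(z)$ with $(\TT_2)_z$. Your extra remarks (the sign bookkeeping making $V'(z)$ positive definite in $\R^{3,4}$, the agreement of the ambient and intrinsic $\ac{J}$ via $\ac{J}(X)=x\times X$, and the role of the codimension-$4$ hypothesis for the uniqueness of the Frenet splitting) are all accurate and only spell out what the paper leaves implicit.
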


A particularly interesting subclass of A-surfaces satisfying these conditions are:
\begin{proposition}\label{prop_holomorphicA}
Let $\Sigma\subset\H^{4,2}$ be a spacelike $\ac{J}$-holomorphic curve with nowhere vanishing second fundamental form. Then $\Sigma$ is an A-surface if and only if the osculation line $\{\II(X,Y)\mid X,Y\in\T_z\Sigma\}\subset\ca{N}_z$ (see \S \ref{subsec_jholominimal}) is negative definite for all $z\in\Sigma$. In this case, the whole Frenet splitting $\T\H^{4,2}|_\Sigma=\TT_1\oplus\TT_2\oplus\TT_3$ is preserved by $\ac{J}$. In particular, $\Sigma$ satisfies the conditions in Corollary \ref{coro_gaussholo}.
\end{proposition}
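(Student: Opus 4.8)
## Proof plan for Proposition \ref{prop_holomorphicA}

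\textbf{The plan is to} work out what the Frenet splitting of such a $\Sigma$ must look like, using the almost complex structure to force the subbundles to be $\ac{J}$-invariant, and to track the signatures. Throughout, recall from \S\ref{subsec_almostcomplex} that the almost complex $\H^{4,2}$ satisfies condition \eqref{eqn_nearly}, so Proposition \ref{prop_kahler}\ref{item_propkahler2} applies: the second fundamental form $\II(\sth,\sth)$ of any spacelike $\ac{J}$-holomorphic curve is complex bilinear, i.e.\ $\II(X,\ac{J}Y)=\ac{J}\,\II(X,Y)$, and (Remark \ref{rk_kahler1}) $\Sigma$ is automatically minimal. Since $\T\Sigma=\TT_1$ is $\ac{J}$-invariant by hypothesis, the orthogonality of $\ac{J}$ means $\ac{J}$ also preserves the normal bundle $\ca N$. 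Because $\II$ is nowhere vanishing and complex bilinear, the osculation line $\ca O_z:=\{\II(X,Y)\mid X,Y\in\T_z\Sigma\}$ is a genuine $1$-dimensional $\ac{J}$-complex subspace (i.e.\ a real $2$-plane preserved by $\ac{J}$) of $\ca N_z$ for every $z$; these fit together into a smooth $\ac{J}$-invariant rank $2$ subbundle $\ca O\subset\ca N$, which by Remark \ref{rk_secondff} is exactly the candidate for $\TT_2$.

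\textbf{First I would} observe that $\ca N$ has rank $4$ and signature $(2,2)$ (since $\H^{4,2}$ has signature $(4,2)$ and $\TT_1$ is positive definite of rank $2$). The bundle $\ca O$ is a rank $2$ $\ac{J}$-invariant subbundle; since $\ac{J}$ is orthogonal, $\langle\sth,\sth\rangle|_{\ca O}$ is either definite (positive or negative) or identically zero — but it cannot be identically zero, because a $\ac{J}$-invariant degenerate plane in a signature $(2,2)$ space would have to be totally isotropic of dimension $2$, and $\ac{J}$-invariance of a totally isotropic $2$-plane in $\R^{2,2}$ is impossible (a maximal isotropic subspace contains no $\ac{J}$-complex line since $\ac{J}$ restricted to it, being orthogonal, would have to be an isometry of the zero form yet square to $-\id$ — here I would spell out the short linear-algebra argument, or rather note directly that $\langle\ac{J}v,\ac{J}v\rangle=\langle v,v\rangle$ forces $\ca O$ to inherit a definite form once we know it is $\ac{J}$-complex and nondegenerate, and nondegeneracy follows because $\Sigma$ is not contained in a totally geodesic $\H^2$, hence $\II$ cannot take values in a degenerate line). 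So $\ca O=\TT_2$ is definite. Then $\TT_3:=(\TT_1\oplus\TT_2)^\perp$ is the orthogonal complement, which is $\ac{J}$-invariant (orthogonal complement of an $\ac{J}$-invariant nondegenerate subbundle), rank $2$, and carries the complementary signature; this establishes that the whole splitting $\T\H^{4,2}|_\Sigma=\TT_1\oplus\TT_2\oplus\TT_3$ is $\ac{J}$-invariant. One checks this is a Frenet splitting: the last bullet of Definition \ref{def_quasisuper} for $|i-j|\ge 2$ means $(\nabla_X\xi)^{\TT_1}=0$ for $\xi$ a section of $\TT_3$, i.e.\ the shape operator $\ve S_\xi$ vanishes for $\xi\perp\TT_2$, which is immediate from Remark \ref{rk_secondff} since all osculation vectors lie in $\TT_2=\ca O$ by construction.

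\textbf{Now I would} run the signature bookkeeping that controls the ``if and only if''. The ambient signature being $(p,q)=(4,2)$ with $n=3$, equation \eqref{eqn_pqintro} demands, for $\Sigma$ to be an A-surface, that $\TT_1,\TT_3$ be positive definite and $\TT_2$ negative definite. We already have $\TT_1$ positive definite by spacelikeness; and once $\TT_2$ is negative definite, $\TT_3=(\TT_1\oplus\TT_2)^\perp$ has signature $(4,2)-(2,0)-(0,2)=(2,0)$, i.e.\ positive definite, so the A-surface condition holds. Conversely, if $\Sigma$ is an A-surface then by Theorem \ref{thm_holo}\ref{item_thmholo1} the Frenet splitting is unique, so it must coincide with the $\ac{J}$-invariant one built above (the uniqueness applies because $\Sigma\subset\H^{4,2}$ is not contained in a codimension-$4$ pseudo-hyperbolic subspace — a totally geodesic $\H^2$ — which is precisely our standing hypothesis), and then $\TT_2$, being the negative-definite member by the A-surface convention, equals $\ca O$, so the osculation line is negative definite. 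This gives the equivalence, and the final sentence — that $\Sigma$ satisfies the conditions of Corollary \ref{coro_gaussholo} — is then immediate since $\TT_2$ is $\ac{J}$-invariant.

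\textbf{The step I expect to be the main obstacle} is pinning down cleanly that $\TT_2=\ca O$ is nondegenerate (hence definite) purely from ``$\Sigma$ is $\ac{J}$-holomorphic with nowhere vanishing $\II$ and not part of a totally geodesic $\H^2$'', and that its definiteness type is exactly what decides the A-surface property; one wants to avoid circularity with Theorem \ref{thm_holo}, which already presupposes an A-surface. The honest route is the elementary linear algebra of $\ac{J}$-complex lines in $\R^{2,2}$: I would record that an orthogonal complex structure on a signature $(2,2)$ space preserves no totally isotropic $2$-plane, and that a $\ac{J}$-complex line is automatically nondegenerate unless it is totally isotropic, so $\ca O$ is automatically definite — the ``not totally geodesic $\H^2$'' hypothesis is then only needed to guarantee that $\TT_3$ in turn is nondegenerate, equivalently that $\Sigma$ is genuinely of codimension $4$ and the osculation data does not collapse. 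Everything else is routine.
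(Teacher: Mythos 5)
Your construction for the ``if'' direction is essentially the paper's proof: take $\TT_2$ to be the bundle of osculation lines, $\TT_3$ its orthogonal complement in $\ca{N}$, observe that orthogonality of $\ac{J}$ makes the splitting $\ac{J}$-invariant, that Remark \ref{rk_secondff} gives the Frenet condition, and that Proposition \ref{prop_kahler} gives minimality. One omission there: you should state explicitly that complex bilinearity of $\II$ makes $\ve\alpha_2(X)=\II(X,\sth):\TT_1\to\TT_2$ conformal, since quasi-superminimality (Definition \ref{def_quasisuperminimal}) requires this and it does not follow from the signature bookkeeping with which you conclude ``the A-surface condition holds''.

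Two steps in your handling of nondegeneracy and of the ``only if'' direction are genuinely wrong, however. First, the claim that an orthogonal complex structure on a signature $(2,2)$ space preserves no totally isotropic $2$-plane is false: with $\langle e_i,e_i\rangle=1$, $\langle f_i,f_i\rangle=-1$, $\ac{J}e_1=e_2$, $\ac{J}f_1=f_2$, the plane spanned by $e_1+f_1$ and $\ac{J}(e_1+f_1)=e_2+f_2$ is totally isotropic and $\ac{J}$-invariant; your parenthetical argument collapses because every linear map is an ``isometry'' of the zero form. So $\ac{J}$-invariance of $\ca{O}$ alone does not force $\ca{O}$ to be definite. Second, your fallback — nondegeneracy of $\ca{O}$ because ``$\Sigma$ is not contained in a totally geodesic $\H^2$'' — invokes a hypothesis that is not in the statement (it does follow from $\II$ being nowhere zero, but you treat it as a standing assumption), and the implication from it to nondegeneracy of the osculation line is asserted rather than proved. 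Fortunately none of this is needed: in the ``if'' direction negative definiteness of $\ca{O}$ is the hypothesis, and in the ``only if'' direction the paper's argument is a one-liner that bypasses Theorem \ref{thm_holo} and any discussion of isotropy — by Remark \ref{rk_secondff} every osculation vector lies in the $\TT_2$ of the given Frenet splitting, which is negative definite by the definition of an A-surface, so the osculation lines are negative definite. Your detour through uniqueness of the Frenet splitting is not only unnecessary but, as written, circular: it presupposes that the $\ac{J}$-invariant splitting ``built above'' is a genuine Frenet splitting, which is precisely what the flawed definiteness claim was meant to secure.
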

\begin{proof}
If $\Sigma$ is a $\ac{J}$-holomorphic curve and an A-surface at the same time, then the fiber of $\TT_2$ at any $z\in \Sigma$ where $\II(\sth,\sth)$ is nonzero is exactly the osculation line at $z$, so the ``only if'' part is trivial. Conversely, assuming that the osculation lines are negative definite, we let $\TT_2$ be the subbundle of the normal bundle $\ca{N}$ formed by these complex lines and $\TT_3$ be the orthogonal complement of $\TT_2$ in $\ca{N}$. Since the almost complex structure $\ac{J}$ of $\H^{4,2}$ is orthogonal, the subbundles $\TT_1=\T\Sigma$, $\TT_2$ and $\TT_3$ of $\T\H^{4,2}|_\Sigma$ form a Frenet splitting preserved by $\ac{J}$. Also, $\ve\alpha_2(X)=\II(X,\sth):\TT_1\to\TT_2$ is conformal because $\II(\sth,\sth)$ is complex bilinear by Proposition \ref{prop_kahler} \ref{item_propkahler2}. Therefore, $\Sigma$ is an A-surface. This proves the ``if'' part and the ``In this case'' statement.
\end{proof}

\begin{remark}
The A-surfaces that we construct from cyclic $G_2'$-Higgs bundles in \S \ref{subsec_g2higgs} below belong to the above subclass. They also have the following extra properties by construction:
\begin{itemize}
	\item $\ve{\alpha}_3^+$ is nowhere zero as well; as a consequence, $\TT_1$, $\TT_2$ and $\TT_3$ identify with $\ca{K}^{-1}$, $\ca{K}^{-2}$ and $\ca{K}^{-3}$, respectively, as holomorphic line bundles; 
	\item the hermitian metrics $\ve{h}_1$, $\ve{h}_2$ and $\ve{h}_3$ on these line bundles satisfy $\ve{h}_3=\ve{h}_1\ve{h}_2/4$.
\end{itemize}
By using $G_2'$-moving frames, it can be shown that these properties are not ad hoc and are local in nature. Namely, they are satisfied by every A-surface $\Sigma$ as in Proposition \ref{prop_holomorphicA}. 
\end{remark}

\begin{remark}\label{rk_local}
	As the concluding remark for this section, we note that all the results till now are local in nature. Therefore, Theorems \ref{thm_holo}, \ref{thm_affinetoda} and Proposition \ref{prop_holomorphicA} still hold if the ambient space $\H^{p,q}$ is replaced by an orientable pseudo-Riemannian manifold locally modeled on $\H^{p,q}$, or more generally if $\Sigma$ is replaced by an \emph{A-surface immersion} from the universal cover $\wt{\Sigma}$ of an abstract surface $\Sigma$ to $\H^{p,q}$ which is equivariant with respect to some representation $\rho$ of $\pi_1(\Sigma)$ in $\SO_0(p,q+1)$ or $G_2'$. In particular, we can treat the structural data of such an immersion as defined on $\Sigma$ rather than $\wt\Sigma$ by virtue of the invariance under the $\pi_1(\Sigma)$ action. The definition and results concerning the Gauss map $f$ also hold in this equivariant setting, and $f$ is $\rho$-equivariant as well.
\end{remark}

\section{Infinitesimal rigidity and unique determination from Gauss map}\label{sec_infinitesimal}

In this section, we restrict to A-surfaces $\Sigma$ satisfying an extra hypothesis related to Dai-Li's work \cite{dai-li_minimal} and prove two result for them: the first is the core result of this paper that $\Sigma$ is infinitesimally rigid; the second is that $\Sigma$ is determined by its Gauss map up to the antipodal map of $\H^{p,q}$.

\subsection{Hypothesis \eqref{eqn_hypo} and a sufficient condition}\label{subsec_hypo}
Fix an oriented A-surface $\Sigma$ in an orientable pseudo-Riemannian manifold $M$ locally modeled on $\H^{p,q}$ with $n=\frac{p+q}{2}\geq3$. Also assume that $\Sigma$ is not locally contained in any pseudo-hyperbolic subspace of codimension $4$, so that we have a unique Frenet splitting $\T M|_\Sigma=\TT_1\oplus\cdots\oplus\TT_n$ by Theorem \ref{thm_holo} and Remark \ref{rk_local}, with each $\TT_i$ being a hermitian holomorphc line bundle. The extra hypothesis that we consider in this section can be simply stated as \emph{the first $n-1$ line bundles $\TT_1,\cdots,\TT_{n-1}$ have semi-negative Chern forms}. Here, the \emph{Chern form} is $\tfrac{\ima}{2\pi}$-times the curvature form of the Chern connection, which is an $\R$-valued $2$-form representing the first Chern class, and we call a $2$-form $\omega$ \emph{semi-negative} if the open set $\{\omega\neq0\}\subset\Sigma$ is dense and $\omega$ is opposite to the orientation of $\Sigma$ on this set. Although this notion of negativity depends on the choice of orientation for $\Sigma$, the hypothesis as a whole does not, because reversing the orientation of $\Sigma$ will also reverse the one of each $\TT_i$ with $2\leq i\leq n-1$ (see Remark \ref{rk_orientation}), hence will reverse the complex structure and Chern form of $\TT_i$ as well.

\begin{remark}
It is reasonable that the semi-negativity assumption is not imposed on $\TT_n$: unlike $\TT_i$ with $2\leq i\leq n-1$, whose orientation (and hence complex structure) is uniquely determined, we are free to choose the orientation of $\TT_n$, which just reflects the orientation of the ambient space $M$ (see Remark \ref{rk_orientation}), and such a choice is irrelevant to what we are doing.
 %This plays no role in what follows.
\end{remark}

By virtue of Theorem \ref{thm_affinetoda}, we can reformulate the hypothesis as follows. Since the Chern form of $\TT_i$ is expressed locally as $\tfrac{\ima}{2\pi}\bpa\pa\log h_i=-\tfrac{1}{\pi}\pazbz\log h_i\dx\wedge\dy$, the hypothesis is equivalent to (see Theorem \ref{thm_affinetoda} and Remark \ref{rk_norm} for the notations):
\begin{equation}\tag{$\star$}\label{eqn_hypo}
\parbox{2.9cm}{on an open dense subset of $\Sigma$, }
	\begin{cases}
		\tfrac{1}{h_1}\pazbz\log h_1=\tfrac{1}{2}-\|\ve\alpha_2\|^2>0,\\[0.2cm]
		\tfrac{1}{h_1}\pazbz\log h_k=\|\ve\alpha_k\|^2-\|\ve\alpha_{k+1}\|^2>0\quad (2\leq k\leq n-2),\\[0.2cm]
	\tfrac{1}{h_1}\pazbz\log h_{n-1}=\|\ve\alpha_{n-1}\|^2-\|\ve\alpha_n^+\|^2-\|\ve\alpha_n^-\|^2>0.
	\end{cases}
\end{equation}

This clearly implies some restrictions on the zeros of the holomorphic forms $\ve\alpha_2,\cdots,\ve\alpha_{n-1},\ve\alpha_n^\pm$. For example, the last inequality in \eqref{eqn_hypo} forces every zero of $\ve\alpha_{n-1}$ to be a zero of $\ve\alpha_n^+$ and $\ve\alpha_n^-$ at the same time.
More specifically, letting $(\ve\alpha)$ denote the divisor\footnote{When $\ve\alpha\not\equiv0$, the divisor $(\ve\alpha)$ is just the $\Z_{\geq0}$-valued function on $\Sigma$ assigning to every $z\in\Sigma$ the multiplicity of zero of $\ve\alpha$ at $z$. When $\ve\alpha\equiv0$, we let $(\ve\alpha)$ be the constant function $+\infty$ by convention.}
 of any holomorphic section $\ve\alpha$, one may check that a necessary condition for \eqref{eqn_hypo} is
\begin{equation}\label{eqn_divisorleq}
	(\ve\alpha_2)\leq \cdots\leq (\ve\alpha_{n-1})\leq \min\big\{(\ve\alpha_n^+),(\ve\alpha_n^-)\big\},
\end{equation}
where ``$\leq$'' and ``$\min$'' are pointwise inequality and minimum of $\Z_{\geq0}\cup\{+\infty\}$-valued functions on $\Sigma$.

When $\Sigma$ is closed, we generalize the work of Dai and Li \cite{dai-li_minimal} to get the theorem below on the affine Toda system. It implies that a specific condition stronger than \eqref{eqn_divisorleq} is sufficient for \eqref{eqn_hypo}, although we do not know whether \eqref{eqn_divisorleq} itself is sufficient.
The condition is given by replacing every ``$\leq$'' in \eqref{eqn_divisorleq} by a stronger relation ``$\prec$'': here ``$D_1\prec D_2$'' means that the divisor $D_1$ is strictly less than that of $D_2$ at every point where the former is nonzero (which always holds when $D_1=\emptyset$).
\begin{theorem}\label{thm_daili}
	Let $\Sigma$ be a closed Riemann surface of genus $\geq2$, $\TT_1,\cdots,\TT_n$ ($n\geq2$) be holomorphic line bundles on $\Sigma$ and $\ve{h}_i$ be a hermitian metric on $\TT_i$. 
	Let $\ve\alpha_1$, $\ve\alpha_j$ ($2\leq j\leq n-1$) and $\ve\alpha_n^\pm$
	be holomorphic sections of the line bundles  $\ca{K}\!\TT_1$, $\ca{K}\!\TT_{j-1}^{-1}\TT_j$ and $\ca{K}\!\TT_{n-1}^{-1}\TT_n^{\pm1}$, respectively, with $\ve\alpha_1,\cdots,\ve\alpha_{n-1}\not\equiv0$. Suppose the following equations (with the same local notation as in Theorem \ref{thm_affinetoda}) hold on $\Sigma$:
	\begin{align}
		&\text{if $n=2$:}\quad
		\begin{cases}
			\pazbz\log h_1=h_1|\alpha_1|^2-\tfrac{h_2}{h_1}|\alpha_2^+|^2-\frac{1}{h_1h_2}|\alpha_2^-|^2,\\[0.2cm]
			\pazbz\log h_2=\tfrac{h_2}{h_1}|\alpha_2^+|^2-\frac{1}{h_1h_2}|\alpha_2^-|^2;
		\end{cases} \label{eqn_dailin2}\\
		&\text{if $n\geq3$:}\quad
		    \begin{cases}
			\pazbz\log h_1=h_1|\alpha_1|^2-\tfrac{h_2}{h_1}|\alpha_2|^2,\\[0.2cm]
			\pazbz\log h_k=\tfrac{h_k}{h_{k-1}}|\alpha_k|^2-\tfrac{h_{k+1}}{h_k}|\alpha_{k+1}|^2\quad \text{for $2\leq k\leq n-2$},\\[0.2cm]
			\pazbz\log h_{n-1}=\tfrac{h_{n-1}}{h_{n-2}}|\alpha_{n-1}|^2-\tfrac{h_{n}}{h_{n-1}}|\alpha_n^+|^2-\tfrac{1}{h_{n-1}h_n}|\alpha_n^-|^2,\\[0.2cm]
			\pazbz\log h_n=\tfrac{h_{n}}{h_{n-1}}|\alpha_n^+|^2-\tfrac{1}{h_{n-1}h_n}|\alpha_n^-|^2.\label{eqn_daili}	
		\end{cases}
	\end{align}
	Then the following statements hold.
	\begin{enumerate}[label=(\arabic*)]
		\item\label{item_daili1}
		If 
$(\ve\alpha_1)\prec(\ve\alpha_2)\cdots\prec (\ve\alpha_{n-1})\prec \min\big\{(\ve\alpha_n^+),(\ve\alpha_n^-)\big\}$,
		then we have $\pazbz\log h_i\geq0$ on $\Sigma$ for $i=1,\cdots, n-1$, with strict inequality away from the zeros of $\ve\alpha_i$.
		\item\label{item_daili2}
		If  $(\ve\alpha_n^+)\prec(\ve\alpha_n^-)\neq\emptyset$, then $\pazbz\log h_n\geq0$ on $\Sigma$, with strict inequality away from the zeros of $\ve\alpha_n^+$.
	\end{enumerate}
\end{theorem}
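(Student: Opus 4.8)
\textbf{Proof strategy for Theorem \ref{thm_daili}.}

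The plan is to adapt the maximum-principle / sub-solution argument of Dai--Li \cite{dai-li_minimal} to this slightly more general affine Toda system, which differs from the cyclic Hitchin equation treated there only in the presence of the extra ``diagonal'' term $\ve\alpha_n^-$ in the last two lines. I would work with the globally defined functions $u_i := \log(h_i / h_i^{\mathrm{DL}})$ comparing the given metrics $h_i$ to a reference solution, or, more directly, introduce the auxiliary quantities obtained by dividing each line of the system by $h_1$ and expressing everything in terms of the squared norms $\|\ve\alpha_j\|^2$ and $\|\ve\alpha_n^\pm\|^2$ of Remark \ref{rk_norm}, since these are the scalars whose positivity is claimed in \eqref{eqn_hypo}. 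The divisor hypothesis $(\ve\alpha_1)\prec(\ve\alpha_2)\prec\cdots\prec(\ve\alpha_{n-1})\prec\min\{(\ve\alpha_n^+),(\ve\alpha_n^-)\}$ is exactly what is needed so that the ``telescoping'' differences $\pazbz\log h_i$ can be controlled near the zeros of the $\ve\alpha_j$'s: at such a zero, the dominant term comes from the $\ve\alpha_i$ with the smallest vanishing order, and the strict inequality ``$\prec$'' ensures there is no cancellation at leading order.

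First I would prove \ref{item_daili1} by induction, running from $i=1$ upward, or equivalently by a single maximum-principle argument applied to the partial sums $\sigma_j := \log(h_1 h_2 \cdots h_j)$ (a standard trick for Toda-type systems: summing the first $j$ equations makes the right-hand side telescope to $\tfrac{1}{2}h_1 - \tfrac{h_{j+1}}{h_j}|\alpha_{j+1}|^2$ for $j \le n-2$, with only one ``interface'' term surviving). The key step is: suppose for contradiction that $\pazbz\log h_i < 0$ somewhere on the open dense set away from the zeros of $\ve\alpha_i$; consider the point where the relevant difference of norms is most negative, and use that at an interior minimum of $\log h_i - (\text{something})$ one has $\pazbz \ge 0$, together with the sign structure of the system, to derive a contradiction. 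One must be careful that the functions involved are only continuous (not smooth) at the zeros of the $\ve\alpha_j$'s, so the maximum principle has to be applied on the complement of a finite set and then the conclusion extended by continuity — this is where the hypothesis $\ve\alpha_1,\dots,\ve\alpha_{n-1}\not\equiv 0$ (so the bad set is finite) and the strict divisor inequalities enter crucially, exactly as in Remark \ref{rk_insubspace}'s ``propagation'' idea.

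For \ref{item_daili2}, the argument is lighter: the last equation reads $\pazbz\log h_n = \tfrac{h_n}{h_{n-1}}|\alpha_n^+|^2 - \tfrac{1}{h_{n-1}h_n}|\alpha_n^-|^2$, and under $(\ve\alpha_n^+)\prec(\ve\alpha_n^-)$ with $\ve\alpha_n^-\not\equiv 0$ one shows the first term dominates. I would argue by the maximum principle applied to $\log h_n$ (or to $h_n^2$): if $\pazbz\log h_n < 0$ somewhere, pick a point minimizing $h_n$ on the closed surface; there $\pazbz\log h_n \ge 0$, forcing $\tfrac{h_n}{h_{n-1}}|\alpha_n^+|^2 \le \tfrac{1}{h_{n-1}h_n}|\alpha_n^-|^2$, i.e. $h_n^2 |\alpha_n^+|^2 \le |\alpha_n^-|^2$ pointwise at the minimum — and then, because $(\ve\alpha_n^+) \prec (\ve\alpha_n^-)$ controls the relative vanishing, propagate this bound to get a contradiction with $h_n$ being minimal there, or directly conclude $\pazbz\log h_n\ge 0$ everywhere with strict inequality away from the zeros of $\ve\alpha_n^+$.

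\textbf{Main obstacle.} The delicate point throughout is the behavior at the common zeros of the holomorphic forms, where $\log h_i$ has the expected logarithmic singularity in the \emph{reference} (Hitchin) metric but the unknown metric $h_i$ is genuinely smooth; reconciling these — i.e. showing that the ``$\prec$'' hypothesis makes the singular terms harmless and lets the maximum principle go through on all of $\Sigma$ rather than just on the complement of the zero divisors — is the technical heart, and is presumably the reason this is deferred to the appendix. A secondary subtlety is that one genuinely needs $\Sigma$ closed of genus $\ge 2$ (so that $\deg\ca{K} > 0$ and the various line bundles have the degrees forcing the $\ve\alpha_j$ to have the right number of zeros) for the comparison metric to exist and for the partial-sum functions to attain interior extrema.
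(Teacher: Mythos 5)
Your overall direction (adapting Dai--Li: work with the gauge-invariant norms of the holomorphic sections, use the strict divisor relation ``$\prec$'' to guarantee that the relevant extrema are attained away from the zero sets) matches the paper's appendix proof in spirit, but the two mechanisms that actually make the argument close are missing or wrong in your sketch. For part \ref{item_daili1}, the claim $\pazbz\log h_i\geq0$ is equivalent to a pointwise bound on the ratio of consecutive norms, and when you apply the maximum principle at the maximum of one such ratio, the resulting inequality unavoidably involves the maxima of the \emph{neighboring} ratios as well. So one does not get an induction ``from $i=1$ upward'', nor does the telescoped partial sum $\sigma_j=\log(h_1\cdots h_j)$ give the sign of an individual $\pazbz\log h_i$: what one gets is a coupled system of algebraic inequalities among all the maxima $A_2,\dots,A_n$, and the paper needs a genuinely global algebraic manipulation (expanding $1-1/A_n$ as a sum of products of the $A_k$'s and auxiliary quantities $B_k$) to deduce that either all $A_k<1$ or all $A_k=1$, followed by a separate argument -- Hopf's strong maximum principle plus a second maximum ($A_2'\leq\tfrac12$) contradiction using the nonemptiness of the zero divisors -- to exclude the degenerate case $A_2=\cdots=A_n=1$. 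Without that exclusion you only get the weak inequality, not the strict inequality away from the zeros of $\ve\alpha_i$ that the statement asserts; your proposal never addresses where strictness comes from.

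For part \ref{item_daili2} there is a concrete error: $h_n$ is the local expression of the metric $\ve{h}_n$ in a holomorphic frame, not a globally defined function on $\Sigma$, so ``pick a point minimizing $h_n$ on the closed surface'' is not meaningful. The correct global quantity is the ratio $\|\ve\alpha_n^-\|^2/\|\ve\alpha_n^+\|^2=\tfrac{|\alpha_n^-|^2}{h_n^2|\alpha_n^+|^2}$, whose maximum is attained away from the zeros of $\ve\alpha_n^+$ precisely because $(\ve\alpha_n^+)\prec(\ve\alpha_n^-)$; the weak maximum principle at that point gives the bound $\leq1$, and the hypothesis $(\ve\alpha_n^-)\neq\emptyset$ is then fed into the strong maximum principle to rule out the ratio being identically $1$, which is what yields the strict sign. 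Finally, your framing of the ``main obstacle'' is off target: there is no comparison with a singular reference (Hitchin) metric anywhere -- the unknown metrics are smooth, the only singularities are the $-\infty$ values of $\log\|\ve\alpha_i\|^2$ at zeros, and these are harmless once the extrema are located away from them; the real technical heart is the coupled-inequality bookkeeping and the strong-maximum-principle step described above.
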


The proof is directly adapted from Dai-Li's and is postponed to the appendix. 
\begin{remark}
	For the sake of completeness, we have given in Theorem \ref{thm_daili} a statement that is stronger than required in two ways. First,
	the setting is more general than the one we are working on: the line bundle $\TT_1$ is not necessarily $\ca{K}^{-1}$, so an extra $\ve\alpha_1\in H^0(\Sigma,\ca{K}\TT_1)$ comes up correspondingly. Second, Parts \ref{item_daili1} and \ref{item_daili2} are about the left-hand side of the 1st to $(n-1)$th and the last equations in \eqref{eqn_daili}, respectively, but we only need the former. The original result of Dai-Li (see \cite[Lemma 5.4]{dai-li_minimal}) is the two parts combined (for $(\ve\alpha_1)=\cdots=(\ve\alpha_{n-1})=(\ve\alpha_n^+)=\emptyset$), without noticing that they are unrelated.
\end{remark}
\begin{remark}\label{rk_reduction}
Without affecting the statement, we can add to Theorem \ref{thm_daili} the extra assumption that $\ve\alpha_n^+\not\equiv0$. In fact, if $\ve\alpha_n^-\not\equiv0$ but $\ve\alpha_n^+\equiv0$, we can switch the roles of $\TT_n$ and $\TT_n^{-1}$, which interchanges $\ve\alpha_n^\pm$ and does affect the statement. On the other hand, if $\ve\alpha_n^+=\ve\alpha_n^-\equiv0$, then the statement just reduces from $n$ to the $n-1$ case, with $\ve\alpha_{n-1}^-\equiv0$ and $\ve\alpha_{n-1}^+$ being the original $\ve\alpha_{n-1}$, whereas the $n=2$ case reduces to a trivial statement. 
\end{remark}

\subsection{Infinitesimal rigidity}
An A-surface in a $4$-dimensional pseudo-Riemannian manifold is just a spacelike minimal surface with timelike normal bundle (see Remark \ref{rk_superminimalisminimal}). In this case, by Proposition \ref{prop_maximal}, $\Sigma$ locally maximizes the area as long as $M$ is negatively curved. We show the following generalization to higher dimensions (see Theorem \ref{thm_holo}, Remark \ref{rk_local} and \S \ref{subsec_variation} for notation and background):
\begin{theorem}\label{thm_Avariation}
	Let $\Sigma$ be an A-surface in an orientable pseudo-Riemannian manifold $M$ locally modeled on $\H^{p,q}$ with $n=\frac{p+q}{2}\geq3$. Suppose $\Sigma$ is not locally contained in any pseudo-hyperbolic subspace of codimension $4$ and its structural data satisfies \eqref{eqn_hypo}. Consider the spacelike part $\ca{N}^+:=\TT_3\oplus \TT_5\oplus \cdots$ and timelike part $\ca{N}^-:=\TT_2\oplus \TT_4\oplus \cdots$ of the normal bundle $\ca{N}$ of $\Sigma$. Then for any compactly supported variation of $\Sigma$ whose variational vector field $\xi$ is a section of $\ca{N}^+$ (resp.\ $\ca{N}^-$) and is not identically zero, the associated second variation of volume is positive (resp.\ negative). 
\end{theorem}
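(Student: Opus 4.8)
The plan is to use the second variation formula from Theorem \ref{thm_variation} \ref{item_thmvariation1}, which expresses the second variation of volume as the integral \eqref{eqn_variationintegral}, namely $-\int_\Sigma\langle L_\Sigma\xi,\xi\rangle\dif\vol_{\ve g}=\int_\Sigma\Tr_{\ve g}\big(\langle\nabla^{\ca N}_\sth\xi,\nabla^{\ca N}_\sth\xi\rangle-\langle\ve S_\xi(\sth),\ve S_\xi(\sth)\rangle-\langle R_M(\sth,\xi)\sth,\xi\rangle\big)\dif\vol_{\ve g}$, and show that the integrand has a definite sign when $\xi$ is a nonzero section of $\ca N^+$ or $\ca N^-$. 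The first step is to analyze each of the three terms separately. For the curvature term, since $M$ has constant curvature $-1$ we have $R_M(X,\xi)X=-(\langle X,X\rangle\xi-\langle X,\xi\rangle X)$, so $\langle R_M(e_i,\xi)e_i,\xi\rangle=-\langle\xi,\xi\rangle$ for an orthonormal frame $(e_1,e_2)$ of $\T\Sigma$; thus $\Tr_{\ve g}\langle R_M(\sth,\xi)\sth,\xi\rangle=-2\langle\xi,\xi\rangle$, which is negative when $\xi\in\ca N^+$ and positive when $\xi\in\ca N^-$. For the gradient term $\Tr_{\ve g}\langle\nabla^{\ca N}_\sth\xi,\nabla^{\ca N}_\sth\xi\rangle$, the sign is controlled by the signature of $\ca N$, but the subtlety is that $\nabla^{\ca N}$ mixes the spacelike part $\ca N^+=\TT_3\oplus\TT_5\oplus\cdots$ with the timelike part $\ca N^-=\TT_2\oplus\TT_4\oplus\cdots$ through the off-diagonal forms $\ve\alpha_3,\ve\alpha_4,\dots$ in \eqref{eqn_nablamatrix2}, so $\nabla^{\ca N}_X\xi$ need not be definite even if $\xi$ is.

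The main work, and the reason Hypothesis \eqref{eqn_hypo} is invoked, will be to handle this mixing. My plan is to integrate by parts: for $\xi$ a section of $\ca N^+$ (the $\ca N^-$ case being entirely analogous with signs flipped), write $\xi=\xi_3+\xi_5+\cdots$ with $\xi_i\in C^\infty(\Sigma,\TT_i)$, decompose $\nabla^{\ca N}_X\xi$ via \eqref{eqn_nablamatrix2}, and expand $\langle\nabla^{\ca N}_\sth\xi,\nabla^{\ca N}_\sth\xi\rangle$ into diagonal terms $\langle\nabla_i\xi_i,\nabla_i\xi_i\rangle$ plus cross terms involving $\ve\alpha_j(X)\xi_{j-1}$ and $\ve\alpha_{j+1}^*(X)\xi_{j+1}$. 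Using the self-adjointness identity \eqref{eqn_laplacian} for the Laplacian $\Delta^{\TT_i}$ on each line bundle, $\int_\Sigma\langle\nabla_i\xi_i,\nabla_i\xi_i\rangle = -\int_\Sigma\langle\Delta^{\TT_i}\xi_i,\xi_i\rangle$, and the Bochner-type identity $\Delta^{\TT_i}\xi_i$ expressed through the curvature of $\TT_i$ — i.e.\ through $\pazbz\log h_i$, which by \eqref{eqn_hypo} is pointwise a positive multiple of $h_1$ on a dense open set — one reorganizes the whole integral into a sum of manifestly signed pieces. The crucial input is that the coefficients $\tfrac1{h_1}\pazbz\log h_1=\tfrac12-\|\ve\alpha_2\|^2$ and $\tfrac1{h_1}\pazbz\log h_k=\|\ve\alpha_k\|^2-\|\ve\alpha_{k+1}\|^2$ and $\tfrac1{h_1}\pazbz\log h_{n-1}=\|\ve\alpha_{n-1}\|^2-\|\ve\alpha_n^+\|^2-\|\ve\alpha_n^-\|^2$ are all strictly positive on a dense set, which lets the curvature contributions dominate the cross terms coming from the $\ve\alpha_j$'s. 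The shape-operator term $\Tr_{\ve g}\langle\ve S_\xi(\sth),\ve S_\xi(\sth)\rangle$ contributes with the right sign automatically: by Remark \ref{rk_secondff}, $\ve S_\xi$ vanishes unless $\xi$ has a $\TT_2$-component, and when it does not vanish it feeds positively (for $\xi\in\ca N^-$) or is simply absent (for $\xi\in\ca N^+$, since $\TT_2\subset\ca N^-$ so $\ve S_\xi\equiv0$).

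The anticipated main obstacle is the bookkeeping in the integration by parts: one must carefully track how the telescoping structure of \eqref{eqn_hypo} (each line being a difference of consecutive squared norms) matches the telescoping structure of the cross terms $\ve\alpha_j(X)\xi_{j-1}$ in the expansion of $\nabla^{\ca N}_X\xi$, so that after summing over $i$ everything collapses to an integral of a nonnegative (resp.\ nonpositive) density that is strictly positive (resp.\ negative) on the dense open set $\{\text{\eqref{eqn_hypo} holds}\}$, forcing the second variation to have the claimed strict sign unless $\xi\equiv0$. A secondary technical point is justifying the integration by parts for compactly supported $\xi$ on the possibly noncompact $\wt\Sigma$, which is routine once $\xi$ has compact support. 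I expect the identity $\int_\Sigma\Tr_{\ve g}\langle\nabla^{\ca N}_\sth\xi,\nabla^{\ca N}_\sth\xi\rangle\dif\vol_{\ve g}$ to reorganize, after using \eqref{eqn_laplacian} and the Toda equations, into $\int_\Sigma\big(\text{(sum of }\|\nabla_i\xi_i\|^2\text{-type terms)}+\text{(sum involving }\|\ve\alpha_j\|^2\|\xi\|^2\text{)}\big)$, and the point will be that after combining with the $-2\langle\xi,\xi\rangle$ curvature term and the shape-operator term, the coefficient of each $\|\xi_i\|^2$ is exactly $\tfrac1{h_1}\pazbz\log h_i$ up to the universal positive constants, hence has a strict sign on a dense set; I would present this as a chain of equalities culminating in $-\int_\Sigma\langle L_\Sigma\xi,\xi\rangle\dif\vol_{\ve g}>0$ (resp.\ $<0$).
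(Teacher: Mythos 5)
Your setup is right: you invoke the second variation formula \eqref{eqn_variationintegral}, you compute the ambient curvature contribution correctly (it enters the integrand as $2\langle\xi,\xi\rangle$), you note that the shape-operator term vanishes for $\xi\in C^\infty(\Sigma,\ca{N}^+)$ and has the favorable sign for $\ca{N}^-$, and you correctly identify the mixing of $\ca{N}^+$ and $\ca{N}^-$ inside $\nabla^{\ca{N}}$ via $\ve\alpha_3,\dots,\ve\alpha_n$ as the crux that \eqref{eqn_hypo} must control. But the mechanism you propose for that control has a genuine gap. First, there is no Bochner-type identity converting $\Delta^{\TT_i}\xi_i$, or the energy $\int_\Sigma\Tr_{\ve g}\langle\nabla_i\xi_i,\nabla_i\xi_i\rangle$, into a curvature term times $|\xi_i|^2$; the $\partial$/$\bar\partial$ splitting only yields an inequality, and in the correct argument the gradient term is not what compensates the mixing at all (it is simply discarded, having the right sign). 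Second, and decisively, your claimed telescoping --- that after reorganization ``the coefficient of each $\|\xi_i\|^2$ is exactly $\tfrac1{h_1}\pazbz\log h_i$'' --- cannot work: the components of a section of $\ca{N}^\pm$ lie in $\TT_3,\TT_5,\dots$ (resp.\ $\TT_2,\TT_4,\dots$), and one of these summands is always $\TT_n$, whereas Hypothesis \eqref{eqn_hypo} says nothing about $\pazbz\log h_n$ (indeed $\tfrac1{h_1}\pazbz\log h_n=\|\ve\alpha_n^+\|^2-\|\ve\alpha_n^-\|^2$ has no definite sign). Already for $n=3$ and $\xi\in C^\infty(\Sigma,\TT_3)=C^\infty(\Sigma,\ca{N}^+)$ your scheme would hinge on the sign of $\pazbz\log h_3$, which is unavailable, so the proposed chain of equalities cannot culminate in the claimed signed integral.

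What actually closes the argument (and is the paper's route) is a purely pointwise estimate that never uses integration by parts: decompose $\nabla^{\ca{N}}$ under $\ca{N}=\ca{N}^+\oplus\ca{N}^-$ with off-diagonal $1$-forms $\Gamma_0^\pm$ built from $\ve\alpha_3,\dots,\ve\alpha_{n-1},\ve\alpha_n^\pm$, compute $\Tr_{\ve g}\langle\Gamma_0^\pm(\sth)\xi,\Gamma_0^\pm(\sth)\xi\rangle$ explicitly (the cross traces such as $\Tr_{\ve g}\big[\ve\alpha_i(\sth)\ve\alpha_j(\sth)\big]$ vanish), and observe that the eigenvalues of the resulting quadratic form in $\xi$ are of the form $2\|\ve\alpha_3\|^2$, $2\|\ve\alpha_j\|^2+2\|\ve\alpha_{j+1}\|^2$, $2\|\ve\alpha_{n-1}\|^2+2\|\ve\alpha_n^+\|^2+2\|\ve\alpha_n^-\|^2$ and $2(\|\ve\alpha_n^+\|\pm\|\ve\alpha_n^-\|)^2$. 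Hypothesis \eqref{eqn_hypo} gives the strict chain $\|\ve\alpha_n^+\|^2+\|\ve\alpha_n^-\|^2<\|\ve\alpha_{n-1}\|^2<\cdots<\|\ve\alpha_2\|^2<\tfrac12$ on a dense open set, so all these eigenvalues are strictly less than $2$ there; hence the mixing term is dominated pointwise by the ambient curvature term $2|\langle\xi,\xi\rangle|$, while the diagonal gradient term and the shape-operator term have the correct sign trivially. This yields an integrand of the required weak sign everywhere and strict sign on a dense subset of $\{\xi\neq0\}$, which is exactly what the strict conclusion needs. To repair your proposal, drop the Bochner/telescoping step and carry out this eigenvalue estimate for $\Gamma_0^\pm$ instead.
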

\begin{remark}\label{rk_h3}
Besides maximal surfaces in $\H^{2,q}$, minimal surfaces in hyperbolic $3$-manifolds have also been extensively studied (see e.g.\ \cite{huang-marcello-tarantello,taubes, uhlenbeck}). For these surfaces,
we cannot expect such kind of variational property to hold in general, because the term in the second variation formula given by the second fundamental form $\II$ has different sign from the Laplacian term and the curvature term, which makes the sign of the whole formula indefinite. 
%In fact, a subclass of these surfaces that are stable are defined by the property that the $\II$-term is controlled by the curvature term. 
The common feature of maximal surfaces and A-surfaces which frees us from this issue is that 
all osculation vectors are timelike (see Remark \ref{rk_secondff}), hence the $\II$-term has the correct sign.
\end{remark}
\begin{proof}
Recall from Theorem \ref{thm_variation} \ref{item_thmvariation1} that the second variation of volume is given by the integral
$$
\int_\Sigma\Tr_{\ve g}\Big(\langle \nabla_\sth^\ca{N} \xi,\nabla_\sth^\ca{N}\xi\rangle-\langle \ve{S}_\xi(\sth),\ve{S}_\xi(\sth)\rangle-\langle R_M(\sth,\xi)\sth,\xi\rangle\Big)\dif\vol_\ve{g}
$$
(where the first fundamental form $\ve{g}$ is the real part of the hermitian metric $\ve{h}_1$ on $\TT_1=\T\Sigma$). To prove the theorem, we shall show that the integrand satisfies
\begin{equation}\label{eqn_Avariation1}
	\Tr_{\ve g}\Big(\langle \nabla_\sth^\ca{N} \xi,\nabla_\sth^\ca{N}\xi\rangle-\langle \ve{S}_\xi(\sth),\ve{S}_\xi(\sth)\rangle-\langle R_M(\sth,\xi)\sth,\xi\rangle\Big)\ 
	\left\{\hspace{-0.1cm}
	\begin{array}{l}
	\geq0\ \text{ if $\xi\in C^\infty(\Sigma,\ca{N}^+)$}\\
	\leq0\ \text{ if $\xi\in C^\infty(\Sigma,\ca{N}^-)$}
	\end{array}
    \right.
\end{equation}
everywhere on $\Sigma$, with strict inequality on an open dense subset of $\{\xi\neq0\}$.

To this end, suppose the connection $\nabla^\ca{N}$ decomposes under the splitting  $\ca{N}=\ca{N}^+\oplus\ca{N}^-$ as
\begin{equation}\label{eqn_npm}
\nabla^\ca{N}=
\mat{\nabla^{\ca{N}^+}&\Gamma_0^-\\\Gamma_0^+&\nabla^{\ca{N}^-}},
\end{equation}
where $\nabla^{\ca{N}^\pm}$ is a connection on $\ca{N}^\pm$ and $\Gamma_0^\pm$ is a $\Hom(\ca{N}^\pm,\ca{N}^\mp)$-valued $1$-form. Then the left-hand side of \eqref{eqn_Avariation1} can be rewritten as
$$
\underbrace{\Tr_{\ve g}\langle \nabla_\sth^{\ca{N}^\pm} \xi,\nabla_\sth^{\ca{N}^\pm}\xi\rangle}_{\cir{1}}+\underbrace{\Tr_{\ve g}\langle \Gamma_0^\pm(\sth)\xi,\Gamma_0^\pm(\sth)\xi\rangle}_{\cir{2}}\ \underbrace{-\Tr_{\ve g}\langle \ve{S}_\xi(\sth),\ve{S}_\xi(\sth)\rangle}_{\cir{3}}\ \underbrace{-\Tr_{\ve g}\langle R_M(\sth,\xi)\sth,\xi\rangle}_{\cir{4}}~,
$$
where every ``$\pm$'' is taken to be $+$ or $-$ when $\xi$ is a section of $\ca{N}^+$ or $\ca{N}^-$, respectively. We shall prove \eqref{eqn_Avariation1} by showing that the terms \cir{1}, \cir{3} and \cir{4} have the correct sign for trivial reason (similarly as in the proof of Proposition \ref{prop_maximal}), whereas the term \cir{2} is controlled by \cir{4} by virtue of Hypothesis \eqref{eqn_hypo}.

For the term \cir{1}, since $\ca{N}^+$ and $\ca{N}^-$ are positive and negative definite respectively, we immediately have $\cir{1}\geq0$ and $\cir{1}\leq0$ for $\xi\in C^\infty(\Sigma,\ca{N}^+)$ and $\xi\in C^\infty(\Sigma,\ca{N}^-)$, respectively.

For the term \cir{3}, recall from  Remark \ref{rk_secondff} that the shape operator $\ve{S}_\xi\in C^\infty(\Sigma,\End(\T\Sigma))$ assigned to a normal vector field $\xi$ is identically zero when $\xi$ is orthogonal to $\TT_2$. In particular, we have $\ve{S}_\xi=0$ if $\xi\in C^\infty(\Sigma,\ca{N}^+)$. Therefore, given an orthonormal local frame $(e_1,e_2)$ of $\T\Sigma$, we have
$$
\cir{3}=-\sum_{i=1}^2\langle \ve{S}_\xi(e_i),\ve{S}_\xi(e_i)\rangle\ 
\left\{\hspace{-0.1cm}
\begin{array}{l}
=0\ \text{ if $\xi\in C^\infty(\Sigma,\ca{N}^+)$},\\
\leq 0\ \text{ if $\xi\in C^\infty(\Sigma,\ca{N}^-)$}.
\end{array}
\right.
$$

For the term  \cir{4}, note that at any $z\in\Sigma$ with $\xi(z)\neq0$, the sectional curvature of $M$ along the tangent $2$-plane spanned by $e_i$ and $\xi$ is 
$K(e_i\wedge\xi)=\frac{\langle R_M(e_i,\xi)e_i,\xi\rangle}{\langle \xi,\xi\rangle}=-1$. Therefore, at the point $z$, we have
$$
\cir{4}=-\sum_{i=1}^2\langle R_M(e_i,\xi)e_i,\xi\rangle=2\langle\xi,\xi\rangle.
$$
This clearly holds at the points where $\xi=0$ as well, hence holds throughout $\Sigma$. In particular, we have $\cir{4}\geq0$ and $\cir{4}\leq0$ for $\xi\in C^\infty(\Sigma,\ca{N}^+)$ and $\xi\in C^\infty(\Sigma,\ca{N}^-)$, respectively.

In the rest of the proof, we show that the absolute value of the term \cir{2} is controlled by that of \cir{4}:
\begin{equation}\label{eqn_Avariation2}
|\Tr_{\ve g}\langle \Gamma_0^\pm(\sth)\xi,\Gamma_0^\pm(\sth)\xi\rangle|\leq 2|\langle\xi,\xi\rangle|\ \text{ for any }\xi\in C^\infty(\Sigma,\ca{N}^\pm),
\end{equation}
and moreover the inequality is strict on an open dense subset of $\{\xi\neq0\}$. This implies the required inequality \eqref{eqn_Avariation1} and prove the theorem.

%To show \eqref{eqn_Avariation2}, we deduce from the decomposition \eqref{eqn_nablamatrix2} of the connection $\nabla$ on $\T M|_\Sigma$ under the Frenet splitting $\TT_1\oplus\cdots\TT_n$, we get the decomposition of $\nabla^\ca{N}$ under the splitting $\ca{N}=(\TT_3\oplus \TT_5\oplus\cdots)\oplus(\TT_2\oplus\TT_4\oplus\cdots)$ as
%$$
%\nabla^\ca{N}=
%\left(
%\begin{array}{c|c}
%\begin{matrix}
%\nabla_3&&\\&\nabla_5&\\&&\ddots
%\end{matrix}
%	&\Gamma_0^-\\
%	\midrule
%	\Gamma_0^+&
%	\begin{matrix}
%		\nabla_2&&\\&\nabla_4&\\&&\ddots
%	\end{matrix}
%\end{array}
%\right),
%$$
In order to show \eqref{eqn_Avariation2}, we first deduce from the decomposition \eqref{eqn_nablamatrix2} of the connection $\nabla$ on $\T M|_\Sigma$ the following more explicit expression of $\Gamma_0^+$ (which is just the $\Gamma$ in the proof of Theorem \ref{thm_gaussmap} with the first row and first column removed):
$$
\Gamma_0^+=
\scalebox{0.9}{$
\mat{
	\ve\alpha_3^*&&&\\
	\ve\alpha_4&\ve\alpha_5^*&&\\
	&\ddots&\ddots&\\
	&&\ddots&\ve\alpha_{n-1}^*\\
	&&&\ve\alpha_n}
$}
\text{ if $n$ is even,}\quad
\Gamma_0^+=
\scalebox{0.9}{$
\mat{
	\ve\alpha_3^*&&&&\\
	\ve\alpha_4&\ve\alpha_5^*&&&\\
	&\ddots&\ddots&&\\
	&&\ddots&\ve\alpha_{n-2}^*&\\
	&&&\ve\alpha_{n-1}&\ve\alpha_n^*}
$}
\text{ if $n$ is odd.}
$$

We proceed to compute $\Tr_\ve{g}\langle \Gamma_0^\pm(\sth)\xi, \Gamma_0^\pm(\sth)\xi\rangle$. Pick an orthonormal local frame $(u_i,v_i)$ of each $\TT_i$ as in the proof of Theorem \ref{thm_affinetoda}, so that $\ve\alpha_j$ can be viewed as a $2\times 2$-matrix of $1$-forms with the explicit expression given in that proof, while $\ve\alpha_j^*$ and $\Gamma_0^-$ are the transposes of $\ve\alpha_j$ and $\Gamma_0^+$ respectively. We also use the frame to view a section $\xi$ of $\ca{N}^+$ or $\ca{N}^-$ as a column vector of functions. Then we may write
$\langle\xi,\xi\rangle=\pm\transp\xi\,\xi$ and $\langle \Gamma_0^\pm(\sth)\xi,\,\Gamma_0^\pm(\sth)\xi\rangle=\mp \transp\xi\ \transp{\Gamma_0}^\pm(\sth)\ \Gamma_0^\pm(\sth)\xi$ for any $\xi\in C^\infty(\Sigma,\ca{N}^\pm)$.
%\begin{align*}
%\langle\xi,\xi\rangle&=\pm\transp \xi\,\xi \quad \text{ for } \xi\in C^\infty(\Sigma,\ca{N}^\pm),\\
%\langle \Gamma_0^+(\sth)\xi, \Gamma_0^+(\sth)\xi\rangle&=-\transp\xi\ \transp{B}^+(\sth)\ \Gamma_0^+(\sth)\xi \quad \text{ for } \xi\in C^\infty(\Sigma,\ca{N}^+),\\
%\langle \Gamma_0^-(\sth)\xi, \Gamma_0^-(\sth)\xi\rangle&=\transp\xi\ \transp{B}^-(\sth)\ \Gamma_0^-(\sth)\xi  \quad \text{ for } \xi\in C^\infty(\Sigma,\ca{N}^-).
%\end{align*}
Thus, when $n$ is even, we get by matrix computations that
$$
\langle \Gamma_0^+(\sth)\xi, \Gamma_0^+(\sth)\xi\rangle=-\transp\xi
\scalebox{0.9}{$
\mat{
	\scalebox{0.85}{\parbox{1.2cm}{$\ve\alpha_3\transp \ve\alpha_3$\\$+\transp \ve\alpha_4\ve\alpha_4$}}&\transp \ve\alpha_4\transp \ve\alpha_5&&\\[0.3cm]
	\ve\alpha_5\ve\alpha_4&\scalebox{0.85}{\parbox{1.2cm}{$\ve\alpha_5\transp \ve\alpha_5$\\$+\transp \ve\alpha_6\ve\alpha_6$}}&\ddots&\\[0.3cm]
	&\ddots&\ddots&\transp \ve\alpha_{n-1} \transp \ve\alpha_n\\[0.3cm]
	&&\hspace{-0.5cm} \ve\alpha_n\ve\alpha_{n-1}&\scalebox{0.85}{\parbox{1.6cm}{$\ve\alpha_{n-1}\transp \ve\alpha_{n-1}$\\$+\transp \ve\alpha_n\ve\alpha_n$}}
}$}
\xi\quad \text{ for }\xi\in C^\infty(\Sigma,\ca{N}^+),
$$
$$
\langle \Gamma_0^-(\sth)\xi, \Gamma_0^-(\sth)\xi\rangle=\transp\xi
\scalebox{0.9}{$
\mat{
	\transp \ve\alpha_3\ve\alpha_3&\transp \ve\alpha_3\transp \ve\alpha_4&&&\\[0.3cm]
	\ve\alpha_4\ve\alpha_3&\scalebox{0.85}{\parbox{1.2cm}{$\ve\alpha_4\transp\ve\alpha_4$\\$+\transp\ve\alpha_5\ve\alpha_5$}}&\ddots&&\\[0.3cm]
	&\ddots&\ddots&\ddots&\\[0.3cm]
	&&\ddots&\scalebox{0.85}{\parbox{1.9cm}{$\ve\alpha_{n-2}\transp\ve\alpha_{n-2}$\\$+\transp\ve\alpha_{n-1}\ve\alpha_{n-1}$}}&\transp \ve\alpha_{n-1} \transp \ve\alpha_n\\[0.3cm]
	&&&\ve\alpha_n\ve\alpha_{n-1}& \ve\alpha_n\transp\ve\alpha_n
}$}
\xi\quad \text{ for }\xi\in C^\infty(\Sigma,\ca{N}^-),
$$
where $\ve\alpha_3\transp\ve\alpha_3$ is understood as the $2\times 2$-matrix-valued $2$-tensor $\ve\alpha_3(\sth)\transp\ve\alpha_3(\sth)$, and similarly for the other terms in the matrices. When $n$ is odd, the results become
$$
\langle \Gamma_0^+(\sth)\xi, \Gamma_0^+(\sth)\xi\rangle=-\transp\xi
\scalebox{0.9}{$
\mat{
	\scalebox{0.85}{\parbox{1.2cm}{$\ve\alpha_3\transp\ve\alpha_3$\\$+\transp\ve\alpha_4\ve\alpha_4$}}&\transp \ve\alpha_4\transp \ve\alpha_5&&\\[0.3cm]
	\ve\alpha_5\ve\alpha_4&\ddots&\ddots&\\[0.3cm]
	&\ddots&\scalebox{0.85}{\parbox{1.9cm}{$\ve\alpha_{n-2}\transp\ve\alpha_{n-2}$\\$+\transp\ve\alpha_{n-1}\ve\alpha_{n-1}$}}&\transp \ve\alpha_{n-1} \transp \ve\alpha_n\\[0.3cm]
	&&\ve\alpha_n\ve\alpha_{n-1}& \ve\alpha_n\transp\ve\alpha_n
}$}
\xi\quad \text{ for }\xi\in C^\infty(\Sigma,\ca{N}^+),
$$
$$
\langle \Gamma_0^-(\sth)\xi, \Gamma_0^-(\sth)\xi\rangle=\transp\xi
\scalebox{0.9}{$
\mat{
	\ve\alpha_3\transp\ve\alpha_3&\transp \ve\alpha_3\transp \ve\alpha_4&&\\[0.3cm]
	\ve\alpha_4\ve\alpha_3&\scalebox{0.85}{\parbox{1.2cm}{$\ve\alpha_4\transp\ve\alpha_4$\\$+\transp\ve\alpha_5\ve\alpha_5$}}&\ddots&\\[0.3cm]
	&\ddots&\ddots&\transp \ve\alpha_{n-1} \transp \ve\alpha_n\\[0.3cm]
	&&\ve\alpha_n\ve\alpha_{n-1}& \scalebox{0.85}{\parbox{1.9cm}{$\ve\alpha_{n-1}\transp\ve\alpha_{n-1}$\\$+\transp\ve\alpha_n\ve\alpha_n$}}
}$}
\xi\quad \text{ for }\xi\in C^\infty(\Sigma,\ca{N}^-).
$$
The trace of each $2$-tensor in these matrices with respect to $\ve{g}$ (not to be confused with the matrix trace considered in the proof of Theorem \ref{thm_gaussmap}) is computed as follows:
\begin{lemma}\label{lemma_trggamma}
Let $I$ denote the $2\times 2$ identity matrix. For $2\leq i,j\leq n-1$, we have
\begin{align*}
	&\Tr_{\ve g}\big[\ve\alpha_i(\sth) \ve\alpha_j(\sth)\big]=\Tr_{\ve g}\big[\transp \ve\alpha_j(\sth) \transp \ve\alpha_i(\sth)\big]=0\quad \text{(the zero $2\times 2$ matrix)},\\
	&\Tr_{\ve g}\big[\transp \ve\alpha_i(\sth) \ve\alpha_i(\sth)\big]=\Tr_{\ve g}\big[\ve\alpha_i(\sth)\transp \ve\alpha_i(\sth)\big]=2\|\ve\alpha_i\|^2 I,\\
	&\Tr_{\ve g}\big[\ve\alpha_n(\sth)\ve\alpha_{i}(\sth)\big]=\Tr_{\ve g}\big[\transp \ve\alpha_i(\sth) \transp \ve\alpha_n(\sth)\big]=0,\\
	&\Tr_{\ve g}\big[\transp\ve\alpha_n(\sth)\ve\alpha_n(\sth)\big]=2(\|\ve\alpha_n^+\|^2+\|\ve\alpha_n^-\|^2)I,\\
	&\Tr_{\ve g}\big[\ve\alpha_n(\sth)\transp\ve\alpha_n(\sth)\big]=2\left[(\|\ve\alpha_n^+\|^2+\|\ve\alpha_n^-\|^2)I+\tfrac{2}{h_1h_{n-1}}
	\scalebox{0.9}{
	$\mat{\re(\alpha_n^+\cj{\alpha_n^-})&\im(\alpha_n^+\cj{\alpha_n^-})\\[0.1cm]\im(\alpha_n^+\cj{\alpha_n^-})&-\re(\alpha_n^+\cj{\alpha_n^-})}$}
	\right]
\end{align*}
(see Theorem \ref{thm_affinetoda} and Remark \ref{rk_norm} for the notations). Moreover, the eigenvalues of the right-hand side of the last equation are $2(\|\ve\alpha_n^+\|\pm\|\ve\alpha_n^-\|)^2$.
\end{lemma}
\begin{proof}
The expressions of the $\ve\alpha_j$'s given in the proof of Theorem \ref{thm_affinetoda} can be rewritten as 
\begin{align*}
	\ve\alpha_j&=\tfrac{\sqrt{h_j}}{\sqrt{h_{j-1}}}\left[
	\scalebox{0.9}{$
	\mat{\re(\alpha_j)&-\im(\alpha_j)\\[0.1cm]\im(\alpha_j)&\re(\alpha_j)}
	$}
	\dx+
	\scalebox{0.9}{$
	\mat{-\im(\alpha_j)&-\re(\alpha_j)\\[0.1cm]\re(\alpha_j)&-\im(\alpha_j)}
	$}
	\dy\right]\quad (2\leq j\leq n-1),\\
	\ve\alpha_{n-1}&=\ve\alpha_n^++\ve\alpha_n^-, \text{ where}\\	\ve\alpha_n^+&=\tfrac{\sqrt{h_n}}{\sqrt{h_{n-1}}}\left[
	\scalebox{0.9}{$
	\mat{\re(\alpha_n^+)&-\im(\alpha_n^+)\\[0.1cm]\im(\alpha_n^+)&\re(\alpha_n^+)}
	$}
	\dx+
	\scalebox{0.9}{$
	\mat{-\im(\alpha_n^+)&-\re(\alpha_n^+)\\[0.1cm]\re(\alpha_n^+)&-\im(\alpha_n^+)}
	$}
	\dy\right],\\
	\ve\alpha_n^-&=\tfrac{1}{\sqrt{h_{n-1}h_n}}\left[
	\scalebox{0.9}{$
	\mat{\re(\alpha_n^-)&-\im(\alpha_n^-)\\[0.1cm]-\im(\alpha_n^-)&-\re(\alpha_n^-)}
	$}
	\dx+
	\scalebox{0.9}{$
	\mat{-\im(\alpha_n^-)&-\re(\alpha_n^-)\\[0.1cm]-\re(\alpha_n^-)&\im(\alpha_n^-)}
	$}
	\dy\right].
\end{align*}
Taking account of the fact that
$$
\Tr_\ve{g}\big(\dx\otimes \dx\big)=\Tr_\ve{g}\big(\dy\otimes \dy\big)=\tfrac{1}{h_1}~,\quad \Tr_\ve{g}\big(\dx\otimes \dy\big)=\Tr_\ve{g}\big(\dy\otimes \dx\big)=0
$$
(because $\ve g=\re\ve h_1=h_1(\dx^2+\dy^2)$), we obtain the required equalities and the eigenvalues by direct computations.
\end{proof}
Using the expressions of $\langle \Gamma_0^\pm(\sth)\xi, \Gamma_0^\pm(\sth)\xi\rangle$ and Lemma \ref{lemma_trggamma}, we get expressions for $\Tr_\ve{g}\langle \Gamma_0^\pm(\sth)\xi, \Gamma_0^\pm(\sth)\xi\rangle$, from which it is clear that at any point of $\Sigma$, both of the quadratic form $\xi\mapsto -\Tr_\ve{g}\langle \Gamma_0^+(\sth)\xi, \Gamma_0^+(\sth)\xi\rangle$ and  $\xi\mapsto \Tr_\ve{g}\langle \Gamma_0^-(\sth)\xi, \Gamma_0^-(\sth)\xi\rangle$ have eigenvalues
belonging to the list
\begin{align*}
	&2\|\ve\alpha_3\|^2,\quad 2\|\ve\alpha_3\|^2+2\|\ve\alpha_4\|^2,\quad 2\|\ve\alpha_4\|^2+2\|\ve\alpha_5\|^2,\ \cdots\\
	&\quad\cdots,\ 2\|\ve\alpha_{n-2}\|^2+2\|\ve\alpha_{n-1}\|^2,\quad 2\|\ve\alpha_{n-1}\|^2+2\|\ve\alpha_n^+\|^2+2\|\ve\alpha_n^-\|^2,\quad 2(\|\ve\alpha_n^+\|\pm\|\ve\alpha_n^-\|)^2.
\end{align*}
But Hypothesis \eqref{eqn_hypo} implies that the following inequalities hold on an open dense set $U\subset\Sigma$:
\begin{equation}\label{eqn_Avariation3}
\tfrac{1}{2}(\|\ve\alpha_n^+\|\pm\|\ve\alpha_n^-\|)^2< \|\ve\alpha_n^+\|^2+\|\ve\alpha_n^-\|^2< \|\ve\alpha_{n-1}\|^2<\ \cdots\ <  \|\ve\alpha_3\|^2<  \|\ve\alpha_2\|^2< \tfrac{1}{2}.
\end{equation}
Therefore, all the eigenvalues are strictly less than $2$ on $U$. This implies the required inequality \eqref{eqn_Avariation2} and completes the proof.
\end{proof}

This theorem can be roughly understood as saying that A-surfaces satisfying the assumptions are saddle type critical points  for the area functional. Such critical points are nondegenerate. In fact, it is a simple linear algebraic fact that if a symmetric matrix $L$ has the form
$$
L=\mat{L_+&*\\ *&L_-}
$$
such that the blocks $L_+$ and $L_-$ are positive definite and negative definite respectively, then $L$ is nondegenerate.  Generalizing this to infinite dimensions and considering the Jacobi operator $L_\Sigma$ in place of $L$, we obtain:
\begin{corollary}\label{coro_Avariation1}
Under the hypotheses of Theorem \ref{thm_Avariation}, $\Sigma$ does not admit any nonzero compactly supported Jacobi field. 
\end{corollary}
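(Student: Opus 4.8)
The plan is to derive the statement immediately from Theorem \ref{thm_Avariation} by passing, via the second variation formula, from a statement about variations to a statement about the definiteness of the Jacobi quadratic form on the spacelike and timelike parts $\ca{N}^\pm$ of the normal bundle. First I would record the (standard) translation: for any compactly supported section $\xi$ of $\ca{N}$, flowing $\Sigma$ along $\xi$ — e.g.\ using the ambient exponential map, $F_t(z):=\exp^M_z(t\xi(z))$ — yields a compactly supported \emph{normal} variation of $\Sigma$ with variational vector field $\dt{F}_0=\xi$. By Theorem \ref{thm_variation} \ref{item_thmvariation1} and \eqref{eqn_variationintegral}, the associated second variation of volume is exactly $-\int_\Sigma\langle L_\Sigma\xi,\xi\rangle\dif\vol_\ve{g}$. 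Hence Theorem \ref{thm_Avariation} says precisely that the symmetric bilinear form $B(\xi,\eta):=\int_\Sigma\langle L_\Sigma\xi,\eta\rangle\dif\vol_\ve{g}$ (symmetric because $L_\Sigma$ is self-adjoint on compactly supported sections, see \S\ref{subsec_jacobi}) is \emph{strictly negative definite} on $C^\infty_c(\Sigma,\ca{N}^+)$ and \emph{strictly positive definite} on $C^\infty_c(\Sigma,\ca{N}^-)$.

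Next I would run the infinite-dimensional version of the ``indefinite block matrix is nondegenerate'' argument mentioned just before the statement. Suppose, for contradiction, that $\xi$ is a nonzero compactly supported Jacobi field, and decompose $\xi=\xi^++\xi^-$ with $\xi^\pm\in C^\infty_c(\Sigma,\ca{N}^\pm)$ (these are smooth because $\ca{N}^+=\TT_3\oplus\TT_5\oplus\cdots$ and $\ca{N}^-=\TT_2\oplus\TT_4\oplus\cdots$ are smooth subbundles, and compactly supported because $\xi$ is). Since $L_\Sigma\xi=0$, we have $B(\xi,\eta)=0$ for every compactly supported $\eta$; taking $\eta=\xi^+$ and $\eta=\xi^-$ and using the symmetry of $B$ gives
\[
B(\xi^+,\xi^+)+B(\xi^+,\xi^-)=0 \quad\text{and}\quad B(\xi^+,\xi^-)+B(\xi^-,\xi^-)=0,
\]
so $B(\xi^+,\xi^+)=B(\xi^-,\xi^-)$. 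The left-hand side is $\leq0$ and the right-hand side is $\geq0$, whence both vanish; by the strict definiteness of $B$ on $\ca{N}^+$ and $\ca{N}^-$ respectively, this forces $\xi^+=0$ and $\xi^-=0$, contradicting $\xi\neq0$.

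The substance of the corollary — controlling the sign of the second variation through Hypothesis \eqref{eqn_hypo} — has already been handled in the proof of Theorem \ref{thm_Avariation}, so for this statement the only point deserving a sentence of care is the first step, namely that every compactly supported section of $\ca{N}^\pm$ genuinely arises as the variational vector field of a compactly supported normal variation to which Theorem \ref{thm_Avariation} applies; this is routine and presents no real obstacle. I do not expect any serious difficulty in the write-up beyond bookkeeping the signs in the identification of the second variation of volume with $-\int_\Sigma\langle L_\Sigma\xi,\xi\rangle\dif\vol_\ve{g}$.
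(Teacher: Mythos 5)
Your proposal is correct and follows essentially the same route as the paper: decompose the putative Jacobi field into its $\ca{N}^+$ and $\ca{N}^-$ components, use the second variation formula to turn Theorem \ref{thm_Avariation} into (strict) definiteness of the Jacobi form on each block, and invoke the self-adjointness of $L_\Sigma$ to force both components to vanish. The paper phrases this through pointwise identities for $\langle L_\Sigma\xi_\pm,\xi_\pm\rangle$ rather than an explicitly named bilinear form, but the argument is the same.
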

\begin{proof}
	Let $\xi\in C^\infty(\Sigma,\ca{N})$ be a compactly supported Jacobi field and write $\xi=\xi_++\xi_-$, where $\xi_\pm$ is a compactly supported section of $\ca{N}^\pm$. Since  $L_\Sigma\xi=0$, we have
	$$
\langle L_\Sigma\xi_+,\,\xi_+\rangle+\langle L_\Sigma\xi_-,\,\xi_+\rangle=	\langle L_\Sigma\xi,\,\xi_+\rangle=0,
\quad \langle L_\Sigma\xi_-,\,\xi_-\rangle+\langle L_\Sigma\xi_+,\,\xi_-\rangle=\langle L_\Sigma\xi,\,\xi_-\rangle=0.
	$$ 
So the following equalities (between functions on $\Sigma$) hold:
	$$
	\langle L_\Sigma\xi_-,\,\xi_+\rangle=-\langle L_\Sigma\xi_+,\,\xi_+\rangle,\quad \langle L_\Sigma\xi_+,\,\xi_-\rangle=-\langle L_\Sigma\xi_-,\,\xi_-\rangle.
	$$ 
	The second variation formula (see Theorem \ref{thm_variation}) says that for any compactly supported $\eta\in C^\infty(\Sigma,\ca{N})$, the integral of $-\langle L_\Sigma\eta,\eta\rangle$ on $\Sigma$ is exactly the second variation of volume associated to $\eta$. Therefore, by Theorem \ref{thm_Avariation}, the integral of the first (resp.\ second) function above is $\geq0$ (resp.\ $\leq 0$), with strict inequality unless $\xi_+\equiv0$ (resp.\ $\xi_-\equiv0$). But the two integrals coincide because $L_\Sigma$ is self-adjoint (see \S \ref{subsec_jacobi}). So we conclude that $\xi_\pm$ are both identically zero.
\end{proof}

In particular, if $\Sigma$ is closed, then it does not admit any nonzero Jacobi field at all. Note that we still assume Hypothesis \eqref{eqn_hypo} here. Applying the sufficient condition for \eqref{eqn_hypo} in terms of divisors from Theorem \ref{thm_daili} \ref{item_daili1} (with $\ve\alpha_1=\frac{1}{\sqrt{2}}$), we obtain:
\begin{corollary}\label{coro_Avariation2}
	Let $\Sigma$ be a closed A-surface of genus $\geq2$ in an orientable pseudo-Riemannian manifold $M$ locally modeled on $\H^{p,q}$ with $n=\frac{p+q}{2}\geq3$. Suppose $\Sigma$ is not locally contained in any pseudo-hyperbolic subspace of codimension $4$ and the holomorphic forms $\ve\alpha_2,\cdots,\ve\alpha_{n-1},\ve\alpha_n^\pm$ in its structural data satisfy $(\ve\alpha_2)\prec\cdots\prec (\ve\alpha_{n-1})\prec \min\big\{(\ve\alpha_n^+),(\ve\alpha_n^-)\big\}$. Then $\Sigma$ does not admit any nonzero Jacobi field.
\end{corollary}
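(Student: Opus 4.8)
The plan is to derive the statement by combining three results already in hand: the saddle-type second-variation estimate of Theorem \ref{thm_Avariation} (in the form of its Corollary \ref{coro_Avariation1}), the affine Toda description of the structural data from Theorem \ref{thm_affinetoda}, and the curvature estimate of Theorem \ref{thm_daili}. The only real content is a short verification that the divisor inequality in the hypothesis implies Hypothesis \eqref{eqn_hypo}. Once \eqref{eqn_hypo} is in place, Corollary \ref{coro_Avariation1} says $\Sigma$ has no nonzero compactly supported Jacobi field, and since a closed surface has no boundary, every Jacobi field is compactly supported, so it must vanish.

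First I would set up the input to Theorem \ref{thm_daili}. Because $\TT_1=\ca{K}^{-1}$, the line bundle $\ca{K}\TT_1$ is holomorphically trivial, so the constant $\ve\alpha_1:=\tfrac{1}{\sqrt2}$ is a nowhere vanishing holomorphic section of it, with empty divisor $(\ve\alpha_1)=\emptyset$. With this choice $h_1|\alpha_1|^2=\tfrac12 h_1$, so the affine Toda system satisfied by the structural data of $\Sigma$ (Theorem \ref{thm_affinetoda}, case $n\geq3$) coincides term by term with the system \eqref{eqn_daili}. Also $\ve\alpha_2,\cdots,\ve\alpha_{n-1}$ are not identically zero by Theorem \ref{thm_holo} \ref{item_thmholo1} (here we use that $\Sigma$ is not locally contained in a codimension $4$ pseudo-hyperbolic subspace), and the hypothesis $(\ve\alpha_2)\prec\cdots\prec(\ve\alpha_{n-1})\prec\min\{(\ve\alpha_n^+),(\ve\alpha_n^-)\}$, together with $(\ve\alpha_1)=\emptyset\prec(\ve\alpha_2)$ (which always holds when the smaller divisor is empty), provides precisely the divisor chain required in Theorem \ref{thm_daili} \ref{item_daili1}. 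If $\ve\alpha_n^+$ or $\ve\alpha_n^-$ vanishes identically, its divisor equals $+\infty$ and the relevant $\prec$ is automatic; one may also reduce to a smaller $n$ as in Remark \ref{rk_reduction}. This yields $\pazbz\log h_i\geq0$ on $\Sigma$ for $i=1,\cdots,n-1$, with strict inequality away from the zeros of $\ve\alpha_i$.

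Next I would convert this into \eqref{eqn_hypo}. By Theorem \ref{thm_affinetoda} and Remark \ref{rk_norm}, dividing each Toda equation by $h_1$ identifies $\tfrac{1}{h_1}\pazbz\log h_i$ with the globally defined function on the left of the matching line of \eqref{eqn_hypo} and the right-hand side with the norm expression in $\|\ve\alpha_j\|^2$ and $\|\ve\alpha_n^\pm\|^2$. The inequalities $\pazbz\log h_i\geq0$ give the non-strict versions of the inequalities in \eqref{eqn_hypo}, and since $\ve\alpha_1$ is nowhere zero while each $\ve\alpha_i$ ($2\leq i\leq n-1$) is a nonzero holomorphic section with discrete zero set, strictness holds on an open dense subset of $\Sigma$, which is what \eqref{eqn_hypo} demands. (Equivalently, this says $\TT_1,\cdots,\TT_{n-1}$ have semi-negative Chern forms.) Then all hypotheses of Theorem \ref{thm_Avariation} are met, so Corollary \ref{coro_Avariation1} applies and completes the argument.

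I do not expect a genuine obstacle here: the deep ingredients, namely the variational estimate of Theorem \ref{thm_Avariation} and the Dai--Li-type estimate of Theorem \ref{thm_daili}, are already established. The one place that needs care is the bookkeeping with conventions: confirming that the constant $\tfrac{1}{\sqrt2}$ is exactly what makes the geometric Toda system of Theorem \ref{thm_affinetoda} agree with the abstract system \eqref{eqn_daili}, and that the empty divisor $(\ve\alpha_1)$ sits correctly at the bottom of the $\prec$-chain so that Theorem \ref{thm_daili} \ref{item_daili1} truly applies.
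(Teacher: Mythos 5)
Your proposal is correct and follows essentially the same route as the paper: taking $\ve\alpha_1=\tfrac{1}{\sqrt2}$ so that the Toda system of Theorem \ref{thm_affinetoda} matches \eqref{eqn_daili}, invoking Theorem \ref{thm_daili} \ref{item_daili1} to obtain Hypothesis \eqref{eqn_hypo}, and then applying Corollary \ref{coro_Avariation1} together with the fact that on a closed surface every Jacobi field is compactly supported. The bookkeeping points you flag (the constant $\tfrac{1}{\sqrt2}$, the empty divisor $(\ve\alpha_1)$ at the bottom of the $\prec$-chain, and the nonvanishing of $\ve\alpha_2,\cdots,\ve\alpha_{n-1}$ from Theorem \ref{thm_holo}) are exactly how the paper deduces the corollary.
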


\subsection{Uniqueness of A-surface with given Gauss map}\label{subsec_uniqueness}
An A-surface $\Sigma\subset\H^{p,q}$ is not completely determined by its Gauss map $f:\Sigma\to X_{\SO_0(p,q+1)}$, as the antipodal surface $-\Sigma$ has the same Gauss map 
(more precisely, the latter is the composition of $f$ with the antipodal map $-\Sigma\to\Sigma$; this can be easily seen by using the setup in the proof of Theorem \ref{thm_gaussmap}). Again under Hypothesis \eqref{eqn_hypo}, we may show that this is the only ambiguity:

\begin{theorem}\label{thm_determination}
Let $\Sigma\subset\H^{p,q}$ be an A-surface, where $n=\frac{p+q}{2}\geq3$. Suppose $\Sigma$ is not contained in any pseudo-hyperbolic subspace of codimension $4$ and satisfies  \eqref{eqn_hypo}. Let $\iota':\Sigma\to\H^{p,q}$ be an A-surface immersion with the same Gauss map as $\Sigma$. Then $\iota'$ is the restriction of either the identity map or the antipodal map of $\H^{p,q}$ to $\Sigma$.
\end{theorem}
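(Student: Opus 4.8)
The plan is to recover the inclusion $\iota:\Sigma\hookrightarrow\H^{p,q}$ from the Gauss map $f$, up to sign. View $X:=\Sy_{\SO_0(p,q+1)}$ as the space of $(q+1)$-dimensional negative definite subspaces of $\R^{p,q+1}$, and let $\ca{V}\to X$ and $\ca{W}\to X$ be the tautological bundles, so that a point of $X$ is the fibre of $\ca{W}$, $\ca{V}$ is its orthogonal complement, $\T X=\Hom_\R(\ca{V},\ca{W})$, and the trivial bundle $\underline{\R^{p,q+1}}=\ca{V}\oplus\ca{W}$ carries the flat connection $D$. The key observation is that for \emph{any} A-surface immersion $g:\Sigma\to\H^{p,q}$ with Gauss map $f$, the map $g$, viewed as a section of $f^*\ca{W}$ (indeed $g(z)\in f(z)=\R g(z)\oplus\ca{N}^-_g(z)$ by definition of the Gauss map), has unit norm $\langle g,g\rangle=-1$ and is \emph{parallel} for the connection $\nabla^\ca{W}$ obtained by restricting $D$ to $f^*\ca{W}$: since $D_Xg=\dif g(X)$ lies in the tangent bundle $\TT_1^g$ of the immersion, which is positive definite and hence orthogonal to $f^*\ca{W}\supset \R g\oplus\ca N^-_g$, its $\ca{W}$-component $\nabla^\ca{W}_Xg$ vanishes. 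As $(f^*\ca{W},\nabla^\ca{W})$ depends only on the map $f$, both $\iota$ and $\iota'$ are unit-norm $\nabla^\ca{W}$-parallel sections of this one fixed bundle-with-connection, and it suffices to show that the space of parallel sections of $(f^*\ca{W},\nabla^\ca{W})$ is one-dimensional; then $\iota'\in\R\iota$ with $\langle\iota',\iota'\rangle=-1$ forces $\iota'=\pm\iota$, i.e.\ $\iota'$ is the restriction to $\Sigma$ of the identity or the antipodal map of $\H^{p,q}$.

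To compute the parallel sections, I would reorganize the moving-frame computation already carried out in the proof of Theorem \ref{thm_gaussmap}. Writing $\nabla^\ca W$ in the global splitting $f^*\ca W=\underline\R\iota\oplus\TT_2\oplus\TT_4\oplus\cdots$, the matrix $\Omega$ of \eqref{eqn_thmholo1} shows that the couplings between neighbouring $\TT_i$'s are all ``horizontal'', i.e.\ valued in $\Hom(\ca V,\ca W)$, and therefore disappear upon restriction to $f^*\ca W$; what remains is that $\nabla^\ca W$ is the direct sum of the trivial connection on $\underline\R\iota$ and the Chern connections $\nabla_2,\nabla_4,\dots$ of the hermitian holomorphic line bundles $\TT_2,\TT_4,\dots$ (the even-indexed Frenet pieces, which make up $\ca N^-$). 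Hence a parallel section of $f^*\ca W$ is a constant multiple of $\iota$ plus a parallel section of $\TT_2\oplus\TT_4\oplus\cdots$, and a line bundle with Chern connection admits a nonzero parallel section exactly when its Chern form vanishes identically. Now Hypothesis \eqref{eqn_hypo} says precisely that $\TT_1,\dots,\TT_{n-1}$ have semi-negative, generically strictly negative Chern forms; in particular each $\TT_{2k}$ with $2k\le n-1$ is non-flat and admits no parallel section. When $n$ is odd this already concludes the proof, since then $\ca N^-=\TT_2\oplus\cdots\oplus\TT_{n-1}$ and every index involved is $\le n-1$.

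The main obstacle is the case $n$ even, where $\TT_n$ belongs to $\ca N^-$ but is not controlled by \eqref{eqn_hypo}. The plan here is to rule out a nonzero parallel section $\phi_0$ of $\TT_n$: flatness of $\TT_n$ would make it holomorphically trivial with constant hermitian metric $h_n$, whence the last Toda equation of Theorem \ref{thm_affinetoda} forces $\ve\alpha_n^+$ and $\ve\alpha_n^-$ to be proportional by a constant of modulus one; one then derives a contradiction with $\iota'$ being a genuine A-surface immersion distinct from $\pm\iota$, by checking that the candidate surface $z\mapsto a\iota(z)+b\phi_0(z)$ with $b\ne0$ (the general unit-norm parallel section in $\R\iota\oplus\TT_n$) has second fundamental form whose $\TT_n$-component fails the conformality required of $\ve\alpha_2'$, or—in every situation where the theorem is actually invoked—by noting that after the symmetry of Remark \ref{rk_intro2} one has $(\ve\alpha_n^+)\prec(\ve\alpha_n^-)\neq\emptyset$, so Theorem \ref{thm_daili}\ref{item_daili2} makes $\TT_n$ non-flat directly. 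I expect this dispatch of the $n$-even degenerate case to be the genuinely delicate point; the rest—reconstructing $(f^*\ca W,\nabla^\ca W)$ from $f$ and extracting its product structure—is a bookkeeping reorganization of material already present in the proofs of Theorems \ref{thm_holo} and \ref{thm_gaussmap}.
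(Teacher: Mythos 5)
Your route is genuinely different from the paper's and its first half is correct and attractive: viewing both $\iota$ and $\iota'$ as unit-norm sections of the $f$-pullback of the tautological negative bundle $\ca{W}$, observing that they are parallel for the connection $\nabla^{\ca{W}}$ induced by the flat connection (which depends on $f$ alone), and reading off from the matrix $\Omega$ in \eqref{eqn_thmholo1} that $\nabla^{\ca{W}}$ splits as the trivial connection on $\underline{\R}\,\iota$ plus the Chern connections on $\TT_2,\TT_4,\dots$ is all sound, since $\Omega$ is block-tridiagonal and consecutive blocks have opposite parity. Under \eqref{eqn_hypo} the bundles $\TT_{2k}$ with $2k\le n-1$ are non-flat, hence carry no nonzero parallel section (a nonzero parallel section of a rank-$2$ metric bundle forces the $\mathfrak{so}(2)$-valued curvature to vanish), so for $n$ odd you do get that the parallel sections are exactly $\R\iota$ and hence $\iota'=\pm\iota$. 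The paper instead compares $\dif f$ as computed from the two immersions: from $\wt f'=\wt f g$ it gets $\Gamma'=g_2^{-1}\Gamma g_1$, computes $\Tr_{\ve g}\big[\Gamma(\sth)\transp\Gamma(\sth)\big]=\mathrm{diag}(2,\Lambda)$, and uses the strict chain \eqref{eqn_Avariation3} (a consequence of \eqref{eqn_hypo} alone) to see that $2$ is a simple top eigenvalue whose eigendirection is the $\iota$-direction, forcing $g_2=\mathrm{diag}(\pm1,*)$. Note that this argument never needs any curvature information about $\TT_n$: the relevant control on the last Frenet step enters through the sizes of the couplings $\ve\alpha_n^\pm$ (via $\|\ve\alpha_n^+\|^2+\|\ve\alpha_n^-\|^2<\|\ve\alpha_{n-1}\|^2<\cdots<\tfrac12$), not through $\nabla_n$.

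This is exactly where your proposal has a genuine gap: for $n$ even, $\TT_n\subset\ca{N}^-$ is not constrained by \eqref{eqn_hypo}, and flatness of $(\TT_n,\nabla_n)$ is perfectly consistent with \eqref{eqn_hypo} (the last Toda equation only forces $\|\ve\alpha_n^+\|\equiv\|\ve\alpha_n^-\|$ in that case). Your two proposed repairs do not close this. First, flatness of the Chern connection does not make $\TT_n$ "holomorphically trivial with constant metric" (only locally does one get unit-norm holomorphic frames), and more importantly the asserted contradiction for the candidate $a\iota+b\phi_0$ (that its second fundamental form "fails the conformality required of $\ve\alpha_2'$") is not established; to rule such a candidate out one must actually compute its tangent planes $c\,\dif\iota-\ve\alpha_n^\dagger(\sth)\phi_0$, its Frenet data and its Gauss map, i.e.\ exploit that $\iota'$ is an A-surface immersion with Gauss map $f$ and not merely a parallel section — this is substantive work comparable to the paper's own computation, not bookkeeping. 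Second, the fallback via Theorem \ref{thm_daili}\ref{item_daili2} changes the hypotheses: the theorem assumes only \eqref{eqn_hypo}, with $\Sigma$ not assumed closed, and no divisor condition on $\ve\alpha_n^\pm$ is available (indeed even in the paper's applications $\ve\alpha_n^-$ may vanish identically or fail $(\ve\alpha_n^+)\prec(\ve\alpha_n^-)$), so "$\TT_n$ is non-flat" cannot be obtained this way. To complete your approach in the even case you must either carry out the ruled-out-candidate computation in full, or switch, as the paper does, to comparing $\dif f$ from both immersions so that only the inequalities \eqref{eqn_Avariation3} — and not the curvature of $\TT_n$ — are needed.
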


\begin{proof}
	Let $\iota,u_1,v_1,\cdots,u_n,v_n$ be locally defined maps from $\Sigma$ to $\R^{p,q+1}$ as in the proofs of Theorem \ref{thm_holo} and Theorem \ref{thm_gaussmap} (where $\iota$ is the inclusion), so that the matrix-valued function
	$$
	\wt{f}:=(u_1,\, v_1,\, u_3,\, v_3,\, \cdots,\ \iota,\, u_2,\, v_2,\, u_4,\, v_4)
	$$
	is a locally defined lift of the Gauss map $f$ of $\Sigma$ and we have
	\begin{equation}\label{eqn_proofdetermination1}
		\wt f^{-1}\dif\wt f=\mat{*&\transp\Gamma\\\Gamma&*},
	\end{equation}
	where $\Gamma$ has the expression \eqref{eqn_Gamma} in terms of the structural $1$-forms $\ve\alpha_2,\cdots,\ve\alpha_n$ of $\Sigma$.
	
	For any another A-surface immersion $\iota'$, we again have locally defined maps $u'_1,v'_1,\cdots,u'_n,v'_n$ from $\Sigma$ to $\R^{p,q+1}$ such that
	$$
	\wt f':=(u'_1,\,v'_1,\,u'_3,\,v'_3,\,\cdots,\ \iota',\,u'_2,\,v'_2,\,u'_4,\,v'_4)
	$$
	lifts the Gauss map $f'$ of $\iota'$ and that
	\begin{equation}\label{eqn_proofdetermination2}
		\wt f'^{-1}\dif\wt f'=\mat{*&\transp\Gamma'\\\Gamma'&*},
	\end{equation}
with $\Gamma'$ given by the same expression \eqref{eqn_Gamma} as $\Gamma$ except that each $\ve\alpha_j$ is replaced by the $1$-forms $\ve\alpha_j'$ coming from $\iota'$, which might be different from $\ve\alpha_j$ a priori.
	
	Now assume $f'=f$. Then there is a locally defined map $g=(g_1,g_2)$ from $\Sigma$ to $\SO(p)\times\SO(q+1)$ such that $\wt f'=\wt f g$, and it follows that $\wt f'^{-1}\dif\wt f'=g^{-1}(\wt f^{-1}\dif\wt f) g+g^{-1}\dif g$. Comparing with \eqref{eqn_proofdetermination1} and \eqref{eqn_proofdetermination2}, we infer 
	that $\Gamma'=g_2^{-1}\Gamma g_1$, and as a consequence, 
	\begin{equation}\label{eqn_proofdetermination3}
		\Tr_\ve{g}\big[\Gamma'(\sth)\,\transp\Gamma'(\sth)\big]=g_2^{-1}\Tr_\ve{g}\big[\Gamma(\sth)\,\transp\Gamma(\sth)\big]g_2,
	\end{equation}
	where we take the tensor traces of matrix-valued $2$-tensors with respect to the first fundamental form $\ve{g}$ (as in the proof of Theorem \ref{thm_Avariation}) to produce matrix-valued functions. These traces can be calculated similarly as the calculation of $\Tr_\ve{g}\langle \Gamma_0^\pm(\sth)\xi, \Gamma_0^\pm(\sth)\xi\rangle$ in the proof of Theorem \ref{thm_Avariation} (the $\Gamma_0^+$ therein is just $\Gamma$ with the first row and column removed). The result is
	\begin{equation}\label{eqn_proofdetermination4}
		\Tr_\ve{g}\big[\Gamma'(\sth)\,\transp\Gamma'(\sth)\big]=\mat{2&\\&\Lambda'},\quad \Tr_\ve{g}\big[\Gamma(\sth)\,\transp\Gamma(\sth)\big]=\mat{2&\\&\Lambda},
	\end{equation}
	where the matrix $\Lambda$ is diagonal except for a possible $2\times2$ block and its eigenvalues belong to the list
	\begin{align*}
		&2\|\ve\alpha_2\|^2+2\|\ve\alpha_3\|^2,\quad 2\|\ve\alpha_3\|^2+2\|\ve\alpha_4\|^2,\ \cdots\\
		&\quad\cdots,\ 2\|\ve\alpha_{n-2}\|^2+2\|\ve\alpha_{n-1}\|^2,\quad 2\|\ve\alpha_{n-1}\|^2+2\|\ve\alpha_n^+\|^2+2\|\ve\alpha_n^-\|^2,\quad 2(\|\ve\alpha_n^+\|\pm\|\ve\alpha_n^-\|)^2.
	\end{align*}
The matrix $\Lambda'$ is the same except that $\ve\alpha_2,\cdots,\ve\alpha_{n-1},\ve\alpha_n^\pm$ are replaced by $\ve\alpha_2',\cdots,\ve\alpha_{n-1}',\ve\alpha_n'{\!\!}^\pm$.
	
	We may now apply condition \eqref{eqn_hypo}, which implies inequalities \eqref{eqn_Avariation3} on an open dense set $U\subset\Sigma$. It follows that at every point in $U$, the sole largest eigenvalue of $\Tr_\ve{g}\big[\Gamma(\sth)\,\transp\Gamma(\sth)\big]$ is $2$. In view of \eqref{eqn_proofdetermination3} and \eqref{eqn_proofdetermination4}, this forces the $\SO(q+1)$-valued function $g_2$ to have the form 
	$$
	g_2=\mat{\pm1&\\&*}
	$$
on $U$. By continuity, it has the same form throughout $\Sigma$. Since $\wt f'=\wt f g$, we conclude that $\iota'=\pm\iota$.
\end{proof}
%\begin{corollary}
%Let $\Sigma$ be a closed Riemann surface and $f_1,f_2$ be conformal A-surface immersions from the universal $\wt\Sigma$ to $\H^{p,q}$ which are equivariant with respect to a representation $\pi_1(\Sigma)\to\SO_0(p,q+1)$ and have the same Gauss map $\wt{\Sigma}\to\Sy_{\SO_0(p,q+1)}$. Further assume that the holomorphic forms $\ve\alpha_2,\cdots,\ve\alpha_{n-1},\ve\alpha_n^\pm$ on $\Sigma$ induced by $f_1$ satisfy $(\ve\alpha_2)\prec \cdots\prec (\ve\alpha_{n-1})\prec \min\big\{(\ve\alpha_n^+),(\ve\alpha_n^-)\big\}$. Then we have $f_1=\pm f_2$.
%\end{corollary}

\section{Application to cyclic $\SO_0(n,n+1)$-Higgs bundles}\label{sec_cyclic}
In this section, we review some backgrounds on cyclic $\SO_0(n,n+1)$-Higgs bundle before showing in Theorem \ref{thm_higgsA} ($\approx$ Theorem \ref{thm_intro1}) that such Higgs bundles yield A-surfaces, then we use the infinitesimal rigidity result to prove Theorem \ref{thm_intro3} (restated as Theorem \ref{thm_main1}) and finally show in Proposition \ref{prop_g2A} that in the $G_2'$ case these A-surfaces belong to the type from Proposition \ref{prop_holomorphicA}.

\subsection{Cyclic $\SO_0(n,n+1)$-Higgs bundles and A-surfaces}\label{subsec_cyclic}
We refer to \cite{arroyo, collier, collier-tholozan-toulisse,2009arXiv0909.4487G} for the definition of $G$-Higgs bundles for a general real reductive group $G$ and the special case of $\SO_0(p,q)$-Higgs bundles. We only consider here the following particular type of $\SO_0(n,n+1)$-Higgs bundles:
\begin{definition}\label{def_cyclic}
Let $\Sigma$ be a closed Riemann surface. Given holomorphic line bundles $\TT_1,\cdots,\TT_n$ ($n\geq2$) on $\Sigma$ and holomorphic sections 
$\ve\alpha_1$, $\ve\alpha_j$ ($2\leq j\leq n-1$) and $\ve\alpha_n^\pm$ of the line bundles $\ca{K}\!\TT_1$, $\ca{K}\!\TT_{j-1}^{-1}\TT_j$ and $\ca{K}\!\TT_{n-1}^{-1}\TT_n^{\pm1}$, respectively, with $\ve\alpha_1,\cdots,\ve\alpha_{n-1},\ve\alpha_n^+\not\equiv0$, we definite the \emph{cyclic $\SO_0(n,n+1)$-Higgs bundle} associated to these data as the $\SO_0(n,n+1)$-Higgs bundle $(\ca{V},Q_\ca{V},\ca{W},Q_\ca{W},\eta)$ given by
\begin{align*}
\ca{V}&=\TT_{-n+1}\oplus \TT_{-n+3}\oplus\ \cdots\ \oplus\TT_{n-3}\oplus\TT_{n-1},\quad
Q_\ca{V}=
\scalebox{0.8}{$
\mat{&&1\\[-0.1cm]&\iddots&\\[-0.1cm]1&&}
$},\\
\ca{W}&=\TT_{-n}\oplus\TT_{-n+2}\oplus\ \cdots\ \oplus\TT_{n-2}\oplus\TT_n,\quad
 Q_\ca{W}=\scalebox{0.8}{$
\mat{&&1\\[-0.1cm]&\iddots&\\[-0.1cm]1&&}
$},\\
\eta&=
\scalebox{0.9}{$
\mat{*&&&*\\[-0.1cm]&\ddots&&\\[-0.1cm]&&*&}
$}\in H^0(\Sigma,\ca{K}\otimes\Hom(\ca{W},\ca{V})),
\end{align*}
where we set $\TT_0:=\ca{O}$, $\TT_{m}:=\TT_{-m}^{-1}$ for $m<0$, and let each ``$*$'' in the matrix expression of $\eta$ be one of the above holomorphic sections, namely the unique one which fits in that slot (for example, the first row is $(\ve\alpha_n^+,0,\cdots,0,\ve\alpha_n^-)$, the second is $(0,\ve\alpha_{n-2},0,\cdots,0)$, etc.).
\end{definition}

Alternatively, we may define the above $(\ca{V},Q_\ca{V},\ca{W},Q_\ca{W},\eta)$ as the $\SO_0(n,n+1)$-Higgs bundle whose underlying $\GL(2n+1,\C)$-Higgs bundle is 
\begin{align*} 
	&\ca{E}=\ca{V}\oplus\ca{W}=\TT_n^{-1}\oplus\cdots \oplus \TT_1^{-1}\oplus \ca{O}\oplus \TT_1\oplus  \cdots \oplus \TT_n,\\
	&\hspace{0.28cm}\Phi=
	\scalebox{0.85}{$
		\left(
		\begin{array}{ccccc|c|ccccc}
			0&&&&&&&&&\ve\alpha_n^-&\\
			\ve\alpha_n^+&0&&&&&&&&&\ve\alpha_n^-\\
			&\hspace{-0.1cm}\ve\alpha_{n-1}&\ddots&&&&&&&&\\
			&&\hspace{-0.2cm}\ddots&\ddots&&&&&&&\\
			&&&\hspace{-0.1cm}\ve\alpha_2&0&&&&&&\\
			\midrule
			&&&&\ve\alpha_1&0&&&&&\\
			\midrule
			&&&&&\ve\alpha_1&0&&&&\\
			&&&&&&\ve\alpha_2&\ddots&&&\\
			&&&&&&&\hspace{-0.2cm}\ddots&\ddots&&\\
			&&&&&&&&\hspace{-0.1cm}\ve\alpha_{n-1}&0&\\
			&&&&&&&&&\ve\alpha_n^+&0
		\end{array}
		\right)
		$}.
\end{align*}
We emphasize that each of the sub-diagonal holomorphic forms $\ve\alpha_1,\cdots,\ve\alpha_{n-1},\ve\alpha_n^+$ is assumed not to be identically zero, although $\ve\alpha_n^-$ can be zero.
Also note that the trivial line bundle $\ca{O}$ is a summand in $\ca{V}$ (resp.\ $\ca{W}$) when $n$ is odd (resp.\ even) and we can choose a global holomorphic section $\underline{1}$ of $\ca{O}$ with $Q_\ca{V}(\underline{1}\,,\,\underline{1})\equiv1$ (resp.\ $Q_\ca{W}(\underline{1}\,,\,\underline{1})\equiv1$). This section is unique up to sign. We let $\underline{\R}:=\R\underline{1}$ be the real line subbundle of $\ca{O}$ generated by it. 

In general, a polystable $G$-Higgs bundle admits a canonical $K$-reduction ($K\subset G$ denotes the maximal compact subgroup), namely the one solving the Hitchin equation, as well as a canonical flat $G$-connection $D$, namely the Hitchin connection. Applying the parallel transportation of $D$ to the $K$-reduction, we get a harmonic map $f$ from $\wt{\Sigma}$ to the symmetric space $X_G=G/K$ which is equivariant with respect to the holonomy representation of $D$. For the above particular $\SO_0(n,n+1)$-Higgs bundles, $f$ is a conformal minimal immersion and these objects have a more concrete description:
\begin{propdef}\label{propdef_harmonic}
The above $(\ca{E},\Phi)$ is stable if and only if either $\ve\alpha_n^-\not\equiv0$ or $\deg(\TT_n)<0$. In this case, there is a unique hermitian metric $\ve{h}_i$ on each $\TT_i$ such that the hermitian metric
$$
H:=\ve{h}_n^{-1}\oplus\cdots\oplus\ve{h}_1^{-1}\oplus1\oplus\ve{h}_1\oplus\cdots\oplus\ve{h}_n
$$ 
 on $\ca{E}$ (where the middle $1$ denotes the hermitian metric on $\ca{O}$ under which the section $\underline{1}$ has unit norm) satisfies the \emph{Hitchin equation}
\begin{equation}\label{eqn_hitchin}
F_{\nabla^H}+[\Phi,\Phi^*]=0.
\end{equation}
Here, $\nabla^H$ is the Chern connection of $(\ca{E},H)$ and $\Phi^*$ is the $\End(\ca{E})$-valued $(0,1)$-form $H$-adjoint to $\Phi$.

Eq.\eqref{eqn_hitchin} coincides with equations \eqref{eqn_dailin2} or \eqref{eqn_daili} in Theorem \ref{thm_daili}
and is equivalent to the condition that the real connection
$$
D^H:=\nabla^H+\Phi+\Phi^*
$$
on $\ca{E}$ is flat. The metric $H$ and the connection $D^H$ are called the \emph{harmonic metric} and \emph{Hitchin connection} of $(\ca{E},\Phi)$, respectively. We also refer to $\ve{h}_i$ as the harmonic metric on $\TT_i$. Further let
\begin{itemize}
\item $Q_\ca{E}$ be the holomorphic quadratic form $Q_\ca{V}\oplus (-Q_\ca{W})$ on $\ca{E}$;
\item 
 $\lambda_i$ be the anti-linear involution of $\TT_i^{-1}\oplus\TT_i$ given by
$$
\lambda_i(\ell_i^{-1})=\tfrac{1}{h_i}\ell_i,\quad \lambda_i(\ell_i)=h_i\ell_i^{-1}
$$
(where $\ell_i$ is a holomorphic local frame of $\TT_i$, $\ell_i^{-1}$ is the corresponding frame of $\TT_i^{-1}$, and $h_i:=\ve{h}_i(\ell_i,\ell_i)$; the definition of $\lambda_i$ is independence of the choice of $\ell_i$);
\item 
$\lambda$ be the anti-linear involution of $\ca{E}$ which is the direct sum of $\lambda_1,\cdots,\lambda_n$ and the anti-linear involution of $\ca{O}$ fixing $\underline{\R}\,$;
\item $\ca{E}_\R$, $\ca{V}_\R$, $\ca{W}_\R$ be the real subbundles formed by the fixed points of $\lambda$ in $\ca{E}$, $\ca{V}$, $\ca{W}$, respectively. 
\end{itemize}
Then the connection $D^H$ preserves both $Q_\ca{E}$ and $\ca{E}_\R$, whereas $Q_\ca{E}$ restricts to a real quadratic form of signature $(n,n+1)$ on $\ca{E}_\R$, denoted by $\langle\sth,\sth\rangle$, under which $\ca{V}_\R$ (resp.\ $\ca{W}_\R$) is a positive (resp.\ negative) definite subbundle of rank $n$ (resp.\ $n+1$).
 Moreover, if we choose a base point $z_0\in\Sigma$, let 
 $$
 \rho:\pi_1(\Sigma,z_0)\to \SO_0(\ca{E}_\R|_{z_0},\langle\sth,\sth\rangle)\cong\SO_0(n,n+1)
 $$ 
 be the holonomy representation of $D^H$ and 
\begin{align*}
f:\wt{\Sigma}\to X_{\SO_0(n,n+1)}&\cong\big\{\text{$n$-dimensional positive definite subspaces of $\ca{E}_\R|_{z_0}$}\big\}\\
&\cong\big\{\text{$(n+1)$-dimensional negative definite subspaces of $\ca{E}_\R|_{z_0}$}\big\}
\end{align*}
be the $\rho$-equivariant map given by translating the fiber of $\ca{V}_\R$ or $\ca{W}_\R$ at each point into $\ca{E}_\R|_{z_0}$ via the parallel transportation of $D^H$, then $f$ is a conformal minimal immersion.
\end{propdef}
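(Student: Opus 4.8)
## Proof proposal

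The plan is to extract the stated claims from the general Higgs-bundle formalism by pinning down the harmonic metric via the non-abelian Hodge correspondence and then checking, piece by piece, that everything behaves as asserted. First I would establish the stability criterion: for the cyclic $\SO_0(n,n+1)$-Higgs bundle $(\ca{E},\Phi)$ of the given form, one checks which $\Phi$-invariant holomorphic subbundles can destabilize. Because all the subdiagonal forms $\ve\alpha_1,\dots,\ve\alpha_{n-1},\ve\alpha_n^+$ are nonzero, the only candidate isotropic $\Phi$-invariant subbundles are built from the ``upper'' pieces $\TT_n^{-1}\oplus\cdots$, and the slope computation shows instability is governed precisely by $\deg(\TT_n)$ when $\ve\alpha_n^-\equiv 0$; if $\ve\alpha_n^-\not\equiv0$ there is no invariant destabilizing subbundle. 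This is a standard but slightly tedious diagram chase on the matrix of $\Phi$.

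Next, given stability, the Hitchin--Kobayashi correspondence for $\GL$-Higgs bundles produces a unique harmonic metric, and Collier--Li's structural result (cited as \cite[Cor.\ 2.11]{collier-li}) for cyclic Higgs bundles guarantees that this metric is diagonal, i.e.\ splits as $H=\ve h_n^{-1}\oplus\cdots\oplus\ve h_1^{-1}\oplus 1\oplus\ve h_1\oplus\cdots\oplus\ve h_n$ with the extra compatibility forced by the $\SO_0(n,n+1)$-structure (that the summands $\TT_i^{-1}$ and $\TT_i$ carry reciprocal metrics and the central $\ca O$ carries the flat metric). I would then expand the Hitchin equation $F_{\nabla^H}+[\Phi,\Phi^*]=0$ in this diagonal gauge: the off-diagonal parts vanish automatically by the cyclic structure, and the diagonal entries reproduce exactly the system \eqref{eqn_dailin2}/\eqref{eqn_daili} of Theorem \ref{thm_daili}. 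The equivalence with flatness of $D^H=\nabla^H+\Phi+\Phi^*$ is the usual computation $F_{D^H}=F_{\nabla^H}+\nabla^H\Phi+\nabla^H\Phi^*+[\Phi,\Phi^*]$ together with the facts that $\Phi$ is holomorphic (so $\bpa\Phi=0$, and $\pa^{H}\Phi$ is absorbed) and $\Sigma$ admits no $(2,0)$-forms.

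Then I would verify the real and quadratic structures. One checks directly that the anti-linear maps $\lambda_i$ are involutions and that $\lambda$ commutes with $D^H$: since $\nabla^H$ is the Chern connection of $H$ and $\lambda$ is built from $H$ together with the holomorphic structure, $\nabla^H$ preserves $\Fix(\lambda)$; and $\Phi+\Phi^*$ is $\lambda$-real by inspection of the matrix (the pattern of $\ve\alpha$'s is exactly the one making $\Phi^*$ the $\lambda$-conjugate of the transpose-reflected $\Phi$, which is why the $\tfrac1{\sqrt2}$ normalization — here written $\ve\alpha_1$ — matters). Similarly $D^H$ preserves $Q_\ca{E}=Q_\ca V\oplus(-Q_\ca W)$ because $\nabla^H$ preserves it (the metric $H$ is ``$Q$-compatible'' by the diagonal anti-diagonal shape of $Q$) and $\Phi$ is $Q$-skew. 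Restricting $Q_\ca E$ to $\ca E_\R$ gives a real nondegenerate form; counting signs of $Q_\ca V$ (all $+1$ on $\ca V_\R$, $n$ of them) versus $-Q_\ca W$ ($n+1$ of them, all $-1$) yields signature $(n,n+1)$, so the holonomy of the flat $Q_\ca E$- and $\ca E_\R$-preserving connection $D^H$ lands in $\SO_0(n,n+1)$ (connectedness of the holonomy group follows since $\pi_1(\Sigma)$ is connected-image in the identity component).

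Finally, for the conformal-minimality of $f$, I would invoke Lemma \ref{lemma_pullback}: writing $D^H=\nabla^K+\Psi$ with $\nabla^K$ the induced $K$-connection on $\ca{E}_\R$ (equivalently the direct sum connection on $\ca V_\R\oplus\ca W_\R$) and $\Psi=(\Phi+\Phi^*)^{\frak p}$ the off-block part, $f^*\T X$ identifies with $\Hom_\R(\ca V_\R,\ca W_\R)$ with connection $\nabla^K$ and $\dif f$ identified with $\Psi$. The point is that $\Psi$, read through the holomorphic identification of the $\TT_i$'s, has the shape $\Gamma$ from \eqref{eqn_Gamma} in the proof of Theorem \ref{thm_gaussmap}: its nonzero entries are the holomorphic $(1,0)$-forms $\ve\alpha_j$ (coupling $\TT_{j-1}$ to $\TT_j$) and their adjoints. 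Hence the complex structure obtained by taking the natural one on each $\Hom_\C(\TT_k,\TT_l)$ summand and inverting it on the ``adjoint'' summands is orthogonal, parallel for $\nabla^K$, and makes $\dif f$ a $(1,0)$-form; Lemma \ref{lemma_conformalminimal} then gives conformal minimality directly. The main obstacle I anticipate is not any single step but the bookkeeping that makes the last point clean: one must check that the harmonic-metric splitting is genuinely compatible with the holomorphic line-bundle structures (so that $\nabla^K$ really is the direct sum of the Chern connections $\nabla_i$ and $\Psi$ really is $\Gamma$), which is where the Collier--Li cyclic structure theorem and the precise form of $\lambda$ do the work — and where the constant $\tfrac1{\sqrt2}$ versus $1$ ambiguity (the difference between this paper's normalization and the ``standard'' one) has to be tracked carefully.
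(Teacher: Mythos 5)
Your proposal is correct and follows essentially the same route as the paper, which likewise treats this statement as an assembly of standard facts: stability analysis as in Collier, existence/uniqueness of the harmonic metric from the Hitchin--Kobayashi correspondence, the diagonal splitting from Collier--Li's cyclic structure theorem, a direct check that the diagonal Hitchin equation is the Toda system and that $\lambda$, $Q_\ca{E}$ are $D^H$-parallel, and conformal minimality of $f$ via the $\frak{p}$-bundle description of $f^*\T X$ (your use of Lemma \ref{lemma_pullback} together with Lemma \ref{lemma_conformalminimal} is exactly the mechanism the paper deploys in the proofs of Theorem \ref{thm_gaussmap} and Theorem \ref{thm_higgsA}). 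The one genuine divergence is the ``only if'' half of the stability criterion: you argue algebraically that when $\ve\alpha_n^-\equiv0$ the summand $\TT_n$ is a $\Phi$-invariant (isotropic) line subbundle, so stability forces $\deg(\TT_n)<0$; the paper instead argues analytically, noting that polystability gives a solution of the Hitchin equation, whose last Toda equation with $\ve\alpha_n^-\equiv0$ makes the Chern form of $\TT_n$ negative away from the zeros of $\ve\alpha_n^+$, hence $\deg(\TT_n)<0$. Your algebraic version is simpler and self-contained; the paper's version has the side benefit of showing at once that polystable already implies stable in this family. Two small points to keep straight when writing it up: the vanishing of the off-diagonal part of $[\Phi,\Phi^*]$ (the $\alpha_n^-\overline{\alpha_n^+}$ cross terms) uses the graded commutator convention $[\Phi,\Phi^*]=\Phi\wedge\Phi^*+\Phi^*\wedge\Phi$, under which those terms cancel rather than add; and the $\tfrac1{\sqrt2}$ normalization is immaterial for this Proposition/Definition (it only matters later, when matching the Higgs data with the structural data of the A-surface).
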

\begin{proof}[Outline of proof]
 If we assume $(\ca{E},\Phi)$ to be stable, then every assertion can be either verified directly or deduced from the general theory of Higgs bundles \cite{corlette,donaldson,hitchin_selfduality,simpson} and cyclic Higgs bundles \cite{baraglia_dedicata,collier-li}.
For the stability statement at the beginning, the ``if'' part can be shown by the same argument as Lemma 4.5 and Lemma 5.11 in \cite{collier}; the ``only if'' part is because polystablity implies solvability of the Hitchin equation, but by the form \eqref{eqn_dailin2} or \eqref{eqn_daili} of the Hitchin equation, it is clear that if $\ve\alpha_n^-\equiv0$ then the Chern form $\frac{\ima}{2\pi}\bpa\pa\log h_n=-\tfrac{1}{\pi}\pazbz\log h_n\dx\wedge\dy$ of $\TT_n$  is negative away from the zeros of $\ve\alpha_n^+$, hence $\deg(\TT_n)=\int_\Sigma\frac{\ima}{2\pi}\bpa\pa\log h_n<0$ (this also shows that if $(\ca{E},\Phi)$ is polystable then it is stable).
\end{proof}

When $\TT_1=\ca{K}^{-1}$ and $\ve\alpha_1=\tfrac{1}{\sqrt{2}}\in H^0(\Sigma,\C)=H^0(\Sigma,\ca{K}\!\TT_1)$, the Hitchin equation \eqref{eqn_dailin2} \eqref{eqn_daili} are exactly the equations satisfied by the structural data of A-surfaces (see Theorem \ref{thm_affinetoda}). In this case, we show that $(\ca{E},\Phi)$ yields an A-surfaces in $\H^{p,q}$ (with $(p,q)$ given by $n$ as before; see \eqref{eqn_pqintro}) in a similar way as how maximal $\SO_0(2,q+1)$-Higgs bundles yield maximal surfaces in $\H^{2,q}$ (see \cite{collier-tholozan-toulisse}):

\begin{theorem}\label{thm_higgsA}
In the setting of Proposition/Definition \ref{propdef_harmonic}, further assume 
\begin{itemize}
	\item $\TT_1=\ca{K}^{-1}$ and $\ve\alpha_1=\tfrac{1}{\sqrt{2}}$;
	\item $(\ve\alpha_2)\prec\cdots\prec(\ve\alpha_{n-1})\prec\min\big\{(\ve\alpha_n^+),(\ve\alpha_n^-)\big\}$ (see \S \ref{subsec_hypo}).
\end{itemize}
Then there exists a unique antipodal pair $\pm\hf$ of $\rho$-equivariant A-surface immersions of $\wt{\Sigma}$ into $\H^{p,q}$ whose Gauss map is $f$. Moreover, after possibly swapping $\pm\hf$, the structural data of $\hf$ is exactly the holomorphic data $(\TT_1,\cdots,\TT_n,\ve\alpha_2,\cdots,\ve\alpha_{n-1},\ve\alpha_n^\pm)$ used to construct $(\ca{E},\Phi)$ along with the harmonic metric $\ve{h}_i$ on each $\TT_i$.
\end{theorem}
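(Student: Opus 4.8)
The plan is to realise $\pm F$ explicitly inside the flat pseudo-Euclidean vector bundle furnished by the Hitchin connection, to check directly that $F$ is an A-surface immersion whose structural data is the Higgs data and whose Gauss map is $f$, and then to obtain uniqueness from the results of \S\ref{subsec_hypo}--\S\ref{subsec_uniqueness}.

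\emph{Construction.} By Proposition/Definition \ref{propdef_harmonic}, the real bundle $\ca{E}_\R$ carries the $D^H$-parallel quadratic form $\langle\sth,\sth\rangle$ of signature $(n,n+1)$, with $\ca{V}_\R$ positive and $\ca{W}_\R$ negative definite. Taking $\langle\sth,\sth\rangle$ when $n$ is even and $-\langle\sth,\sth\rangle$ when $n$ is odd matches the signature $(p,q+1)$ from \eqref{eqn_pqintro} and picks out the hyperboloid (resp.\ pseudosphere) model of $\H^{p,q}$. Pulling back to $\wt\Sigma$ and using flatness, identify $(\ca{E}_\R,D^H)$ with the trivial bundle $\wt\Sigma\times\R^{p,q+1}$ on which $\pi_1(\Sigma)$ acts by $\rho$. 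In this trivialization the distinguished real section $\underline{1}$ of $\ca{O}\subset\ca{E}_\R$, which has norm-square $\mp1$ for the sign chosen above, becomes a $\rho$-equivariant map $F\colon\wt\Sigma\to\H^{p,q}$; composing with the antipodal map of $\H^{p,q}$ gives $-F$.

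\emph{$F$ is an A-surface with the prescribed data.} Under the canonical identification $F^*\T\H^{p,q}\cong\underline{\R}^{\perp}=\bigoplus_{i=1}^{n}(\TT_i^{-1}\oplus\TT_i)_\R$ (metric and connection induced from $D^H$), the connection $\nabla^H$ is block diagonal and $\Phi+\Phi^*$ couples only consecutive blocks, coupling the first block to $\underline{\R}$ through $\ve\alpha_1=\tfrac{1}{\sqrt{2}}$. Hence $dF$, which is $D^H\underline{1}$ read in this trivialization, is a nowhere vanishing $1$-form valued in $(\TT_1^{-1}\oplus\TT_1)_\R$, so $F$ is an immersion with $F^*\T\Sigma=(\TT_1^{-1}\oplus\TT_1)_\R$, and $\T\H^{p,q}|_F=(\TT_1^{-1}\oplus\TT_1)_\R\oplus\cdots\oplus(\TT_n^{-1}\oplus\TT_n)_\R$ is a Frenet splitting in the sense of Definition \ref{def_quasisuper}: the tridiagonal shape \eqref{eqn_nablamatrix2} is forced by the nearest-neighbour pattern of $\Phi$, the blocks are alternately definite since they alternate between $\ca{V}_\R$ and $\ca{W}_\R$, and for $2\le j\le n-1$ the off-diagonal term $\ve\alpha_j$ is the corresponding Higgs form, which is $\C$-linear, hence conformal, while $\ve\alpha_n=\ve\alpha_n^++\ve\alpha_n^-$ carries no such constraint. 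By Remark \ref{rk_superminimalisminimal}, $F$ is minimal, so it is an A-surface. Choosing on each block the orientation (equivalently, complex structure) for which $\ve\alpha_j$ is a holomorphic $(1,0)$-form, as in Theorem \ref{thm_holo}, makes $(\TT_i^{-1}\oplus\TT_i)_\R$ the holomorphic line bundle $\TT_i$; the constant $\tfrac{1}{\sqrt{2}}$ is precisely what forces the induced metrics $\langle\sth,\sth\rangle_i$ to equal the harmonic metrics $\ve{h}_i$ and the first fundamental form to equal $\re\ve{h}_1$ (compare the Toda system of Theorem \ref{thm_affinetoda} with the Hitchin equations of Proposition/Definition \ref{propdef_harmonic}, which agree because $|\alpha_1|^2=\tfrac12$; see Remark \ref{rk_intro2}). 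The only genuine choice is the orientation of $\TT_n$, which the antipodal map reverses, interchanging $\ve\alpha_n^+$ with $\ve\alpha_n^-$ (Remark \ref{rk_intro1}); this is the ``after possibly swapping $\pm F$'' clause. Finally, the Gauss map of $F$ sends $z$ to $\Span\{\underline{1}(z)\}\oplus\ca{N}^-_z=\ca{W}_\R|_z$ (resp.\ $\ca{V}_\R|_z$), transported to the fixed fibre by $D^H$; this is exactly the conformal minimal immersion $f$ of Proposition/Definition \ref{propdef_harmonic}.

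\emph{Uniqueness.} The hypothesis $(\ve\alpha_2)\prec\cdots\prec(\ve\alpha_{n-1})\prec\min\{(\ve\alpha_n^+),(\ve\alpha_n^-)\}$ together with Theorem \ref{thm_daili} \ref{item_daili1} (applied with $\ve\alpha_1=\tfrac{1}{\sqrt{2}}$) shows that the structural data of $F$ satisfies Hypothesis \eqref{eqn_hypo}, and since $\ve\alpha_2,\dots,\ve\alpha_{n-1}\not\equiv0$, $F$ is not locally contained in a codimension $4$ pseudo-hyperbolic subspace. Any $\rho$-equivariant A-surface immersion of $\wt\Sigma$ with Gauss map $f$ then has the same Gauss map as $F$, so by Theorem \ref{thm_determination} in its equivariant form (Remark \ref{rk_local}) it equals $F$ or $-F$, giving the uniqueness of the antipodal pair. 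The real work here is the bookkeeping of the preceding step: one must track the parity of $n$ (hyperboloid versus pseudosphere model, and which blocks are positive or negative definite) and pin down the orientations that turn the real rank-$2$ blocks $(\TT_i^{-1}\oplus\TT_i)_\R$ into the holomorphic line bundles $\TT_i$, so that the Frenet data produced by $D^H$ is literally the Higgs data; once this is set up, minimality, the Gauss-map identity and uniqueness follow from the already established Theorems \ref{thm_holo}, \ref{thm_affinetoda}, \ref{thm_gaussmap}, \ref{thm_determination} and \ref{thm_daili}.
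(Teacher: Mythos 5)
Your proposal is correct and follows essentially the same route as the paper: define $F$ by parallel-transporting the distinguished real section $\underline{1}$ of the flat bundle $(\ca{E}_\R,D^H)$ (with the parity-dependent sign convention), check that the decomposition of $D^H$ over $\underline{\R}\oplus\TT_1'\oplus\cdots\oplus\TT_n'$ reproduces the A-surface moving-frame equations with the Higgs data and harmonic metrics (so that the Gauss map is $f$), and then get uniqueness of the antipodal pair from Theorem \ref{thm_determination}, with Hypothesis \eqref{eqn_hypo} supplied by Theorem \ref{thm_daili} under the divisor condition. The only difference is presentational: the paper carries out the change-of-frame computation $R^{-1}\Omega'R+R^{-1}\dif R$ explicitly to pin down the identification of the blocks $(\TT_i^{-1}\oplus\TT_i)_\R\cong\TT_i$, the metrics $\langle\sth,\sth\rangle_i=\re\ve{h}_i$ and the role of the constant $\tfrac{1}{\sqrt{2}}$, which is exactly the "bookkeeping" you flag but do not write out.
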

\begin{proof}
In this proof, when $n$ is odd, we multiply $Q_\ca{E}$ by $-1$ and swap the notations of $\ca{V}$ and $\ca{W}$. Under this new notation, $\ca{W}$ is the direct sum of  all $\TT_k$ with $k\in[-n,n]$ even, including $\TT_0=\ca{O}$. The signature of $\langle\sth,\sth\rangle$ is exactly $(p,q+1)$, while $\ca{V}_\R$ (resp.\ $\ca{W}_\R$) is a positive  (resp.\ negative) definite subbundle of rank $p$ (resp.\ $q+1$) in $\ca{E}_\R$.

Identifying $\H^{p,q}$ as the hypersurface in the fiber $\ca{E}_\R|_{z_0}$ formed by all $x\in\ca{E}_\R|_{z_0}$ with $\langle x,x\rangle=-1$, we let $\hf$ be the $\rho$-equivariant map from $\wt{\Sigma}$ to $\H^{p,q+1}$ given by translating the section $\underline{1}$ of $\underline{\R}$ at each point into $\ca{E}_\R|_{z_0}$ (similarly as the construction of $f$). 
We shall show that $\hf$ is an A-surface immersion with Gauss map $f$ and has the required structural data. Combining with the uniqueness result in Theorem \ref{thm_determination}, this implies the theorem.

Let $\TT'_i$ denote the real subbundle of $\TT^{-1}_i\oplus\TT_i$ fixed by $\lambda_i$. Then $\TT'_i$ is positive (resp.\ negative) definite with respect to $\langle\sth,\sth\rangle$ exactly when $i$ is odd (resp.\ even), and we have orthogonal splittings
$$
\ca{E}_\R=\underline{\R}\oplus \TT_1'\oplus\TT_2'\oplus\cdots\oplus\TT_n',\quad \ca{W}_\R=\underline{\R}\oplus\TT_2'\oplus\TT_4'\oplus\cdots.
$$
Choose a holomorphic local frame $\ell_i$ of $\TT_i$ and consider the following orthonormal local frame $(u_i,v_i)$ of $\TT_i'$ with respect to the positive definite metric $\langle\sth,\sth\rangle_i:=(-1)^{i+1}\langle\sth,\sth\rangle|_{\TT_i'}$:
$$
u_i:=\tfrac{1}{\sqrt{2}}\big(h_i^\frac{1}{2}\ell_i^{-1}+h_i^{-\frac{1}{2}}\ell_i\big),\quad 
v_i:=\tfrac{1}{\sqrt{2}}\big(-\ima h_i^\frac{1}{2}\ell_i^{-1}+\ima h_i^{-\frac{1}{2}}\ell_i\big).
$$
%$$
%u_i:=\sqrt{\tfrac{h_i}{2}}\,\ell_i^{-1}+\tfrac{1}{\sqrt{2h_i}}\,\ell_i,\quad 
%v_i:=-\ima\sqrt{\tfrac{h_i}{2}}\,\ell_i^{-1}+\tfrac{\ima}{\sqrt{2h_i}}\,\ell_i.
%$$
We may identify $\TT_i'$ with $\TT_i$ through the $\sqrt{2}$-scaled projection from $\TT_i^{-1}\oplus\TT_i$ to $\TT_i$, and view $\TT_i'$ as a hermitian holomorphic line bundle in this way.
%$\TT_i'$ can be identified with either $\TT_i^{-1}$ or $\TT_i$ via the projection of $\TT_i^{-1}\oplus\TT_i$ to these components. 
Note that this identifies $\langle\sth,\sth\rangle_i$ with the real part of the hermitian metric $\ve{h}_i$ and identifies $(u_i,v_i)$ with the frame $h_i^{-\frac{1}{2}}(\ell_i,\ima \ell_i)$ of $\TT_i$.
In particular, $\TT_1'$ gets identified with $\TT_1=\ca{K}^{-1}$, which can be viewed as the tangent bundle $\T\Sigma$ (see the paragraph preceding Theorem \ref{thm_holo}), while the metric $\langle\sth,\sth\rangle_1\cong\re\ve{h}_1$ corresponds to the conformal Riemannian metric $\ve{g}=h_1|\dz|^2$ on $\Sigma$.

To prove the required statement about $\hf$, it suffices to show that the Hitchin connection $D^H$ (restricted to $\ca{E}_\R$) is expressed under the local frame $(\underline{1},u_1,v_1,\cdots,u_n,v_n)$ of $\ca{E}_\R$ as
\begin{equation}\label{eqn_proofhiggsA1}
D^H=\dif+\Omega,\quad \text{ where }\Omega=
\scalebox{0.9}{
$\mat{
	&\transp\theta&&&\\[0.1cm]
	\theta&\Omega_1&\transp\ve\alpha_2&&\\[0.1cm]
	&\ve\alpha_2&\Omega_2&\ddots&\\
	&&\hspace{-0.2cm}\ddots&\hspace{-0.2cm}\ddots&\transp\ve\alpha_n\\[0.1cm]
	&&&\hspace{-0.3cm}\ve\alpha_n&\hspace{-0.1cm}\Omega_n
}$}.
\end{equation}
Here, $\theta=\transp(\theta_1,\theta_2)$ is an orthonormal local frame of $\T^*\Sigma$ with respect to $\ve{g}$, $\Omega_i$ represents the Chern connection of $\TT_i'\cong\TT_i$, and we abuse the notation to let $\ve\alpha_2,\cdots,\ve\alpha_{n-1}$ and $\ve\alpha_n=\ve\alpha_n^++\ve\alpha_n^-$ be the matrices of $1$-forms given in the proof of Theorem \ref{thm_affinetoda}, which represent the holomorphic forms $\ve\alpha_2,\cdots,\ve\alpha_{n-1},\ve\alpha_n^\pm$ used to construct $(\ca{E},\Phi$).
In fact, once \eqref{eqn_proofhiggsA1} is established, by the parallel transportation process in the construction of $f$ and $\hf$, we may infer that there are locally defined maps $U_1,V_1,\cdots,U_n,V_n$ from $\wt\Sigma$ to $\R^{p,q+1}$ (given by parallel transporting $u_1,v_1,\cdots,u_n,v_n$) such that the matrix valued function $P:=(\hf,U_1,V_1,\cdots,U_n,V_n)$
satisfies $P^{-1}\dif P=\Omega$, whereas the function $(U_1,V_1,U_3,V_3,\cdots,\hf,U_2,V_2,U_4,V_4)$, which takes values in $\SO_0(p,q+1)$, is a lift of $f$. But this implies that $\hf$ has the required structural data and $f$ is the Gauss map of $\hf$ (c.f.\ the proof of Theorems \ref{thm_holo} and \ref{thm_gaussmap}).

To show \eqref{eqn_proofhiggsA1}, we start from the expression of $D^H$ under the frame $(\ell_n^{-1},\cdots,\ell_1^{-1},\underline{1},\ell_1,\cdots,\ell_n)$ of $\ca{E}$:
\begin{align}
&D^H=\nabla^H+\Phi+\Phi^*=\dif+\Omega', \text{ where}\nonumber\\
&\Omega'=\left(
\scalebox{0.85}{
$\begin{array}{ccccc|c|ccccc}
-\tfrac{\pa h_n}{h_n}&\tfrac{h_n\cj{\alpha_n^+}}{h_{n-1}}\dbz&&&&&&&&\alpha_n^-\dz&\\[0.3cm]
	\alpha_n^+\dz&-\tfrac{\pa h_{n-1}}{h_{n-1}}&\tfrac{h_{n-1}\cj{\alpha_{n-1}}}{h_{n-2}}\dbz&&&&&&&&\alpha_n^-\dz\\[0.3cm]
	&\alpha_{n-1}\dz&\ddots&\ddots&&&&&&&\\[0.3cm]
	&&\ddots&\ddots&\tfrac{h_2\cj{\alpha_2}}{h_1}\dbz&&&&&&\\[0.3cm]
	&&&\alpha_2\dz&-\tfrac{\pa h_1}{h_1}&\tfrac{h_1\dbz}{\sqrt{2}}&&&&&\\
	\midrule
	&&&&\tfrac{\dz}{\sqrt{2}}&0&\tfrac{h_1\dbz}{\sqrt{2}}&&&&\\
	\midrule
	&&&&&\tfrac{\dz}{\sqrt{2}}&\tfrac{\pa h_1}{h_1}&\tfrac{h_2\cj{\alpha_2}}{h_1}\dbz&&&\\[0.3cm]
	&&&&&&\alpha_2\dz&\ddots&\ddots&&\\[0.3cm]
	&&&&&&&\ddots&\ddots&\tfrac{h_{n-1}\cj{\alpha_{n-1}}}{h_{n-2}}\dbz&\\[0.3cm]
	\tfrac{\cj{\alpha_n^-}\,\dbz}{h_{n-1}h_n}&&&&&&&&\alpha_{n-1}\dz&\tfrac{\pa h_{n-1}}{h_{n-1}}&\tfrac{h_n\cj{\alpha_n^+}}{h_{n-1}}\dbz\\[0.3cm]
	&\tfrac{\cj{\alpha_n^-}\,\dbz}{h_{n-1}h_n}&&&&&&&&\alpha_n^+\dz&\tfrac{\pa h_n}{h_n}
\end{array}$
}
\right).\label{eqn_proofhiggsA2}
\end{align}
The frame for \eqref{eqn_proofhiggsA1} is  $(\underline{1},u_1,v_1,\cdots,u_n,v_n)=(\ell_n^{-1},\cdots,\ell_1^{-1},\underline{1},\ell_1,\cdots,\ell_n)R$ with change-of-basis matrix
$$
R=
\tfrac{1}{\sqrt{2}}
\left(
\scalebox{0.87}{
	$\begin{array}{c|cc|cc|cc|cc}
		&&&&&&&h_n^\frac{1}{2}&-\ima h_n^\frac{1}{2}\\
		&&&&&&\iddots&&\\[-0.1cm]
		&&&&&\iddots&&&\\
		&&&h_2^\frac{1}{2}&-\ima h_2^\frac{1}{2}&&&&\\
		&h_1^\frac{1}{2}&-\ima h_1^\frac{1}{2}&&&&&&\\
		\midrule
		\hspace{-0.2cm}\sqrt{2}&&&&&&&&\\
		\midrule
		&h_1^{-\frac{1}{2}}&\ima h_1^{-\frac{1}{2}}&&&&&&\\
		&&&h_2^{-\frac{1}{2}}&\ima h_2^{-\frac{1}{2}}&&&&\\
		&&&&&\ddots&&&\\[-0.1cm]
		&&&&&&\ddots&&\\
		&&&&&&&h_n^{-\frac{1}{2}}&\ima h_n^{-\frac{1}{2}}
	\end{array}$
}
\right).
$$
So the matrix of $D^H$ under this frame is $R^{-1}\Omega' R+R^{-1}\dif R$, and we only need to show that it equals $\Omega$ in \eqref{eqn_proofhiggsA1}. 
This is a straightforward but lengthy matrix computation. We give below the details of some key components of the computation:
\begin{itemize}
	\item The $\TT_i'$-diagonal block of $R^{-1}\Omega' R+R^{-1}\dif R$ is
	$$
	\scalebox{0.85}{
		$\mat{h_i^\frac{1}{2}&-\ima h_i^\frac{1}{2}\\[0.2cm]h_i^{-\frac{1}{2}}&\ima h_i^{-\frac{1}{2}}}^{-1}$
	}
	\mat{-\tfrac{\pa h_i}{h_i}&\\&\tfrac{\pa h_i}{h_i}}
	\scalebox{0.85}{
		$\mat{h_i^\frac{1}{2}&-\ima h_i^\frac{1}{2}\\[0.2cm]h_i^{-\frac{1}{2}}&\ima h_i^{-\frac{1}{2}}}$
	}
	+
	\scalebox{0.85}{
		$\mat{h_i^\frac{1}{2}&-\ima h_i^\frac{1}{2}\\[0.2cm]h_i^{-\frac{1}{2}}&\ima h_i^{-\frac{1}{2}}}^{-1}$
	}
	\hspace{-0.3cm}
	\scalebox{0.85}{
		$\dif\mat{h_i^\frac{1}{2}&-\ima h_i^\frac{1}{2}\\[0.2cm]h_i^{-\frac{1}{2}}&\ima h_i^{-\frac{1}{2}}}$
	}
	=\mat{&\tfrac{\dc h_i}{2h_i}\\-\tfrac{\dc h_i}{2h_i}&},
	$$
	where $\dc h:=\ima(\pa-\bpa)h=\pa_yh\dx-\pa_xh\dy$. The result is exactly the matrix expression of the Chern connection of $(\TT_i,\ve{h}_i)$ under the frame $h_i^{-\frac{1}{2}}(\ell_i,\ima\ell_i)$, as required.
	
	\item The $\underline{\R}$-to-$\TT_1'$ block of $R^{-1}\Omega' R+R^{-1}\dif R$  is 
	$$
	\scalebox{0.85}{
		$\mat{h_1^\frac{1}{2}&-\ima h_1^\frac{1}{2}\\[0.2cm]h_1^{-\frac{1}{2}}&\ima h_1^{-\frac{1}{2}}}^{-1}$
	}
	\mat{\tfrac{h_1\dbz}{\sqrt{2}}\\[0.2cm]\frac{\dz}{\sqrt{2}}}\cdot \sqrt{2}=h_1^\frac{1}{2}\mat{\dx\\\dy}
	$$
	(the term $R^{-1}\dif R$ is block-diagonal and does not contribute here; similarly below). This is indeed an orthonormal local frame of $\T^*\Sigma$ under $\ve{g}=h_1(\dx^2+\dy^2)$, as required.
	%$$
	%\scalebox{0.85}{
	%	$\mat{\sqrt{\tfrac{h_1}{2}}&-\ima\sqrt{\tfrac{h_1}{2}}\\[0.2cm]\tfrac{1}{\sqrt{2h_1}}&\tfrac{\ima}{\sqrt{2h_1}}}^{-1}$
	%}
	%\mat{h_1\dbz\\\dz}=(2h_1)^\frac{1}{2}\mat{\dx\\\dy}
	%$$
	\item The $\TT_{j-1}'$-to-$\TT_j'$ block ($2\leq j\leq n-1$) and the $\TT_{n-1}'$-to-$\TT_n'$ block are respectively
	\begin{align*}
		&\scalebox{0.85}{
			$\mat{h_j^\frac{1}{2}&-\ima h_j^\frac{1}{2}\\[0.2cm]h_j^{-\frac{1}{2}}&\ima h_j^{-\frac{1}{2}}}^{-1}$
		}
		\mat{\tfrac{h_j\cj{\alpha_j}}{h_{j-1}}\dbz&\\&\alpha_j\dz}
		\scalebox{0.85}{
			$\mat{h_{j-1}^\frac{1}{2}&-\ima h_{j-1}^\frac{1}{2}\\[0.2cm]h_{j-1}^{-\frac{1}{2}}&\ima h_{j-1}^{-\frac{1}{2}}}$
		}=\tfrac{\sqrt{h_j}}{\sqrt{h_{j-1}}}
		\scalebox{0.9}{$
			\mat{\re(\alpha_j\dz)&-\im(\alpha_j\dz)\\[0.1cm]\im(\alpha_j\dz)&\re(\alpha_j\dz)}
			$},\\
		&\scalebox{0.85}{
			$\mat{h_n^\frac{1}{2}&-\ima h_n^\frac{1}{2}\\[0.2cm]h_n^{-\frac{1}{2}}&\ima h_n^{-\frac{1}{2}}}^{-1}$
		}
		\scalebox{0.85}{
			$\mat{\tfrac{h_n\cj{\alpha_n^+}}{h_{n-1}}\dbz&\alpha_n^-\dz\\[0.2cm]\tfrac{\cj{\alpha_n^-}\,\dbz}{h_{n-1}h_n}&\alpha_n^+\dz}$
		}
		\scalebox{0.85}{
			$\mat{h_{n-1}^\frac{1}{2}&-\ima h_{n-1}^\frac{1}{2}\\[0.2cm]h_{n-1}^{-\frac{1}{2}}&\ima h_{n-1}^{-\frac{1}{2}}}$
		}\\
		&\hspace{0.5cm}=\tfrac{\sqrt{h_n}}{\sqrt{h_{n-1}}}
		\scalebox{0.9}{$
			\mat{\re(\alpha_n^+\dz)&-\im(\alpha_n^+\dz)\\[0.1cm]\im(\alpha_n^+\dz)&\re(\alpha_n^+\dz)}
			$}
		+\tfrac{1}{\sqrt{h_{n-1}h_n}}
		\scalebox{0.9}{$
			\mat{\re(\alpha_n^-\dz)&-\im(\alpha_n^-\dz)\\[0.1cm]-\im(\alpha_n^-\dz)&-\re(\alpha_n^-\dz)}
			$}.
	\end{align*}
	These are exactly the matrices representing $\ve\alpha_j$ and $\ve\alpha_n=\ve\alpha_n^++\ve\alpha_n^-$ under the frame $h_i^{-\frac{1}{2}}(\ell_i,\ima\ell_i)$ (see the proof of Theorem \ref{thm_affinetoda}), as required.
\end{itemize}
In summary, we obtain the expression \eqref{eqn_proofhiggsA1} of $D^H$ by computations and this completes the proof.
\end{proof}

\subsection{Collier's components}\label{subsec_collier}
We briefly review here the construction of Collier's components \cite{collier}, which leads to the statement of Theorem \ref{thm_intro3}, and then give the proof.

We henceforth fix integers $g\geq2$ and $n\geq3$. 
Given $k\geq1$, the Hodge bundle $\ca{H}_k$ is the holomorphic vector bundle over the Teichm\"uller space ${\ca{T}}_g$ whose fiber $\ca{H}_k|_{[\Sigma]}$ at any $[\Sigma]\in{\ca{T}}_g$ is the vector space $H^0(\Sigma,\ca{K}^k)$ of holomorphic $k$-differentials on the Riemann surface $\Sigma$ (marked by $S_g$). Given $0<d\leq (2g-2)n$, we also define a fiber bundle $\ca{F}_d$ over ${\ca{T}}_g$ in such a way that the fiber $\ca{F}_d|_{[\Sigma]}$ is the moduli space $\wt{\ca{F}}_d|_{[\Sigma]}\big/\sim$, where
\begin{itemize}
	\item $\wt{\ca{F}}_d|_{[\Sigma]}$ is the space of triples $(\ca{M},\ve\mu,\ve\nu)$ consisting of a holomorphic line bundle $\ca{M}$ of degree $d$ on $\Sigma$ and holomorphic sections $\ve\mu\in H^0(\Sigma,\ca{K}^n\!\!\!\ca{M}^{-1})\setminus\{0\}$, $\ve\nu\in H^0(\Sigma,\ca{K}^n\!\!\!\ca{M})$;
	\item the equivalence relation $(\ca{M},\ve\mu,\ve\nu)\sim(\ca{M}',\ve\mu',\ve\nu')$ means that there are line bundle isomorphisms $\phi_0:\ca{K}^{-n+1}\to\ca{K}^{-n+1}$ and $\phi_1:\ca{M}\to\ca{M}'$ which, along with the isomorphism $\phi_{-1}:\ca{M}^{-1}\to\ca{M}'{}^{-1}$ induced by $\phi_1$, fit into a commutative diagram
 \begin{center}
	\includegraphics[width=4.8cm]{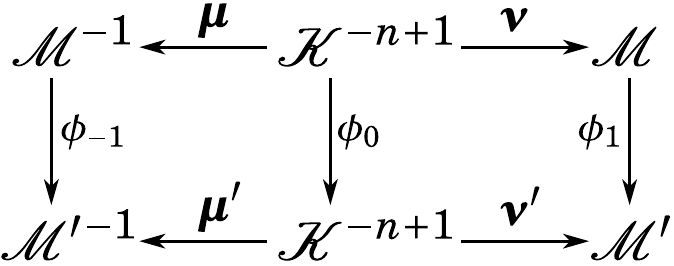}
 \end{center}
   Here, an horizontal arrow such as $\TT_1\overset{\ve\alpha}\to\TT_2$ means that $\ve\alpha$ is a $\Hom(\TT_1,\TT_2)$-valued holomorphic $1$-form, or equivalently a holomorphic section of $\ca{K}\!\TT_1^{-1}\!\TT_2$.
\end{itemize}

\begin{remark}
The $\wt{\ca{F}}_d|_{[\Sigma]}$ here is slightly different from the one in \cite{collier}\footnote{Note that everything in \cite{collier} is based on a fix Riemann surface $\Sigma$, so the notation ``$\,\sth\,|_{[\Sigma]}$'' is not included therein.} in that our $\ca{M}$ is a concrete line bundle rather than a point in $\mathrm{Pic}^d(\Sigma)$, but the resulting quotient $\ca{F}_d|_{[\Sigma]}$ is the same. As noticed in \cite{collier}, $\ca{F}_d|_{[\Sigma]}$ identifies with the total space of a rank $d+(2n-1)(g-1)$ holomorphic vector bundle over the symmetric product $\Sym^{(2g-2)n-d}\Sigma$.
\end{remark}

The direct sum fiber bundle $\ca{H}_2\oplus\ca{H}_4\oplus\cdots\oplus\ca{H}_{2n-2}\oplus\ca{F}_d$ over ${\ca{T}}_g$ can be viewed as a data space whose each point determines an equivalence class of stable $\SO_0(n,n+1)$-Higgs bundles.
More specifically, 
from $\big(\ve\phi_2,\cdots,\ve\phi_{2n-2},(\ca{M},\ve\mu,\ve\nu)\big)\in\big(\ca{H}_2\oplus\ca{H}_4\oplus\cdots\oplus\ca{H}_{2n-2}\oplus\wt{\ca{F}}_d\big)|_{[\Sigma]}$, we get the following $\SO_0(n,n+1)$-Higgs bundle $(\ca{V},Q_\ca{V},\ca{W},Q_\ca{W},\eta)$ on $\Sigma$:
\begin{align*}
	&\ca{V}=\ca{K}^{n-1}\oplus\ca{K}^{n-3}\oplus\cdots\oplus\ca{K}^{-n+3}\oplus\ca{K}^{-n+1},\quad 
	Q_\ca{V}=
	\scalebox{0.9}{$
		\mat{&&1\\[-0.1cm]&\iddots&\\[-0.1cm]1&&}
		$},\\
	&\ca{W}=\ca{M}\oplus\ca{K}^{n-2}\oplus\ca{K}^{n-4}\oplus\cdots\oplus\ca{K}^{-n+4}\oplus\ca{K}^{-n+2}\oplus\ca{M}^{-1},\quad
	 Q_\ca{W}=\scalebox{0.8}{$
		\mat{&&1\\[-0.1cm]&\iddots&\\[-0.1cm]1&&}
		$},\\
	&\eta=
	\scalebox{0.85}{$
		\mat{\ve\mu&\ve\phi_2&\ve\phi_4&\cdots&\hspace{-0.15cm}\ve\phi_{2n-2}&\ve\nu\\&1&\ddots&\ddots&\vdots&\\[-0.1cm]&&\ddots&\ddots&\ve\phi_4&\\&&&1&\ve\phi_2&\\&&&&\tfrac{1}{\sqrt{2}}&}$} \in H^0(\Sigma,\ca{K}\otimes\Hom(\ca{W},\ca{V}));
\end{align*}
while replacing $(\ca{M},\ve\mu,\ve\nu)$ by an equivalent triple  $(\ca{M}',\ve\mu',\ve\nu')$ yields an equivalent Higgs bundle (the $\frac{1}{\sqrt{2}}$ here can be replaced by $1$ as mentioned in the introduction).

Let $\ca{X}:=\Hom(\pi_1(S_g),\SO_0(n,n+1))/\SO_0(n,n+1)$ be the $\SO_0(n,n+1)$-character variety. Combining the above construction with the non-abelian Hodge correspondence, 
which assigns to every equivalence class of stable $\SO_0(n,n+1)$-Higgs bundles a point of $\ca{X}$, 
we obtain a map
$$
\Psi:\ca{H}_2\oplus\ca{H}_4\oplus\cdots\oplus\ca{H}_{2n-2}\oplus\ca{F}_d\to \ca{X}.
$$
For any fixed Riemann surface $\Sigma$, Collier \cite{collier} studied the restriction of $\Psi$ to the fiber of the domain bundle at $[\Sigma]$ and found that it parametrizes certain remarkable components of $\ca{X}$:
\begin{theorem}[\cite{collier}]\label{thm_collier}
Given $d\in(0,(2g-2)n]$ and a Riemann surface $\Sigma$, the map $\Psi$ restricts to a diffeomorphism from $\big(\ca{H}_2\oplus\ca{H}_4\oplus\cdots\oplus\ca{H}_{2n-2}\oplus\ca{F}_d\big)|_{[\Sigma]}$ to a connected component $\ca{X}_d$ of $\ca{X}$.
\end{theorem}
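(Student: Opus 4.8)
This is Collier's theorem \cite{collier}, and the plan is to reproduce its argument. For a fixed Riemann surface $\Sigma$ the statement breaks into three parts: (i) every point of the fibre $\big(\ca{H}_2\oplus\ca{H}_4\oplus\cdots\oplus\ca{H}_{2n-2}\oplus\ca{F}_d\big)|_{[\Sigma]}$ determines, through the construction recalled above, a \emph{stable} $\SO_0(n,n+1)$-Higgs bundle, so that the restriction of $\Psi$ to this fibre is a well-defined real-analytic map into $\ca{X}$; (ii) this restriction is injective; and (iii) its image is open, closed and connected in $\ca{X}$, hence a single component $\ca{X}_d$, onto which $\Psi$ is a diffeomorphism. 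We would carry these out in order.

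For (i), we would check directly that the off-diagonal entries of $\eta$ that are forced to be nonzero when $d>0$ --- the sub-diagonal $1$'s and the $\tfrac{1}{\sqrt2}$, together with $\ve\mu$ --- obstruct every $\Phi$-invariant destabilizing subsheaf; this is the same type of computation as the stability lemmas for the Hitchin component and for Collier's maximal $\Sp(4,\R)$ case, and in fact polystability forces stability here. Granting stability, the non-abelian Hodge correspondence attaches a reductive representation to each such Higgs bundle, and the harmonic metric depends real-analytically on the holomorphic data, which yields the map $\Psi$ and its regularity.

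For (ii), we would recover from the isomorphism class of the Higgs bundle, up to the equivalence defining $\ca{F}_d$, the triple $(\ca{M},\ve\mu,\ve\nu)$ and the differentials $\ve\phi_2,\dots,\ve\phi_{2n-2}$. The key point is that the $\C^*$-grading underlying the cyclic-type shape of $\Phi$ is intrinsic: any Higgs-bundle isomorphism preserves the associated filtration, hence is gauge-equivalent to a block transformation preserving the displayed normal form, and such a transformation is precisely the data of an equivalence $(\ca{M},\ve\mu,\ve\nu)\sim(\ca{M}',\ve\mu',\ve\nu')$ and leaves the $\ve\phi_{2k}$ unchanged --- the latter being read off as the components of suitable powers of $\Phi$ between graded pieces, exactly as the Hitchin-base coordinates are.

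Part (iii) is where the real work lies, and we would handle it via the $\C^*$-action and the Morse theory of the Hitchin function $f(\ca{E},\Phi)=\|\Phi\|_{L^2}^2$ on the moduli space of $\SO_0(n,n+1)$-Higgs bundles, which is proper and Morse--Bott with critical set the complex variations of Hodge structure. The plan is to show that the local minima of $f$ on the components in question are precisely the Higgs bundles of the above form with all $\ve\phi_{2k}=0$, that the locus of these minima is canonically identified with $\ca{F}_d|_{[\Sigma]}$, and that these minima are of a ``good'' type, so that each such component deformation-retracts onto its minimal locus and is, moreover, the total space of the upward Morse flow over it. Matching that flow with the operation of reinstating the $\ve\phi_{2k}$ would identify the component with the full fibre and realize $\Psi$ as the sought diffeomorphism; openness of the image then also follows from invariance of domain together with the dimension count
\[
\dim_\R\big(\ca{H}_2\oplus\ca{H}_4\oplus\cdots\oplus\ca{H}_{2n-2}\oplus\ca{F}_d\big)|_{[\Sigma]}=(2g-2)\dim\SO_0(n,n+1)=\dim_\R\ca{X},
\]
and closedness from properness of $f$. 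The classification of the minima and the verification that they are of good type is the main obstacle; it constitutes the bulk of \cite{collier}, which we would follow, and it does not interact with the surface geometry developed in the rest of the present paper.
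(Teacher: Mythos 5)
The paper does not prove this statement at all: Theorem \ref{thm_collier} is quoted from \cite{collier} and used as a black box, so there is no internal argument to compare yours against. Within your outline, parts (i) and (ii) are consistent with what Collier actually does: stability is immediate here because $\deg(\ca{M})=d>0$ forces $\deg(\TT_n)=-d<0$ (cf.\ Prop./Def.\ \ref{propdef_harmonic}), and injectivity does come from the fact that the graded pieces and the line subbundle $\ca{M}\subset\ca{W}$ are intrinsically characterized, so any isomorphism is gauge-equivalent to a block transformation realizing an equivalence in $\wt{\ca{F}}_d|_{[\Sigma]}$.

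The gap is in part (iii), and it is concrete. First, the local minima of the Hitchin function on $\ca{X}_d$ are \emph{not} parametrized by $\ca{F}_d|_{[\Sigma]}$: minima are $\C^*$-fixed points (Hodge bundles), and for the cyclic shape of $\Phi$ this forces $\ve\nu=0$ in addition to $\ve\phi_{2k}=0$, so the minimal locus is the proper subvariety $\{\ve\nu=0\}\subset\ca{F}_d|_{[\Sigma]}$, essentially recording only the divisor of $\ve\mu$. The case $d=(2g-2)n$ is the sanity check: there $\ca{F}_d|_{[\Sigma]}\cong H^0(\Sigma,\ca{K}^{2n})$, while the Hitchin component has a single minimum (the Fuchsian point). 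Second, even with the correct minimal locus, Morse--Bott/Bia{\l}ynicki-Birula theory gives a retraction of the component onto its minima, but no general principle identifies the component with ``the upward flow parametrized by the remaining holomorphic data''; the unstable set of the fixed-point locus is not literally the operation of reinstating $\ve\nu$ and the $\ve\phi_{2k}$, so this step would not produce the diffeomorphism $\Psi$. Collier's actual route to surjectivity is different: a direct holomorphic normal-form argument showing that \emph{every} polystable Higgs bundle in the relevant component is gauge-equivalent to one of the displayed shape, openness of the image via a deformation-theoretic (hypercohomology) computation matching the dimension count you wrote (together with smoothness of the relevant locus, which is where $d>0$ is used), and closedness via a limiting/properness argument; the minima only serve to locate and distinguish components. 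So your skeleton ``injective, open, closed, connected domain'' is the right one, but the mechanism you lean on to show the image is the whole component is not available as stated and would have to be replaced by the normal-form argument that constitutes the bulk of \cite{collier}.
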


On the other hand, the motivation behind Labourie's conjecture is to give certain components of character varieties \emph{mapping-class-group-equivariant} parametrizations. For the component $\ca{X}_d$ from Theorem \ref{thm_collier}, the major candidate for such a parametrization is the restriction of $\Psi$ to the domain fiber bundle with the summand $\ca{H}_2$ removed. However, although the existing techniques \cite{labourie_cross,breyrer-pozzetti,guichard-labourie-wienhard} imply that this restriction is surjective, the injectivity is a difficult problem, especially in view of the new twist \cite{markovic,markovic-sagman-smillie} in the study of Labourie's conjecture for hermitian-type Lie groups. Our goal here is to provide the following partial result which extends Labourie's work \cite{labourie_cyclic} for Hitchin components. The proof is closely modeled on the argument in \cite[\S 7.3.4]{labourie_cyclic}, only with the infinitesimal rigidity of cyclic surfaces replaced by that of A-surfaces. 
\begin{theorem}\label{thm_main1}
Given $d\in(0,(2g-2)n]$, $\Psi$ restricts to an immersion from the fiber bundle $\ca{F}_d$ to $\ca{X}_d$.
\end{theorem}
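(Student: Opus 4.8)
The plan is to follow Labourie's strategy from \cite[\S 7.3.4]{labourie_cyclic}, substituting the infinitesimal rigidity of A-surfaces (Corollary \ref{coro_Avariation2}) for that of cyclic surfaces. Fix a point $p=\big([\Sigma],(\ca{M},\ve\mu,\ve\nu)\big)$ in $\ca{F}_d$, i.e.\ with the $\ca{H}_2,\cdots,\ca{H}_{2n-2}$ components set to zero, so that the associated $\SO_0(n,n+1)$-Higgs bundle is the cyclic one $(\ca{E},\Phi)$ from Definition \ref{def_cyclic} with $\TT_i=\ca{K}^{-i}$ for $i\leq n-1$, $\TT_n=\ca{K}^{-n}\!\ca{M}$, $\ve\alpha_1=\tfrac{1}{\sqrt{2}}$, $\ve\alpha_j=1$ for $2\leq j\leq n-1$, $\ve\alpha_n^+=\ve\mu$ and $\ve\alpha_n^-=\ve\nu$ (after possibly swapping $\TT_n^{\pm1}$ to arrange $d=\deg(\TT_n^{-1})>0$). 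Since the $\ve\alpha_j$ with $2\leq j\leq n-1$ are nowhere zero, the divisor condition $(\ve\alpha_2)\prec\cdots\prec(\ve\alpha_{n-1})\prec\min\{(\ve\alpha_n^+),(\ve\alpha_n^-)\}$ of \eqref{eqn_introprec} holds automatically (the left-hand divisors are empty). Hence Theorem \ref{thm_higgsA} applies: the harmonic metric yields an antipodal pair $\pm F$ of $\rho$-equivariant A-surface immersions $\wt{\Sigma}\to\H^{p,q}$, with $\rho=\Psi(p)$, and by Corollary \ref{coro_Avariation2} these A-surfaces carry no nonzero Jacobi field. (One must check the A-surface is not locally contained in a codimension-$4$ pseudo-hyperbolic subspace; this is immediate since $\ve\alpha_2,\cdots,\ve\alpha_{n-1}$ are nowhere zero, so by Theorem \ref{thm_holo}\,\ref{item_thmholo1} the Frenet splitting has no vanishing $\ve\alpha_k$ and the surface is ``full''.)

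The core of the argument is to show that $d\Psi|_p$, restricted to the tangent space $T_p\ca{F}_d$, is injective. I would argue by contradiction: suppose $v\in T_p\ca{F}_d$ is a nonzero tangent vector with $d\Psi(v)=0$. Because $\Psi$ lands in the character variety via the non-abelian Hodge correspondence, $d\Psi(v)=0$ means the first-order deformation of the flat connection $D^H$ induced by $v$ is trivial in cohomology, i.e.\ the corresponding class in $H^1(\Sigma,\Ad\rho)$ vanishes. The A-surface $F$ depends smoothly on the Higgs bundle data (this is where one invokes the smooth dependence of solutions of the Hitchin equation, plus Theorem \ref{thm_higgsA}, which produces $F$ canonically from the harmonic metric), so $v$ induces a first-order deformation of the minimal immersion $F$ through $\rho_t$-equivariant minimal immersions, with $\rho_0=\rho$. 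Its variational vector field has a normal component which, by Theorem \ref{thm_variation}\,\ref{item_thmvariation2}, is a Jacobi field on the closed surface $\Sigma$; by Corollary \ref{coro_Avariation2} it must vanish. The tangential component contributes only a reparametrization. Since $d\Psi(v)=0$, the deformation of $\rho$ is trivial, so after conjugating we may assume $\rho_t\equiv\rho$; then the deformed A-surfaces $F_t$ are all $\rho$-equivariant A-surface immersions of $\wt\Sigma$ into the \emph{same} $\H^{p,q}$ with (by the uniqueness part of Theorem \ref{thm_higgsA} / Theorem \ref{thm_determination}) the same Gauss map, hence $F_t=\pm F$ for all $t$; by continuity $F_t\equiv F$. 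Thus the structural data $(\TT_i,\ve\alpha_j,\ve\alpha_n^\pm,\ve h_i)$ of $F$ is stationary along $v$. But by the second part of Theorem \ref{thm_higgsA} this structural data recovers exactly the holomorphic data $(\TT_1,\cdots,\TT_n,\ve\alpha_2,\cdots,\ve\alpha_{n-1},\ve\alpha_n^\pm)$ defining the Higgs bundle, and $(\TT_n,\ve\alpha_n^+,\ve\alpha_n^-)=(\ca{K}^{-n}\!\ca{M}^{-1},\ve\mu,\ve\nu)$ together with the conformal class determine the point of $\ca{F}_d$; so $v=0$, a contradiction.

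The main obstacle I anticipate is the passage from ``$d\Psi(v)=0$'' to ``the A-surface deformation is by an equivariant family with fixed holonomy,'' i.e.\ carefully setting up the correspondence between first-order deformations of the Higgs bundle, first-order deformations of the flat connection, and first-order deformations of the equivariant minimal immersion, and isolating the normal component so that Theorem \ref{thm_variation}\,\ref{item_thmvariation2} can be applied. One delicate point is that a priori a deformation of the Higgs bundle data changes the \emph{holomorphic structure} on $\Sigma$ (the point of $\ca{T}_g$ moves), so the A-surfaces $F_t$ are equivariant immersions of a varying Riemann surface; but since we only need the underlying smooth surface and the immersion, and the minimal-surface equation is conformally invariant in dimension $2$, this is harmless — one works with the fixed smooth surface $S_g$ and a varying conformal structure, which is absorbed into the variation $(F_t)$. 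Another point needing care is that the variation $(F_t)$ produced this way need not be compactly supported, but $\Sigma$ is closed so Corollary \ref{coro_Avariation2} (no Jacobi fields at all) applies directly, sidestepping the issue. Finally, one should note that Theorem \ref{thm_higgsA}'s uniqueness gives $F_t\in\{+F,-F\}$ pointwise in $t$, and since $\{+F,-F\}$ is discrete in the relevant topology and $t\mapsto F_t$ is continuous, $F_t\equiv F$; invoking Remark \ref{rk_local} ensures the whole discussion makes sense in the $\rho$-equivariant setting over a varying base. Once these foundational identifications are in place, the conclusion that $\Psi|_{\ca{F}_d}$ is an immersion is immediate, and since $\ca{X}_d$ contains the image, this proves the theorem.
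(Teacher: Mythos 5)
Your setup (reducing to the kernel of $d\Psi$ at a point of $\ca F_d$, checking that the divisor condition \eqref{eqn_introprec} is automatic since $\ve\alpha_2=\cdots=\ve\alpha_{n-1}$ are nowhere zero, producing the equivariant A-surface via Theorem \ref{thm_higgsA}, and killing the normal component of the variational field with Theorem \ref{thm_variation}\,\ref{item_thmvariation2} and Corollary \ref{coro_Avariation2}) matches the paper's argument. But the second half of your proof has a genuine gap. From $d\Psi(v)=0$ you may only arrange, after conjugation, that $\dt{\rho}_0=0$ — an infinitesimal statement — not that $\rho_t\equiv\rho$ along the whole path. Worse, even granting a path with constant holonomy, your step ``the $F_t$ all have the same Gauss map, hence $F_t=\pm F$'' presupposes that the $\rho$-equivariant conformal minimal map into $X_{\SO_0(n,n+1)}$ is \emph{unique} for fixed $\rho$; that global uniqueness is exactly the open Labourie-type statement which is not available here (only infinitesimal rigidity is), and Theorem \ref{thm_determination} only gives uniqueness of the A-surface \emph{with a prescribed Gauss map}, not uniqueness of the Gauss map itself. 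So the conclusion $F_t\equiv F$, and with it ``the structural data is stationary, hence $v=0$,'' is unsupported.

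The paper never leaves the infinitesimal level. After concluding that $\dt F_0$ is tangential, it argues in two steps you are missing: (a) the first variation of the induced conformal structure under a tangential variation is a Lie derivative, so the point $[\Sigma_t]\in\ca T_g$ is stationary, and one may then fix the Riemann surface and let only $(\ca M_t,\ve\mu_t,\ve\nu_t)$ vary; (b) by Lemma \ref{lemma_variationconnection}, a tangential variation in a constant-curvature ambient space leaves every block of \eqref{eqn_nablaTN} except $\nabla^{\ca T}$ stationary, in particular the normal connection $\nabla^{\ca N_t}$ is stationary at $t=0$. Since $\nabla^{\ca N_t}$ encodes the holomorphic chain built from $(\ca M_t,\ve\mu_t,\ve\nu_t)$, its stationarity forces the equivalence class of that chain — i.e.\ the point of $\ca F_d|_{[\Sigma]}$ — to be stationary, which is exactly $v=0$. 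Your proposal needs this mechanism (or a substitute for it) to pass from ``$\dt F_0$ is tangential'' to ``the Higgs-bundle data is stationary''; the shortcut through global uniqueness of $F_t$ does not work.
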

\begin{proof}
Fix $q=([\Sigma],\ca{M},\ve\mu,\ve\nu)\in \wt{\ca{F}}_d$ and consider a smooth variation $q_t=([\Sigma_t],\ca{M}_t,\ve\mu_t,\ve\nu_t)$ with $q_0=q$, $t\in(-\epsilon,\epsilon)$. 
Let $(\ca{E}_t,\Phi_t)$ be the Higgs bundle constructed from $q_t$ as above, with  $\ve\phi_2,\cdots,\ve\phi_{2n-2}=0$. Then $(\ca{E}_t,\Phi_t)$ is the special case of the Higgs bundles studied in \S \ref{subsec_cyclic} with $\TT_i=\ca{K}^{-i}$ for $i\leq n-1$, $\TT_n=\ca{M}^{-1}$, $\ve\alpha_1=\frac{1}{\sqrt{2}}$, $\ve\alpha_2=\cdots=\ve\alpha_{n-1}=1$, $\ve\alpha_n^+=\ve\mu_t$ and $\ve\alpha_n^-=\ve\nu_t$. By Prop./Def.\ \ref{propdef_harmonic} and Theorem \ref{thm_higgsA}, we get from $(\ca{E}_t,\Phi_t)$ a holonomy representation $\rho_t:\pi_1(S_g)=\pi_1(\Sigma_t)\to\SO_0(n,n+1)$ and a $\rho_t$-equivariant A-surface immersion $\hf_t:\wt\Sigma_t\to\H^{p,q}$, while the map $\Psi$ is given by $\Psi([q_t])=[\rho_t]\in\ca{X}_d$. To prove the theorem, we assume that the path $([\rho_t])_{t\in(-\epsilon,\epsilon)}$ in $\ca{X}_d$ is stationary at $t=0$, and only need to show that the path $([q_t])$ in $\ca{F}_d$ is stationary at $t=0$ as well.

After conjugating each $\rho_t$ by some element of $\SO_0(n,n+1)$, we may assume that $\rho_t$ itself (rather than the conjugacy class $[\rho_t]$) is stationary, namely $\dt{\rho}_0=0$. As a consequence, taking the derivative in $t$ of the $\rho_t$-equivariance relation
$\hf_t(\gamma.z)=\rho_t(\gamma).\hf_t(z)$ ($\gamma\in\pi_1(\Sigma_t)$, $z\in\wt{\Sigma}$), we obtain, at $t=0$,
$$
	\dt{\hf}_0(\gamma.z)=\rho_0(\gamma).\dt{\hf}_0(z)\ \  \text{ for all }\gamma\in\pi_1(\Sigma),\ z\in\wt{\Sigma}.
$$
Namely, $\dt{F}_0$ is $\rho_0(\Sigma)$-invariant. In particular, the normal component $\dt{\hf}_0^\ca{N}$ of $\dt{F}_0$, which can be viewed as a Jacobi field on the A-surface $F_0(\wt{\Sigma})\subset\H^{p,q}$ (see \S \ref{subsec_variation}), descents to a Jacobi field on the closed A-surface $F_0(\wt{\Sigma})/\rho_0(\pi_1(\Sigma))$ in the quotient  $M:=\H^{p,q}/\rho_0(\pi_1(\Sigma))$. Therefore, we may apply Corollary \ref{coro_Avariation2} to conclude that $\dt{\hf}_0^\ca{N}\equiv0$, or in other words, $\dt{\hf}_0$ is tangent to $\hf$.

Since the first fundamental form of the immersion $\hf_t$ is conformal to the Riemann surface structure $\Sigma_t$, by the first variation formula of first fundamental form (see e.g.\ \cite{spivak}), the variation of this Riemann surface structure at $t=0$ is given by its Lie derivative with respect to the tangent vector field of $\Sigma$ corresponding to $\dt{F}_0$. It follows that the path $([\Sigma_t])$ in ${\ca{T}}_g$ is stationary at $t=0$. 
Therefore, we may assume from the beginning that $\Sigma_t$ is the fixed Riemann surface $\Sigma$ and only the triple $(\ca{M}_t,\ve\mu_t,\ve\nu_t)$ on $\Sigma$ vary.
By Lemma \ref{lemma_variationconnection}, the connection $\nabla^{\ca{N}_t}$ on the normal bundle $\ca{N}_t\subset\hf_t^*\T\H^{p,q}$ is stationary at $t=0$. But on the other hand, by definition of A-surfaces, $(\ca{N}_t,\nabla^{\ca{N}_t})$ is isomorphic to $\ca{K}^{-2}\oplus\cdots\oplus\ca{K}^{-n+1}\oplus\ca{M}^{-1}_t$ (as a real vector bundle) endowed with the connection 
$$
	\nabla=
	\scalebox{0.9}{$
		\mat{
			\nabla_2&1^*&&&\\[0.1cm]
			1&\nabla_3&\ddots&&\\
			&\ddots&\ddots&1^*&\\[0.2cm]
			&&1&\nabla_{n-1}&\hspace{-0.1cm}\ve\mu_t^*+\ve\nu_t^*\\[0.1cm]
			&&&\hspace{-0.2cm}\ve\mu_t+\ve\nu_t&\hspace{-0.4cm}\nabla_n
		}$}
$$
(where each $1\in \C\cong H^0(\Sigma,\ca{K}\otimes\Hom_\C(\ca{K}^{-j+1},\ca{K}^{-j}))$ is viewed as a $\Hom_\R(\ca{K}^{-j+1},\ca{K}^{-j})$-valued $1$-form). In particular, the equivalence class of the holomorphic chain 
  \begin{center}
	\includegraphics[width=7.5cm]{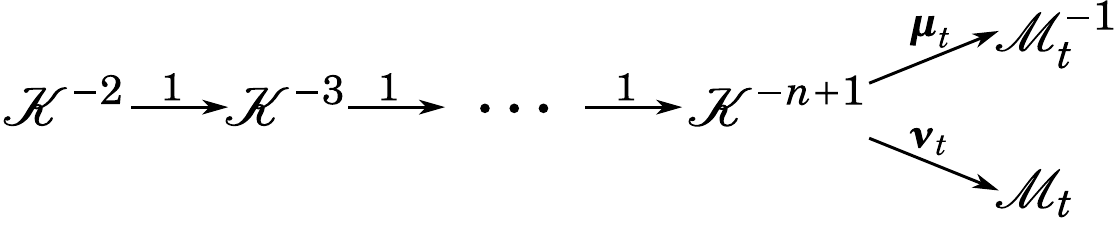}
\end{center}
is determined by the connection $\nabla^{\ca{N}_t}$. Therefore, we conclude that this equivalence class is stationary at $t=0$. By definition of the equivalence relation in $\wt{\ca{F}}_d|_{[\Sigma]}$, this means that the point in $\ca{F}_d|_{[\Sigma]}=\wt{\ca{F}}_d|_{[\Sigma]}/\sim$ represented by $(\ca{M}_t,\ve\mu_t,\ve\nu_t)$ is stationary at $t=0$, as required.
\end{proof}

\subsection{$G_2'$-Hitchin component}\label{subsec_g2higgs}
We now turn to the special case of Def.\ \ref{def_cyclic} and Prop./Def.\ \ref{propdef_harmonic} with 
\begin{equation}\label{eqn_caseg2}
n=3,\quad \TT_i=\ca{K}^{-i}\ (i=1,2,3),\quad \ve\alpha_1=\tfrac{1}{\sqrt{2}},\quad \ve\alpha_2=\ve\alpha_3^+=1.
\end{equation}
The $(\ca{E},\Phi)$'s in this case are parametrized by $\ve\alpha_n^-\in H^0(\Sigma,\ca{K}^6)$ and are exactly the Higgs bundles in the $G_2'$-Hitchin component constructed from the principal $3$-dimensional subalgebra $\frak{s}\subset\frak{g}_2^\C$ in \S \ref{subsec_g2complex}. By using the description in Remark \ref{rk_p} of the $\SO(4)$-module $\frak{p}\subset\frak{g}_2'$, one may check that $(\ca{E},\Phi)$ is indeed a $G_2'$-Higgs bundles, so the Hitchin connection $D^H$ on $\ca{E}=\ca{K}^3\oplus\cdots\oplus\ca{K}^{-3}$ is a  $G_2'$-connection. We omit the details here, as it involves the lengthy definition of $G$-Higgs bundles for a general $G$. Alternatively, we now use Proposition \ref{prop_g2realform} to identify $D^H$ as a $G_2'$-connection in a direct way.

Recall from Proposition \ref{prop_g2complex} that $\frak{g}_2^\C$ can be identified as the Lie algebra of matrices infinitesimally preserving the $3$-form $\varpi:=\ima\big(\vae^{147}+\vae^{246}-\vae^{345}-\scalebox{0.8}{$2\sqrt{2}$}\,\vae^{156}-\tfrac{1}{\sqrt{2}}\vae^{237}\big)\in\bigwedge^3\C^{7*}$. But each term $\vae^{ijk}$ in $\varpi$ (note that $i+j+k=12$) can be viewed as a holomorphic section of $\bigwedge^3\ca{E}^*$ through
$$
\vae^{ijk}(\xi_1,\xi_2,\xi_3):=\sum_{\sigma\in\frak{S}_3}\mathrm{sign}(\sigma)\ \xi_{\sigma(1)}^{(i)}\otimes\xi_{\sigma(2)}^{(j)}\otimes\xi_{\sigma(3)}^{(k)}
$$
for any holomorphic local sections $\xi_1,\xi_2,\xi_3$ of $\ca{E}$,
where $\frak{S}_3$ is the permutation group of the indices $\{1,2,3\}$ and the superscript ``$(i)$'' means taking the $i$th component in $\ca{E}=\ca{K}^3\oplus\cdots\oplus\ca{K}^{-3}$, namely the $\ca{K}^{4-i}$-component; the tensor product yields a holomorphic section of $\ca{K}^{4-i}\otimes\ca{K}^{4-j}\otimes\ca{K}^{4-k}=\ca{O}$, which is just a holomorphic function. Therefore, we may consider $\varpi$ as a holomorphic section of $\bigwedge^3\ca{E}^*$ as well. It allows us to deduce from Proposition \ref{prop_g2realform} and Prop./Def.\ \ref{propdef_harmonic}:
\begin{corollary}\label{coro_g2hitchin}
In the case \eqref{eqn_caseg2}, the following statements about the objects in Prop./Def.\ \ref{propdef_harmonic} hold:
\begin{itemize}
	\item 
the harmonic metric $H=\ve{h}_3^{-1}\oplus\ve{h}_2^{-1}\oplus\ve{h}_1^{-1}\oplus1\oplus\ve{h}_1\oplus\ve{h}_2\oplus\ve{h}_3$ satisfies
$\ve{h}_3=\ve{h}_1\ve{h}_2/4$;
\item the Hitchin connection $D^H$ preserves $\varpi\in H^0(\Sigma,\bigwedge^3\ca{E}^*)$;
\item $\varpi$ restricts to a real $3$-form on each fiber of $\ca{E}_\R$ which, together with the metric $\langle\sth,\sth\rangle$, makes the fiber isomorphic to $\R^{3,4}$ endowed with the $3$-form $\varphi$ associated to the cross product $\times$ (see \S \ref{subsec_crossproduct}). 
\end{itemize}
As a consequence, the representation $\rho$ takes values in the subgroup $G_2'$ of $\SO_0(\ca{E}_\R|_{z_0},\langle\sth,\sth\rangle)\cong\SO_0(3,4)$ formed by linear transformations of $\ca{E}_\R|_{z_0}$ preserving $\langle\sth,\sth\rangle$ and the $3$-form, whereas the minimal immersion $f$ takes values in the corresponding submanifold $X_{G_2'}$ of $X_{\SO_0(3,4)}$.
 \end{corollary}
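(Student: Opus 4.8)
The plan is to prove the three bulleted assertions in the order in which they are stated --- the shape of the harmonic metric, the $D^H$-parallelism of $\varpi$, and the identification of the real structure --- and then to read off the claims about $\rho$ and $f$.

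\emph{Step 1: $\ve h_3=\ve h_1\ve h_2/4$.} I would obtain this from a maximum principle on the closed surface $\Sigma$. Specialising the Hitchin equations \eqref{eqn_daili} to the data \eqref{eqn_caseg2} (so $\alpha_1=\tfrac1{\sqrt2}$ and $\alpha_2=\alpha_3^+=1$, with $\alpha_3^-$ arbitrary) and setting $u:=\log\!\big(4\,h_3\,h_1^{-1}h_2^{-1}\big)$, a globally defined function on $\Sigma$, the last equation of \eqref{eqn_daili} minus the first two gives, after substituting $h_3=\tfrac14 h_1h_2\,e^{u}$,
$$
\pazbz u=\tfrac12 h_1\big(e^{u}-1\big).
$$
As $h_1>0$, a maximum of $u$ has $\pazbz u\le 0$, hence $u\le 0$ there and therefore everywhere; a minimum has $\pazbz u\ge0$, forcing $u\ge0$ everywhere. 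So $u\equiv 0$, which is the first bullet.

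\emph{Step 2: $D^H\varpi=0$.} I would use the identification --- already set up just before the corollary --- of the holomorphic summands $\ca K^{3},\dots,\ca K^{-3}$ of $\ca E$ with $\vae_1,\dots,\vae_7$, under which $\varpi$ is a constant holomorphic section of $\bigwedge^3\ca E^*$; a check of anti-diagonal entries also gives $Q_\ca E=Q$, the form of Proposition \ref{prop_g2complex}. Then decompose $D^H=\nabla^H+\Phi+\Phi^*$ and treat the three terms separately. (i) \emph{$\Phi$ preserves $\varpi$}: in this frame $\Phi=\wt e+\ve\alpha_3^-\,f$, where $\wt e$ is the principal nilpotent of \S\ref{subsec_g2complex} and $f$ is (a multiple of) the degree-$6$ lowest-weight vector of $\frak g_2^\C$; both lie in the matrix algebra \eqref{eqn_g2complex}, which by Proposition \ref{prop_g2complex} is exactly the infinitesimal stabiliser of $\varpi$, so $\Phi\cdot\varpi=0$ (alternatively one checks $\Phi\cdot\varpi=0$ directly by feeding basis vectors into the five monomials of $\varpi$). (ii) \emph{$\Phi^*$ preserves $\varpi$}: by Step 1 the harmonic metric is $H=\diag(h_3^{-1},h_2^{-1},h_1^{-1},1,h_1,h_2,h_3)$ with $h_3=h_1h_2/4$, so Proposition \ref{prop_g2realform} applies and the Cartan involution $\rho_H(X)=-X^*$ preserves \eqref{eqn_g2complex}; since the coefficient matrix of $\Phi^*$ is $-\rho_H$ of that of $\Phi$, it too is valued in \eqref{eqn_g2complex}, whence $\Phi^*\cdot\varpi=0$. (iii) \emph{$\nabla^H$ preserves $\varpi$}: its $(0,1)$-part is $\bpa$ and kills the constant section $\varpi$, while its $(1,0)$-part is $A=\diag(-\pa\log h_3,-\pa\log h_2,-\pa\log h_1,0,\pa\log h_1,\pa\log h_2,\pa\log h_3)$, the diagonal of $\Omega'$ in \eqref{eqn_proofhiggsA2}; on a monomial $\vae^{ijk}$ it acts by $-(A_{ii}+A_{jj}+A_{kk})$, and these sums vanish for $\vae^{147},\vae^{246},\vae^{345}$ for free and for $\vae^{156},\vae^{237}$ precisely because $h_1h_2/h_3$ is constant. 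Adding (i)--(iii) yields $D^H\varpi=0$.

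\emph{Step 3: real structure and consequence.} With $h_3=h_1h_2/4$, comparing $\Lambda(\vae_i)$ with $\lambda(\vae_i)$ term by term shows that the anti-linear involution $\Lambda$ of Proposition \ref{prop_g2realform} coincides in this frame with the involution $\lambda$ of Proposition/Definition \ref{propdef_harmonic}, so $\ca E_\R=\Fix(\lambda)=\Fix(\Lambda)$; together with $Q_\ca E=Q$, the ``moreover'' part of Proposition \ref{prop_g2realform} identifies $(\ca E_\R|_{z},\langle\cdot,\cdot\rangle,\varpi)$ with $(\R^{3,4},\langle\cdot,\cdot\rangle,\varphi)$, which is the third bullet. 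Since $D^H$ is flat and preserves $Q_\ca E$, the subbundle $\ca E_\R$ (Proposition/Definition \ref{propdef_harmonic}) and $\varpi$ (Step 2), its holonomy $\rho$ preserves $\langle\cdot,\cdot\rangle$ and $\varphi$ on $\ca E_\R|_{z_0}\cong\R^{3,4}$, hence lands in $\Aut(\langle\cdot,\cdot\rangle,\varphi)=G_2'\subset\SO_0(3,4)$ (Definition \ref{def_g2}, Corollary \ref{coro_g2}). For $f$, reading off the explicit real frame $B$ of Proposition \ref{prop_g2realform} exhibits the positive definite subbundle $\ca V_\R$ of Proposition/Definition \ref{propdef_harmonic} as $\mathrm{span}(e_1,e_2,e_3)$, hence corresponding to $V=\R e_1\oplus\R e_2\oplus\R e_3$, which is closed under the cross product (\S\ref{subsec_almostcomplex}); as $f(z)$ is the $D^H$-parallel translate of $\ca V_\R|_z$, it is a $3$-dimensional positive definite subspace closed under $\times$, i.e.\ a point of $X_{G_2'}$ in the description of Corollary \ref{coro_xg2}. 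Thus $f$ maps into $X_{G_2'}\subset X_{\SO_0(3,4)}$.

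\emph{Expected main obstacle.} Once the frame identifications are fixed, everything past Step 1 is elementary bookkeeping, so the one point needing genuine care is part (i) of Step 2: verifying that $\Phi$ lies in the matrix algebra \eqref{eqn_g2complex} amounts to matching the $\ve\alpha_3^-$-block of the Higgs field against the degree-$6$ weight vector of $\frak g_2^\C$, and this is exactly where the distinguished scalars (the $\tfrac1{\sqrt2}$ in $\ve\alpha_1$ and the coefficients of $\varpi$) are pinned down. If one prefers to bypass this identification, the direct tensor check $\Phi\cdot\varpi=0$ does the job at the cost of a somewhat longer computation.
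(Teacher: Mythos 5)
Your proposal is correct, and it differs from the paper's argument in one genuine respect: the first bullet. The paper obtains $\ve h_3=\ve h_1\ve h_2/4$ by quoting the general structural fact (Hitchin, Labourie) that for cyclic Higgs bundles in the Hitchin component the Cartan involution induced by the harmonic metric commutes with $\sigma$ and preserves $\frak h$, and then reading the constraint off Proposition \ref{prop_g2realform}; you instead prove it directly from the Hitchin equations \eqref{eqn_daili} specialized to \eqref{eqn_caseg2}, via the maximum principle applied to the globally defined function $u=\log\big(4h_3h_1^{-1}h_2^{-1}\big)$. Your computation checks out: the $|\alpha_3^-|^2$ terms cancel between the last two equations, leaving $\pazbz u=\tfrac{2h_3}{h_2}-\tfrac12 h_1=\tfrac12 h_1(e^u-1)$, and evaluating at a maximum and a minimum of $u$ on the closed surface forces $u\equiv0$. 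This buys a self-contained, elementary derivation that does not invoke the uniqueness/compatibility machinery behind the cited general fact; what the paper's route buys is the conceptual point that the identity is an instance of that general compatibility rather than a coincidence of this particular system. Your Steps 2 and 3 follow essentially the paper's route: the paper verifies at one stroke that the full connection matrix \eqref{eqn_DHg2} has the form \eqref{eqn_g2complex}, while you split $D^H=\nabla^H+\Phi+\Phi^*$ and check the three pieces separately (the $\wt e$- and $x_5$-slot matching for $\Phi$, Proposition \ref{prop_g2realform} for $\Phi^*$, and the diagonal weight computation for $\nabla^H$, where the constraint $h_3=h_1h_2/4$ is exactly what kills the $\vae^{156}$ and $\vae^{237}$ contributions); the identification $\lambda=\Lambda$ and the reading of $\ca V_\R$ as the span of the first three columns of the matrix $B$ are the same ingredients the paper uses, with your write-up making the closure of $f(z)$ under the cross product (via parallelism of $\varpi$ and $Q_\ca E$) slightly more explicit than the paper's "whence the statement on $\rho$ and $f$ follows."
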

\begin{proof}
It is a general fact for Higgs bundles in the Hitchin component (see \cite{hitchin_lie,labourie_cubic}) that the Cartan involution $\rho$ (induced by the harmonic metric) on each fiber of the underlying Lie algebra bundle commutes with the holomorphic involution $\sigma$ (induced by the principal $3$-dimensional subalgebra), and in the cyclic case $\rho$ furthermore preserves $\frak{h}$ . Therefore, the statement on $H$ follows from Proposition \ref{prop_g2realform}. For the statement on $D^H$, we note that in the current setting, the expression \eqref{eqn_proofhiggsA2} of $D^H$ under the local frame $(\ell_n^{-1},\cdots,\ell_1^{-1},\underline{1},\ell_1,\cdots,\ell_n)$ of $\ca{E}$ (here $n=3$ and we may take $\ell_i=\dz^{-i}$) becomes
\begin{equation}\label{eqn_DHg2}
D^H=\dif+\left(
\scalebox{0.9}{
	$\begin{array}{ccc|c|ccc}
	-\tfrac{\pa h_3}{h_3}&\tfrac{h_3}{h_2}\dbz&&&&\alpha_3^-\dz&\\[0.3cm]
	\dz&-\tfrac{\pa h_2}{h_2}&\tfrac{h_2}{h_1}\dbz&&&&\alpha_3^-\dz\\[0.3cm]
	&\dz&-\tfrac{\pa h_1}{h_1}&\tfrac{h_1\dbz}{\sqrt{2}}&&&\\[0.3cm]
	\midrule
	&&\tfrac{\dz}{\sqrt{2}}&0&\tfrac{h_1\dbz}{\sqrt{2}}&&\\
    \midrule
	&&&\tfrac{\dz}{\sqrt{2}}&\tfrac{\pa h_1}{h_1}&\tfrac{h_2}{h_1}\dbz&\\[0.3cm]
	\tfrac{\cj{\alpha_3^-}\dbz}{h_2h_3}&&&&\dz&\tfrac{\pa h_2}{h_2}&\tfrac{h_3}{h_2}\dbz\\[0.3cm]
	&\tfrac{\cj{\alpha_3^-}\dbz}{h_2h_3}&&&&\dz&\tfrac{\pa h_3}{h_3}	
	\end{array}$
}
\right), \text{ where }h_3=\tfrac{h_1h_2}{4}.
\end{equation}
This matrix has the form \eqref{eqn_g2complex} in Proposition \ref{prop_g2complex}, hence $D^H$ preserves $\varpi$. The statement on the restriction of $\varpi$ to $\ca{E}_\R$ is again contained in Proposition \ref{prop_g2realform}. These statements imply that $D^H$ is a $G_2'$-connection on $\ca{E}_\R$, whence the statement on $\rho$ and $f$ follows.
\end{proof}

The A-surfaces underlying $(\ca{E},\Phi)$ belong to the subclass discussed in Proposition \ref{prop_holomorphicA}:
\begin{proposition}\label{prop_g2A}
In the above setting, the A-surface immersion $\hf:\wt{\Sigma}\to\H^{4,2}$ given in Theorem \ref{thm_higgsA} is $\ac{J}$-holomorphic. As a consequence, $-F$ is $\ac{J}$-anti-holomorphic and the images of $\pm F$ are
$\ac{J}$-holomorphic curve with nowhere vanishing second fundamental form and timelike osculation lines.
\end{proposition}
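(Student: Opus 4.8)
The plan is to exhibit an explicit $G_2'$-moving frame along $\hf$, built from the frame of $\ca{E}_\R$ that already appears in the proof of Theorem~\ref{thm_higgsA}, and then to read the action of $\ac{J}$ off the $G_2'$-multiplication table (Table~\ref{table_crossproduct}).

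First I would reduce everything to a single fiber. By Corollary~\ref{coro_g2hitchin}, in the case \eqref{eqn_caseg2} the Hitchin connection $D^H$ preserves, fiberwise on $\ca{E}_\R$, the metric $\langle\sth,\sth\rangle=Q_\ca{E}$ and the $3$-form $\varpi$, and $(\ca{E}_\R|_z,\langle\sth,\sth\rangle,\varpi)\cong(\R^{3,4},\varphi)$; in particular each fiber carries the cross product $\times$ and $D^H$ is a $G_2'$-connection, so the parallel transport into the fixed fiber $\ca{E}_\R|_{z_0}$ used to define $f$ and $\hf$ is a $G_2'$-isomorphism. Thus it suffices to work in $(\ca{E}_\R|_{z_0},\times)\cong\R^{3,4}$, in which $\hf(z)$ is the transported image of $\underline{1}$, a spacelike unit vector of $Q_\ca{E}$ (since $Q_\ca{E}(\underline1,\underline1)=1$), hence a point of $\mathbb{S}^{2,4}$; since $\H^{4,2}$ is $\mathbb{S}^{2,4}$ with reversed metric, the almost complex structure at $\hf(z)$ is $Y\mapsto\hf(z)\times Y$ on $\hf(z)^\perp$, the latter carrying exactly the A-surface metric $\langle\sth,\sth\rangle$.

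Next I would name a $G_2'$-frame. By \eqref{eqn_DHg2} the connection $D^H$ has the form \eqref{eqn_g2complex} with hermitian metric $H=\diag(h_3^{-1},h_2^{-1},h_1^{-1},1,h_1,h_2,h_3)$ and $h_3=h_1h_2/4$, so Proposition~\ref{prop_g2realform} applies verbatim: the columns of the matrix $B$ there provide, at each point, vectors $e_1,\dots,e_7$ of $\ca{E}_\R$ for which $\varpi$ and $Q_\ca{E}$ are in standard form, i.e.\ $(e_1,\dots,e_7)$ is a $G_2'$-frame. Substituting $\ell_i=\dz^{-i}$ into the entries of $B$ and comparing with the orthonormal local frame $(u_i,v_i)$ of $\TT_i'$ from the proof of Theorem~\ref{thm_higgsA} yields the purely bookkeeping identities $e_1=\underline{1}$, $(e_2,e_3)=(u_2,v_2)$, $(e_4,e_5)=(u_1,v_1)$ and $(e_6,e_7)=(-v_3,u_3)$. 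After the ($G_2'$-equivariant) parallel transport the same relations hold in $\ca{E}_\R|_{z_0}$ between $\hf(z)$, the frame $(U_1,V_1)$ — which spans $\TT_1=\dif\hf(\T_z\wt\Sigma)$ by the shape of $\Omega$ in \eqref{eqn_proofhiggsA1} — and the frames of the normal planes $\TT_2,\TT_3$.

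Finally, since $\hf(z)$ is the first vector of a $G_2'$-frame, Table~\ref{table_crossproduct} gives $\ac{J}(U_1)=\hf(z)\times U_1=V_1$ and $\ac{J}(V_1)=\hf(z)\times V_1=-U_1$, so $\ac{J}$ preserves $\dif\hf(\T_z\wt\Sigma)$ and $\hf$ is a $\ac{J}$-holomorphic curve; the same table also yields $\ac{J}(\TT_2)=\TT_2$ and $\ac{J}(\TT_3)=\TT_3$, recovering the last clause of Proposition~\ref{prop_holomorphicA}. For the consequences: the antipodal map $\tau$ of $\H^{4,2}$ is an isometry with $\dif\tau\circ\ac{J}=-\ac{J}\circ\dif\tau$ (\S\ref{subsec_almostcomplex}), so $-\hf=\tau\circ\hf$ is $\ac{J}$-anti-holomorphic and $\dif\tau$ carries $\ac{J}$-invariant planes to $\ac{J}$-invariant planes, whence $-\hf(\wt\Sigma)$ is again a $\ac{J}$-holomorphic curve; the second fundamental form of $\hf$ equals $\ve\alpha_2=\II=1$, nowhere zero by Theorem~\ref{thm_higgsA}, and its osculation lines lie in the timelike bundle $\TT_2$ (Remark~\ref{rk_secondff}), both features being preserved by the isometry $\tau$. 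The one genuinely delicate point is the identification of the columns of $B$ with the frame $(u_i,v_i)$ — getting the reordering and the sign in $e_6=-v_3$ right, and not conflating the A-surface metric with the metric $Q_\ca{E}$ of the $\R^{3,4}$ underlying $\ac{J}$.
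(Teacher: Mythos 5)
Your proposal is correct and follows essentially the same route as the paper: it reduces to the fiber $(\ca{E}_\R,\langle\sth,\sth\rangle,\varpi)\cong\R^{3,4}$ via the $G_2'$-connection $D^H$, identifies the reordered frame $(\underline{1},u_2,v_2,u_1,v_1,-v_3,u_3)=(\ell_3^{-1},\cdots,\ell_3)B$ as a $G_2'$-frame using Proposition \ref{prop_g2realform}, and reads off $\hf\times U_1=e_1\times e_4=e_5=V_1$ from Table \ref{table_crossproduct}, exactly as in the paper's proof, with the same treatment of the antipodal map and of Proposition \ref{prop_holomorphicA} for the consequences. The only detail worth making explicit is that $U_1,V_1$ are the images under $\dif\hf$ of the oriented orthonormal tangent frame $h_1^{-1/2}\pa_x,h_1^{-1/2}\pa_y$, so that $\ac{J}(U_1)=V_1$ upgrades "the tangent plane is $\ac{J}$-invariant" to the full relation $\dif\hf\circ\ac{j}=\ac{J}\circ\dif\hf$.
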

\begin{proof}
Consider the holomorphic local frame $(\ell_3^{-1},\ell_2^{-1},\ell_1^{-1},\underline{1},\ell_1,\ell_2,\ell_3)=(\dz^3,\dz^2,\cdots,\dz^{-3})$ of $\ca{E}$. In the proof of Theorem \ref{thm_higgsA}, we have shown that under the frame $(\underline{1},u_1,v_1,u_2,v_2,u_3,v_3)=(\ell_3^{-1},\cdots,\ell_3)R$ of  $\ca{E}_\R$ given by
$$
R=\tfrac{1}{\sqrt{2}}
\left(
\scalebox{0.9}{$
	\begin{array}{c|cc|cc|cc}
		&&&&&h_3^\frac{1}{2}&-\ima h_3^\frac{1}{2}\\[0.2cm]
		&&&h_2^\frac{1}{2}&-\ima h_2^\frac{1}{2}&&\\[0.2cm]
		&h_1^\frac{1}{2}&-\ima h_1^\frac{1}{2}&&&&\\
		\midrule
		\hspace{-0.2cm}\scalebox{0.9}{$\sqrt{2}$}&&&&&&\\
		\midrule
		&h_1^{-\frac{1}{2}}&\ima h_1^{-\frac{1}{2}}&&&&\\[0.2cm]
		&&&h_2^{-\frac{1}{2}}&\ima h_2^{-\frac{1}{2}}&&\\[0.2cm]
		&&&&&h_3^{-\frac{1}{2}}&\ima h_3^{-\frac{1}{2}}
	\end{array}
	$}
\right)
$$	
(note that $h_3=\frac{h_1h_2}{4}$ by Corollary \ref{coro_g2hitchin}), the Hitchin connection is represented by the matrix 
$$
\Omega=
\scalebox{0.9}{
	$\mat{
		&\transp\theta&&\\[0.1cm]
		\theta&\Omega_1&\transp\ve\alpha_2&\\[0.1cm]
		&\ve\alpha_2&\Omega_2&\transp\ve\alpha_3\\[0.1cm]
		&&\ve\alpha_3&\hspace{-0.1cm}\Omega_n
	}$};
$$
and as a consequence, there are maps $U_i,V_i$ ($i=1,2,3$) from $\wt\Sigma$ to $\R^{3,4}$ such that the matrix-valued map $P:=(F,U_1,V_1,U_2,V_2,U_3,V_3)$ satisfies $P^{-1}\dif P=\Omega$. In particular, $\dif F_z:\T_z\wt\Sigma\to \T_{F(z)}\H^{4,2}\cong F(z)^\perp$ sends the unit tangent vectors $h_1^{-\frac{1}{2}}\pa_x,h_1^{-\frac{1}{2}}\pa_y\in\T_z\wt\Sigma$ to $U_1$ and $V_1$, respectively.

On the other hand, by the proof of Proposition \ref{prop_g2realform}, the modified frame $(\underline{1},u_2,v_2,u_1,v_1,-v_3,u_3)$ (which equals $(\ell_3^{-1}\,\cdots,\ell_3)B$ with $B$ from that proof) is a $G_2'$-frame of $\ca{E}_\R$ in the sense of Definition \ref{def_g2}, so the matrix $(F,U_2,V_2,U_1,V_1,-V_3,U_3)$ is in $G_2'$. Thus, if we re-denote the column vectors of this matrix by $e_1,\cdots,e_7$ successively, then their cross products are given by Table \ref{table_crossproduct}. In particular, we have $F\times U_1=e_1\times e_4=e_5=V_1$. By definition of the almost complex structure $\ac{J}$ (see \S \ref{subsec_almostcomplex}), this means that $\ac{J}$ sends $U_1(z)\in F(z)^\perp=\T_{F(z)}\H^{4,2}$ to $V_1(z)$ for all $z\in\wt{\Sigma}$. Combining with the above description of $\dif F_z$, we conclude that $\dif F_z\circ\ac{j}=\ac{J}\circ\dif F_z$ (where $\ac{j}$ denotes the complex structure on $\wt\Sigma$), namely $F$ is $\ac{J}$-holomorphic.
\end{proof}

\appendix
\section{Proof of Theorem \ref{thm_daili}}
The proof here is adapted from \cite[\S 5]{dai-li_minimal}. In order to make it easier for the reader to compare with the original work, we restate the theorem as follows, where the assumption from Remark \ref{rk_reduction} has been added and the notation has been reset (see the remark below) so as to be consistent with \cite{dai-li_minimal}.
\begin{theorem*}
	Let $\Sigma$ be a closed Riemann surface of genus $\geq2$, $\TT_1,\cdots,\TT_n$ ($n\geq2$) be holomorphic line bundles on $\Sigma$ and $\ve{h}_i$ be a hermitian metric on $\TT_i$ ($i=1,\cdots,n$). Let $\ve{\gamma}_j$ ($j=1,\cdots, n-1$), $\ve{\gamma}_n$ and $\ve{\beta}$ 
	be holomorphic sections of the line bundles $\ca{K}\!\TT_j^{-1}\TT_{j+1}$, $\ca{K}\!\TT_n^{-1}$ and $\ca{K}\!\TT_1\TT_2$, respectively, such that none of $\ve{\gamma}_1,\cdots,\ve{\gamma}_n$ is identically zero and the following equations hold on $\Sigma$:
	\begin{align}
		&\text{if $n=2$: }		
		\begin{cases}
			\pazbz\log h_1=-\tfrac{h_2}{h_1}|\gamma_1|^2+h_1h_2|\beta|^2,\\[0.2cm]
			\pazbz\log h_2=\tfrac{h_2}{h_1}|\gamma_1|^2-\tfrac{1}{h_2}|\gamma_2|^2+h_1h_2|\beta|^2;
		\end{cases}
		\label{eqn_appthm1}
		\\
		&\text{if $n\geq3$: }	
		\begin{cases}
			\pazbz\log h_1=-\tfrac{h_2}{h_1}|\gamma_1|^2+h_1h_2|\beta|^2,\\[0.2cm]
			\pazbz\log h_2=\tfrac{h_2}{h_1}|\gamma_1|^2-\tfrac{h_3}{h_2}|\gamma_2|^2+h_1h_2|\beta|^2,\\[0.2cm]
			\pazbz\log h_k=\tfrac{h_k}{h_{k-1}}|\gamma_{k-1}|^2-\tfrac{h_{k+1}}{h_k}|\gamma_k|^2\quad (3\leq k\leq n-1),\\[0.2cm]
			\pazbz\log h_n=\tfrac{h_n}{h_{n-1}}|\gamma_{n-1}|^2-\tfrac{1}{h_n}|\gamma_n|^2.
		\end{cases}
		\label{eqn_appthm2}
	\end{align}
	Then the following statements hold.
	\begin{enumerate}[label=(\arabic*)]
		\item\label{item_appthm2}
		If $(\ve\gamma_1)\prec(\ve\beta)\neq \emptyset$, then $\pazbz\log h_1\leq0$ on $\Sigma$, with strict inequality away from the zeros of $\ve\gamma_1$. 
		\item\label{item_appthm1}
		If $(\ve{\gamma}_n)\prec\cdots\prec (\ve{\gamma}_2)\prec \min\big\{(\ve{\gamma}_1),(\ve{\beta})\big\}$, then
		$\pazbz\log h_i\leq0$ on $\Sigma$ for $i=2,\cdots,n$, with strict inequality away from the zeros of $\ve\gamma_i$.
	\end{enumerate}
\end{theorem*}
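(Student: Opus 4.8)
The plan is to follow the maximum--principle arguments of Dai--Li \cite[\S 5]{dai-li_minimal}. The mechanism common to both parts is that the \emph{strict} divisor inequalities turn the relevant quotients of the holomorphic forms $\ve\gamma_j,\ve\beta$ into genuine holomorphic sections of line bundles, so that their pointwise squared norms are globally defined smooth functions on $\Sigma$; the desired inequalities $\pazbz\log h_i\le 0$ then amount to comparing such norms with $1$, which one does by the maximum principle.

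For part \ref{item_appthm2} I would put $\eta:=\ve\beta/\ve\gamma_1$. The hypothesis $(\ve\gamma_1)\prec(\ve\beta)$ says that at every zero of $\ve\gamma_1$ the form $\ve\beta$ vanishes to strictly higher order, so $\eta$ is a \emph{holomorphic} section of $\TT_1^2$, with squared norm $\|\eta\|^2=h_1^2|\beta|^2/|\gamma_1|^2$ (taken in $\ve h_1^2$). The $h_1$-equation of \eqref{eqn_appthm1}/\eqref{eqn_appthm2} then reads $\pazbz\log h_1=\tfrac{h_2}{h_1}|\gamma_1|^2\,(\|\eta\|^2-1)$, and away from the zeros of $\eta$ one has $\pazbz\log\|\eta\|^2=2\,\pazbz\log h_1$. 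Since $\|\eta\|^2$ is continuous and not identically zero on the closed surface $\Sigma$, it attains its maximum at a point that is necessarily a non-vanishing point of $\eta$ (hence a point where $\ve\gamma_1\neq0$, by the divisor condition); evaluating the previous relation there yields $\|\eta\|^2\le 1$ on all of $\Sigma$. The strong maximum principle, applied to $\log\|\eta\|^2$ (which is superharmonic away from the isolated zeros of $\eta$ once $\|\eta\|^2\le 1$ is known), then upgrades this to $\|\eta\|^2<1$ everywhere, the only alternative $\|\eta\|^2\equiv 1$ being excluded because $(\ve\beta)\neq\emptyset$ forces $\eta$ to have a zero. Hence $\pazbz\log h_1\le 0$, with equality precisely at the zeros of $\ve\gamma_1$.

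For part \ref{item_appthm1} one runs the same idea along the whole chain. Write $A_j:=\tfrac{h_{j+1}}{h_j}|\gamma_j|^2$ for $1\le j\le n-1$, $A_n:=\tfrac{1}{h_n}|\gamma_n|^2$ and $B:=h_1h_2|\beta|^2$; then \eqref{eqn_appthm1}/\eqref{eqn_appthm2} becomes $\pazbz\log h_1=B-A_1$, $\pazbz\log h_2=A_1-A_2+B$ and $\pazbz\log h_k=A_{k-1}-A_k$ for $3\le k\le n$, the interior equations forming an affine Toda system $\pazbz\log A_k=2A_k-A_{k-1}-A_{k+1}$ (away from the zeros of $\ve\gamma_k$). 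The hypotheses $(\ve\gamma_k)\prec(\ve\gamma_{k-1})$ make each quotient $\ve\gamma_{k-1}/\ve\gamma_k$ holomorphic, with squared norm $A_{k-1}/A_k$, smooth on $\Sigma$ and vanishing at the zeros of $\ve\gamma_{k-1}$; similarly $(\ve\gamma_2)\prec(\ve\beta)$ makes $\ve\beta/\ve\gamma_2$ holomorphic, so that $\rho_2:=(A_1+B)/A_2$ is smooth, and I set $\rho_k:=A_{k-1}/A_k$ for $3\le k\le n$. Since $\pazbz\log h_k\le 0$ is exactly $\rho_k\le 1$, I would let $c$ be the largest of the finitely many maxima $\max_\Sigma\rho_2,\dots,\max_\Sigma\rho_n$, attained at a point $p_0$ with index $m$, and suppose $c>1$. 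As $c>0$, $p_0$ is a regular point of $\rho_m$ and $\log\rho_m$ has a global maximum there, so $\pazbz\log\rho_m\le 0$ at $p_0$; expanding this via the Toda and endpoint equations, and using $\rho_{m\pm1}(p_0)\le c$, one wants to force $c\le 1$, a contradiction. Once every $\rho_k\le 1$ is known, applying the strong maximum principle to the $\log$ of the corresponding norm gives the strict inequalities away from the zeros of $\ve\gamma_k$.

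The delicate step is this contradiction in part \ref{item_appthm1}: because of the coupling in the Toda chain, the bare inequality $\pazbz\log\rho_m\le 0$ at the maximizing point does not by itself pin down $c$ when $m$ is an interior index, so one must exploit the asymmetry of the two ends of the system --- the $\ve\beta$-term entering the $h_2$-equation (where part \ref{item_appthm2}-type information feeds in) and the special factor $|\gamma_n|^2/h_n$ in the $h_n$-equation --- to drive the maximizing index toward an endpoint or to otherwise close the argument, exactly as in \cite[\S 5]{dai-li_minimal}. The other point needing care is the behaviour of the ratios $\rho_k$ near the common zeros of the $\ve\gamma_j$, which is what the strict ``$\prec$'' hypotheses are tailored to control, and which is also where the strong maximum principle (rather than the plain one) is needed for the strict inequalities.
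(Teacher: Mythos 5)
Your part \ref{item_appthm2} is correct and is essentially the paper's own argument in different packaging: your $\|\eta\|^2$ is exactly the quantity $e^{u_0-u_1}=\|\ve\beta\|^2/\|\ve\gamma_1\|^2$ used in the appendix, and the two maximum-principle steps coincide. One small imprecision: what closes the case $\max\|\eta\|^2=1$ is not superharmonicity (a superharmonic function may well have an interior maximum) but Hopf's strong maximum principle for $\pazbz u\ge c\,u$ with $c>0$, applied near a maximizing point where $\ve\gamma_1\neq0$, followed by an open--closed argument; also the trivial case $\ve\beta\equiv0$ should be set aside at the start.

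The genuine gap is in part \ref{item_appthm1}, precisely where you flag it, and it is not a detail one can defer. Your plan is to take $c:=\max_k\max_\Sigma\rho_k$, attained at $p_0$ with index $m$, and to extract $c\le1$ from $\pazbz\log\rho_m(p_0)\le0$. For an interior index this fails quantitatively: dividing the interior identity $\tfrac14\Delta(u_{m-1}-u_m)=-e^{u_{m-2}}+3e^{u_{m-1}}-3e^{u_m}+e^{u_{m+1}}$ by $e^{u_m}(p_0)>0$ and using $\rho_{m\pm1}(p_0)\le c$ only gives $-c^2+3c-3+\tfrac1c\le0$, i.e.\ $(c-1)^3\ge0$, which is vacuous; and it is not clear how to ``drive the maximizing index to an endpoint.'' The paper's mechanism is different: it records a separate maximum $A_k$ for \emph{each} ratio (with $A_2$ the maximum of the combined ratio $(\|\ve\gamma_1\|^2+\|\ve\beta\|^2)/\|\ve\gamma_2\|^2$, handled via $\Delta\log(e^f+1)\ge\tfrac{e^f}{e^f+1}\Delta f$), obtains the coupled scalar system \eqref{eqn_elementary}, and then runs a global telescoping argument with $B_k=(1-\tfrac1{A_k})-(A_{k-1}-1)$, writing $1-\tfrac1{A_n}$ as a sum of $n-1$ terms each bounded by $B_n$; it is this global algebraic step, not any single pointwise evaluation, that forces either $A_2=\cdots=A_n=1$ or all $A_k<1$. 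A second missing ingredient is the exclusion of the degenerate case $A_2=\cdots=A_n=1$: this uses the preliminary reduction (a degree count relying on genus $\ge2$) that after possibly swapping $\ve\gamma_1$ with $\ve\beta$ and $\TT_1$ with $\TT_1^{-1}$ one may assume $(\ve\beta)\neq\emptyset$, then the strong maximum principle to conclude $\|\ve\gamma_1\|^2+\|\ve\beta\|^2\equiv\|\ve\gamma_2\|^2$, and finally the secondary estimate $\max_\Sigma\|\ve\gamma_1\|^2/\|\ve\gamma_2\|^2\le\tfrac12$, contradicted at the nonempty zero set of $\ve\beta$ where this ratio equals $1$. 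Your closing claim that, once $\rho_k\le1$ is known, the strict inequalities follow by the strong maximum principle runs into the same wall: propagating $\rho_k\equiv1$ along the chain lands exactly in this degenerate configuration, so without the two ingredients above the argument does not close.
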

\begin{remark*}
The hermitian line bundle $(\TT_i,\ve{h}_i)$ here corresponds to $(\TT_{n+1-i}^{-1},\ve{h}_{n+1-i}^{-1})$ in Theorem \ref{thm_daili}. In particular, $\log h_1,\log h_2,\cdots$ correspond to $-\log h_n,-\log h_{n-1},\cdots$ therein, respectively. Meanwhile, $\ve\gamma_n,\cdots,\ve\gamma_2,\ve\gamma_1,\ve\beta$ correspond respectively to $\ve\alpha_1,\cdots,\ve\alpha_{n-1},\ve\alpha_n^+,\ve\alpha_n^-$. In fact, while the equations in Theorem \ref{thm_daili} are the Hitchin equation for the Higgs bundle $(\ca{E},\Phi)$ exhibited in \S \ref{def_cyclic}, the above \eqref{eqn_appthm1} and \eqref{eqn_appthm2} are the Hitchin equation for the Higgs bundle 
\begin{align*} 
	&\hspace{0.1cm}\ca{E}=\TT_1\oplus\TT_2\cdots \oplus \TT_n\oplus \ca{O}\oplus \TT_n^{-1}\oplus  \cdots \TT_2^{-1}\oplus \TT_1^{-1},\\
	&\Phi=
	\scalebox{0.83}{$
		\left(
		\begin{array}{cccc|c|cccc}
			0&&&&&&&\ve\beta&\\
			\ve\gamma_1&0&&&&&&&\ve\beta\\
			&\ddots&\ddots&&&&&&\\
			&&\hspace{-0.2cm}\ve\gamma_{n-1}&0&&&&&\\
			\midrule
			&&&\ve\gamma_n&0&&&&\\
			\midrule
			&&&&\ve\gamma_n&0&&&\\
			&&&&&\ve\gamma_{n-1}&\ddots&&\\
			&&&&&&\ddots&0&\\
			&&&&&&&\ve\gamma_1&0
		\end{array}
		\right)
		$}~.
\end{align*}
The original result in \cite{dai-li_minimal} deals with case $\TT_i=\ca{K}^{n+1-i}$, $\ve\gamma_1=\cdots=\ve\gamma_n=1$.
\end{remark*}
\begin{proof}
	When $\ve\beta\equiv0$, \ref{item_appthm2} is trivial and \ref{item_appthm1} can be proved by a simple adaptation of the argument below for $\ve\beta\not\equiv0$. So let us assume $\ve\beta\not\equiv0$ from now on and only outline the adaption for $\ve\beta\equiv0$ at the end.
	
	Pick a background conformal metric $\ve{g}=h_0|\dz|^2$ on $\Sigma$. After being divided by $h_0$, each term in  \eqref{eqn_appthm2} is a globally defined function on $\Sigma$. The terms on the right-hand side are
	$$
	\|\ve{\gamma}_j\|^2=\tfrac{h_{j+1}}{h_0h_j}|\gamma_j|^2\quad (j=1,\cdots,n-1),\quad
	\|\ve\gamma_n\|^2=\tfrac{1}{h_0h_n}|\gamma_n|^2,\quad
	\|\ve{\beta}\|^2=\tfrac{h_1h_2|\beta|^2}{h_0}.
	$$
	In what follows, we extend the definition of the real logarithm and exponential functions by putting $\log 0:=-\infty$ and $e^{-\infty}:=0$. Consider the functions $u_0,\cdots, u_n:\Sigma\to\{-\infty\}\cup\R$ given by
	$$
	u_0:=\log\|\ve\beta\|^2,\quad u_i:=\log\|\ve\gamma_i\|^2\quad (i=1,\cdots,n).
	$$
	Note that $u_0$ (resp.\ $u_i$) is smooth away from the discrete set $\{u_0=-\infty\}$ (resp.\ $\{u_i=-\infty\}$), which is just the zero locus of $\ve\beta$ (resp.\ $\ve\gamma_i$).
	
	Let $\Delta=\frac{4}{h_0}\pazbz$ be the Laplacian associated to the metric $\ve{g}$.
	% We first compute the Laplacian of $u_0,\cdots,u_n$. 
	Since $\pazbz\log |\phi|^2=0$ for any holomorphic function $\phi(z)$, by \eqref{eqn_appthm1} and the first two equations in \eqref{eqn_appthm2}, away from $\{u_0=-\infty\}$, we have
	\begin{align*}
		\tfrac{1}{4}\Delta u_0&=\tfrac{1}{h_0}\pazbz\big(\log|\beta|^2+\log h_1+\log h_2-\log h_0\big)=\tfrac{1}{h_0}\pazbz\log h_1+\tfrac{1}{h_0}\pazbz\log h_2+\tfrac{\kappa}{2}\\
		&=\big(-\|\ve\gamma_1\|^2+\|\ve\beta\|^2\big)+\big(\|\ve\gamma_1\|^2-\|\ve\gamma_2\|^2+\|\beta\|^2\big)+\tfrac{\kappa}{2}\\
		&=2\|\ve\beta\|^2-\|\ve\gamma_2\|^2+\tfrac{\kappa}{2}=2e^{u_0}-e^{u_2}+\tfrac{\kappa}{2}
	\end{align*}
	where $\kappa=-\frac{2}{h_0}\pazbz\log h_0$ is the curvature of $\ve{g}$. By similar computations, we obtain
	$$
	\text{if $n=3$: }
	\begin{cases}
		\tfrac{1}{4}\Delta u_0=2e^{u_0}-e^{u_2}+\tfrac{\kappa}{2}~,\\
		\tfrac{1}{4}\Delta u_1=2e^{u_1}-e^{u_2}+\tfrac{\kappa}{2};\\
		\tfrac{1}{4}\Delta u_2=-e^{u_0}-e^{u_1}+e^{u_2}+\tfrac{\kappa}{2};\\
	\end{cases}
	\
	\text{if $n\geq3$: }
	\begin{cases}
		\tfrac{1}{4}\Delta u_0=2e^{u_0}-e^{u_2}+\tfrac{\kappa}{2}~,\\
		\tfrac{1}{4}\Delta u_1=2e^{u_1}-e^{u_2}+\tfrac{\kappa}{2}~,\\
		\tfrac{1}{4}\Delta u_2=-e^{u_0}-e^{u_1}+2e^{u_2}-e^{u_3}+\tfrac{\kappa}{2}~,\\
		\tfrac{1}{4}\Delta u_k=-e^{u_{k-1}}+2e^{u_k}-e^{u_{k+1}}+\tfrac{\kappa}{2}\quad (3\leq k\leq n-1),\\
		\tfrac{1}{4}\Delta	u_n=-e^{u_{n-1}}+e^{u_n}+\tfrac{\kappa}{2}.
	\end{cases}
	$$
	Taking differences, we get equations for $\tfrac{1}{4}\Delta(u_i-u_{i+1})$ away from $\{u_i=-\infty\}\cup\{u_{i+1}=-\infty\}$:
	\begin{align}
		\text{if $n=2$: }&
		\begin{cases}
			\tfrac{1}{4}\Delta(u_0-u_1)=2e^{u_0}-2e^{u_1},\\
			\tfrac{1}{4}\Delta(u_1-u_2)=e^{u_0}+3e^{u_1}-2e^{u_2};
		\end{cases}
		\text{if $n=3$: }
		\begin{cases}
			\tfrac{1}{4}\Delta(u_0-u_1)=2e^{u_0}-2e^{u_1},\\
			\tfrac{1}{4}\Delta(u_1-u_2)=e^{u_0}+3e^{u_1}-3e^{u_2}+e^{u_3},\\
			\tfrac{1}{4}\Delta(u_2-u_3)=-e^{u_0}-e^{u_1}+3e^{u_2}-2e^{u_3};
		\end{cases}
		\label{eqn_appthmproof4}\\
		&\text{if $n\geq4$: }
		\begin{cases}
			\tfrac{1}{4}\Delta(u_0-u_1)=2e^{u_0}-2e^{u_1},\\
			\tfrac{1}{4}\Delta(u_1-u_2)=e^{u_0}+3e^{u_1}-3e^{u_2}+e^{u_3},\\
			\tfrac{1}{4}\Delta(u_2-u_3)=-e^{u_0}-e^{u_1}+3e^{u_2}-3e^{u_3}+e^{u_4},\\
			\tfrac{1}{4}\Delta(u_{k-1}-u_k)=-e^{u_{k-2}}+3e^{u_{k-1}}-3e^{u_k}+e^{u_{k+1}}\quad (4\leq k\leq n-1),\\
			\tfrac{1}{4}\Delta(u_{n-1}-u_n)=-e^{u_{n-2}}+3e^{u_{n-1}}-2e^{u_n}.
		\end{cases}
		\label{eqn_appthmproof5}
	\end{align}
	%By continuity, the expression of each $\tfrac{1}{4}\Delta(u_{k-1}-u_k)$ with $2\leq k\leq n$ makes sense everywhere on $\Sigma$ except for the finitely many points where $u_{k-1}-u_k=-\infty$ as long as we take $e^{-\infty}$ to be $0$ on the right-hand side. Similarly, the expression of $\tfrac{1}{4}\Delta(u_0-u_1)$ makes sense away from the points where $u_0=-\infty$ or $u_1=-\infty$.
	
	We are now ready to carry out the proof of statements \ref{item_appthm2} and \ref{item_appthm1}. 
	
	\ref{item_appthm2}
	The assumption $(\ve\gamma_1)\prec(\ve\beta)$ implies that $e^{u_0-u_1}=\|\ve\beta\|^2/\|\ve\gamma_1\|^2$ attains its maximum $A_1>0$ on a set $P_1\subset\Sigma$ not containing the zeros of $\ve\beta$ (note that the condition $(\ve\gamma_1)\leq(\ve\beta)$ is sufficient for $\|\ve\beta\|^2/\|\ve\gamma_1\|^2$ to attain maximum, but in this case a maximal point might be a common zero of $\ve\gamma_1$ and $\ve\beta$ with the same multiplicity). Since a maximal point of $e^{u_0-u_1}$ is also a maximal point of $u_0-u_1$, by the first equation in \eqref{eqn_appthmproof4} or \eqref{eqn_appthmproof5} and Maximum Principle, we obtain the following inequality, where every function is evaluated at a given $p\in P_1$:
	$$
	\text{at $p\in P_1$:}\quad 0\geq \tfrac{1}{4}\Delta (u_0-u_1)=2e^{u_0}-2e^{u_1}=2e^{u_1}(e^{u_0-u_1}-1)=2e^{u_1}(A_1-1).
	$$
	It follows that $A_1\leq 1$ because $e^{u_1}=\|\ve\gamma_1\|^2>0$ at $p$.
	
	We now improve this to the strict inequality $A_1<1$ by using E. Hopf's Strong Maximum Principle (see e.g.\ \cite[Theorem 3.5]{gilbarg-trudinger}), which says that any function $u$ on a disk in $\C$ satisfying $\pazbz u\geq cu$ for a constant $c>0$ cannot achieve nonnegative maximum unless it is constant. Suppose by contradiction that $A_1=1$. Given $p\in P_1$, on any disk $U$  centered at $p$ and avoiding the zeros of $\ve\beta$, we have
	$$
	\tfrac{1}{h_0}\pazbz(u_0-u_1)=\tfrac{1}{4}\Delta(u_0-u_1)=2e^{u_1}(e^{u_0-u_1}-1)\geq e^{u_1}(u_0-u_1).
	$$
	Since the maximum $\log A_1=0$ of $u_0-u_1$ on $U$ is achieved at $p$, we can use Strong Maximum Principle to infer that $u_0-u_1$ is identically $0$ on $U$. Therefore, the nonempty closed set $P_1$ is also open, hence is the whole $\Sigma$. This is impossible because $P_1$ does not intersect the zero locus of $\ve\beta$, which is nonempty by assumption. Thus, we have shown $A_1<1$. The required statement follows:
	$$
	\tfrac{1}{h_0}\pazbz\log h_1=\|\ve\beta\|^2-\|\ve\gamma_1\|^2=
	\begin{cases}
		0 &\text{ on $\{\ve\gamma_1=0\}$},\\
		\|\ve\gamma_1\|^2\big(\tfrac{\|\ve\beta\|^2}{\|\ve\gamma_1\|^2}-1\big)\leq \|\ve\gamma_1\|^2(A_1-1)<0 &\text{ on $\{\ve\gamma_1\neq0\}$}.
	\end{cases}
	$$

	\ref{item_appthm1} Assuming $(\ve\gamma_n)\prec\cdots\prec(\ve\gamma_2)\prec\min\big\{(\ve\gamma_1),(\ve\beta)\big\}$, we first observe that the divisors $(\ve\gamma_1)$ and $(\ve\beta)$ cannot be both empty: otherwise we have $(\ve\gamma_2)=\cdots=(\ve\gamma_n)=\emptyset$ as well, and hence the holomorphic line bundles $\ca{K}\!\TT_j^{-1}\TT_{j+1}$, $\ca{K}\!\TT_n^{-1}$ and $\ca{K}\!\TT_1\TT_2$ are all trivial; but the triviality of the first two implies that $\TT_i=\ca{K}^{n+1-i}$, so $\ca{K}\!\TT_1\TT_2=\ca{K}^{2n}$ is not trivial (as $\Sigma$ has genus $\geq2$), a contradiction. Therefore, after possibly replacing $(\TT_1,\ve{h}_1)$ by $(\TT_1^{-1},\ve{h}_1^{-1})$ and interchanging $\ve\gamma_1$ with $\ve\beta$ (which does not affect the equations), we may assume that $(\ve\beta)\neq\emptyset$.
	
	We also infer from the assumption that $e^{u_0-u_2}+e^{u_1-u_2}=\tfrac{\|\ve\gamma_1\|^2+\|\ve\beta\|^2}{\|\ve\gamma_2\|^2}$ attains its maximum $A_2>0$ on a set $P_2\subset\Sigma$ not containing the zeros of $\ve\gamma_2$, while $e^{u_{k-1}-u_k}=\|\ve\gamma_{k-1}\|^2/\|\ve\gamma_k\|^2$ ($3\leq k\leq n$) attains its maximum $A_k>0$ on $P_k\subset\Sigma$ not containing the zeros of $\ve\gamma_k$. We proceed in the following steps:
	
	\textbf{Step 1.} Use Maximum Principle to produce a set of algebraic inequalities on $A_2,\cdots,A_n$.
	
	First consider the case $n\geq4$. Using \eqref{eqn_appthmproof5} and Maximum Principle, we get
	\begin{align*}
		\text{at $p\in P_3$:}\quad 0&\geq\tfrac{1}{4}e^{-u_3}\Delta(u_2-u_3)\\
		&=e^{-u_3}(-e^{u_0}-e^{u_1}+3e^{u_2}-3e^{u_3}+e^{u_4})\\
		&=-e^{u_2-u_3}(e^{u_0-u_2}+e^{u_1-u_2}-1)+2(e^{u_2-u_3}-1)-(1-e^{u_4-u_3})\\
		&\geq-A_3(A_2-1)+2(A_3-1)-\big(1-\tfrac{1}{A_4}\big),
	\end{align*}
	where the multiplier $e^{-u_3}$ in the first line is allowed because $P_3$ does not contain the zeros of $\ve\gamma_3$. Similarly, we have
	\begin{align*}
		\text{at $p\in P_k$ ($4\leq k\leq n-1$):}\quad 0&\geq \tfrac{1}{4}e^{-u_k}\Delta(u_{k-1}-u_k)\\
		&=e^{-u_k}(-e^{u_{k-2}}+3e^{u_{k-1}}-3e^{u_k}+e^{u_{k+1}})\\
		&=-e^{u_{k-1}-u_{k}}(e^{u_{k-2}-u_{k-1}}-1)+2(e^{u_{k-1}-u_k}-1)-(1-e^{u_{k+1}-u_k})\\
		&\geq -A_k(A_{k-1}-1)+2(A_k-1)-\big(1-\tfrac{1}{A_{k+1}}\big);\\
		\text{at $p\in P_n$:}\quad 0&\geq \tfrac{1}{4}e^{-u_n}\Delta(u_{n-1}-u_n)\\
		&=e^{-u_n}(e^{-u_{n-2}}+3e^{u_{n-1}}-2e^{u_n})\\
		&=-e^{u_{n-1}-u_n}(e^{u_{n-2}-u_{n-1}}-1)+2(e^{u_{n-1}-u_n}-1)\\
		&\geq -A_n(A_{n-1}-1)+2(A_n-1).
	\end{align*}
	As for $P_2$, we use the inequality
	$\Delta\log(e^f+1)=\tfrac{e^f}{e^f+1}\Delta f+\tfrac{e^f}{(e^f+1)^2}\|\nabla f\|^2\geq \tfrac{e^f}{e^f+1}\Delta f$, which holds for any $f\in C^\infty(\Sigma)$, to get
	\begin{align}
		\text{at $p\in P_2$:}\quad0&\geq \tfrac{1}{4}\Delta \log\big(e^{u_0-u_2}+e^{u_1-u_2}\big)\label{eqn_A2}\\
		&=\tfrac{1}{4}\Delta \log \big(e^{u_0-u_1}+1\big)+\tfrac{1}{4}\Delta (u_1-u_2)\nonumber\\
		&\geq\frac{e^{u_0-u_1}}{e^{u_0-u_1}+1}\cdot\tfrac{1}{4}\Delta (u_0-u_1)+\tfrac{1}{4}\Delta(u_1-u_2)\nonumber\\
		&=\frac{e^{u_0-u_1}}{e^{u_0-u_1}+1}\cdot 2(e^{u_0}-e^{u_1})+(e^{u_0}+3e^{u_1}-3e^{u_2}+e^{u_3})\nonumber\\
		&=\frac{e^{-u_1}(e^{u_0}-e^{u_1})^2}{e^{u_0-u_1}+1}+(e^{u_0}-e^{u_1})+(e^{u_0}+3e^{u_1}-3e^{u_2}+e^{u_3})\nonumber\\
		&\geq (e^{u_0}-e^{u_1})+(e^{u_0}+3e^{u_1}-3e^{u_2}+e^{u_3})\nonumber\\
		&=e^{u_2}\left[2\big(e^{u_0-u_2}+e^{u_1-u_2}-1\big)-\big(1-e^{u_3-u_2}\big)\right]\nonumber\\
		&\geq e^{u_2}\left[2(A_2-1)-\big(1-\tfrac{1}{A_3}\big)\right]\nonumber.
	\end{align}
	Thus, we have obtained
	\begin{equation}\label{eqn_elementary}
		\text{ if $n\geq4$: }
		\begin{cases}
			2(A_2-1)-\big(1-\tfrac{1}{A_3}\big)\leq 0,\\
			-A_3(A_2-1)+2(A_3-1)-\big(1-\tfrac{1}{A_4}\big)\leq0,\\
			-A_k(A_{k-1}-1)+2(A_k-1)-\big(1-\tfrac{1}{A_{k+1}}\big)\leq0\quad (4\leq k\leq n-1),\\
			-A_n(A_{n-1}-1)+2(A_n-1)\leq0.
		\end{cases}
	\end{equation}
	
	When $n=2,3$, the same argument yields
	
	\begin{align}
		&\text{ if $n=2$: } A_2-1\leq0;\\
		&\text{ if $n=3$: }
		\begin{cases}
			2(A_2-1)-\big(1-\tfrac{1}{A_3}\big)\leq 0,\\
			-A_3(A_2-1)+2(A_3-1)\leq0.\label{eqn_elementaryn=3}
		\end{cases}
	\end{align}
	
	\textbf{Step 2.} Infer from these inequalities that $A_2,\cdots,A_n$ are either all equal to $1$ or all less than $1$.
	
	If $n=2$, there is nothing to prove. If $n=3$, we can rewrite \eqref{eqn_elementaryn=3} as
	$4\big(1-\tfrac{1}{A_3}\big)\leq 2(A_2-1)\leq 1-\tfrac{1}{A_3}$, whence
	the required conclusion follows easily. Now assume that $n\geq4$ and $A_2,\cdots,A_n$ are not all $1$. Put $B_k:=\big(1-\tfrac{1}{A_k}\big)-(A_{k-1}-1)$ for $3\leq k\leq n$. Then \eqref{eqn_elementary} can be rewritten as
	\begin{equation}\label{eqn_elementary2}
		\begin{cases}
			A_2-1\leq B_3,\\
			A_3B_3\leq B_4,\\
			A_kB_k\leq B_{k+1}\quad (4\leq k\leq n-1),\\
			A_nB_n\leq -(A_n-1).
		\end{cases}
	\end{equation}
	
	Let us first show $A_n\leq 1$. Applying the definition of the $B_k$'s repeatedly, we get
	\begin{align*}
		1-\tfrac{1}{A_n}&=B_n+A_{n-1}-1=B_n+A_{n-1}\big(1-\tfrac{1}{A_{n-1}}\big)\\
		&=B_n+A_{n-1}(B_{n-1}+A_{n-2}-1)=B_n+A_{n-1}B_{n-1}+A_{n-1}A_{n-2}\big(1-\tfrac{1}{A_{n-2}}\big)\\
		&=B_n+A_{n-1}B_{n-1}+A_{n-1}A_{n-2}(B_{n-2}+A_{n-3}-1)=\cdots\\
		&=B_n+A_{n-1}B_{n-1}+A_{n-1}A_{n-2}B_{n-2}+\ \cdots\ +A_{n-1}\cdots A_3B_3+A_{n-1}\cdots A_3(A_2-1)
	\end{align*}
	The result has $n-1$ terms and each term is no larger than $B_n$ by  \eqref{eqn_elementary2}. Therefore, we have
	$$
	1-\tfrac{1}{A_n}\leq (n-1) B_n\leq -(n-1)\big(1-\tfrac{1}{A_n}\big),
	$$
	which implies $A_n\leq 1$, as required.
	
	Next, we show $A_2<1$. Suppose by contradiction that $A_2\geq1$. Then the inequalities in \eqref{eqn_elementary2} imply, successively for $3\leq k\leq n$, that $B_k\geq0$; or equivalently,  $1-\tfrac{1}{A_k}\geq A_{k-1}-1$. This in turn implies successively that $A_k\geq1$ for $3\leq k\leq n$. Since we already know that $A_n\leq 1$, it follows that $A_n=1$. This allows us to deduce again from \eqref{eqn_elementary2} that $B_k\leq 0$ for $3\leq k\leq n$ (in reversed order). Therefore, we have $B_3=\cdots=B_n=0$, which implies $A_2=\cdots=A_n=1$, a contradiction.
	
	Finally, we show that $A_3,\cdots,A_n<0$ in a similar way as follows. Suppose by contradiction that $A_{l-1}<1$ but $A_l\geq1$ for some $3\leq l\leq n$. Then we have $B_l:=\big(1-\tfrac{1}{A_l}\big)-(A_{l-1}-1)>0$. Similarly as in the above proof of $A_2<1$, we infer that $B_k>0$ and hence $A_k\geq1$ for $l\leq k\leq n$. So again we have $A_n=1$, which implies $B_n\leq0$, a contradiction.

	\textbf{Step 3.} Exclude the case $A_2=\cdots=A_n=1$ by using Stronger Maximum Principle.
	
	Suppose $A_2=\cdots=A_n=1$ by contradiction. Observe that all the inequalities in \eqref{eqn_A2} other than the first and the last hold outside of the zeros of $(\ve\beta)$ or $(\ve\gamma_1)$ (namely, they do not require the functions to be evaluated at $p\in P_2$). So we have the following estimates for  $w:=\log\big(e^{u_0-u_2}+e^{u_1-u_2}\big)=\log\tfrac{\|\ve\gamma_1\|^2+\|\ve\beta\|^2}{\|\ve\gamma_2\|^2}$ away from the zeros of $(\ve\beta)$ or $(\ve\gamma_1)$:
	\begin{align*}
		\tfrac{1}{4}\Delta w&\geq e^{u_2}\left[2\big(e^{u_0-u_2}+e^{u_1-u_2}-1\big)+\big(e^{-(u_2-u_3)}-1\big)\right]\\
		&\geq 2e^{u_2}\big(e^{u_0-u_2}+e^{u_1-u_2}-1\big)\\
		&=2e^{u_2}(e^w-1)\geq 2e^{u_2}w,
	\end{align*}
	where the second inequality is because $e^{u_2-u_3}\leq A_3=1$.
	Since $w$ achieves its maximum $\log A_2=0$ on $P_2$, we may use Strong Maximum Principle in the same way as in Part \ref{item_appthm2} to conclude that $P_2=\Sigma$, or in other words, $\tfrac{\|\ve\gamma_1\|^2+\|\ve\beta\|^2}{\|\ve\gamma_2\|^2}=1$ throughout $\Sigma$. By the assumption $(\ve\gamma_2)\prec\min\big\{(\ve\gamma_1),(\ve\beta)\}$, it follows that $\ve\gamma_1$ and $\ve\beta$ do not have any common zero and $\ve\gamma_2$ is nowhere zero.
	
	In order to derive a contradiction, we consider the maximum $A_2'$ of $e^{u_1-u_2}=\|\ve\gamma_1\|^2/\|\ve\gamma_2\|^2$. By the second equation in \eqref{eqn_appthmproof4} or \eqref{eqn_appthmproof5}, at any point where $A_2'$ is achieved (which cannot be a zero of $e^{u_2}=\|\ve\gamma_2\|^2$ because $(\ve\gamma_2)\prec(\ve\gamma_1)$), we have
	\begin{align*}
		0&\geq\tfrac{1}{4}e^{-u_2}\Delta(u_1-u_2)=e^{-u_2}\big(e^{u_0}+3e^{u_1}-3e^{u_2}+e^{u_3}\big)\\
		&\geq e^{u_0-u_2}+e^{u_1-u_2}+2e^{u_1-u_2}-3+e^{-(u_2-u_3)}\\
		&=2e^{u_1-u_2}-2+e^{-(u_2-u_3)}\\
		&\geq 2A_2'-2+\tfrac{1}{A_3}=2A_2'-1.
	\end{align*}
	It follows that $A_2'\leq \tfrac{1}{2}$. But this is impossible because $\|\ve\gamma_1\|^2/\|\ve\gamma_2\|^2=1$ on the zero locus of $\ve\beta$ (which is nonempty as observed at the beginning). This contradiction shows that $A_2=\cdots=A_n=1$ cannot hold. Therefore, we have $A_2,\cdots, A_n<1$.
	
	\textbf{Step 4.} Deduce the required statement from the inequality $A_2,\cdots,A_n<1$ that we just obtained:
	since $(\ve\gamma_n)\leq\cdots\leq(\ve\gamma_2)\leq\min\big\{(\ve\gamma_1),(\ve\beta)\big\}$, we have
	$$
	\tfrac{1}{h_0}\pazbz\log h_2=\|\ve\gamma_1\|^2-\|\ve\gamma_2\|^2+\|\ve\beta\|^2\\
	=\begin{cases}
		0&\text{on $\{\ve\gamma_2=0\}$},\\
		\|\ve\gamma_2\|^2\big(\tfrac{\|\ve\gamma_1\|^2+\|\ve\beta\|^2}{\|\ve\gamma_2\|^2}-1\big)\leq \|\ve\gamma_2\|^2(A_2-1)<0&\text{on $\{\ve\gamma_2\neq0\}$};
	\end{cases}
	$$
	$$
	\tfrac{1}{h_0}\pazbz\log h_k=\|\ve\gamma_{k-1}\|^2-\|\ve\gamma_k\|^2=
	\begin{cases}
		0&\text{on $\{\ve\gamma_k=0\}$},\\
		\|\ve\gamma_k\|^2\big(\tfrac{\|\ve\gamma_{k-1}\|^2}{\|\ve\gamma_k\|^2}-1\big)\leq \|\ve\gamma_k\|^2(A_k-1)<0&\text{on $\{\ve\gamma_k\neq0\}$}.
	\end{cases}
	$$
	
	\vspace{0.4cm}
	
	Thus, we have proven statement \ref{item_appthm1} in the case $\ve\beta\not\equiv0$. To adapt this proof to $\ve\beta\equiv0$, we basically only need to erase all the terms containing $u_0$ or $\|\ve\beta\|^2$ and take into account some simplications along the way. For example, when $n\geq4$, the equations that we started with become
	%\begin{align*}
	%&
	%\begin{cases}
	%	\tfrac{1}{4}\Delta u_1=2e^{u_1}-e^{u_2}+\tfrac{\kappa}{2}~,\\
	%	\tfrac{1}{4}\Delta u_2=-e^{u_1}+2e^{u_2}-e^{u_3}+\tfrac{\kappa}{2}~,\\
	%	\tfrac{1}{4}\Delta u_k=-e^{u_{k-1}}+2e^{u_k}-e^{u_{k+1}}+\tfrac{\kappa}{2}\quad (3\leq k\leq n-1),\\
	%	\tfrac{1}{4}\Delta	u_n=-e^{u_{n-1}}+e^{u_n}+\tfrac{\kappa}{2};
	%\end{cases}\\
	%&
	%\begin{cases}
	%	\tfrac{1}{4}\Delta(u_1-u_2)=3e^{u_1}-3e^{u_2}+e^{u_3},\\
	%	\tfrac{1}{4}\Delta(u_2-u_3)=-e^{u_1}+3e^{u_2}-3e^{u_3}+e^{u_4},\\
	%	\tfrac{1}{4}\Delta(u_{k-1}-u_k)=-e^{u_{k-2}}+3e^{u_{k-1}}-3e^{u_k}+e^{u_{k+1}}\quad (4\leq k\leq n-1),\\
	%	\tfrac{1}{4}\Delta(u_{n-1}-u_n)=-e^{u_{n-2}}+3e^{u_{n-1}}-2e^{u_n}.
	%\end{cases}
	%\end{align*}
	$$
	\begin{cases}
		\tfrac{1}{4}\Delta(u_1-u_2)=3e^{u_1}-3e^{u_2}+e^{u_3}\geq 3e^{u_1}-3^{u_2},\\
		\tfrac{1}{4}\Delta(u_2-u_3)=-e^{u_1}+3e^{u_2}-3e^{u_3}+e^{u_4},\\
		\tfrac{1}{4}\Delta(u_{k-1}-u_k)=-e^{u_{k-2}}+3e^{u_{k-1}}-3e^{u_k}+e^{u_{k+1}}\quad (4\leq k\leq n-1),\\
		\tfrac{1}{4}\Delta(u_{n-1}-u_n)=-e^{u_{n-2}}+3e^{u_{n-1}}-2e^{u_n}.
	\end{cases}
	$$
	Putting $A_k:=\max_\Sigma e^{u_{k-1}-u_k}$ for $k=2,\cdots,n$ (only $A_2$ differs from the above one), we may use Maximum Principle to deduce almost the same inequalities on $A_2,\cdots, A_n$ as \eqref{eqn_elementary}, except that the first one is replaced by the simpler inequality $A_2-1\leq0$. Then, in the same way as above, we may show that $A_2,\cdots,A_n<1$, whence the required statement follows. 
\end{proof}

\bibliographystyle{siam} \bibliography{frenet_bib}
\end{document}